\providecommand{\U}[1]{\protect\rule{.1in}{.1in}}
\newtheorem{theorem}{Theorem}[section]
\newtheorem*{theorem*}{Theorem}
\newtheorem{proposition}[theorem]{Proposition}
\newtheorem{lemma}[theorem]{Lemma}
\newtheorem{corollary}[theorem]{Corollary}
\newtheorem{example}[theorem]{Example}
\newtheorem{condition}{Condition}
\newtheorem{definition}[theorem]{Definition}
\newtheorem{remark}[theorem]{Remark}
\newcommand{\wrt}[1]{\, \mathrm{d} #1}
\newcommand{\strato}[1]{\circ \mathrm{d} #1}
\newcommand{\pd}[2]{\frac{\partial #1}{\partial #2}}
\newcommand{\exptn}[1]{\mathbb{E} \left[ #1\right]}
\newcommand{\fqquad}{\qquad \qquad \qquad \qquad}
\newcommand{\abs}[1]{\left\lvert#1\right\rvert}
\newcommand{\norm}[1]{\left\lVert#1\right\rVert}
\begin{document}

\author{Thomas Cass and Nengli Lim}
\title{{\huge A Stratonovich-Skorohod formula for Volterra Gaussian rough paths}}
\maketitle

\begin{abstract}
Given a solution $Y$ to a rough differential equation (RDE), a recent result \cite{cl2018} extends the classical It\"{o}-Stratonovich formula and provides a closed-form expression for $\int Y \circ \mathrm{d} \mathbf{X} - \int Y \, \mathrm{d} X$, i.e. the difference between the rough and Skorohod integrals of $Y$ with respect to $X$, where $X$ is a Gaussian process with finite $p$-variation less than 3. In this paper, we extend this result to Gaussian processes with finite $p$-variation such that $3 \leq p < 4$. The constraint this time is that we restrict ourselves to Volterra Gaussian processes with kernels satisfying a natural condition, which however still allows the result to encompass many standard examples, including fractional Brownian motion with $H > \frac{1}{4}$. Analogously to \cite{cl2018}, we first show that the Riemann-sum approximants of the Skorohod integral converge in $L^2(\Omega)$ by adopting a suitable characterization of the Cameron-Martin norm, before appending the approximants with higher-level compensation terms without altering the limit. Lastly, the formula is obtained after a re-balancing of terms, and we also show how to recover the standard It\"{o} formulas in the case where the vector fields of the RDE governing $Y$ are commutative.
\end{abstract}

\section{Introduction}

Lyons' rough path theory is a framework for giving a path-wise interpretation
to stochastic differential equations of the form
\begin{align} \label{basic SDE}
\mathrm{d}Y_{t}=V(Y_{t})\circ\mathrm{d}X_{t},\quad
Y_{0}=y_{0},
\end{align}
in particular for a broad class of continuous, vector-valued Gaussian processes $X$. A fundamental contribution of Lyons \cite{lyons98, lcl2006} was to
realize that this needs $X$ to be enriched to a \textit{rough path} $\mathbf{X}$ whose components comprise not only $X$, but also the higher-order iterated integrals up to some finite degree. The model \eqref{basic SDE} then ought to be rewritten as
\begin{align} \label{basic SDE1}
\mathrm{d}Y_{t}=V(Y_{t})\circ\mathrm{d}\mathbf{X}_{t},\quad
Y_{0}=y_{0},
\end{align}
to reflect the dependence of the solution on the enriched rough path. The existence of $\mathbf{X}$ for a given stochastic process cannot be taken for granted. Nevertheless, if $X$ is Gaussian then $\mathbf{X}$ can be constructed in a canonical way as an appropriate limit of iterated integrals of smooth (i.e. bounded variation) approximations to $X$ such as piecewise linear approximations, see \cite{fv2010a, fv2010b}. For this, conditions are needed on the covariance function, but these are flexible enough to encompass a range of examples including fractional Brownian motion with Hurst index $H > \frac{1}{4}$. \par 

In this paper we will assume that $X=\left(  X^{1},...,X^{d}\right)  $ has
i.i.d components, each centered with covariance function $R$, and that $X$ is
defined on a probability space $\left(  \Omega,\mathcal{F},\mathbb{P}\right)
$. For simplicity we assume that $\mathcal{F}$ is generated by $X.$ The
process $X$ then gives rise an isonormal Gaussian process w.r.t. the Hilbert
$\mathcal{H}_{1}^{d}=\oplus_{i=1}^{d}\mathcal{H}_{1}^{\left(  i\right)  }$ where, for all $i=1,...,d,$ $\mathcal{H}_{1}^{\left(  i\right)  } = \mathcal{H}_{1}$ and $\mathcal{H}_{1}$ is the completion of the real vector space
\begin{align*}
\text{span}\left\{  \mathds{1}_{[0,t)}(\cdot):\bigg\vert\;t\in[0,T]\right\}
\end{align*}
endowed with the inner-product $\left\langle \mathds{1}_{[0,t)}(\cdot), \mathds{1}_{[0,s)}(\cdot) \right\rangle_{\mathcal{H}_1} = R(t,s)$. The solution $Y$ to (\ref{basic SDE1}) can also be viewed as a Wiener functional on $\left(\Omega,\mathcal{F},\mathbb{P}\right) $, and its properties can then be studied using Malliavin calculus. A number of recent works have opened up the interplay between Lyons' and Malliavin's calculi, see e.g. \cite{cfv2009}, \cite{cf2011}, \cite{hp2013} and \cite{chlt2015}. In particular, in a recent paper \cite{cl2018} the authors have proven a conversion formula for the difference between the rough path integral of $Y$ w.r.t $\mathbf{X}$ and the Skorohod integral $\delta^{X}$ of $Y$ (i.e. the $L^{2}\left( \Omega\right)$ adjoint of the Malliavin derivative operator). In more detail this result shows, for the case where $Y$ and $X$ are both $\mathbb{R}^{d}$-valued, the following almost sure identity
\begin{align} \label{corr form}
\begin{split}
\int_{0}^{T}\left\langle Y_{t}\circ\mathrm{d}\mathbf{X}_{t}\right\rangle - \delta^{X}\left(  Y\right)  
&= \frac{1}{2}\int_{0}^{T}\mathrm{tr}\left[V(Y_{t})\right]  \,\mathrm{d}R(t) \\
&\qquad+ \int_{[0,T]^{2}}\mathds{1}_{[0,t)}(s) \mathrm{tr}\left[  J_{t}^{\mathbf{X}}\left(  J_{s}^{\mathbf{X}}\right)^{-1} V(Y_{s})-V(Y_{t})\right]  \,\mathrm{d}R(s,t).
\end{split}
\end{align}
Here, $J_{t}^{\mathbf{X}}$ denotes the Jacobian of the flow map $y_{0} \rightarrow Y_{t},$ and the second part of the correction term is a proper 2D Young-Stieltjes integral (see \cite{fv2010a, fv2010b}) with respect to the covariance function of $X.$ When $X$ is standard Brownian motion, this last term vanishes since the integrand is zero on the diagonal and $\mathrm{d}R(s,t) = \delta_{\{s=t\}}\,\mathrm{d}s\,\mathrm{d}t$. This, together with the fact that $R(t)=t$, allows us to recover the classical It\^{o}-Stratonovich conversion formula. This a rather basic tool in stochastic analysis and generalizations are likely to be important especially given the now widespread adoption of Gaussian models, e.g. most recently in mathematical
finance \cite{bfg2016}.

In \cite{cl2018}, conditions need to be imposed in the proof of the formula (\ref{corr form}) which limit the range of applications. An important assumption, for instance, is that the covariance function of $X$ has finite (two-parameter) $\rho$-variation for $\rho \in \left[ 1, \frac{3}{2} \right)$. This implies that the sample paths of $X$ will have finite $p$-variation, for some $p\in [2, 3)$, and this excludes interesting examples such as fractional Brownian motion with $H \in \left( \frac{1}{4}, \frac{1}{3} \right]$. \par 

The purpose of the present paper is to extend the correction formula \eqref{corr formula} to these less regular cases. To do so we will assume that the Gaussian process $X$ is a Volterra process; that is, the covariance function $R$ of each component can be written as
\begin{align*}
R(s,t)=\int_{0}^{t\wedge s}K(t,r)K(s,r)\,\mathrm{d}r,
\end{align*}
for some kernel $K$, a square-integrable function $K: \left[ 0, T \right]^2  \rightarrow\mathbb{R}$ with $K(t,s)=0,\;\forall s\geq t.$ We will present conditions on $K$ that allow us to generalize \eqref{corr form}. In doing so, we need to overcome a number of serious obstacles. We highlight here the three most salient of these, outline the contribution of the present work and, at the same time, provide a road-map for the paper:
\begin{enumerate} [(i)]
\item We need to prove that the solution $Y$ belongs to the domain of the Skorohod integral $\delta^{X}.$ In fact we prove the stronger statement that $Y$ belongs to the Malliavin Sobolev space $\mathbb{D}^{1,2} \left(\mathcal{H}_{1} \right) \subset \mathrm{Dom}(\delta^{X})$ (for definitions see Section \ref{MDsection}). To show that $Y$, a path-valued random variable, can
be understood as a random variable in the Hilbert space $\mathcal{H}_{1}$, we need to identify a class of functions with a subset of $\mathcal{H}_{1}$. This was proved in \cite{cl2018}, by taking advantage of the assumption that $\rho \in \left[1, \frac{3}{2} \right)$, but the less regular cases need a new argument that exploits the structure of the Volterra kernel. To handle the Malliavin derivative $\mathcal{D}Y$, we need a similar result that identifies a class of two-parameter functions as a subset of $\mathcal{H}_{1}\otimes\mathcal{H}_{1}$.

\item For the examples considered in this paper, the Gaussian rough path $\mathbf{X}$ will consist of iterated integrals up to degree three; i.e. $\mathbf{X=}\left(  1,X,\mathbf{X}^{2},\mathbf{X}^{3}\right)  .$ This contrasts with the result in \cite{cl2018}, where only the case $\mathbf{X=} \left(  1,X,\mathbf{X}^{2}\right)$ needs to be considered. This increases the complexity of the the arguments significantly; indeed, the rough integral in the left side of (\ref{corr form}) is now well approximated locally by terms up to third-order
\begin{align*}
\int_{s}^{t}\left\langle Y_{t}\circ\mathrm{d}\mathbf{X}_{t} \right\rangle
\simeq\left\langle Y_{s},X_{s,t}\right\rangle +V\left(  Y_{s}\right) \mathbf{X}_{s,t}^{2} + V^2(Y_{s}) \left(
\mathbf{X}_{s,t}^{3}\right).
\end{align*}
A key step in \cite{cl2018} is the proof that the second-order terms in this approximation satisfy
\begin{align*}
\lim_{\left\Vert \pi(n)\right\Vert \rightarrow 0}\left\Vert \sum_{i:\pi(n)=\left\{  t_{i}^{n}\right\}}V(Y_{t_{i}^{n}})\left(  \mathbf{X}_{t_{i}^{n}, t_{i+1}^{n}}^{2}-\frac{1}{2}\sigma^{2}\left(  t_{i}^{n},t_{i+1}^{n}\right) \mathcal{I}_{d}\right) \right\Vert_{L^{2} (\Omega)} = 0.
\end{align*}
For the present work we need to address the same problem for the third order terms, namely the existence of an $L^{2}(\Omega)$-limit for sums of terms of the form
\begin{align*}
V^2(Y_{t_{i}^{n}}) \left( \mathbf{X}_{t_{i}^{n},t_{i+1}^{n}}^{3}\right),
\end{align*}
possibly after rebalancing, over a sequence of partitions with mesh tending to zero. An important discovery of this paper is the somewhat surprising conclusion that this these terms have vanishing $L^{2}(\Omega)-$limit, without the the need to subtract any rebalancing terms. This is the concluding result of Section \ref{Sko aug}.

\item The proof of point (ii) relies on a rather intricate interplay between estimates from Malliavin's calculus and rough path analysis. From the latter theory, we need estimates on the directional derivatives of RDE solutions. It is well known that an RDE solution of the form \eqref{basic SDE1} can be differentiated in a direction $h\in C^{q-\text{var}}\left(  \left[  0,T\right], \mathbb{R}^{d}\right)$ by considering the perturbed RDE solution driven by the translated rough path $T_{\epsilon h}\mathbf{X}$ and then evaluating the
derivative in $\epsilon$ at zero. For $T_{\epsilon h}\mathbf{X}$ to make sense, $\mathbf{X}$ and $h$ must have Young-complementary regularity, i.e. $\frac{1}{p} + \frac{1}{q} > 1$, in which case Duhamel's formula gives
\begin{align} \label{first o}
D_{g}Y_{t}=\int_{0}^{t}J_{t}^{\mathbf{X}}\left(  J_{s}^{\mathbf{X}}\right)^{-1}V\left(  Y_{s}\right)  \,\mathrm{d}g(s),
\end{align}
a well-defined Young integral. In Malliavin calculus, $g$ will typically be an element of the Cameron-Martin space (written as $\mathcal{H}^{d}$ in this paper), and this has spurred interest in results that prove that $\mathcal{H}^{d}$ can be continuously embedded into $q$-variation spaces, see e.g. \cite{cfv2009}, \cite{fggr2016}. By combining these results with Young's inequality, one can then say e.g. that
\begin{align} \label{embed}
\left\vert D_{g}Y_{t}\right\vert \lesssim\left\vert g\right\vert_{q\text{-var}}\lesssim\left\vert g\right\vert _{\mathcal{H}^{d}},
\end{align}
and these arguments can be generalized to higher order directional derivatives, allowing one control over the Hilbert-Schmidt norm of the Malliavin derivative; see \cite{inahama2014}. Note however, that quality is lost in \eqref{embed} by use of the embedding. For the proof in (ii) we need subtler estimates on the higher order derivatives of the form
\begin{align} \label{high o}
\left\vert D_{g_{i},....,g_{n}}^{n}Y_{t}\right\vert \leq C_{n}\left(\mathbf{X}\right)
{\textstyle\prod\limits_{j=1}^{n}} \left\vert g_{j}\right\vert _{q\text{-var}}.
\end{align}
High order derivatives here complicate matters since derivatives of order $2$ and higher are no longer representable as Young integrals as in \eqref{first o}; instead genuine rough integrals appear. Much of the work underpinning point (ii) goes into deriving closed-form expressions for these high-order derivatives and then estimating them so as to arrive at \eqref{high o}. We must also pay
careful attention to the random variable $C_{n}\left(  \mathbf{X}\right)$ in \eqref{high o} which, for our application, must have finite positive moments of all orders. The first half of Section \ref{Sko aug} is devoted to this material.
\end{enumerate}

The culmination of the these arguments is presented in Section \ref{corr formula}, where we give a set of conditions under which a conversion formula holds for $\int_{0}^{T}\left\langle Y_{t}\circ\mathrm{d}\mathbf{X}_{t}\right\rangle -\delta^{X}\left(  Y\right)$. This formula is reminiscent of the one obtained for the case of second-order rough paths, but there are interesting differences too. Most notably the second term in \eqref{corr form},
\begin{align} \label{non commut}
\int_{\lbrack0,T]^{2}}\mathds{1}_{[0,t)}(s)\mathrm{tr}\left[ J_{t}^{\mathbf{X}}\left(  J_{s}^{\mathbf{X}}\right)  ^{-1}V(Y_{s})-V(Y_{t})\right] \,\mathrm{d}R(s,t),
\end{align}
which exists for $2 \leq p < 3$ as a well-defined 2D Young-Stieltjes integral, can only be identified as an $L^{2}$-limit of a sequence of approximating sums. The difference between the two cases stems from the lack of complementary Young regularity of the integrand and $R.$ Interestingly the integrand, while being continuous on $\left[  0,T\right]^{2}$, is not H\"{o}lder bi-continuous and so we cannot even appeal to the relaxed criteria discussed in point (1) above. It is unknown at present whether the limit is interpretable as a 2D Young-Stieltjes integral. We discuss in detail two important corollaries of our result. The first is where $X$ is a fractional
Brownian motion with $H$ in $\left( \frac{1}{4}, \frac{1}{3} \right]$, and the second is the case where the vector fields defining \eqref{basic SDE1} commute. In this latter case, we show that the second term (\ref{non commut}) in the correction formula disappears and, as a special case, we can recover It\^{o}-type formulas for Gaussian processes, thus connecting our work to a substantial recent corpus e.g. \cite{np1998}, \cite{amn2001}, \cite{ccm2003}, \cite{no2011} and \cite{hjt2013}. \par 

The bulk of the content in this paper can be found in the second-named author's doctoral dissertation \cite{lim2016}.

\section{Preliminaries}

\subsection{Rough path concepts and notation}

We briefly review the basic notation of rough paths theory used in this article; the standard references in this area \cite{lyons98}, \cite{lq2003}, \cite{fh2014} and \cite{fv2010b} can be consulted for more detail. We let $T^{n}\left(  \mathbb{R}^{d}\right)  $ denote the degree $n$ truncated tensor algebra $T^{n}\left(  \mathbb{R}^{d}\right) :=\mathbb{R}\oplus\mathbb{R}^{d}\oplus\cdots\oplus\left(  \mathbb{R}^{d}\right)^{\otimes n}$ equipped with addition and scalar multiplication as defined in the usual fashion. The truncated tensor product of $a=\left(  a^{0},a^{1},\ldots,a^{n}\right) $ (alternatively written as $a^{0}+a^{1}+\ldots+a^{n}$) and $b=\left(  b^{0},b^{1},\ldots,b^{n}\right)\in T^{n}\left(  \mathbb{R}^{d}\right)$ is defined by
\begin{align*}
a\otimes b:=\left(  c^{0},c^{1},\ldots,c^{n}\right), \qquad c^{k}=\sum
_{i=0}^{k}a^{i}\otimes b^{k-i},\quad\forall\,0\leq k\leq n,
\end{align*}
where here we abuse the notation by re-using the same symbol for the tensor product in $\mathbb{R}^{d}$. The unit element is $e=(1,0\ldots,0)$ and the tangent space of $T^{n}(\mathbb{R}^{d})$ at\ $e$ is denoted $A_{T}^{n}\left(\mathbb{R}^{d}\right)$. The exponential and logarithm maps, $\exp:A_{T}^{n}(\mathbb{R}^{d})\rightarrow T^{n}(\mathbb{R}^{d})$ and $\log: T^{n}(\mathbb{R}^{d})\rightarrow A_{T}^{n}(\mathbb{R}^{d}),$ are mutually inverse and defined by
\begin{align*}
\exp(a):=\sum_{i=0}^{n}\frac{a^{\otimes i}}{i!}, \qquad \log(a)=\sum_{i=1}^{n}(-1)^{i+1}\frac{(a-e)^{\otimes i}}{i}.
\end{align*}
The step-$n$ nilpotent group (with $d$ generators), denoted by $G^{n}\left(\mathbb{R}^{d}\right)  $, is the subgroup of $T^{n}\left(  \mathbb{R}^{d}\right)$ corresponding to the sub-Lie algebra of $A_{T}^{n}\left(\mathbb{R}^{d}\right)$ generated by the Lie bracket $[a,b]=a\otimes b-b\otimes a$. We equip it with any symmetric, sub-additive homogeneous norm $\left\Vert \cdot\right\Vert $ (cf. \cite{fv2010a}) which induces the left-invariant metric $d\left(  a,b\right) = \norm{a^{-1} \otimes b}$. Given $\mathbf{x}\in\mathcal{C}\left(  [0,T];G^{n}\left(  \mathbb{R}^{d}\right) \right)$, a continuous $G^{n}\left(  \mathbb{R}^{d}\right)$-valued path, we define the increment over $\left[  s,t\right]  \subseteq\left[  0,T\right]$ by $\mathbf{x}_{s,t}:=\mathbf{x}_{s}^{-1}\otimes\mathbf{x}_{t}$, and then the
$p$-variation distance is
\begin{align} \label{pvarDist}
d_{p-var;[0,T]}(\mathbf{x},\mathbf{y}):=\sup_{\pi}\left( \sum_{i} d(\mathbf{x}_{t_{i},t_{i+1}},\mathbf{y}_{t_{i},t_{i+1}})^{p}\right)^{\frac{1}{p}},
\end{align}
where the supremum runs over all partitions $\pi=\{t_{i}\}$ of $[0, T]$. We let $\left\Vert \mathbf{x}\right\Vert _{p-var;[0,T]}:=d_{p-var;[0,T]} (\mathbf{x},0)$ $,$where $0$ denotes the constant path $\mathbf{y}_{t}=e$ for all $t \in [0, T]$.

\begin{definition}
For $p \geq 1$, the \textit{weakly geometric p-rough paths}, which we will denote by \\ $\mathcal{C}^{p-var}\left([0,T];G^{\lfloor p\rfloor} \left( \mathbb{R}^{d}\right) \right) $, is the set of continuous functions $\mathbf{x}$ from $[0,T]$ onto $G^{\lfloor
p\rfloor} \left( \mathbb{R}^{d} \right)$ such that $\norm{\mathbf{x}}_{p-var; [0, T]} < \infty$.
\end{definition}

The simplest example of a weakly geometric $p$-rough path is as follows, given a bounded-variation path $x$ in $\mathbb{R}^{d}$, we can compute the signature of $x$ in $G^{\lfloor p\rfloor} \left( \mathbb{R}^{d} \right)$: 
\begin{align*}
S_{\lfloor p\rfloor }(x)_{s,t}=\left( 1,\mathbf{x}_{s,t}^{1},\mathbf{x}_{s,t}^{2},\ldots ,\mathbf{x}_{s,t}^{\lfloor p\rfloor }\right),
\end{align*}
where $\mathbf{x}_{s,t}^{k}$ is the conventional $k$-th iterated integral of
the path $x$ over the interval $[s,t]$: 
\begin{align*}
\mathbf{x}_{s,t}^{k} = \sum_{j_1, \ldots, j_k=1}^d \left( \int_{s<r_{1}<\cdots <r_{k}<t}\,\mathrm{d}x^{(j_1)}_{r_{1}}\otimes \cdots \otimes \,\mathrm{d}x^{(j_k)}_{r_{k}} \right) e_{j_1} \otimes \cdots \otimes e_{j_k}.
\end{align*}

\begin{definition}
For $p \geq 1$, the space of \textit{geometric p-rough paths}, which we will denote by \\ $\mathcal{C}^{0,p-var}\left( [0,T]; G^{\lfloor p \rfloor}\left( \mathbb{R}^{d}\right) \right)$, is defined to be the closure of 
\begin{align*}
\mathcal{C}^{\infty} \left( [0, T]; G^{\lfloor p \rfloor} \left(\mathbb{R}^d\right) \right)
:= \left\{ f \in \mathcal{C}^{p-var} \left([0, T]; G^{\lfloor p \rfloor} \left(\mathbb{R}^d\right)\right) \: \Big| \: f \,  \mathrm{smooth} \right\}
\end{align*}
with respect to the topology given by the $p$-variation distance \eqref{pvarDist}.
\end{definition}

The notion of finite $p$-variation extends to (non-necessarily continuous) paths \\
$f:[0,T]\rightarrow E$ taking values in a metric space $(E,d)$ via
\begin{align*}
\left\Vert f\right\Vert _{p-var;[0,T]}:=\sup_{\pi}\left(  \sum_{i}d(f_{t_{i}}, f_{t_{i+1}})_{E}^{p}\right)^{\frac{1}{p}}.
\end{align*}
Letting $V^{p-var}\left( [0,T]; E \right) := \{f \:|\: \left\Vert f\right\Vert_{p-var;[0,T]}<\infty\}$, we use $\mathcal{C}^{p-var}\left(  [0,T];E\right)$ and \\ 
$\mathcal{C}_{pw}^{p-var}\left( [0, T]; E \right) $ to denote the subsets of $V^{p-var}\left( [0,T]; E\right) $ consisting of functions which are also, respectively, continuous functions and piecewise continuous. Given $f$ in \\
$\mathcal{C}^{p-var}\left( [0,T];E\right)$, the function $\omega(s,t):=\left\Vert f\right\Vert _{p-var;[s,t]}^{p}$ defined on the simplex
$\left\{ \left( s, t \right) \:|\: 0 \leq s\leq t\leq T\right\} $ is a \textit{control}, by which we mean it is a continuous, non-negative, function that is super-additive and satisfies $\omega(t,t) = 0$ for all $t \in [0,T]$. When $\left(  E,\left\Vert \cdot\right\Vert _{E}\right) $ is a Banach space we can define a norm on $\mathcal{C}^{p-var}\left(  [0,T];E\right) $ via
\begin{align*}
\left\Vert f\right\Vert _{\mathcal{V}^{p};[0,T]}:=\left\Vert f\right\Vert_{p-var;[0,T]}+\sup_{t \in [0, T]}\left\Vert f_{t}\right\Vert_{E}.
\end{align*}

We will also need the notion of $p$-variation for two-parameter functions. Thus, given $f:[0,T]^{2} \rightarrow E$, we say $f$ is of finite 2D $p$-variation if
\begin{align*} 
\left\Vert f\right\Vert _{p-var;[0,T]^{2}}:=\sup_{\pi}\left(  \sum_{i,j} \left\Vert f
\begin{pmatrix}
u_{i},u_{i+1}\\
v_{j},v_{j+1}
\end{pmatrix} \right\Vert_{E}^{p}\right)  ^{\frac{1}{p}}<\infty,
\end{align*}
where $\pi=\left\{  \left(  u_{i},v_{j}\right)  \right\}  $ is a partition of $[0,T]^{2}$, and the rectangular increment is given by
\begin{align} \label{rectInc}
f \begin{pmatrix}
u_{i},u_{i+1}\\
v_{j},v_{j+1}
\end{pmatrix}
:=f(u_{i},v_{j})+f(u_{i+1},v_{j+1})-f(u_{i},v_{j+1})-f(u_{i+1},v_{j}).
\end{align}
On occasion, we will use the notation
\begin{align*}
f(\Delta_{i},v):=f(u_{i+1},v)-f(u_{i},v)\text{ and }f(u,\Delta_{j} ) := f(u,v_{j+1})-f(u,v_{j}).
\end{align*}

\begin{definition}
Let $f$ and $g$ be functions defined on $[0,T]^{2}.$ We say that the 2D Young-Stieltjes integral of $f$ with respect to $g$ exists if there exists a scalar $I(f,g)\in\mathbb{R}$ such that
\begin{align} \label{quant1}
\lim_{\left\Vert \pi\right\Vert \rightarrow0}\left\vert \sum_{i,j}f\left(
u_{i},v_{j}\right) g
\begin{pmatrix}
u_{i} & u_{i+1}\\
v_{j} & v_{j+1}%
\end{pmatrix}
-I(f,g)\right\vert \rightarrow0,
\end{align}
i.e. for each $\varepsilon > 0$, there exists a $\delta>0$ such that for all partitions $\pi=\{(u_{i},v_{j})\}$ of $[0,T]^{2}$ with $\left\Vert \pi\right\Vert <\delta$, the quantity on the left of \eqref{quant1} is less than $\varepsilon$. In this case, we use $\int_{[0,T]^{2}}f\,\mathrm{d}g$ to denote $I(f,g)$, or $\int_{[s,t]\times\lbrack u,v]}f\,\mathrm{d}g$ whenever we restrict ourselves to any particular subset $[s,t]\times\lbrack u,v]$ of $[0,T]^{2}$.
\end{definition}

\begin{definition}
We say that $f\in\mathcal{C}^{p-var}([s,t]\times\lbrack u,v])$ and $g\in\mathcal{C}^{q-var}([s,t]\times\lbrack u,v])$ have complementary regularity if $p^{-1}+q^{-1}>1$.
\end{definition}

The significance of this definition lies in the following theorem (see \cite{towghi2002}, \cite{fv2010a}), which gives the existence of the Young-Stieltjes integral and Young's inequality in two dimensions; see \cite{lcl2006}, \cite{fh2014}, \cite{fv2010b} for the
one-dimensional version.

\begin{theorem} \label{2Dintegral} 
Let $f \in V^{p-var}([s, t] \times[u, v])$ and $g \in V^{q-var}([s, t] \times[u, v])$ have complementary regularity. We also assume
that $f(s, \cdot)$ and $f(\cdot, u)$ have finite p-variation, and that $f$ and $g$ have no common discontinuities. Then the 2D Young-Stieltjes integral exists and the following Young's inequality holds;
\begin{align} \label{2DYoungIneq}
\begin{split}
\left|  \int_{[s,t] \times[u,v]} f \, \mathrm{d} g\right|  \quad\leq C_{p, q}
\, {\left\vert \kern-0.25ex\left\vert \kern-0.25ex\left\vert f \right\vert
\kern-0.25ex\right\vert \kern-0.25ex\right\vert } \left\|  g\right\| _{q-var,
[s,t] \times[u,v]},
\end{split}
\end{align}
where
\begin{align*}
{\left\vert \kern-0.25ex\left\vert \kern-0.25ex\left\vert f \right\vert
\kern-0.25ex\right\vert \kern-0.25ex\right\vert } = \left|  f(s, u)\right|  +
\left\|  f(s, \cdot)\right\| _{p-var; [u, v]} + \left\|  f(\cdot, u)\right\|
_{p-var; [s, t]} + \left\|  f\right\| _{p-var, [s,t] \times[u,v]}.
\end{align*}

\end{theorem}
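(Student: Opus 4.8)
The plan is to work directly with the Riemann--Stieltjes sums
$S(\pi)=\sum_{i,j} f(u_i,v_j)\, g\!\begin{pmatrix}u_i,u_{i+1}\\ v_j,v_{j+1}\end{pmatrix}$
associated to grid partitions $\pi=\{u_i\}\times\{v_j\}$ of $[s,t]\times[u,v]$, and to establish both the existence of the limit and the inequality \eqref{2DYoungIneq} by a single refinement argument that reduces every step to the one-dimensional Young inequality (cf. \cite{lcl2006,fh2014,fv2010b}), namely $\big|\sum_k \phi(w_k)\,(G(w_{k+1})-G(w_k)) - \phi(w_0)\,(G(b)-G(a))\big|\le C_{p,q}\lVert\phi\rVert_{p\text{-var};[a,b]}\lVert G\rVert_{q\text{-var};[a,b]}$ for $\phi$ of finite $p$-variation and $G$ of finite $q$-variation on $[a,b]$ with $\tfrac1p+\tfrac1q>1$. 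The ``triple-bar'' norm $|||f|||$ is designed precisely to collect the corner value, the two one-dimensional edge variations, and the genuinely two-dimensional variation that will surface as we peel the grid apart.

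The core computation I would carry out first is the effect on $S(\pi)$ of deleting a single interior vertical line $u_k$. Using additivity of the rectangular increments of $g$ under splitting a rectangle, the resulting defect telescopes to $\Delta_k=\sum_j \big(f(u_{k-1},v_j)-f(u_k,v_j)\big)\, g\!\begin{pmatrix}u_k,u_{k+1}\\ v_j,v_{j+1}\end{pmatrix}$. Setting $\phi_j:=f(u_{k-1},v_j)-f(u_k,v_j)$, a direct expansion shows $\phi_j-\phi_{j+1}=f\!\begin{pmatrix}u_{k-1},u_k\\ v_j,v_{j+1}\end{pmatrix}$, so that $\phi$ has finite $p$-variation in the $v$-variable controlled by the two-dimensional variation $\lVert f\rVert_{p\text{-var};[u_{k-1},u_k]\times[u,v]}$ over the thin strip, while its base value $\phi|_{v=u}$ is an increment of the edge function $f(\cdot,u)$. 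Likewise $g\!\begin{pmatrix}u_k,u_{k+1}\\ v_j,v_{j+1}\end{pmatrix}=G_k(v_{j+1})-G_k(v_j)$ for $G_k(v):=g(u_{k+1},v)-g(u_k,v)$, which has finite $q$-variation bounded by $\lVert g\rVert_{q\text{-var};[u_k,u_{k+1}]\times[u,v]}$. Hence $\Delta_k$ is itself a one-dimensional Young sum, and the 1D inequality gives $|\Delta_k|\le C_{p,q}\,\omega_f(u_{k-1},u_k)^{1/p}\,\omega_g(u_k,u_{k+1})^{1/q}$, where $\omega_f(a,b):=\lVert f(\cdot,u)\rVert_{p\text{-var};[a,b]}^p+\lVert f\rVert_{p\text{-var};[a,b]\times[u,v]}^p$ and $\omega_g(a,b):=\lVert g\rVert_{q\text{-var};[a,b]\times[u,v]}^q$ are both super-additive in $[a,b]$.

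With this estimate in hand I would run the classical greedy point-dropping argument in the $u$-direction: among the interior lines of any $u$-partition with $m$ points, super-additivity guarantees one whose combined control $\omega_f+\omega_g$ is at most $\tfrac{2}{m-1}$ of the total, so deleting lines in this order and summing $|\Delta_k|$ produces the convergent series $\sum_m(\tfrac{2}{m-1})^{1/p+1/q}\sim \zeta(\tfrac1p+\tfrac1q)$, which is finite exactly because of complementary regularity. This bounds the passage from the full grid to the partition $\{s,t\}\times\{v_j\}$ by $C_{p,q}\big(\lVert f(\cdot,u)\rVert_{p\text{-var};[s,t]}+\lVert f\rVert_{p\text{-var};[s,t]\times[u,v]}\big)\lVert g\rVert_{q\text{-var};[s,t]\times[u,v]}$. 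The remaining sum $\sum_j f(s,v_j)\, g\!\begin{pmatrix}s,t\\ v_j,v_{j+1}\end{pmatrix}$ is a single 1D Young sum in $v$ with integrand $f(s,\cdot)$ (finite $p$-variation by hypothesis) against $H(v):=g(t,v)-g(s,v)$; applying the 1D inequality once more splits off the corner term $f(s,u)\,g\!\begin{pmatrix}s,t\\ u,v\end{pmatrix}$ and the edge term in $\lVert f(s,\cdot)\rVert_{p\text{-var};[u,v]}$. Collecting the four contributions reproduces exactly $C_{p,q}\,|||f|||\,\lVert g\rVert_{q\text{-var};[s,t]\times[u,v]}$, which is \eqref{2DYoungIneq}.

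For existence I would apply the same line-removal estimates to compare $S(\pi)$ and $S(\pi')$ through their common refinement $\pi\cup\pi'$: the differences are then controlled by $\omega_f,\omega_g$ evaluated over the small strips straddling the newly inserted lines, and continuity of these controls forces them to vanish as $\lVert\pi\rVert\to0$, so the net $(S(\pi))$ is Cauchy and the limit $I(f,g)$ exists. The hypothesis that $f$ and $g$ have no common discontinuities enters here to guarantee that the left-endpoint sums converge to a single value independent of how the mesh is refined (partition points may be chosen to avoid the at most countable jump sets). I expect the principal obstacle to be the bookkeeping in the greedy step: unlike the one-dimensional case, deleting a grid line yields a defect that is \emph{itself} a Young integral, so one must verify carefully that the strip-wise two-dimensional variations and the edge variations telescope correctly and remain dominated by the global controls $\lVert f\rVert_{p\text{-var};[s,t]\times[u,v]}$ and $\lVert g\rVert_{q\text{-var};[s,t]\times[u,v]}$ throughout the successive removals, so that the $\zeta$-series---and hence the whole argument---closes.
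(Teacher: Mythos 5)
The paper does not supply a proof of this theorem --- it is quoted from \cite{towghi2002} and \cite{fv2010a} --- so your argument has to stand on its own. Your overall strategy is the standard one: delete one interior grid line at a time, observe that the defect $\Delta_k=\sum_j\bigl(f(u_{k-1},v_j)-f(u_k,v_j)\bigr)\,g\bigl(\begin{smallmatrix}u_k,\,u_{k+1}\\ v_j,\,v_{j+1}\end{smallmatrix}\bigr)$ is itself a one-dimensional Young sum of $\phi_j$ against $G_k(v)=g(u_{k+1},v)-g(u_k,v)$, run the greedy point-dropping argument in the $u$-direction, and finish with a one-dimensional Young estimate on the boundary sum $\sum_j f(s,v_j)\bigl(H(v_{j+1})-H(v_j)\bigr)$. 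The algebra for the single-line defect, the identification $\phi_j-\phi_{j+1}=f\bigl(\begin{smallmatrix}u_{k-1},\,u_k\\ v_j,\,v_{j+1}\end{smallmatrix}\bigr)$, and the accounting that produces the four terms of $|||f|||$ are all correct.

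The genuine gap sits exactly where you say the "principal obstacle" is, and it is not mere bookkeeping. Your candidate control $\omega_f(a,b)=\lVert f(\cdot,u)\rVert^{p}_{p\text{-var};[a,b]}+\lVert f\rVert^{p}_{p\text{-var};[a,b]\times[u,v]}$ is not known to be super-additive in $[a,b]$: the edge term is fine, but for the strip term, combining near-optimal grid partitions of $[a,c]\times[u,v]$ and $[c,b]\times[u,v]$ into a single grid of $[a,b]\times[u,v]$ forces a refinement of each in the $v$-direction, and since $|x+y|^{p}\not\le |x|^{p}+|y|^{p}$ for $p>1$, such a refinement can strictly decrease a grid $p$-variation sum. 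The same issue affects $\omega_g$. Without super-additivity the greedy selection ("some line carries at most $\tfrac{2}{m-1}$ of the total control") has no basis, and the $\zeta\bigl(\tfrac1p+\tfrac1q\bigr)$ series does not close. This failure of super-additivity for grid-defined higher-dimensional $p$-variation is precisely the subject of \cite{fv2011}, which the paper cites elsewhere; the standard repairs are either to work with the \emph{controlled} 2D $p$-variation (the least super-additive rectangle function dominating $|f(R)|^{p}$) and convert back to grid $p$-variation at the cost of an arbitrarily small increase of $p$ (harmless here because $\tfrac1p+\tfrac1q>1$ is an open condition), or to follow Towghi's original induction, which avoids the greedy step. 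With one of these inserted your argument is complete; the existence/Cauchy part and the use of the no-common-discontinuities hypothesis are treated loosely but are standard once the main estimate is secured.
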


\subsection{Gaussian rough paths}
We will work with a stochastic process
\begin{align*}
X_{t} = \left( X_{t}^{(1)},\ldots, X_{t}^{(d)}\right) , \quad t \in[0, T],
\end{align*}
which denotes a centered (i.e. zero-mean), continuous Gaussian process in $\mathbb{R}^{d}$ with i.i.d. components.

This process is defined on the canonical probability space $\left(\Omega,\mathcal{F},\mathbb{P}\right)  $, where $\Omega=\mathcal{C}\left(
[0,T];\mathbb{R}^{d}\right) $, the space of continuous $\mathbb{R}^d$-valued paths equipped with the supremum topology, $\mathcal{F}$ is the completion of the Borel $\sigma$-algebra generated by $X$, and $\mathbb{P}$ is the unique Borel measure under which $X \left( \omega\right) = \left( \omega_{t}\right)_t \in \left[ 0, T\right] $ has the specified Gaussian distribution. We will use
\begin{align*}
R(s,t):=\mathbb{E}\left[ X_{s}^{(1)}X_{t}^{(1)}\right]
\end{align*}
to denote the covariance function common to the components. The variance $R\left( t, t\right) $ will be denoted simply by $R(t)$, and we will also use the notation
\begin{align*}
\sigma^{2}(s,t):=R
\begin{pmatrix}
s & t\\
s & t
\end{pmatrix}
=\mathbb{E}\left[ \left(  X_{s,t}^{(1)}\right)^{2}\right];
\end{align*}
recall the definition of the rectangular increment in \eqref{rectInc}.

The triple $\left(  \Omega,\mathcal{H}^{d},\mathbb{P} \right)  $ denotes the abstract Wiener space associated to $X$, where $\mathcal{H}^{d}=\bigoplus_{i=1}^{d}\mathcal{H}$ is the Cameron-Martin space (or reproducing kernel Hilbert space). The Cameron-Martin space, which is densely and continuously embedded in $\Omega\text{,}$ is the completion of the linear span of the functions
\begin{align*}
\left\{  R(t, \cdot)^{(u)} := R(t,\cdot) \, e_{u} \; \bigg\vert \; t\in[0,T],
\; u = 1, \ldots, d \right\}
\end{align*}
under the inner-product
\begin{align*}
\left\langle R(t, \cdot)^{(u)}, R(s,\cdot)^{(v)} \right\rangle _{\mathcal{H}^{d}} = \delta_{uv} \, R(t,s), \quad u, v = 1, \ldots, d.
\end{align*}
By definition, $\mathcal{H}^{d}$ satisfies the following reproducing property; for any $f=\left(  f^{(1)},\ldots, f^{(d)}\right)  \in\mathcal{H}^{d}$,
\begin{align*}
\left\langle f_{\cdot}, R(t,\cdot)^{(u)} \right\rangle_{\mathcal{H}^{d}} 
= f_{t}^{(u)}, \quad t \in[0,T].
\end{align*}

We assume that there exists $\rho < 2$ such that $R$ has finite 2D $\rho$-variation. The following theorem in \cite{fv2010a} (see also \cite{cq2002} in the case of fractional Brownian motion) then shows that one can canonically lift $X$ via its piecewise linear approximants $X^{\pi}$ to a geometric $p$-rough path for $p > 2 \rho$.

\begin{theorem} \label{gaussianRP} 
Assume $X$ is a centered continuous $\mathbb{R}^d$-valued Gaussian process with i.i.d. components. Let $\rho \in [1, 2)$ and assume that the covariance function has finite 2D $\rho$-variation.
\begin{enumerate} [(i)]
\item (Existence) There exists a random variable $\mathbf{X}=\left( 1, \mathbf{X}^{1}, \mathbf{X}^2, \mathbf{X}^3\right) $ on $\left( \Omega, \mathcal{F},\mathbb{P}\right) $ which takes values in $\mathcal{C}^{0, p-var} \left( [0, T]; G^3(\mathbb{R}^d) \right)$ for $p > 2\rho$ almost surely, and is hence a geometric $p-$rough path for $p \in (2\rho, 4)$. Moreover, $\mathbf{X}$ lifts the Gaussian process $X$ in the sense that $\mathbf{X}_{s,t}^{1}=X_{t}-X_{s}$ almost surely for all $s,t\in [0, T]$.

\item (Uniqueness and consistency) The lift $\mathbf{X}$ is unique in the sense that it is the $d_{p -v a r}$-limit in $L^{q} (\Omega)$, $q \in [1,\infty )$, of any sequence $S_{^{\left \lfloor p\right \rfloor }} (X^{\pi}) $ with $\left\| \pi\right\| \rightarrow 0$. Furthermore, if $X$ has a.s. sample paths of finite $[1 ,2)$-variation, $\mathbf{X}$ coincides with the
signature of $X$.
\end{enumerate}
\end{theorem}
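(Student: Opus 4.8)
The plan is to prove Theorem \ref{gaussianRP} by reducing it to the general machinery for constructing Gaussian rough paths from the finite $\rho$-variation of the covariance, which is exactly the content of the cited result in \cite{fv2010a}. The core object to control is the second-moment structure of the iterated integrals of the piecewise-linear approximants $X^\pi$. First I would recall that, for smooth (bounded-variation) approximants, the lifts $S_{\lfloor p \rfloor}(X^\pi)$ are genuine signatures, so the only issue is convergence in the rough-path metric. The key analytic input is the estimate that, for each level $k \in \{1,2,3\}$, the $L^q(\Omega)$-norm of the $k$-th iterated integral increment $\mathbf{X}^{k,\pi}_{s,t}$ is controlled by $\omega(s,t)^{k/(2\rho)}$, where $\omega$ is the 2D $\rho$-variation control of $R$ restricted to the square $[s,t]^2$. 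Since Gaussian chaos of fixed order enjoys hypercontractivity, all $L^q$-norms are equivalent to the $L^2$-norm, so it suffices to obtain the bound at the level of second moments.

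The main steps proceed as follows. Step one establishes the $L^2$ estimates on the iterated integrals directly from the $\rho$-variation of $R$: for the first level this is immediate from $\exptn{|X_{s,t}^{(1)}|^2} = \sigma^2(s,t)$; for the second and third levels one expands $\exptn{|\mathbf{X}^{k,\pi}_{s,t}|^2}$ as a 2D Young–Stieltjes integral against $R$ and applies Theorem \ref{2Dintegral}, using that $\rho < 2$ guarantees the complementary-regularity condition $\rho^{-1} + \rho^{-1} > 1$ needed to make the multidimensional Young integral converge. Here the constraint $p > 2\rho$ arises precisely because level $k$ carries regularity exponent $k/(2\rho)$, and one needs $k/p < k/(2\rho)$ uniformly, i.e. $p > 2\rho$; the truncation at level $\lfloor p \rfloor = 3$ is consistent since $p < 4$. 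Step two upgrades these uniform moment bounds to convergence: one shows the sequence $S_{\lfloor p \rfloor}(X^\pi)$ is Cauchy in $d_{p\text{-var}}$ in $L^q(\Omega)$ by estimating the differences of iterated integrals of two approximants, again reducing to 2D Young estimates on $R$, and one invokes a Kolmogorov-type criterion for rough paths to extract an almost-sure limit $\mathbf{X}$ living in $\mathcal{C}^{0,p\text{-var}}$. Step three records that, by construction as an $L^q$-limit of signatures, $\mathbf{X}$ is automatically multiplicative and geometric, and that $\mathbf{X}^1_{s,t} = X_t - X_s$ almost surely follows from the first-level convergence $X^\pi \to X$ uniformly.

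For part (ii), uniqueness and consistency, the idea is that any two candidate limits obtained as $d_{p\text{-var}}$-limits of $S_{\lfloor p \rfloor}(X^\pi)$ along sequences with $\|\pi\| \to 0$ must agree, since the $L^2$ estimates force the limit to depend only on $X$ and not on the approximating sequence; one controls $d_{p\text{-var}}(S_{\lfloor p\rfloor}(X^{\pi}), S_{\lfloor p\rfloor}(X^{\pi'}))$ by a common refinement argument and the same Young-integral bounds. The consistency claim, that $\mathbf{X}$ coincides with the true signature of $X$ when $X$ already has finite $[1,2)$-variation paths, follows because in that regime the iterated integrals of $X$ exist as honest Young integrals and the piecewise-linear signatures converge to them pathwise.

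The hard part will be Step one, namely obtaining the level-two and level-three $L^2$ estimates from the single hypothesis of finite 2D $\rho$-variation of $R$ with $\rho < 2$. The level-three term is the genuinely new difficulty relative to the $2 \le p < 3$ theory: one must verify that the relevant multidimensional Young–Stieltjes integrals defining the third iterated-integral moments actually converge, which needs $\rho < 2$ sharply (so that $\rho^{-1} + \rho^{-1} > 1$) together with care that the triple iterated integral's second moment decomposes into a sum of products of rectangular increments of $R$ over nested simplices, each estimable by Theorem \ref{2Dintegral}. Since all of this is carried out in full in \cite{fv2010a}, for the present paper it suffices to cite that construction and extract the precise regularity exponent $p > 2\rho$ and the truncation level $\lfloor p \rfloor = 3$ that we shall use throughout.
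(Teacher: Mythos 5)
The paper gives no proof of this theorem: it is quoted directly from \cite{fv2010a} (with \cite{cq2002} for the fractional Brownian case), and your proposal ultimately does the same thing, deferring the full construction to that reference. Your sketch of the underlying argument --- $L^{2}$ bounds on the iterated integrals of the piecewise-linear lifts via 2D Young integration against $R$ using $\rho^{-1}+\rho^{-1}>1$, hypercontractivity to upgrade to $L^{q}$, the exponent count giving $p>2\rho$, and uniqueness/consistency from the same estimates --- is a faithful summary of the Friz--Victoir construction, so this is essentially the same approach as the paper.
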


Moreover, Proposition 17 in \cite{fv2010a} shows that for all $h\in\mathcal{H}^{d}$,
\begin{align} \label{CMembedding}
\left\|  h\right\| _{\rho-var; [0, T]} \leq\left\| h\right\| _{\mathcal{H}^{d}} \sqrt{\left\|  R\right\| _{\rho-var; [0, T]^{2}}},
\end{align}
which implies that $\mathcal{H}^{d} \hookrightarrow\mathcal{C}^{\rho-var}([0 ,T] ;\mathbb{R}^{d})$ whenever $R$ has finite 2D $\rho$-variation. Thus if $\rho\in\left[ 1, \frac{3}{2}\right) $, corresponding to $2 \leq p < 3$, we have complementary regularity between $X$ and any path in the Cameron-Martin space. \par 

In the case $\rho\in\left[  \frac{3}{2}, 2 \right)$, there exists a recent result for complementary regularity between the Cameron-Martin paths and $X$ which requires the following definition.

\begin{definition}
We say that a function $f \in V^{(p, q)-var}([0, T]^{2})$ is of mixed (left)
$(p, q)$-variation if
\begin{align*}
\sup_{\pi} \left(  \sum_{i} \left(  \sum_{j} \left|  f
\begin{pmatrix}
s_{i} & s_{i+1}\\
t_{j} & t_{j+1}%
\end{pmatrix}
\right| ^{p} \right) ^{\frac{q}{p}} \right) ^{\frac{1}{q}} < \infty,
\end{align*}
where the supremum runs over all partitions $\pi:= \left\{  \left(  s_{i}, t_{j} \right)  \right\} $ of $[0, T]^{2}$.
\end{definition}

Theorem 1 in \cite{fggr2016} states that if $R$ is of mixed $(1, \rho)$-variation, then
\begin{align*}
\left\|  h\right\| _{q-var; [0, T]} \leq\left\|  h \right\| _{\mathcal{H}^{d}}
\sqrt{\left\|  R\right\| _{(1, \rho)-var; [0, T]^{2}}},
\end{align*}
where $q = \frac{2\rho}{\rho+ 1}$. One can easily verify that this gives us $\frac{1}{p} + \frac{1}{q} > 1$ as long as $p < 4$.

The following condition collects the assumptions we impose on $X$, or equivalently $\mathbf{X}$.
\begin{condition} \label{newCond1} 
Let $X$ be a continuous, centered Gaussian process in $\mathbb{R}^{d}$ with i.i.d. components, and assume that the covariance
function satisfies
\begin{enumerate} [(a)]
\item $\left\|  R\right\| _{\rho-var; [0, T]^{2}} < \infty$ for some $\rho
\in\left[ 1, 2 \right) $.
\end{enumerate}
For $p \in[1, 4)$, let $\mathbf{X} \in\mathcal{C}^{0, p-var} \left( [0, T]; G^{\lfloor p \rfloor} \left( \mathbb{R}^{d}\right) \right) $ denote the geometric rough path constructed from the limit of the piecewise-linear approximations of $X$. Furthermore, assume that $\mathcal{H}^{d} \hookrightarrow\mathcal{C}^{q-var} \left( [0, T]; \mathbb{R}^{d} \right) $, where $q$ satisfies $\frac{1}{p} + \frac{1}{q} > 1$, i.e. for all $h \in\mathcal{H}^{d}$,
\begin{enumerate} [(a)]
\setcounter{enumi}{1}
\item $\left\|  h\right\| _{q-var; [0, T]} \leq C \left\|  h\right\|
_{\mathcal{H}^{d}}$.
\end{enumerate}
\end{condition}

Later on, we will need to impose further conditions on the covariance function. For all $s, t \in[0 ,T]$, we will assume there exists $C < \infty$ such that
\begin{align} \label{R1DBound}
\left\|  R(t, \cdot) - R (s, \cdot) \right\| _{q-var; [0, T]} \leq C \left|  t - s\right| ^{\frac{1}{\rho}}.
\end{align}
This bound will be used to control the $L^{2} (\Omega)$ norm of the iterated integrals. An immediate consequence of the bound is illustrated in the following lemma (Lemma 2.14 in \cite{cl2018}).

\begin{lemma} \label{RdotRhoVar} 
Let $X$ be a continuous, centered Gaussian process in $\mathbb{R}$ and assume its covariance function satisfies
\begin{align*}
\left\|  R (t , \cdot) - R (s , \cdot)\right\| _{q-var; [0, T]} \leq C \left|
t -s\right| ^{\frac{1}{\rho}}, \quad\forall s < t \in[0, T],
\end{align*}
for some $q, \rho\geq1$. Then

\begin{enumerate} [(i)]
\item $R(t) := R(t, t)$ is of bounded $\rho$-variation.

\item For $p > 2 \rho$, $X$ has a $\frac{1}{p}$-H\"{o}lder continuous modification.
\end{enumerate}
\end{lemma}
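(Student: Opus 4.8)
The plan is to deduce both assertions from the single increment bound
\begin{align*}
\sigma^{2}(s,t) = \exptn{(X_{t}-X_{s})^{2}} \leq C\,\abs{t-s}^{\frac{1}{\rho}}, \qquad \forall\, s<t\in[0,T].
\end{align*}
To obtain it, set $g_{s,t}(u) := R(t,u)-R(s,u)$, the one-parameter function whose $q$-variation the hypothesis controls. Expanding $\sigma^{2}$ and using the symmetry $R(a,b)=R(b,a)$,
\begin{align*}
\sigma^{2}(s,t) = R(t,t)-2R(s,t)+R(s,s) = \big(R(t,t)-R(s,t)\big)-\big(R(t,s)-R(s,s)\big) = g_{s,t}(t)-g_{s,t}(s),
\end{align*}
so $\sigma^{2}(s,t)$ is a single increment of $g_{s,t}$. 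Since any increment of a function over $[a,b]\subseteq[0,T]$ is dominated by its $q$-variation seminorm on $[0,T]$, the assumed bound yields $\sigma^{2}(s,t)\leq\norm{g_{s,t}}_{q-var;[0,T]}\leq C\abs{t-s}^{1/\rho}$.

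For (ii), I would feed this into Kolmogorov's continuity criterion. As $X_{t}-X_{s}$ is centered Gaussian, its even moments are explicit: $\exptn{\abs{X_{t}-X_{s}}^{2m}} = (2m-1)!!\,\sigma^{2}(s,t)^{m} \leq (2m-1)!!\,C^{m}\abs{t-s}^{m/\rho}$ for every $m\in\mathbb{N}$. Writing $m/\rho = 1 + (m/\rho - 1)$, the criterion produces a modification that is $\gamma$-Hölder for every $\gamma < \frac{1}{2m}\big(\frac{m}{\rho}-1\big) = \frac{1}{2\rho}-\frac{1}{2m}$, valid once $m>\rho$. Letting $m\to\infty$ shows $X$ admits a modification that is $\gamma$-Hölder for each $\gamma<\frac{1}{2\rho}$. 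Given $p>2\rho$, i.e. $\frac1p<\frac1{2\rho}$, I choose $m$ large enough that $\frac{1}{2\rho}-\frac{1}{2m}>\frac1p$, which furnishes the desired $\frac1p$-Hölder modification.

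For (i), I would bound the increments of the diagonal directly. The same symmetry gives the companion decomposition
\begin{align*}
R(t)-R(s) = R(t,t)-R(s,s) = \big(R(t,t)-R(s,t)\big)+\big(R(s,t)-R(s,s)\big) = g_{s,t}(t)+g_{s,t}(s).
\end{align*}
Because $X$ starts at the origin, $R(\cdot,0)\equiv 0$, so $g_{s,t}(0)=0$; hence $g_{s,t}(t)=g_{s,t}(t)-g_{s,t}(0)$ and $g_{s,t}(s)=g_{s,t}(s)-g_{s,t}(0)$ are genuine increments of $g_{s,t}$, whence $\abs{g_{s,t}(t)},\abs{g_{s,t}(s)}\leq\norm{g_{s,t}}_{q-var;[0,T]}\leq C\abs{t-s}^{1/\rho}$. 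Consequently $\abs{R(t)-R(s)}\leq 2C\abs{t-s}^{1/\rho}$, so $R$ is $\tfrac1\rho$-Hölder and, for any partition $\{t_{i}\}$, $\sum_{i}\abs{R(t_{i+1})-R(t_{i})}^{\rho}\leq (2C)^{\rho}\sum_{i}\abs{t_{i+1}-t_{i}} = (2C)^{\rho}T$, giving finite $\rho$-variation.

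The delicate point I expect is precisely this last estimate. The naive route---writing $R(t)-R(s)=\exptn{(X_{t}-X_{s})(X_{t}+X_{s})}$ and applying Cauchy--Schwarz---only yields $\abs{R(t)-R(s)}\lesssim\sigma(s,t)\lesssim\abs{t-s}^{1/(2\rho)}$, i.e. finite $2\rho$-variation, which is too weak. The gain comes from not splitting into a product but from recognizing $R(t)-R(s)$ as a sum of two anchored values of $g_{s,t}$, each controlled at the sharp rate $\abs{t-s}^{1/\rho}$ by the hypothesis; the role of $R(\cdot,0)\equiv0$ (equivalently $X_{0}=0$) is exactly to convert point values of $g_{s,t}$ into genuine increments so that the $q$-variation seminorm, which a priori controls only increments and not point values, can be applied.
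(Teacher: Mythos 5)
Your argument for part (ii) is correct and is the expected one: the identity $\sigma^{2}(s,t)=g_{s,t}(t)-g_{s,t}(s)$ exhibits $\sigma^{2}(s,t)$ as a single increment of $g_{s,t}:=R(t,\cdot)-R(s,\cdot)$, the hypothesis then gives $\sigma^{2}(s,t)\leq C\abs{t-s}^{1/\rho}$, and Kolmogorov's criterion applied to the Gaussian moments produces a $\gamma$-H\"{o}lder modification for every $\gamma<\frac{1}{2\rho}$, in particular for $\gamma=\frac{1}{p}$ when $p>2\rho$. Note the paper does not reprove this lemma (it imports it from \cite{cl2018}), so there is no in-paper proof to compare against, but this is exactly the intended route.

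For part (i) there is one point to flag: the assertion $R(\cdot,0)\equiv 0$ is not among the stated hypotheses, which assume only that $X$ is a continuous centered Gaussian process. Your proof genuinely needs it --- as you yourself observe, the $q$-variation seminorm controls increments of $g_{s,t}$ but not its point values, and without an anchor the decomposition $R(t)-R(s)=g_{s,t}(t)+g_{s,t}(s)$ leaves behind the term $2g_{s,t}(0)=2\,\exptn{X_{0}X_{s,t}}$, which Cauchy--Schwarz controls only at the rate $\abs{t-s}^{1/(2\rho)}$; getting its $\rho$-variation under control in general would require a Cameron--Martin embedding argument, not the elementary manipulation used here. That said, the assumption is implicit in the paper's framework: the inner product $\left\langle\mathds{1}_{[0,t)},\mathds{1}_{[0,s)}\right\rangle_{\mathcal{H}_{1}}=R(t,s)$ forces $R(0,0)=\norm{\mathds{1}_{\emptyset}}_{\mathcal{H}_{1}}^{2}=0$, hence $X_{0}=0$ a.s.\ and $R(\cdot,0)\equiv0$, and every process to which the lemma is applied (Volterra processes, fractional Brownian motion) starts at the origin. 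So your proof is valid in the setting in which the lemma is actually used; you should simply record $X_{0}=0$ (equivalently $R(0,0)=0$) as a standing assumption rather than invoke it without justification.
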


\subsection{Volterra processes and fractional Brownian motion}
A \textit{Volterra kernel }$K$ is a square-integrable function $K:\left[ 0,T\right]^2 \rightarrow \mathbb{R}$ such that $K(t,s) = 0 \;\forall s\geq t$. Associated with any Volterra kernel is a lower triangular, Hilbert-Schmidt operator $\mathbb{K}: L^{2}\left(  \left[  0,T \right] \right) \rightarrow L^{2}\left(  \left[  0,T\right]  \right)  $ given by
\begin{align*}
\mathbb{K} \left(  f\right)  \left(  \cdot\right)  =\int_{0}^{T}K\left(  \cdot,s\right) f\left(  s\right)  \text{ for all }f\in L^{2}\left(  \left[  0,T\right] \right) .
\end{align*}
Given a standard Brownian motion $B$ and a Volterra kernel $K,$we define a Volterra process $X = \left( X_{t} \right)_{t\in\left[ 0,T\right] }$ as the It\^{o} integral 
\begin{align} \label{voltrep}
X_{t}=\int_{0}^{T}K(t,s)\,\mathrm{d}B_{s};
\end{align}
this is a centered Gaussian process with covariance function
\begin{align*}
R(s,t) = \int_{0}^{t\wedge s}K(t,r)K(s,r)\,\mathrm{d}r.
\end{align*}

\begin{example}
\mbox{}
\begin{enumerate} [(i)]
\item Standard fractional Brownian motion $B^{H}$, with Hurst parameter $H\in(0,1)$, is the centered Gaussian process with covariance function
\begin{align} \label{covfbm}
R\left(  s,t\right)  =\frac{1}{2}\left(  s^{2H}+t^{2H}-\left\vert t-s \right\vert^{2H}\right).
\end{align}
It is well-known that $B^{H}$ has a Volterra representation of the form \eqref{voltrep} where the kernel can be expressed as a particular hypergeometric function; cf. \cite{du1999}.

\item The Riemann-Liouville process with Hurst parameter $H\in\left( 0, 1 \right) $ is determined by the kernel $K(t,s) := C_{H}(t-s)^{H-\frac{1}{2}} \mathds{1}_{[0, t)}(s)$. Like the fractional Brownian motion, it is a self-similar process with variance $t^{2H}$; however, it does not have stationary increments.
\end{enumerate}
\end{example}

We will need the following condition on the kernel $K$.
\begin{condition} \label{amnCond} 
There exists constants $C < \infty$ and $\alpha \in \left[ 0, \frac{1}{4} \right)$ such that
\begin{enumerate} [(i)]
\item $\left|  K(t,s) \right|  \leq C s^{-\alpha} (t - s)^{-\alpha}$ for all $0 < s < t \leq T$.
\item $\frac{\partial K(t,s)}{\partial t}$ exists for all $0 < s < t \leq T$ and satisfies $\left|  \frac{\partial K(t, s)}{\partial t} \right| \leq C \left(  t-s \right)^{-(\alpha + 1)}$.
\end{enumerate}
\end{condition}

The following proposition summarizes the properties of fractional Brownian motion which will be relevant in the sequel.

\begin{proposition} \label{fBMCond} 
Let $B^{H}$ be standard fractional Brownian motion with Hurst index $H\in\left(  \frac{1}{4},1\right)  $, and let $K$ be the
square-integrable kernel associated with it. We have:

\begin{enumerate} [(i)]
\item For any $p>\frac{1}{H}$ the sample paths of $B^{H}$ are almost surely $\frac{1}{p}$-H\"{o}lder continuous. Furthermore, there exists a geometric rough path $\mathbf{X}\in\mathcal{C}^{0,p-var}\left( [0,T];G^{\lfloor p\rfloor}\left(  \mathbb{R}^{d}\right)  \right) $ which is the $d_{p\text{-var }}$-limit of the paths $S_{\left\lfloor p\right\rfloor }\left(  X^{\pi}\right)  $ as $\left\vert \left\vert \pi\right\vert \right\vert \rightarrow 0$.

\item $B^{H}$ satisfies Condition \ref{newCond1} with $\rho=\frac{1}{2H}$ and
\begin{align*}
q = \begin{cases}
\: \frac{2\rho}{\rho+1} \quad \text{if }\frac{1}{4}<H\leq\frac{1}{3},\\
\: \frac{1}{\rho}\wedge1 \quad \text{if }\frac{1}{3} < H \leq 1.
\end{cases}
\end{align*}  

\item If $\frac{1}{4} < H \leq \frac{1}{2}$, then the kernel $K$ satisfies Condition \ref{amnCond} with $\alpha = \frac{1}{2} - H$.

\item The covariance function (\ref{covfbm}) of $B^{H}$ satisfies:
\begin{enumerate}
\item $\left\Vert R(t,\cdot)-R(s,\cdot)\right\Vert _{q-var;[0,T]}
\leq C \left\vert t-s\right\vert ^{\frac{1}{\rho}}$, if $\frac{1}{4}<H\leq\frac{1}{2}$,
\item $R(t)=t^{2H}$ is of bounded variation and thus of finite q-variation for any $q \geq 1$.
\end{enumerate}
\end{enumerate}
\end{proposition}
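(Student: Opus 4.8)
The plan is to verify each of the four claims by directly exploiting the explicit covariance \eqref{covfbm} of fractional Brownian motion and its known Volterra representation, leaning on the general Gaussian rough path theory (Theorem \ref{gaussianRP}), the embedding results \eqref{CMembedding} and Theorem 1 of \cite{fggr2016}, and the structural Lemma \ref{RdotRhoVar}.

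For part (i), I would recall the standard fact that $B^H$ has stationary increments with $\mathbb{E}\left[ \left( B^H_t - B^H_s \right)^2 \right] = |t-s|^{2H}$; by the Kolmogorov continuity criterion this yields a $\gamma$-H\"{o}lder modification for every $\gamma < H$, hence $\frac{1}{p}$-H\"{o}lder sample paths for $p > \frac{1}{H}$. The existence of the lift $\mathbf{X} \in \mathcal{C}^{0,p-var}$ as the $d_{p-var}$-limit of $S_{\lfloor p \rfloor}(X^\pi)$ then follows immediately from Theorem \ref{gaussianRP} once part (iv) establishes that $R$ has finite 2D $\rho$-variation with $\rho = \frac{1}{2H}$, since $p > \frac{1}{H} = 2\rho$ is exactly the regime covered there. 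So part (i) is essentially a corollary of the later items and the classical increment estimate.

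For part (ii), the key quantity is the 2D $\rho$-variation of the covariance. The established result (see \cite{cq2002}, \cite{fv2010a}) is that the covariance \eqref{covfbm} has finite 2D $\rho$-variation precisely for $\rho = \frac{1}{2H}$ when $H \leq \frac{1}{2}$; I would invoke this together with \eqref{CMembedding} and the mixed-variation result of \cite{fggr2016}. Condition \ref{newCond1}(a) is then the finite-$\rho$-variation statement. For Condition \ref{newCond1}(b) I split on $H$: when $\frac{1}{3} < H \leq 1$ we have $\rho < \frac{3}{2}$, so $p < 3$ and \eqref{CMembedding} gives the embedding into $\mathcal{C}^{\rho-var}$, with $q = \frac{1}{\rho} \wedge 1$ providing complementary regularity $\frac{1}{p} + \frac{1}{q} > 1$; when $\frac{1}{4} < H \leq \frac{1}{3}$ we have $\rho \in \left[ \frac{3}{2}, 2 \right)$, so I would verify that $R$ is of mixed $(1, \rho)$-variation and apply \cite{fggr2016} to obtain the embedding with exponent $q = \frac{2\rho}{\rho+1}$, for which the excerpt already notes $\frac{1}{p} + \frac{1}{q} > 1$ holds whenever $p < 4$.

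Part (iii) is a direct computation from the explicit hypergeometric kernel of $B^H$: with $\alpha = \frac{1}{2} - H$ one checks the pointwise bound $|K(t,s)| \lesssim s^{-\alpha}(t-s)^{-\alpha}$ and the derivative bound $\left| \partial_t K(t,s) \right| \lesssim (t-s)^{-(\alpha+1)}$, which are standard estimates on the fBm kernel (cf. \cite{du1999}, \cite{amn2001}); note $H \leq \frac{1}{2}$ ensures $\alpha \geq 0$ and $H > \frac{1}{4}$ ensures $\alpha < \frac{1}{4}$, matching Condition \ref{amnCond}. For part (iv)(b) the identity $R(t) = t^{2H}$ is immediate from \eqref{covfbm}, and monotonicity gives bounded variation. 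The genuinely substantive estimate, and the step I expect to be the main obstacle, is part (iv)(a): the bound $\left\| R(t,\cdot) - R(s,\cdot) \right\|_{q-var;[0,T]} \leq C|t-s|^{\frac{1}{\rho}}$. Here one must control a one-parameter $q$-variation norm of the covariance increment uniformly in the base points, and the two regimes of $q$ from part (ii) must be handled separately. I would estimate the rectangular increments of $(s,t) \mapsto R(t,\cdot) - R(s,\cdot)$ using the explicit form \eqref{covfbm}, interpolating between the H\"{o}lder-type behavior of $|t-s|^{2H}$ and the regularity afforded by $\frac{1}{\rho} = 2H$; the delicate point is that for $H \leq \frac{1}{3}$ the exponent $\frac{1}{\rho}$ is small, so the argument must carefully track how the singularity of the fBm covariance near the diagonal interacts with the chosen $q$, and this is where I anticipate the heaviest calculation. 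Once \eqref{R1DBound} is verified, Lemma \ref{RdotRhoVar} then confirms both the bounded $\rho$-variation of $R(t)$ and the H\"{o}lder modification claimed in part (i), closing the loop.
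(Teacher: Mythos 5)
Your treatment of parts (i), (ii), (iii) and (iv)(b) matches the paper's: Kolmogorov for the H\"{o}lder modification, Theorem \ref{gaussianRP} for the lift, Proposition 17 of \cite{fv2010a} and Theorem 1 of \cite{fggr2016} for the Cameron--Martin embedding in the two regimes, the known kernel bounds \eqref{kBound1}--\eqref{kBound2} for Condition \ref{amnCond}, and monotonicity of $t^{2H}$ for (iv)(b). These are all fine and are essentially citations to standard facts, as in the paper.

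The gap is part (iv)(a), which you correctly flag as the substantive estimate but then leave as an unexecuted ``heavy calculation'' based on ``interpolating'' rectangular increments of the explicit covariance. That plan does not obviously close: the difficulty is not the size of the exponent $\frac{1}{\rho}=2H$ but controlling the one-parameter $q$-variation of $u\mapsto R(t,u)-R(s,u)=\mathbb{E}\left[B^H_{s,t}B^H_u\right]$ over \emph{all} of $[0,T]$ uniformly in $(s,t)$, and a direct attack on the rectangular increments of \eqref{covfbm} does not by itself produce the factor $|t-s|^{2H}$ for partition points far from $[s,t]$. The paper's proof rests on a specific structural input you do not mention: for $H\leq \frac{1}{2}$ the disjoint increments of fBm are non-positively correlated, so for any partition $\{r_i\}$ of $[0,s]$ one has
\begin{align*}
\sum_i \left|\mathbb{E}\left[B^H_{s,t}B^H_{r_i,r_{i+1}}\right]\right|^q \leq \left(\sum_i \left|\mathbb{E}\left[B^H_{s,t}B^H_{r_i,r_{i+1}}\right]\right|\right)^q = \left|\mathbb{E}\left[B^H_{s,t}B^H_{s}\right]\right|^q \leq |t-s|^{2Hq},
\end{align*}
because all the summands share a sign and the sum telescopes; the same argument handles $[t,T]$, while on the middle block $[s,t]$ one uses the localized mixed-variation bound $\|R\|_{1,\rho\text{-var};[s,t]^2}\leq C|t-s|^{2H}$ together with the Cameron--Martin embedding. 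Without this sign argument (or an equivalent replacement) your proposal does not establish \eqref{R1DBound} in the regime $\frac{1}{4}<H\leq\frac{1}{3}$, which is precisely the case the paper needs.
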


\begin{proof}
The sample paths of fractional Brownian motion have $H - \varepsilon$-H\"{o}lder regularity for any $\varepsilon > 0$ by Kolmogorov's continuity theorem, and thus have $\frac{1}{p}$-H\"{o}lder regularity for any $p > 2\rho = \frac{1}{H}$. The proof that it has finite 2D $\rho$-variation can be found in \cite{fv2010a}; see also \cite{fv2011}. \par
In the case $1 \leq p < 2$, or $H > \frac{1}{2}$, the geometric rough path is simply $\left( 1, B^H_t \right)$, and for $H \leq \frac{1}{2}$, one can invoke Theorem \ref{gaussianRP} to construct the geometric rough path. Now Condition \ref{newCond1} is satisfied since we have $\mathcal{H}^d \hookrightarrow \mathcal{C}^{q-var} \left([0, T]; \mathbb{R}^d\right)$ from Proposition 17 in \cite{fv2010a} and Theorem 1 in \cite{fggr2016}. Note that the second case, which applies when $\frac{1}{4} < H \leq \frac{1}{3}$, follows from the fact that the covariance function has finite mixed $(1, \rho)$-variation (cf. \cite{fggr2016}). \par
For any $H \in (0, 1)$, we have (see Theorem 3.2 in \cite{du1999})
\begin{align} \label{kBound1}
|K (t,s)| \leq C_{1, H} s^{-\left| H - \frac{1}{2} \right|} (t - s)^{-\left( \frac{1}{2} - H \right)},
\end{align}
for all $0 < s < t \leq T$, and we also have
\begin{align} \label{kBound2}
\frac{\partial K(t, s)}{\partial t} = C_{2, H} \left( \frac{t}{s} \right)^{H - \frac{1}{2}} (t - s)^{- \left(\frac{3}{2} - H \right)};
\end{align}
see \cite{ccm2003} and \cite{nualart2006}. Thus, Condition \ref{amnCond} is satisfied by the kernel as a consequence of the bounds above. \par
Now to prove that
\begin{align*}
\norm{R(t, \cdot) - R (s, \cdot) }_{q-var; [0, T]}  \leq C \left| t - s\right|^{\frac{1}{\rho}},
\end{align*}
when $\frac{1}{4} < H \leq \frac{1}{2}$, we will adopt the method in \cite{fv2011}, and find bounds for
\begin{align*}
\norm{\exptn{B^H_{s,t} B^H_{\cdot}}}_{q-var; [s, t]}, \quad
\norm{\exptn{B^H_{s,t} B^H_{\cdot}}}_{q-var; [0, s]}, \quad \mathrm{and} \;
\norm{\exptn{B^H_{s,t} B^H_{\cdot}}}_{q-var; [t, T]},
\end{align*}
for all $s, t$ in $[0, T]$.
For the first quantity in the preceding line, we use the fact that when $H > \frac{1}{4}$, we have (see \cite{fggr2016})
\begin{align*}
\norm{R}_{1, \rho-var; [s,t]^2} \leq C \abs{t - s}^{2H},
\end{align*}
and
\begin{align*}
\norm{\exptn{B^H_{s,t} B^H_{\cdot}}}_{q-var; [s, t]}
&\leq \norm{R(t, \cdot) - R(s, \cdot)}_{\mathcal{H}} \sqrt{\norm{R}_{1, \rho-var; [s,t]^2}} \\
&\leq C \abs{t - s}^H \abs{t - s}^H = C \abs{t-s}^{2H}.
\end{align*}
Now let $\{r_i\}$ be a partition of $[0, s]$. Then
\begin{align*}
\sum_i \left( \exptn{B^H_{s,t} B^H_{r_i, r_{i+1}}} \right)^q
&\leq \left( \sum_i \abs{\exptn{B^H_{s,t} B^H_{r_i, r_{i+1}}}} \right)^q \\
&= \abs{\exptn{B^H_{s,t}B^H_s} }^q
\end{align*}
since the disjoint increments of fractional Brownian motion have non-positive correlation when $H \leq \frac{1}{2}$. Taking the supremum over all partitions of $[0, s]$, we have
\begin{align*}
\norm{\exptn{B^H_{s,t} B^H_{\cdot}}}_{q-var; [0, s]}
\leq \abs{\exptn{B^H_{s,t} B^H_s}}
\leq \abs{t - s}^{2H},
\end{align*}
where we note that $\abs{\exptn{B^H_{s,t} B^H_u}} < \abs{t - s}^{2H}$ for all $u$ if $H \leq \frac{1}{2}$, cf. Lemma 5 in \cite{nnt2010}.
The bound for $\norm{\exptn{B^H_{s,t} B^H_{\cdot}}}_{q-var; [t, T]}$ is shown in the same manner when we again exploit the fact that the disjoint increments have the same sign.
\end{proof}

Given a Banach space $E$ and a kernel $K$ satisfying Condition \ref{amnCond} for some $\alpha\in\left[  0,\frac{1}{4}\right)  $, we introduce the linear operator $\mathcal{K}^{\ast}$ (see \cite{amn2001}, \cite{dfond2005})
\begin{align} \label{kStarDefn}
\left(  \mathcal{K}^{\ast}\phi\right)  (s):=\phi\left(  s\right)
K(T,s)+\int_{s}^{T}\left[  \phi\left(  r\right)  -\phi\left(  s\right)
\right]  K(\mathrm{d}r,s),
\end{align}
where the signed measure $K(\mathrm{d}r,s):=\frac{\partial K(r,s)}{\partial r}\,\mathrm{d}r$. The domain $D\left( \mathcal{K}^* \right)$ of $\mathcal{K}^*$ consists of measurable functions $\phi:\left[0, T \right] \rightarrow E$ for which the integral on the right-hand side
exists for all $s$ in $\left[  0,T\right] $. In particular, if $\phi$ is a $\lambda$-H\"{o}lder continuous function in the norm of $E$ for some $\lambda > \alpha$, then $\phi \in D\left( \mathcal{K}^* \right) $ and $\mathcal{K}^* \phi$ is in $L^{2}([0,T];E)$. Note also that for any $a$ in $[0,T]$, $\phi\mathds{1}_{[0,a)}$ is in $D\left( \mathcal{K}^* \right) $ whenever $\phi$ is, and we have the identity
\begin{align} \label{truncatedKStar}
\mathcal{K}^* \left( \phi\mathds{1}_{[0,a)}\right) (s)=\mathds{1}_{[0,a)}(s)\left(  \phi(s)K(a,s)+\int_{s}^{a}\left[ \phi(r)-\phi(s)\right] K(\mathrm{d}r,s)\right).
\end{align}

\subsection{Malliavin calculus} \label{MDsection} 
We will primarily work with the following Hilbert space which is isomorphic to $\mathcal{H}^{d}$.

\begin{definition}
Let $\mathcal{H}_{1}^{d}$ denote the completion of the linear span of
\begin{align*}
\left\{  \mathds{1}_{[0,t)}^{(u)}(\cdot):=\mathds{1}_{[0,t)}(\cdot
)\,e_{u}\;\bigg\vert\;t\in\lbrack0,T],\;u=1,\ldots,d\right\}
\end{align*}
(cf. \cite{amn2001}, \cite{nualart2006}) with respect to the inner-product given by
\begin{align*}
\left\langle \mathds{1}_{[0,t)}^{(u)}(\cdot),\mathds{1}_{[0,s)}^{(v)}
(\cdot)\right\rangle _{\mathcal{H}_{1}^{d}}=\delta_{uv}\,R(t,s),
\end{align*}
where $\delta$ denotes the Kronecker delta. Furthermore, let $\Phi:\mathcal{H}_{1}^{d}\rightarrow\mathcal{H}^{d}$ denote the Hilbert space
isomorphism obtained from extending the map $\mathds{1}_{[0,t)}^{(u)}(\cdot) \mapsto R(t,\cdot)^{(u)}$, $t \in [0, T],\;u=1,\ldots,d$.
\end{definition}

We record some basic properties about the Malliavin calculus. For simplicity, we assume here that $d=1;$ the case of $d \in \mathbb{N}$ case needs only minor modifications. First we recall that the map $\mathds{1}_{\left[ 0, t\right) } (\cdot) \mapsto X_{t}$ extends to a unique linear isometry $I$ from $\mathcal{H}_{1}$ to $L^{2}\left(  \Omega\right) $. It follows that $I\left(  h\right)  $ is a mean-zero Gaussian random variable with variance $\left\Vert h\right\Vert_{\mathcal{H}_{1}}^{2}$.

Given a smooth function $f: \mathbb{R}^{n}\rightarrow\mathbb{R}$ of at most polynomial growth, the Malliavin derivative $\mathcal{D}F$ of the functional $F=f\left(  I\left(  h_{1}\right)  ,\ldots,I\left(  h_{n}\right)  \right) $ is the $\mathcal{H}_{1}$-valued random variable given by
\begin{align*}
\mathcal{D}F:=\sum_{i=1}^{n}\frac{\partial f}{\partial x_{i}}\left(  I\left(
h_{1}\right)  ,\ldots,I\left(  h_{n}\right)  \right)  h_{i}.
\end{align*}
We let $\mathbb{D}^{1,2}$ denote the Hilbert space that arises from completing
this subspace of cylinder functionals with respect to
\begin{align*}
\left\Vert F\right\Vert _{1,2}^{2}:=\left\Vert F\right\Vert _{L^{2}\left(
\Omega\right)  }^{2}+\left\Vert \mathcal{D}F\right\Vert _{L^{2}\left(
\Omega;\mathcal{H}_{1}\right)  }^{2},
\end{align*}
whereupon $D$ extends to a bounded linear operator from $\mathbb{D}^{1,2}$ to $L^{2}\left(  \Omega;\mathcal{H}_{1}\right)  .$ More generally, Banach spaces $\mathbb{D}^{1,p}$ $\ $for $p>1$ can be defined by replacing $2$ above with $p.$ For any $F$ in $\mathbb{D}^{1,2}$, we let $\mathcal{D}_{h}F:=\left\langle \mathcal{D}F,h\right\rangle _{\mathcal{H}_{1}}$. The divergence operator
$\delta^{X}$ is the $L^{2}\left(  \Omega\right)  $-adjoint of $D$, and its domain Dom$\left(  \delta^{X}\right)  $ consists of those $G$ in $L^{2}\left( \Omega;\mathcal{H}_{1}\right)  $ for which there exists a $c>0$ with
\begin{align*}
\left\vert \mathbb{E}\left[  \left\langle DF,G\right\rangle _{\mathcal{H}_{1}%
}\right]  \right\vert \leq c\left\Vert F\right\Vert _{L^{2}\left(
\Omega\right) }, \quad \forall F\in\mathbb{D}^{1,2}.
\end{align*}
For such $G$, the Riesz representation theorem then provides that $\delta^{X}\left(  G\right)  $ is characterized by
\begin{align*}
\mathbb{E}\left[ \left\langle DF,G\right\rangle_{\mathcal{H}_{1}}\right]
= \mathbb{E}\left[ F\delta^{X}\left(  G\right) \right].
\end{align*}
Analogous definitions for $DF$ apply when $F$ is an $E-$valued random variable, for any separable Hilbert space $E$, in which case we will denote the Sobolev spaces by $\mathbb{D}^{1,p}\left(  E\right) $. Higher order derivatives $D^{n}F$ and their corresponding Sobolev spaces $\mathbb{D}^{n,p}$ can then be defined iteratively, and the operator $\delta_{n}^{X}:$ Dom$\left(  \delta_{n}^{X}\right)  \subset L^{2}\left( \Omega; \mathcal{H}_{1}^{\otimes n}\otimes E\right)  $ $\rightarrow L^{2}\left(  \Omega\right) $ can be defined as the adjoint of $D^{n}F$ as above. We will use the notation $\delta^{X}(h)$ and $\int_{0}^{T}h_{s}\,\mathrm{d}X_{s}$ interchangeably to denote the divergence
operator. It is well-known that the domain of $\delta^{X} $ contains $\mathbb{D}^{1,2}\left(  \mathcal{H}_{1}\right) $, see e.g. Proposition 1.3.1 in \cite{nualart2006}.

For deterministic $h\in\mathcal{H}_{1}$ we notice that $\delta^{X}\left( h\right)  =I\left(  h\right)  $ as introduced above. More generally, by fixing a multi-index $a=(a_{1},\ldots,a_{M})$ where $\left\vert a\right\vert :=\sum_{i=1}^{M}a_{i}=n$, we can define $I_{n}:\mathcal{H}_{1}^{\otimes n}\rightarrow L^{2}\left(  \Omega\right)  $ as follows:
\begin{align*}
I_{n}\left(  h_{1}^{\otimes a_{1}}\otimes\cdots\otimes h_{M}^{\otimes a_{M}}\right)  =a!\prod_{i=1}^{M}H_{a_{i}}(\delta^{X}(h_{i})),
\end{align*}
where $a!:=\prod_{i=1}^{M}a_{i}!$ and $H_{m}(x)$ denotes the $m^{th}$ Hermite polynomial. Again for deterministic $h\in\mathcal{H}_{1}^{\otimes n} $ we have that $I_{n}\left(  h\right)  =\delta_{n}^{X}\left(  h\right) $. In particular, we have the duality formula:
\begin{align} \label{dualityFormula}
\mathbb{E}\left[  FI_{n}(h)\right]  =\mathbb{E}\left[  \left\langle D^{n}F,h\right\rangle _{\mathcal{H}_{1}^{\otimes n}}\right] .
\end{align}
For $f\in\mathcal{H}_{1}^{\otimes n}$, $g\in\mathcal{H}_{1}^{\otimes m}$, both $f$ and $g$ symmetric, we also have the following product formula (cf. Proposition 1.1.3 in \cite{nualart2006})
\begin{align} \label{productFormula}
I_{n}(f)I_{m}(g)=\sum_{r=0}^{n\wedge m}r!
\begin{pmatrix}
n\\
r
\end{pmatrix}
\begin{pmatrix}
m\\
r
\end{pmatrix}
I_{n+m-2r}\left(  f\tilde{\otimes}_{r}g\right) .
\end{align}
Here $f\tilde{\otimes}_{r}g$ denotes the symmetrization of the tensor $f\otimes_{r}g$, which in turn denotes the contraction of $f$ and $g$ of order $r$ \cite{nnt2010}; i.e. given any orthonormal basis $\{h_{m}\}$ of $\mathcal{H}_{1}$,
\begin{align*}
f\otimes_{r}g:=\sum_{k_{1},\ldots,k_{r}=1}^{\infty}\left\langle f,h_{k_{1}%
}\otimes\cdots\otimes h_{k_{r}}\right\rangle _{\mathcal{H}_{1}^{\otimes r}%
}\otimes\left\langle g,h_{k_{1}}\otimes\cdots\otimes h_{k_{r}}\right\rangle
_{\mathcal{H}_{1}^{\otimes r}}\in\mathcal{H}_{1}^{\otimes(n+m-2r)}.
\end{align*}

\subsection{Rough differential equations}

In this paper we will focus on the RDEs with time-homogeneous vector fields driven by a Gaussian geometric rough path
\begin{align} \label{RDE}
\mathrm{d}Y_{t}=V(Y_{t})\circ\mathrm{d}\mathbf{X}_{t},\quad Y_{0} = y_{0} \in \mathbb{R}^{e},
\end{align}
where $\left\{ V_{1}, \ldots, V_{d} \right\} $ denotes a collection of $\mathbb{R}^e$-valued vector fields which will always be at least continuously differentiable. Recall also from Theorem 2.25 in \cite{cl2018} that for all $s, t \in [0, t]$, $\norm{Y}_{p-var; [s, t]}$ is in $L^q(\Omega)$ for all $q > 0$. \par 

For $h_{1}, \ldots, h_{n} \in\mathcal{H}_{1}^{d}$, we can take the directional derivatives of $Y_{t}$ in the directions \newline$\Phi(h_{1}), \ldots, \Phi(h_{n})$ in $\mathcal{H}^{d} \subset\mathcal{C}^{q-var} \left(  \left[
0,T \right] , \mathbb{R}^{d}\right) $ by setting
\begin{align} \label{nderiv}
\mathcal{D}_{h_{1}, \ldots, h_{n}}^{n} Y_{t} 
:= \frac{\partial^{n}}{\partial\varepsilon_{1} \ldots\partial\varepsilon_{n}}
Y_{t}^{\varepsilon_{1}, \ldots, \varepsilon_{n}} \bigg\vert_{\varepsilon_{1} 
= \ldots = \varepsilon_{n}=0},
\end{align}
where $Y_{t}^{\varepsilon_{1}, \ldots, \varepsilon_{n}}$ solves
\begin{align*}
\mathrm{d} Y^{\varepsilon_{1}, \ldots, \varepsilon_{n}}_{t} = V(Y^{\varepsilon
_{1}, \ldots, \varepsilon_{n}}_{t}) \circ\mathrm{d} \left(  T_{\varepsilon_{1}
\Phi(h_{1}) + \cdots+ \varepsilon_{n} \Phi(h_{n})} \mathbf{X} \right) _{t}.
\end{align*}
Here $T_{\Phi(h)} \mathbf{X}$ denotes the rough path translation of $\mathbf{X}$ by $\Phi(h)$ (see \cite{cf2011}), which is well-defined via Young-Stieltjes integration since $\frac{1}{p} + \frac{1}{q} > 1$. The path \eqref{nderiv} again has finite $p$-variation and can be written as a sum of rough integrals and/or Young-Stieltjes integrals; e.g. when $n=1$ the first-order derivative is given by (cf. \cite{fv2010b}, \cite{cf2011})
\begin{align}
\label{dd}\mathcal{D}_{h} Y_{t} = \int_{0}^{t} J_{t}^{\mathbf{X}} \left(
J_{s}^{\mathbf{X}}\right) ^{-1} V \left(  Y_{s} \right)  \, \mathrm{d}
\Phi(h)(s),
\end{align}
and explicit formulas in the cases $n \geq2$ were derived in \cite{cl2018}. Here $J^{\mathbf{X}}_{t}$ denotes the Jacobian of the flow map $y_{0} \rightarrow Y_{t}$ and satisfies
\begin{align} \label{jacobianRDE}
\mathrm{d} J^{\mathbf{X}}_{t} = \nabla V (Y_{t}) \left( \circ\mathrm{d} \mathbf{X}_{t} \right)  J^{\mathbf{X}}_{t}, \quad
J^{\mathbf{X}}_{0} = \mathcal{I}_{e}.
\end{align}

To bound the Jacobian, we will define
\begin{align} \label{NDefn}
N^{\mathbf{X}}_{\beta;[s,t]} := \sup\left\{  n\in\mathbb{N} \cup\{0\} \: \big\vert \: \tau_{n}(\beta)<t\right\}, \quad s, t \in [0, T], \, \beta > 0,
\end{align}
where $\{\tau_{i}(\beta)\}$ is the "greedy sequence" (see \cite{cll2013}) given by
\begin{align*}
& \tau_{0}(\beta)=s,\\
& \tau_{i+1}(\beta)=\inf\left\{  u\in(\tau_{i},t] \: \big\vert \: \left\Vert
\mathbf{X}\right\Vert _{p-var;[\tau_{i},u]}^{p}\geq\beta\right\} \wedge t.
\end{align*}
We then have the following theorem (Theorem 2.27 in \cite{cl2018}).

\begin{theorem} \label{JBoundThm} 
Let $p \geq 1$. Then for all $s < t \in[0, T]$, $\left\| J^{\mathbf{X}}\right\|_{p-var; [s, t]}$ is in $L^{q}(\Omega)$ for all $q > 0$.
\end{theorem}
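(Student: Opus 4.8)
The plan is to combine a deterministic, pathwise growth estimate for the linearized flow with the sharp integrability of the greedy count $N^{\mathbf{X}}_{\beta;[s,t]}$ coming from Gaussian rough path theory; this parallels the proof of the corresponding statement for $\norm{Y}_{p\text{-var};[s,t]}$ (Theorem 2.25 in \cite{cl2018}), now applied to the \emph{linear} RDE \eqref{jacobianRDE} satisfied by $J^{\mathbf{X}}$. Fix $\beta > 0$ and recall the greedy times $\{\tau_i(\beta)\}$ from \eqref{NDefn}, which partition $[s,t]$ into $N^{\mathbf{X}}_{\beta;[s,t]}+1$ consecutive subintervals on each of which $\norm{\mathbf{X}}^p_{p\text{-var};[\tau_i,\tau_{i+1}]} \leq \beta$. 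Since \eqref{jacobianRDE} is linear in $J^{\mathbf{X}}$ with coefficient $\nabla V(Y_\cdot)$, which is bounded under the standing regularity assumptions on the vector fields, I would first treat the problem pathwise for a fixed realization of $\mathbf{X}$.

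On a single greedy interval the standard estimate for linear RDEs (e.g.\ Theorem 10.53 in \cite{fv2010b}) bounds the growth factor of the flow by a constant $K = K(\beta)$ depending only on $\beta$ and the regularity norm of $V$, because $\norm{\mathbf{X}}_{p\text{-var}}$ is at most $\beta^{1/p}$ there. Exploiting the multiplicativity of the linearized flow across the $N^{\mathbf{X}}_{\beta;[s,t]}+1$ intervals then yields a deterministic bound of the form
\begin{align} \label{Jgrowth}
\norm{J^{\mathbf{X}}}_{p\text{-var};[s,t]} \leq C_1 \exp\!\left( C_2\, N^{\mathbf{X}}_{\beta;[s,t]} \right),
\end{align}
with $C_1, C_2$ deterministic. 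Everything then reduces to showing that the right-hand side of \eqref{Jgrowth} lies in $L^q(\Omega)$ for every $q > 0$, i.e.\ that $N^{\mathbf{X}}_{\beta;[s,t]}$ possesses finite exponential moments of all orders.

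This last point is where the Gaussian structure enters and constitutes the main obstacle. By Condition \ref{newCond1}(a) the covariance $R$ has finite 2D $\rho$-variation with $\rho < 2$, so the integrability results of Cass, Litterer and Lyons \cite{cll2013} apply to $\mathbf{X}$: the greedy count $N^{\mathbf{X}}_{\beta;[s,t]}$ has Weibull-type tails with shape exponent strictly greater than one (of Gaussian type in the Brownian case $\rho = 1$), and in particular $\exptn{\exp(\lambda N^{\mathbf{X}}_{\beta;[s,t]})} < \infty$ for every $\lambda > 0$. The crucial — and a priori surprising — feature is that although \eqref{Jgrowth} grows exponentially in $N^{\mathbf{X}}_{\beta;[s,t]}$, this is harmless precisely because the tail of $N^{\mathbf{X}}_{\beta;[s,t]}$ decays faster than any exponential; a naive super-additivity argument, giving only $N^{\mathbf{X}}_{\beta;[s,t]} \lesssim \norm{\mathbf{X}}^p_{p\text{-var};[s,t]}$ and hence a Weibull shape $2/p < 1$, would be insufficient, so the sharp localization estimate of \cite{cll2013} is essential here. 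Combining this integrability with \eqref{Jgrowth} via Hölder's inequality gives $\norm{J^{\mathbf{X}}}_{p\text{-var};[s,t]} \in L^q(\Omega)$ for all $q > 0$, as claimed.
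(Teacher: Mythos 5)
Your proposal is correct and follows essentially the same route as the paper: the paper likewise bounds $\left\| J^{\mathbf{X}}\right\|_{p-var;[s,t]}$ by $C_1\left\| \mathbf{X}\right\|_{p-var;[s,t]}\exp\left(C_2 N^{\mathbf{X}}_{1;[s,t]}\right)$ (citing equation (4.10) of \cite{cll2013} rather than re-deriving it from the greedy partition and the multiplicativity of the linear flow, as you sketch) and then concludes via the Gaussian-type tail estimate for $N^{\mathbf{X}}_{1;[s,t]}$ from Theorem 6.3 of \cite{cll2013} together with Cauchy--Schwarz. The only difference is presentational: you reconstruct the deterministic growth estimate that the paper imports as a black box.
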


\begin{proof}
Using the fact that $N_{1; [s, t]}^{\mathbf{X}}$ has Gaussian tails (see Theorem 6.3 in \cite{cll2013}), we see that $\exptn{\exp \left( C_2 q N_{1; [s, t]}^{\mathbf{X}} \right)} < \infty$ for all $q > 0$, $s < t \in [0, T]$. Now from equation (4.10) in \cite{cll2013}, we have the bound
\begin{align} \label{JBound}
\norm{J^{\mathbf{X}}}_{p-var; [s, t]} \leq C_1 \norm{\mathbf{X}}_{p-var; [s, t]} \exp \left( C_2 N^{\mathbf{X}}_{1; [s, t]} \right).
\end{align}
The statement of the theorem then follows immediately using Cauchy-Schwarz since \\
$\norm{\mathbf{X}}_{p-var; [s, t]}$ also has moments of all orders.
\end{proof}

\begin{remark}
Note that as in \cite{cl2018}, we will abuse the notation and write $\mathcal{D}_{h_{1}, \ldots, h_{n}}^{n} Y_{t}$ and $\mathcal{D}_{\Phi(h_{1}), \ldots, \Phi(h_{n})}^{n} Y_{t}$ interchangeably.
\end{remark}
\section{Convergence in $\mathbb{D}^{1, 2} \left( \mathcal{H}_1^d\right)$} \label{various}

In this section, we will discuss the various isomorphisms and subspaces of the Cameron-Martin space and its tensor product. The motivation is as follows: let $Y$ be a solution to RDE \eqref{RDE} and given a partition $\pi = \{ r_i \}$ of $[0, T]$, denote
\begin{align*}
Y^{\pi} (t) := \sum_i Y_{r_i} \mathds{1}_{\left[ r_i, r_{i+1} \right)} (t).
\end{align*}
Now recall the following inequality from Proposition 1.3.1 in \cite{nualart2006} 
\begin{align} \label{itoskoro}
\exptn{\delta^X\left(Y^{\pi} - Y \right)^2}
\leq \exptn{\norm{ Y^{\pi} - Y }^2_{\mathcal{H}_1^d}} +
\exptn{\norm{ \mathcal{D} Y^{\pi} - \mathcal{D} Y }^2_{\mathcal{H}_1^d \otimes \mathcal{H}_1^d}},
\end{align}
which in particular implies that Dom$\left(  \delta^{X}\right) \supseteq \mathbb{D}^{1,2}(\mathcal{H}_{1}^{d})$. \par 

Thus if we can show that almost surely, $Y$ and $DY$ can be identified as elements of $\mathcal{H}_1^d$ and $\mathcal{H}_1^d \otimes \mathcal{H}_1^d$ respectively, and furthermore $\norm{Y^{\pi} - Y}_{\mathcal{H}_1^d} $ and $\norm{\mathcal{D}Y^{\pi} - \mathcal{D}Y}_{\mathcal{H}_1^d \otimes \mathcal{H}_1^d}$ both vanish as $\norm{\pi} \rightarrow 0$, then with further integrability assumptions one can use \eqref{itoskoro} and dominated convergence to show that $\delta^X (Y^{\pi})$ converges to $\delta^X (Y)$ in $L^2(\Omega)$. 

\subsection{Convergence in $\mathcal{H}_{1}^{d}$}
This main aim of this subsection is to investigate the (almost sure) regularity required of $Y$ to identify it as an element of $\mathcal{H}_1^d$, and to have $\norm{Y^{\pi} - Y}_{\mathcal{H}_1^d} \rightarrow 0$. For Volterra processes, the first issue is to find criteria ensuring that the step-function approximations to a given H\"{o}lder continuous function converge in $\mathcal{H}_{1}^{d}$. We recall the following result from \cite{lim2018} (see also \cite{amn2000}).

\begin{proposition} \label{nualartProp} 
Let $\left(  E,\left\Vert \cdot\right\Vert _{E}\right)$ be a Banach space and $K:\left[  0,T\right]  ^{2}\rightarrow\mathbb{R}$ be a kernel satisfying Condition \ref{amnCond} for some $\alpha\in\left[ 0,\frac{1}{4}\right)  $. Let $\phi:[0,T]\rightarrow E$ be $\lambda$-H\"{o}lder continuous, i.e. there exists $C<\infty$ such that
\begin{align*}
\left\Vert \phi(t_{1})-\phi(t_{2})\right\Vert _{E}\leq C\,|t_{1}
-t_{2}|^{\lambda},\quad\forall t_{1},t_{2} \in [0, T],
\end{align*}
and for any partition $\pi=\{s_{i}\}$ of $[0,T]$, let $\phi^{\pi
}:[0,T]\rightarrow E$ denote
\begin{align*}
\phi^{\pi}(t)=\sum_{i}\phi(s_{i})\mathds{1}_{[s_{i},s_{i+1})}(t).
\end{align*}
Then if $\lambda>\alpha$ we have
\begin{align*}
\lim_{\left\Vert \pi\right\Vert \rightarrow0}\int_{0}^{T}\left\Vert
\mathcal{K}^{\ast}\left(  \phi^{\pi}-\phi\right)  (t)\right\Vert _{E}
^{2}\,\mathrm{d}t=0,
\end{align*}
where $\mathcal{K}^*$ is defined as in \eqref{kStarDefn}.
\end{proposition}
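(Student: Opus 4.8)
The plan is to set $\psi := \phi^{\pi} - \phi$ and to show directly that $\int_{0}^{T} \left\| (\mathcal{K}^{\ast}\psi)(t) \right\|_{E}^{2}\,\mathrm{d}t$ is bounded by a constant multiple of $\|\pi\|^{2(\lambda - \alpha)}$, which vanishes as $\|\pi\| \to 0$ precisely because $\lambda > \alpha$. The starting observation is that $\psi$ enjoys a uniform bound: if $s$ lies in the partition cell $[s_{i}, s_{i+1})$ then $\psi(s) = \phi(s_{i}) - \phi(s)$, so by $\lambda$-H\"older continuity $\left\| \psi(s) \right\|_{E} \leq C|s - s_{i}|^{\lambda} \leq C\|\pi\|^{\lambda}$ for every $s \in [0,T]$.

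Using the definition \eqref{kStarDefn}, I would split $(\mathcal{K}^{\ast}\psi)(s)$ into three pieces: the boundary term $\psi(s)K(T,s)$; the ``local'' integral $\int_{s}^{s_{i+1}}[\psi(r)-\psi(s)]K(\mathrm{d}r,s)$ over the cell containing $s$; and the ``far'' integral $\int_{s_{i+1}}^{T}[\psi(r)-\psi(s)]K(\mathrm{d}r,s)$. For the boundary term, Condition \ref{amnCond}(i) gives $\left\| \psi(s)K(T,s) \right\|_{E} \leq C\|\pi\|^{\lambda} s^{-\alpha}(T-s)^{-\alpha}$, whose square is integrable in $s$ since $2\alpha < \tfrac{1}{2} < 1$, contributing $O(\|\pi\|^{2\lambda})$.

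The crux is the local integral, where the measure obeys the non-integrable bound $|K(\mathrm{d}r,s)| \leq C(r-s)^{-(\alpha+1)}\,\mathrm{d}r$ at $r = s$ by Condition \ref{amnCond}(ii). Here the uniform bound on $\psi$ is useless; instead I would exploit the cancellation available inside a single cell, where $\psi(r) - \psi(s) = \phi(s) - \phi(r)$ so that $\left\| \psi(r)-\psi(s) \right\|_{E} \leq C|r-s|^{\lambda}$. This extra $(r-s)^{\lambda}$ factor tames the singularity exactly when $\lambda > \alpha$, giving $\int_{s}^{s_{i+1}}(r-s)^{\lambda-\alpha-1}\,\mathrm{d}r \leq C(s_{i+1}-s)^{\lambda-\alpha} \leq C\|\pi\|^{\lambda-\alpha}$, hence an $O(\|\pi\|^{2(\lambda-\alpha)})$ contribution after squaring and integrating. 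For the far integral, $r$ and $s$ sit in distinct cells, so one falls back on the uniform bound $\left\| \psi(r)-\psi(s) \right\|_{E} \leq 2C\|\pi\|^{\lambda}$; now $r - s \geq s_{i+1}-s$ keeps us away from the singularity and $\int_{s_{i+1}}^{T}(r-s)^{-(\alpha+1)}\,\mathrm{d}r \leq \tfrac{1}{\alpha}(s_{i+1}-s)^{-\alpha}$, so this piece is bounded by $C\|\pi\|^{\lambda}(s_{i+1}-s)^{-\alpha}$.

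Finally I would square and integrate each piece and sum over cells. The far term requires the counting estimate $\sum_{i} \int_{s_{i}}^{s_{i+1}}(s_{i+1}-s)^{-2\alpha}\,\mathrm{d}s = \sum_{i} \tfrac{1}{1-2\alpha}\delta_{i}^{1-2\alpha} \leq \tfrac{1}{1-2\alpha}\|\pi\|^{-2\alpha}\sum_{i}\delta_{i} = \tfrac{T}{1-2\alpha}\|\pi\|^{-2\alpha}$, where $\delta_{i} = s_{i+1}-s_{i}$ and $2\alpha < 1$ guarantees integrability of $(s_{i+1}-s)^{-2\alpha}$; this turns the far contribution into $O(\|\pi\|^{2(\lambda-\alpha)})$. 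Collecting the three bounds gives the claim. The main obstacle throughout is the diagonal singularity of $K(\mathrm{d}r,s)$: the whole argument hinges on separating the near-diagonal region, where one must use H\"older cancellation rather than the uniform bound, from the off-diagonal region, and on checking that the resulting $(s_{i+1}-s)^{-\alpha}$ weight remains square-integrable, which is exactly where $\alpha < \tfrac{1}{4}$ (in particular $2\alpha < 1$) enters.
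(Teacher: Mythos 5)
The paper does not reproduce a proof of this proposition (it is quoted from \cite{lim2018}), so I am judging your argument on its own. Your three-way split into boundary term, local (same-cell) integral and far integral is a natural approach, and the boundary and local estimates are correct: in particular the use of the cancellation $\psi(r)-\psi(s)=\phi(s)-\phi(r)$ inside a single cell to tame the $(r-s)^{-(\alpha+1)}$ singularity is exactly the right move, and yields an honest $O(\|\pi\|^{2(\lambda-\alpha)})$ contribution. The gap is in the far-field term. Your counting estimate $\sum_i \delta_i^{1-2\alpha}\leq \|\pi\|^{-2\alpha}\sum_i\delta_i$ runs the wrong way: since $\delta_i\leq\|\pi\|$ and the exponent $-2\alpha$ is negative, $\delta_i^{-2\alpha}\geq\|\pi\|^{-2\alpha}$, hence $\delta_i^{1-2\alpha}=\delta_i\,\delta_i^{-2\alpha}\geq\delta_i\|\pi\|^{-2\alpha}$. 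Worse, $\sum_i\delta_i^{1-2\alpha}$ is not controlled by the mesh at all: a partition consisting of one cell of width $\|\pi\|$ together with $M$ equal tiny cells filling the rest of $[0,T]$ has $\sum_i\delta_i^{1-2\alpha}\approx M^{2\alpha}T^{1-2\alpha}\rightarrow\infty$ as $M\rightarrow\infty$ with $\|\pi\|$ held fixed. Since the proposition requires convergence uniformly over \emph{all} partitions of small mesh, the bound $C\|\pi\|^{2\lambda}\sum_i\delta_i^{1-2\alpha}$ you arrive at does not tend to zero, and the far-field step as written fails.

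The underlying cause is that the uniform bound $\|\psi(r)-\psi(s)\|_E\leq 2C\|\pi\|^{\lambda}$ throws away too much in the region where $r$ is just beyond $s_{i+1}$ and $(r-s)^{-(\alpha+1)}$ is large. The repair is to keep the pointwise smallness of $\psi$ near the left endpoint of each cell: for $r\in[s_j,s_{j+1})$ with $j>i$ one has $\|\psi(r)-\psi(s)\|_E\leq C(r-s_j)^{\lambda}+C(s-s_i)^{\lambda}$. The $(s-s_i)^{\lambda}$ piece then contributes at most $\tfrac{C}{\alpha}(s-s_i)^{\lambda}(s_{i+1}-s)^{-\alpha}$, whose square integrates over the cell to a Beta integral $C\,\delta_i^{1+2(\lambda-\alpha)}$; since the exponent $2(\lambda-\alpha)$ is now \emph{positive}, $\delta_i^{1+2(\lambda-\alpha)}\leq\delta_i\|\pi\|^{2(\lambda-\alpha)}$ and the sum over $i$ is bounded by $T\|\pi\|^{2(\lambda-\alpha)}$, as desired; the $(r-s_j)^{\lambda}$ piece requires a similar cell-by-cell integration in $r$. (Two minor points: your $\tfrac{1}{\alpha}(s_{i+1}-s)^{-\alpha}$ tacitly assumes $\alpha>0$ --- for $\alpha=0$ either track the logarithm or replace $\alpha$ by a slightly larger $\alpha'<\lambda\wedge\tfrac14$, which Condition \ref{amnCond} permits; and the square-integrability of $s^{-2\alpha}(T-s)^{-2\alpha}$ only needs $2\alpha<1$, so that part is fine.)
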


Rather than dealing with the Hilbert space $\mathcal{H}_{1}^{d}$ as an abstract completion, it will be useful to realize it as a closed subspace of an $L^{2}$ space. To this end, we define $\mathcal{H}_{2}^{d}$ to be the closure in $L^{2}([0,T];\mathbb{R}^{d})$ of the linear subspace generated by
\begin{align*}
\left\{  K(t,\cdot)^{(u)}:=K(t,\cdot)\,e_{u}\;\bigg\vert\;t\in\lbrack
0,T],\;u=1,\ldots,d\right\}.
\end{align*}
The inner-product is the usual one in $L^{2}([0,T];\mathbb{R}^{d})$, namely $\left\langle f, g \right\rangle_{\mathcal{H}_{2}^{d}} = \int_{0}^{T} \left\langle f_{s}, g_{s} \right\rangle \wrt{s}$ where $\left\langle \cdot, \cdot \right\rangle$ denotes the Euclidean inner product in $\mathbb{R}^{d}$. The following proposition is more or less immediate.

\begin{proposition} \label{h1 and h2}
$\mathcal{H}_{1}^{d}$ and $\mathcal{H}_{2}^{d}$ are isomorphic as Hilbert spaces
\end{proposition}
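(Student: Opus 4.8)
The plan is to build an explicit isometric isomorphism on the generating vectors and extend it by density. Define a linear map $\Psi$ on the algebraic span of the generators of $\mathcal{H}_1^d$ by setting $\Psi\big(\mathds{1}_{[0,t)}^{(u)}\big) := K(t,\cdot)^{(u)}$ for each $t \in [0,T]$ and $u = 1, \ldots, d$, and extending linearly. The heart of the matter is that $\Psi$ preserves the inner product on generators. Using the $L^2$ inner product defining $\mathcal{H}_2^d$,
\begin{align*}
\left\langle K(t,\cdot)^{(u)}, K(s,\cdot)^{(v)} \right\rangle_{\mathcal{H}_2^d} = \delta_{uv} \int_0^T K(t,r) K(s,r) \wrt{r},
\end{align*}
and since $K$ is a Volterra kernel we have $K(t,r)=0$ for $r \geq t$ and $K(s,r)=0$ for $r \geq s$, so the integrand vanishes once $r \geq t \wedge s$. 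The integral therefore collapses to $\int_0^{t \wedge s} K(t,r) K(s,r)\wrt{r} = R(t,s)$, yielding
\begin{align*}
\left\langle K(t,\cdot)^{(u)}, K(s,\cdot)^{(v)} \right\rangle_{\mathcal{H}_2^d} = \delta_{uv}\, R(t,s) = \left\langle \mathds{1}_{[0,t)}^{(u)}, \mathds{1}_{[0,s)}^{(v)} \right\rangle_{\mathcal{H}_1^d}.
\end{align*}

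By bilinearity this upgrades to $\left\langle \Psi f, \Psi g \right\rangle_{\mathcal{H}_2^d} = \left\langle f, g \right\rangle_{\mathcal{H}_1^d}$ for all $f, g$ in the algebraic span of generators. In particular $\Psi$ is norm-preserving, which simultaneously settles well-definedness (any combination whose $\mathcal{H}_1^d$-norm is zero is sent to a vector of zero $L^2$-norm, hence to $0$) and shows that $\Psi$ is a linear isometry defined on a dense subspace. A densely-defined isometry between pre-Hilbert spaces extends uniquely to an isometry between their completions, so $\Psi$ extends to an isometric embedding $\mathcal{H}_1^d \hookrightarrow \mathcal{H}_2^d$.

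Surjectivity is then immediate: the image of $\Psi$ contains every generator $K(t,\cdot)^{(u)}$, whose linear span is by definition dense in $\mathcal{H}_2^d$, and the range of an isometry between complete spaces is closed; hence $\Psi$ is onto. This makes $\Psi$ an isometric isomorphism and proves $\mathcal{H}_1^d \cong \mathcal{H}_2^d$. I anticipate no serious obstacle — the single point that requires care is the truncation of the domain of integration from $[0,T]$ down to $[0, t\wedge s]$, which is exactly where the lower-triangular (Volterra) structure $K(t,s)=0$ for $s \geq t$ is used to match the $L^2$-pairing with $R(t,s)$; everything else is the standard ``an isometry on a dense set extends to the completion'' argument.
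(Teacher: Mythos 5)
Your proof is correct and follows essentially the same route as the paper: both verify that the pairing of the generators $K(t,\cdot)^{(u)}$ in $L^{2}$ reproduces $\delta_{uv}R(s,t)$ via the Volterra identity $R(s,t)=\int_{0}^{s\wedge t}K(s,r)K(t,r)\,\mathrm{d}r$, and then extend the generator-level correspondence to an isometric isomorphism by density. The only cosmetic difference is that the paper identifies the resulting isometry with the operator $\mathcal{K}^{*}$ from \eqref{kStarDefn} (noting $\mathcal{K}^{*}\mathds{1}_{[0,t)}^{(u)}=K(t,\cdot)^{(u)}$), whereas you introduce the map abstractly; your explicit treatment of well-definedness and surjectivity fills in details the paper leaves implicit.
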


\begin{proof}
Since
\begin{align*}
R(s, t) = \int_0^{s \wedge t} K(s, r) K(t, r) \wrt{r},
\end{align*}
we have
\begin{align*}
\left\langle K(t, \cdot)^{(u)}, K(s, \cdot)^{(v)} \right\rangle_{L^2([0, T];
\mathbb{R}^d)} = \delta_{uv} R(s, t) = \left\langle \mathds{1}_{[0, t)}^{(u)} (\cdot), \mathds{1}_{[0, s)}^{(v)} (\cdot) \right\rangle_{\mathcal{H}_1^d}
\end{align*}
for all $s, t \in [0, T], u, v = 1, \ldots, d$. From the definition of $\mathcal{K}^*$ in \eqref{kStarDefn}, we have
\begin{align*}
\mathcal{K}^* \left( \mathds{1}_{[0, t)}^{(u)} (\cdot) \right) (s) = K(t, s)^{(u)}, \quad u = 1, \ldots, d,
\end{align*}
which means that $\mathcal{K}^*$ extends uniquely to a linear isometry from $\mathcal{H}_{1}^d$ onto $\mathcal{H}_{2}^d$ so that
\begin{align} \label{iso3}
\left\langle f, g\right\rangle _{\mathcal{H}_{1}^d} = \left\langle \mathcal{K}^* f, \mathcal{K}^* g \right\rangle_{\mathcal{H}_2^d} \quad \forall f, g\in\mathcal{H}_{1}^d.
\end{align}
\end{proof}

\begin{remark}
In the case of standard Brownian motion the isomorphism $\mathcal{K}^{\ast}$ is the identity operator and $\mathcal{H}_{1}^{d}=\mathcal{H}_{2}^{d} = L^{2}([0,T];\mathbb{R}^{d})$.
\end{remark}

Since the RDE solutions we work with are path-valued, it will be convenient to find subspaces of $\mathcal{H}_{1}^{d}$ whose elements are actual paths. We let
\begin{align*}
\Lambda_{\alpha}^{d}\ :=\bigcup_{\lambda>\alpha}\mathcal{C}_{pw}
^{\lambda-H\ddot{o}l}\left(  [0,T];\mathbb{R}^{d}\right),
\end{align*}
where $\mathcal{C}_{pw}^{\lambda-H\ddot{o}l}\left(  [0,T];\mathbb{R}^{d}\right) $ denotes the space of piecewise $\lambda$-H\"{o}lder continuous functions. By equipping $\Lambda_{\alpha}^{d}$ with the inner product
\begin{align*}
\left\langle f,g\right\rangle _{\Lambda_{\alpha}^{d}}:=\left\langle
\mathcal{K}^{\ast}(f),\,\mathcal{K}^*(g)\right\rangle_{L^{2}\left(
[0,T];\mathbb{R}^{d}\right) },
\end{align*}
whilst suppressing its dependence on $K$ in the notation, the following proposition shows that we can regard $\Lambda_{\alpha}^{d}$ as a dense subspace of $\mathcal{H}_{1}^{d}$.

\begin{proposition} \label{A1eqH1} 
Suppose $K$ is a kernel satisfying Condition \ref{amnCond} for some $\alpha \in \left[0, \frac{1}{4} \right)$. Then $\Lambda_{\alpha}^{d}$ is a dense subspace of $\mathcal{H}_{1}^{d}$, and the inclusion map $i:(\Lambda_{\alpha
}^{d},\,\left\langle \cdot,\cdot\right\rangle _{\Lambda_{\alpha}^{d}})\rightarrow(\mathcal{H}_{1}^{d},\,\left\langle \cdot,\cdot\right\rangle_{\mathcal{H}_{1}^{d}})$ is an isometry.
\end{proposition}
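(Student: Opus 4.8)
The plan is to leverage the Hilbert space isomorphism $\mathcal{K}^\ast : \mathcal{H}_1^d \to \mathcal{H}_2^d$ of Proposition \ref{h1 and h2} together with the step-function approximation result of Proposition \ref{nualartProp}. The starting observation is that every function in the linear span of $\{\mathds{1}_{[0,t)}^{(u)}\}$ is constant off finitely many jump points, hence piecewise $\lambda$-H\"older for any $\lambda \in (0,1]$ and in particular for some $\lambda > \alpha$ (recall $\alpha < \tfrac14$); thus these step functions lie in $\Lambda_\alpha^d$. Moreover, on them the pointwise operator $\mathcal{K}^\ast$ of \eqref{kStarDefn} agrees with the abstract isometry of Proposition \ref{h1 and h2}, since $\mathcal{K}^\ast(\mathds{1}_{[0,t)}^{(u)}) = K(t,\cdot)^{(u)}$. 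Density of $\Lambda_\alpha^d$ in $\mathcal{H}_1^d$ is then immediate: the span of the indicators is dense in $\mathcal{H}_1^d$ by definition, it is contained in $\Lambda_\alpha^d$, and the inclusion fixes each $\mathds{1}_{[0,t)}^{(u)}$.

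The substance of the proof is to define the inclusion on a general $f \in \Lambda_\alpha^d$ and to verify the isometry. First I would fix partitions $\pi_n$ with mesh tending to zero that contain the (finitely many) discontinuity points of $f$, and form the step approximants $f^{\pi_n}$. Applying Proposition \ref{nualartProp} on each interval of $\lambda$-H\"older continuity, and using the localization identity \eqref{truncatedKStar} to splice the pieces across the jumps, I would obtain $\left\| \mathcal{K}^\ast(f^{\pi_n} - f) \right\|_{L^2([0,T];\mathbb{R}^d)} \to 0$. Since $\mathcal{H}_2^d$ is by construction closed in $L^2([0,T];\mathbb{R}^d)$ and each $\mathcal{K}^\ast(f^{\pi_n})$ lies in $\mathcal{H}_2^d$, the $L^2$-limit $\mathcal{K}^\ast(f)$ also lies in $\mathcal{H}_2^d$.

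Transporting this back through the isometry, the sequence $\{f^{\pi_n}\}$ is Cauchy in $\mathcal{H}_1^d$; I would set $i(f)$ to be its limit, which is the unique element of $\mathcal{H}_1^d$ satisfying $\mathcal{K}^\ast(i(f)) = \mathcal{K}^\ast(f)$. Because this limit is pinned down by the $L^2$-object $\mathcal{K}^\ast(f)$, which depends on $f$ alone, the map $i$ is well-defined independently of the approximating sequence and extends the natural inclusion of the indicators. The isometry is then a one-line computation:
\begin{align*}
\left\langle i(f), i(g)\right\rangle_{\mathcal{H}_1^d} = \left\langle \mathcal{K}^\ast i(f), \mathcal{K}^\ast i(g)\right\rangle_{\mathcal{H}_2^d} = \left\langle \mathcal{K}^\ast(f), \mathcal{K}^\ast(g)\right\rangle_{L^2([0,T];\mathbb{R}^d)} = \left\langle f, g\right\rangle_{\Lambda_\alpha^d},
\end{align*}
using \eqref{iso3} in the first equality and the definition of $\left\langle \cdot, \cdot\right\rangle_{\Lambda_\alpha^d}$ in the last.

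The step I expect to be the main obstacle is the convergence $\left\| \mathcal{K}^\ast(f^{\pi_n} - f) \right\|_{L^2} \to 0$, since Proposition \ref{nualartProp} is stated for globally $\lambda$-H\"older $\phi$, whereas elements of $\Lambda_\alpha^d$ are only \emph{piecewise} H\"older and therefore carry jumps. Reducing to the continuous case requires decomposing $f$ along its discontinuities, controlling each truncated piece via \eqref{truncatedKStar}, and checking that the H\"older constant and the exponent $\lambda > \alpha$ survive the splicing; the near-diagonal singularity of the kernel encoded in Condition \ref{amnCond} is precisely what the constraint $\lambda > \alpha$ is present to tame. Everything else — closedness of $\mathcal{H}_2^d$, the isometry identity, and density — is then routine.
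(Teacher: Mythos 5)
Your proposal follows essentially the same route as the paper's proof: approximate $f\in\Lambda_\alpha^d$ by step functions $f^{\pi(n)}$, use Proposition \ref{nualartProp} to get $\left\Vert \mathcal{K}^{\ast}(f^{\pi(n)}-f)\right\Vert_{L^{2}}\to 0$, conclude via \eqref{iso3} that the sequence is Cauchy in $\mathcal{H}_1^d$, identify $f$ with the limit, and obtain density from the fact that $\Lambda_\alpha^d$ contains the generating indicators. The one point where you are actually more careful than the paper is in flagging that Proposition \ref{nualartProp} is stated for globally H\"older $\phi$ while elements of $\Lambda_\alpha^d$ are only piecewise H\"older; the paper applies the proposition directly without comment, whereas your splicing via \eqref{truncatedKStar} is the right way to close that small gap.
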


\begin{proof}
Let $f \in \Lambda^d_{\alpha}$ and let $\pi(n) = \left\{ r_i^{(n)}\right\}$ be a sequence of partitions whose mesh vanishes as $n \rightarrow \infty$. We denote
\begin{align*}
f^{\pi(n)} (t) := \sum_i f\left( r^{(n)}_i \right) \mathds{1}_{\left[ r_i^{n}, r_{i+1}^{(n)} \right)} (t),
\end{align*}
and note that for each $n$, $f^{\pi(n)}$ is in $\Lambda_{\alpha}^d \cap \mathcal{H}_1^d$. Moreover, Proposition \ref{nualartProp} tells us that $\norm{\mathcal{K}^* \left( f^{\pi(n)} - f \right)}_{L^2 \left([0, T]; \mathbb{R}^d\right)} \rightarrow 0$. Hence, using \eqref{iso3}, we see that $f^{\pi(n)}$ is Cauchy in $\mathcal{H}^d_1$. We again identify $f$ with the limit of the sequence, and under this identification we have
\begin{align}  \label{LH1norm}
\norm{f}_{\mathcal{H}^d_1}^2 = \norm{\mathcal{K}^*(f)}_{L^2\left([0, T]; \mathbb{R}^d\right)}.
\end{align}
$\Lambda^d_{\alpha}$ contains all the generating functions $\left\{ \mathds{1}^{(u)}_{[0, t)} (\cdot) \right\}$ of $\mathcal{H}^d_1$, and so its closure is $\mathcal{H}^d_1$.
\end{proof}

We recall from \cite{cl2018} a similar result in terms of $p$-variation. In that paper, $\mathcal{H}_{1}^{d}$ was derived from a Gaussian covariance function $R$ which was assumed to be of finite 2D $\rho$-variation, $\rho \in [1, 2)$. It was shown that
\begin{align} \label{Wd}
\mathcal{W}_{\rho}^{d}
:= \bigcup_{q < \frac{\rho}{\rho - 1}} \mathcal{C}_{pw}^{q-var}\left( [0,T];\mathbb{R}^{d}\right),
\end{align}
when equipped with the inner product
\begin{align*}
\left\langle f,g\right\rangle _{\mathcal{W}_{\rho}^{d}}:=\int_{[0,T]^{2}}\left\langle f_{s},\,g_{t}\right\rangle _{\mathbb{R}^{d}}\,\mathrm{d}R(s,t),
\end{align*}
is a dense subspace of $\mathcal{H}_{1}^{d}$ with the inclusion map again being an isometry. In the case when $\lambda > \alpha \wedge \left( 1 - \frac{1}{\rho} \right) $, any $f$ and $g$ belonging to $\mathcal{C}_{pw}^{\lambda-H\ddot{o}l} \left( [0, T]; \mathbb{R}^{d}\right) $ also belong to $\mathcal{W}_{\rho}^{d}\cap\Lambda_{\alpha}^{d}$, and we have
\begin{align*}
\left\langle f,g\right\rangle _{\mathcal{W}_{\rho}^{d}}=\int_{[0,T]^{2}
}\left\langle f_{s},g_{t}\right\rangle _{\mathbb{R}^{d}}\,\mathrm{d}R\left(
s,t\right)  =\int_{0}^{T}\left\langle \mathcal{K}^{\ast}f(r),\mathcal{K}
^{\ast}g(r)\right\rangle _{\mathbb{R}^{d}}\,\mathrm{d}r=\left\langle
f,g\right\rangle _{\Lambda_{\alpha}^{d}}
\end{align*}

The follow figure depicts schematically the relationship between the various subspaces in the case of a Volterra process satisfying Condition \ref{amnCond}. For convenience we have assumed we are in the scalar-valued case $d=1$. 
\begin{figure}[H]
\tikzset{every path/.append style={thick}}
\begin{tikzpicture} [xscale = 0.95, yscale = 0.6]
\draw (0, 0) circle (2.6) (-0.8, 2) node {$\mathcal{W}_{\rho}$};
\draw (1.5, 0) circle (2.6) (2.3, 2) node {$\Lambda_{\alpha}$};
\draw (0.75, 0) circle (3.5) (0.75,3)  node {$\mathcal{H}_1$} (0.75,0) node {$\left\langle \mathds{1}_{[0, t)}(\cdot), \mathds{1}_{[0, s)} (\cdot) \right\rangle_{\mathcal{H}_1}$};
\draw (5, 7.5) circle (3.5) (5,10) node {$\mathcal{H}$} (5, 7.5) node {$\left\langle R(t, \cdot), R(s, \cdot) \right\rangle_{\mathcal{H}}$};
\draw (9.25, 0) circle (3.5) (9.25,3) node {$\mathcal{H}_2$} (9.25,0) node {$\left\langle K(t, \cdot), K(s, \cdot) \right\rangle_{\mathcal{H}_2}$};
\draw (9.25, 0.25) circle (4.1) (9.25, 3.85) node {$L^2\left([0, T]; \mathbb{R} \right)$};
\draw (5, 2.5) node {$R(t, s)$};
\draw [->] ([shift={(5, 2.5)}] 246.5:6) arc[radius=6, start angle=246.5, end angle = 293.5];
\draw [->] ([shift={(5, 2.5)}] 6:6) arc[radius=6, start angle = 6, end angle = 54];
\draw [->] ([shift={(5, 2.5)}] 174:6) arc[radius=6, start angle = 174, end angle = 126];
\draw (5, -4 ) node {$\mathcal{K}^*$} (10.6, 5.75) node {$K$} (-0.6, 5.75) node {$\Phi$};
\draw (4.85, 3) -- (4.85, 7) (5.15, 3) -- (5.15, 7);
\draw (5.45, 2) -- (7.45, 0.3) (5.6, 2.3) -- (7.6, 0.6);
\draw (4.5, 2) -- (2.3, 0.3) (4.35, 2.3) -- (2.15, 0.6);
\end{tikzpicture}
\hspace{1cm}
\caption{}
\end{figure}

Note that $K$ gives an isomorphism from $\mathcal{H}_{2}^{d}$ onto $\mathcal{H}^{d} $ because $R(t, \cdot) = \int_{0}^{T} K(\cdot, r) K(t, r) \,\mathrm{d} r$.

\subsection{Convergence in $\mathcal{H}_{1}^{d}\otimes\mathcal{H}_{1}^{d}$} \label{noCompRegularity} 
The results of the previous subsection allow us to interpret RDE solutions (paths) as $\mathcal{H}_{1}^{d}$-valued random
variables. The Malliavin derivatives of these random variables, when they exist, will take values in $\mathcal{H}_{1}^{d}\otimes\mathcal{H}_{1}^{d}$, and we therefore need similar results which identify suitable function spaces which are subspaces of this tensor product space. We will develop this point in the current subsection. \par 

Throughout, $E$ will denote a general Banach space with norm $\left\Vert \cdot \right\Vert _{E}$. The following operator was defined in \cite{lim2018}.
\begin{definition} \label{kStarTensorOp}
Let $\mathcal{K}^{\ast}\otimes\mathcal{K}^{\ast}$ denote the operator
\begin{align*}
(\mathcal{K}^{\ast}\otimes\mathcal{K}^{\ast})\psi(u,v)
&:= \psi (u,v)K(T,v)K(T,u) + K(T,v)A^{K}\big(\psi(\cdot,v)\big)(u) \\ 
&\qquad+ K(T,u) A^{K} \big(\psi(u,\cdot)\big)(v) + B^{K}(\psi)(u,v),
\end{align*}
where
\begin{align*}
&  A^{K}(\phi)(s):=\int_{s}^{T}\left[  \phi(r)-\phi(s)\right]  K(\mathrm{d}%
r,s)\\
&  B^{K}(\psi)(u,v):=\int_{v}^{T}\int_{u}^{T}\psi%
\begin{pmatrix}
u & r_{1}\\
v & r_{2}%
\end{pmatrix}
K(\mathrm{d}r_{1},u)K(\mathrm{d}r_{2},v),
\end{align*}
which is defined for any measurable function $\psi:[0,T]^{2}\rightarrow E$ for which the
integrals on the right side exist.
\end{definition}

Using Proposition \ref{h1 and h2} and the fact that 
\begin{align} \label{kStarProduct}
\left(  \mathcal{K}^{\ast}\otimes\mathcal{K}^{\ast}\right)  \psi(s,t)=\left(
\mathcal{K}^{\ast}\psi_{1}\right)  (s)\otimes\left(  \mathcal{K}^{\ast}%
\psi_{2}\right) (t)
\end{align}
when $\psi(s, t) = \psi_1(s) \psi_2(t)$, it is also clear that $\mathcal{K}^{\ast}\otimes\mathcal{K}^{\ast}$ maps $\mathcal{H}_{1}^{d}\otimes\mathcal{H}_{1}^{d}$ isometrically onto $\mathcal{H}_{2}^{d} \otimes\mathcal{H}_{2}^{d}$, which is a closed subspace of $L^{2}\left([0,T];\mathbb{R}^{d}\right)  \otimes L^{2}\left(  [0,T];\mathbb{R}^{d}\right)
\cong L^{2}\left(  [0,T]^{2};\mathbb{R}^{d}\otimes\mathbb{R}^{d}\right)$. 

To go beyond product functions in the domain of $\mathcal{K}^* \otimes\mathcal{K}^{\ast}$, we also recall the class of strongly H\"{o}lder bi-continuous functions from \cite{lim2018}.

\begin{definition} \label{biHolderDef}
Let $0 < \lambda \leq 1$. We say that a function $\phi:[0,T]^{2}\rightarrow E$ is strongly $\lambda$-H\"{o}lder bi-continuous in the norm of $E$ (or simply strongly $\lambda $-H\"{o}lder bi-continuous in the case where $E$ is finite-dimensional), if
for all $u_{1},u_{2},v_{1},v_{2}\in\lbrack0,T]$ we have
\begin{align*}
\sup_{v\in\left[  0,T\right]  }\left\Vert \phi(u_{2},v)-\phi(u_{2}
,v)\right\Vert _{E}\leq C\,\left\vert u_{2}-u_{1}\right\vert ^{\lambda}
,\quad\sup_{u\in\left[  0,T\right]  }\left\Vert \phi(u,v_{2})-\phi
(u,v_{1})\right\Vert _{E}\leq C\,\left\vert v_{2}-v_{1}\right\vert ^{\lambda},
\end{align*}
and
\begin{align} \label{biHolder2}
\left\Vert \phi
\begin{pmatrix}
u_{1} & u_{2}\\
v_{1} & v_{2}
\end{pmatrix}
\right\Vert _{E}\leq C\,\left\vert u_{2}-u_{1}\right\vert ^{\lambda}\left\vert v_{2}-v_{1}\right\vert ^{\lambda}.
\end{align}
\end{definition}

The following proposition is one of the main results of \cite{lim2018}. 
\begin{proposition} \label{nualartPropNew} 
Let $\psi:[0,T]^{2}\rightarrow E$ be a function which is strongly $\lambda$-H\"{o}lder bi-continuous in the norm of $E$. For any partition $\{(u_{i},v_{j})\}$ of $[0,T]^{2}$, let $\psi^{\pi}:[0,T]^{2}\rightarrow E$ denote
\begin{align*}
\psi^{\pi}(u,v):=\sum_{i,j}\psi(u_{i},v_{j})\mathds{1}_{[u_{i},u_{i+1}%
)}(u)\mathds{1}_{[v_{j},v_{j+1})}(v).
\end{align*}
In addition, let $\mathcal{K}^{\ast}\otimes\mathcal{K}^*$ denote the operator in Definition \ref{kStarTensorOp}, where the Volterra kernel $K$ satisfies Condition \ref{amnCond} for some $\alpha \in \left[ 0, \frac{1}{4} \right) $.
Then if $\lambda>\alpha$, we have
\begin{align*}
\lim_{\left\Vert \pi\right\Vert \rightarrow0}\int_{[0,T]^{2}}\left\Vert
\left(  \mathcal{K}^* \otimes\mathcal{K}^{\ast}\left(  \psi^{\pi} - \psi\right)  \right) (u,v)\right\Vert _{E}^{2}\,\mathrm{d}u\,\mathrm{d}v,
\end{align*}
and
\begin{align*}
\lim_{\left\Vert \pi\right\Vert \rightarrow0}\int_{0}^{T}\left\Vert \left(
\mathcal{K}^{\ast}\otimes\mathcal{K}^{\ast}\left(  \psi^{\pi}-\psi\right)
\right)  (r,r)\right\Vert _{E}\,\mathrm{d}r=0.
\end{align*}
\end{proposition}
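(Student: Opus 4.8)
The plan is to reduce the two-parameter statement to the one-parameter Proposition \ref{nualartProp} by exploiting the factorisation of $\mathcal{K}^{\ast}\otimes\mathcal{K}^{\ast}$ into commuting one-variable operators. Writing the one-dimensional operator as $\mathcal{K}^{\ast}\phi=\phi\,K(T,\cdot)+A^{K}(\phi)$ and multiplying out in the two variables, one checks that the four summands in Definition \ref{kStarTensorOp} are exactly the terms of $\mathcal{K}^{\ast}_{(1)}\mathcal{K}^{\ast}_{(2)}$, where $\mathcal{K}^{\ast}_{(i)}$ is the one-dimensional operator acting in the $i$-th variable; in particular the double-integral term satisfies $B^{K}=A^{K}_{(1)}A^{K}_{(2)}$, in agreement with \eqref{kStarProduct}. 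I would record this identity first, and then estimate the four terms of $(\mathcal{K}^{\ast}\otimes\mathcal{K}^{\ast})(\psi^{\pi}-\psi)$ separately, using throughout the bounds of Condition \ref{amnCond}: $\abs{K(T,s)}\le Cs^{-\alpha}(T-s)^{-\alpha}$ and $\abs{K(\mathrm{d}r,s)}\le C(r-s)^{-(\alpha+1)}\mathrm{d}r$. Note that $\abs{K(T,\cdot)}$ lies in $L^{2}([0,T])$ since $2\alpha<1$, which disposes of the boundary term $\psi(u,v)K(T,u)K(T,v)$ at once, because $\norm{\psi^{\pi}-\psi}_{\infty}\le 2C\norm{\pi}^{\lambda}$ by the one-variable H\"older bounds.

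For the two cross terms I would use a slicing argument. Fix $v$ and let $\underline{v}$ be the largest point of $\pi$ not exceeding $v$; the key observation is that, for fixed $v$, the slice $\psi^{\pi}(\cdot,v)$ is precisely the one-dimensional step approximation along the first-variable grid of the $\lambda$-H\"older function $r\mapsto\psi(r,\underline{v})$. Hence I would decompose
\[
\psi^{\pi}(\cdot,v)-\psi(\cdot,v)=\big[\big(\psi(\cdot,\underline{v})\big)^{\pi}-\psi(\cdot,\underline{v})\big]+\big[\psi(\cdot,\underline{v})-\psi(\cdot,v)\big].
\]
The second bracket is, by \eqref{biHolder2} and the one-variable H\"older estimate, a $\lambda$-H\"older function of $r$ whose sup-norm and H\"older seminorm are both $O(\norm{\pi}^{\lambda})$; applying $A^{K}$ to it produces a term of size $O(\norm{\pi}^{\lambda})$ uniformly in $u$ and $v$, giving a vanishing $L^{2}$ contribution. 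The first bracket is a genuine one-dimensional approximation error, to which Proposition \ref{nualartProp} applies after writing $A^{K}=\mathcal{K}^{\ast}-(\,\cdot\,)K(T,\cdot)$; here I must check that the convergence in Proposition \ref{nualartProp} is uniform over the family $\{\psi(\cdot,w):w\in[0,T]\}$, which holds because the relevant bound depends only on the common H\"older constant, $\lambda$ and $\alpha$. Multiplying by $K(T,v)$ and integrating in $v$, and treating the symmetric slice in $u$ identically, shows that both cross terms vanish in $L^{2}([0,T]^{2})$.

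The genuinely new, and I expect most delicate, point is the double-integral term $B^{K}(\psi^{\pi}-\psi)=A^{K}_{(1)}A^{K}_{(2)}(\psi^{\pi}-\psi)$. The obstruction is that $K(\mathrm{d}r,s)$ carries the non-integrable weight $(r-s)^{-(\alpha+1)}$, integrability being restored only through the H\"older vanishing of the increments $\psi(r)-\psi(s)$; but the step function $\psi^{\pi}$ jumps across grid points, and near such a point $u_{k}$ the integrand behaves like $\norm{\pi}^{\lambda}(u_{k}-u)^{-\alpha}$, so one cannot simply cite the one-dimensional result for both integrations at once. I would instead combine the slicing decomposition above with a direct estimate of these jump contributions, showing that although each is locally of order $(u_{k}-u)^{-\alpha}$, squaring, integrating and summing over the $O(\norm{\pi}^{-1})$ cells yields a total of order $\norm{\pi}^{2\lambda-2\alpha}$, which tends to $0$ because $\lambda>\alpha$ and $2\alpha<1$. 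The complementary, regular part of $B^{K}(\psi^{\pi}-\psi)$ — away from the grid lines — is controlled using that both $\psi$ and $\psi^{\pi}$ obey the rectangular bound \eqref{biHolder2} (for $\psi^{\pi}$ with the corners snapped to the grid), so that the rectangular increment of $\psi^{\pi}-\psi$ over $[u,r_{1}]\times[v,r_{2}]$ is $\lesssim(r_{1}-u)^{\lambda}(r_{2}-v)^{\lambda}$; since $\lambda>\alpha$ this makes the double integral absolutely convergent with a $\pi$-uniform $L^{2}([0,T]^{2})$ envelope, while for fixed $(u,v)$ the integrand tends to $0$ pointwise, so dominated convergence gives $B^{K}(\psi^{\pi}-\psi)(u,v)\to 0$.

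Assembling the four estimates by the triangle inequality yields the first limit. The diagonal statement is obtained from the same decomposition evaluated at $(r,r)$, now measured in $\norm{\cdot}_{E}$ and integrated once in $r$: the boundary term is bounded by $2C\norm{\pi}^{\lambda}\abs{K(T,r)}^{2}$ with $\abs{K(T,\cdot)}^{2}\in L^{1}([0,T])$, and the $A^{K}$- and $B^{K}$-contributions are handled exactly as above, the only difference being that the kernel powers must be integrable once rather than squared, which again holds since $\alpha<\tfrac14<\tfrac12$.
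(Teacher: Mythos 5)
This proposition is stated in the paper without proof: it is imported verbatim from the companion preprint \cite{lim2018} (``one of the main results of \cite{lim2018}''), so there is no in-paper argument to compare yours against. Judged on its own terms, your strategy is sound and is consistent with how the surrounding machinery is actually deployed: the factorisation $\mathcal{K}^{\ast}\otimes\mathcal{K}^{\ast}=\mathcal{K}^{\ast}_{(1)}\mathcal{K}^{\ast}_{(2)}$ that you open with is precisely the content of Lemma 3.2 of \cite{lim2018}, which the paper invokes in the proof of Proposition \ref{HSprop}; your term-by-term treatment of the four summands of Definition \ref{kStarTensorOp}, the slicing decomposition $\psi^{\pi}(\cdot,v)-\psi(\cdot,v)=[(\psi(\cdot,\underline v))^{\pi}-\psi(\cdot,\underline v)]+[\psi(\cdot,\underline v)-\psi(\cdot,v)]$, and the identification of the degenerate H\"older bound for $\psi^{\pi}$ near grid lines as the real obstruction in $B^{K}$ are all the right moves, and the claimed rate $\norm{\pi}^{2\lambda-2\alpha}$ for the jump contributions is the correct order of magnitude.

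Three points remain genuinely open in what you wrote. First, the $B^{K}$ estimate --- which you yourself flag as the delicate step --- is asserted rather than carried out: the rectangular increment of $\psi^{\pi}-\psi$ has jump contributions in $u$ alone, in $v$ alone, and in both variables simultaneously, and each of these must be paired with the weights $(r_{1}-u)^{-(\alpha+1)}(r_{2}-v)^{-(\alpha+1)}$ and summed over cells; this bookkeeping is the bulk of the actual proof. Second, the uniformity of Proposition \ref{nualartProp} over the family $\{\psi(\cdot,w)\}$ is plausible (the rate depends only on the common H\"older constant) but is used as a black box and would need to be extracted from that proposition's proof. Third, and most concretely, your claim that the diagonal statement is ``handled exactly as above'' does not go through for the cross terms: Proposition \ref{nualartProp} delivers an \emph{integrated} bound $\int_{0}^{T}\norm{\mathcal{K}^{\ast}(\phi^{\pi}-\phi)(t)}_{E}^{2}\,\mathrm{d}t\to 0$, and restricting a two-parameter $L^{2}([0,T]^{2})$ convergence to the diagonal (a Lebesgue-null set) yields nothing. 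Summing the integrated one-dimensional rates over the $O(\norm{\pi}^{-1})$ slices $[v_{j},v_{j+1}]$ loses a factor $\norm{\pi}^{-1}$ and would require $\lambda-\alpha>\tfrac12$. What rescues the diagonal case is a \emph{pointwise} envelope of the form $\abs{A^{K}((\psi^{\pi}-\psi)(\cdot,r))(r)}\lesssim \norm{\pi}^{\lambda-\alpha}+\norm{\pi}^{\lambda}(u_{k(r)}-r)^{-\alpha}$, with $u_{k(r)}$ the next grid point above $r$, whose square integrates over each cell to $O(\norm{\pi}^{1-2\alpha})$; this is exactly the kind of estimate you develop for $B^{K}$, and it must replace the appeal to Proposition \ref{nualartProp} in the diagonal argument. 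With these repairs the proof closes, but as written the diagonal half rests on a reduction that does not transfer.
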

For this paper, the result above, coupled with the fact that $\mathcal{H}_1^d \otimes \mathcal{H}_1^d$ is isomorphic to $\mathcal{H}_2^d \otimes \mathcal{H}_2^d$, shows that the strongly $\lambda$-H\"{o}lder bi-continuous functions are contained in $\mathcal{H}_{1}^{d} \otimes \mathcal{H}_{1}^{d}$ for the class of Volterra kernels we are considering.  For orientation here, contrast this to Proposition \ref{nualartProp}, which showed a similar inclusion in $\mathcal{H}_{1}^{d}$ for the class of $\lambda$-H\"{o}lder continuous functions.

\subsection{The Malliavin derivative and convergence in the tensor norm} \label{mall conv}
Here, we will apply the results of the last subsection to the Malliavin derivatives of RDE solutions.
When $\mathbf{X} \in \mathcal{C}^{0, p-var} \left( [0, T]; G^{\lfloor p \rfloor} \left( \mathbb{R}^d\right) \right)$ satisfies Condition \ref{newCond1}, for all $h \in \mathcal{H}_1^d$, $\Phi(h)$ can be embedded in $\mathcal{C}^{q-var}\left( \left[ 0,T \right]; \mathbb{R}^{d}\right)$ where $\frac{1}{p} + \frac{1}{q} > 1$. Furthermore, the Malliavin derivative of $Y$ satisfying
\begin{align*}
\mathrm{d} Y_t = V(Y_t) \strato{\mathbf{X}_t}, \quad Y_0 = y_0,
\end{align*}
is given by
\begin{align*}
\mathcal{D}_h Y_t = \int_{0}^{t} J_{t}^{\mathbf{X}} \left( J_{s}^{\mathbf{X}}\right)^{-1} V\left( Y_{s} \right) \, \mathrm{d} \Phi(h)(s) 
= \int_{0}^{T} \mathds{1}_{[0,t)} \left( s\right) J_{t}^{\mathbf{X}} \left( J_{s}^{\mathbf{X}}\right)^{-1} V\left( Y_{s} \right) \, \mathrm{d} \Phi(h)(s).
\end{align*}
Denoting
\begin{align} \label{D as path}
\mathcal{D}_{s} Y_{t} = \mathds{1}_{[0, t)} (s) J_{t}^{\mathbf{X}} \left(J_{s}^{\mathbf{X}} \right)^{-1} V\left( Y_{s} \right)
\end{align}
with respect to any partition $\pi = \{ r_i \}$ of $[0, T]$, we will write
\begin{align*}
\mathcal{D}_s Y^{\pi}_t 
&= \sum_i \mathcal{D}_s Y_{r_i} \mathds{1}_{\left[ r_i, r_{i+1} \right)} (t).
\end{align*}
We will proceed to show that 
\begin{enumerate} [(i)]
\item $\mathcal{D} Y^{\pi}$ lies in $\mathcal{H}_1^d \otimes \mathcal{H}_1^d$ almost surely, 
\end{enumerate}
and under suitable regularity assumptions on $\mathcal{D} Y$, we have
\begin{enumerate} [(i)]
\setcounter{enumi}{1}
\item $\norm{\mathcal{D} Y^{\pi} - \mathcal{D}Y}_{\mathcal{H}_1^d \otimes \mathcal{H}_1^d} \rightarrow 0$ as $\norm{\pi} \rightarrow 0$.
\end{enumerate}
Coupled with the results in the previous subsections, this will mean that $Y^{\pi}$ converges to $Y$ in $\mathbb{D}^{1,2} \left(\mathcal{H}_1^d\right)$, and $\delta^X(Y)$ is then the $L^2(\Omega)$ limit of $\delta^X \left( Y^{\pi} \right)$.

A potential problem with \eqref{D as path} is the discontinuity at the diagonal $\{s = t\}$, which prevents it from being H\"{o}lder bi-continuous. The next two propositions show how to handle discontinuities of this form.

\begin{proposition}
Given a Banach space $\left(  E,\left\Vert \cdot\right\Vert _{E}\right) $,
let $\psi:[0,T]^{2}\rightarrow E$ be of the form
\begin{align*}
\psi(u,v)=\mathds{1}_{[0,v)}(u)\tilde{\psi}(u,v),
\end{align*}
where $\tilde{\psi}:[0,T]^{2}\rightarrow E$ is strongly $\lambda$-H\"{o}lder bi-continuous in the norm of $E$. Assume that $K$ is a Volterra kernel which satisfies Condition \ref{amnCond} for some $\alpha \in \left[ 0, \frac{1}{4} \right) $
and let $\mathcal{K}^{\ast}\otimes\mathcal{K}^{\ast}$ be the operator given in Definition \ref{kStarTensorOp}. Then if $\lambda>\alpha$, $\left(  \mathcal{K}^{\ast} \otimes\mathcal{K}^{\ast}\right) \psi$ is in $L^{2}([0,T]^{2};E)$.
\end{proposition}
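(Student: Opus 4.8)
The plan is to bound $\norm{(\mathcal{K}^{\ast}\otimes\mathcal{K}^{\ast})\psi}_{L^{2}([0,T]^{2};E)}$ directly, estimating each of the four terms in Definition \ref{kStarTensorOp} separately and using only the two pointwise bounds from Condition \ref{amnCond}, namely $\abs{K(t,s)}\leq C\,s^{-\alpha}(t-s)^{-\alpha}$ and $\abs{K(\mathrm{d}r,s)}\leq C\,(r-s)^{-(\alpha+1)}\,\mathrm{d}r$, together with the fact that the strong $\lambda$-H\"older bi-continuity of $\tilde\psi$ (Definition \ref{biHolderDef}) makes $\tilde\psi$ bounded on $[0,T]^{2}$ by some $M<\infty$. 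The guiding principle is that the indicator $\mathds{1}_{[0,v)}(u)$ forces $\psi$, and hence every integrand below, to vanish on $\{u\geq v\}$; all the difficulty is concentrated near the diagonal $\{u=v\}$, where $\psi$ jumps and so Proposition \ref{nualartPropNew} cannot be invoked. Throughout I would split the square into $\{u\geq v\}$ (where each term vanishes) and $\{u<v\}$, and show that on the latter each term is majorized pointwise by $(v-u)^{-2\alpha}$ up to integrable boundary factors, so that square-integrability holds precisely because $\alpha<\frac{1}{4}$.

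For the product term $\psi(u,v)K(T,v)K(T,u)$, the kernel bound and boundedness of $\tilde\psi$ give the majorant $C\,u^{-\alpha}(T-u)^{-\alpha}v^{-\alpha}(T-v)^{-\alpha}$, whose square integrates since $2\alpha<1$. For $K(T,v)A^{K}(\psi(\cdot,v))(u)$ I would split the inner integral at the jump $r=v$ (exactly as in \eqref{truncatedKStar}): on $\{u<v\}$ one obtains $\int_{u}^{v}[\tilde\psi(r,v)-\tilde\psi(u,v)]K(\mathrm{d}r,u)$, controlled by the bi-H\"older estimate and $\int_{u}^{v}(r-u)^{\lambda-\alpha-1}\,\mathrm{d}r\lesssim(v-u)^{\lambda-\alpha}$ (this is the step where $\lambda>\alpha$ is essential), plus the boundary piece $-\tilde\psi(u,v)\bigl(K(T,u)-K(v,u)\bigr)$. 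Multiplying by $K(T,v)$, the only genuinely singular contribution is $\lesssim v^{-\alpha}(T-v)^{-\alpha}u^{-\alpha}(v-u)^{-\alpha}$, whose square is integrable by a Beta-type computation convergent for $\alpha<\frac{1}{3}$. The term $K(T,u)A^{K}(\psi(u,\cdot))(v)$ is handled symmetrically.

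The crux is the double integral
\begin{align*}
B^{K}(\psi)(u,v)=\int_{v}^{T}\int_{u}^{T}\psi\begin{pmatrix} u & r_{1}\\ v & r_{2}\end{pmatrix}K(\mathrm{d}r_{1},u)\,K(\mathrm{d}r_{2},v),
\end{align*}
where the two indicators inside the rectangular increment interact. On $\{u<v\}$ I would partition the domain $\{r_{1}\geq u,\,r_{2}\geq v\}$ by the position of $r_{1}$: region (A1) $u\leq r_{1}<v$, where the increment of $\psi$ equals that of $\tilde\psi$ and is $\lesssim(r_{1}-u)^{\lambda}(r_{2}-v)^{\lambda}$, giving after integration the harmless bound $(v-u)^{\lambda-\alpha}$; region (A3) $r_{2}\leq r_{1}$, where the increment collapses to $\tilde\psi(u,v)-\tilde\psi(u,r_{2})=O\!\left((r_{2}-v)^{\lambda}\right)$ while $r_{1}-u\geq v-u$ stays off the diagonal, yielding $\lesssim(v-u)^{-\alpha}$; and region (A2) $v\leq r_{1}<r_{2}$, the delicate case, where the increment has the \emph{nonzero} limit $\tilde\psi(v,v)$ at the corner $r_{1},r_{2}\to v^{+}$. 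In (A2) I would bound the rectangular increment crudely by $3M$ and use the telescoping estimate $\int_{v}^{r_{2}}\abs{K(\mathrm{d}r_{1},u)}\leq\min\{(r_{2}-v)(v-u)^{-(\alpha+1)},\,C(v-u)^{-\alpha}\}$; splitting the $r_{2}$-integral at $r_{2}-v=v-u$ and using each branch of the minimum in turn gives $\abs{B^{K}_{(\mathrm{A2})}(u,v)}\lesssim(v-u)^{-2\alpha}$. This is the worst of the three regions and is exactly where the hypothesis $\alpha<\frac{1}{4}$ is consumed, since $\int_{\{u<v\}}(v-u)^{-4\alpha}\,\mathrm{d}u\,\mathrm{d}v<\infty$ if and only if $4\alpha<1$.

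Assembling the four terms, $(\mathcal{K}^{\ast}\otimes\mathcal{K}^{\ast})\psi$ is dominated on $\{u<v\}$ by a finite sum of functions each square-integrable under $\alpha<\frac{1}{4}$, and vanishes on $\{u\geq v\}$, so it lies in $L^{2}([0,T]^{2};E)$. I expect the bookkeeping in region (A2) to be the main obstacle: one must verify that the constant leftover $\tilde\psi(v,v)$ near the corner does not destroy integrability. The resolution is that the constraint $r_{1}<r_{2}$ confines the inner integral to a window of length $r_{2}-v$, so that the near-diagonal singularity of $K(\mathrm{d}r_{1},u)$ contributes a compensating factor $(r_{2}-v)$, converting a would-be $(v-u)^{-(\alpha+1)}$ blow-up into the integrable $(v-u)^{-2\alpha}$. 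The remaining estimates, once the domain has been cut along $\{u=v\}$, $\{r_{1}=v\}$ and $\{r_{1}=r_{2}\}$, reduce to elementary power-law integrals.
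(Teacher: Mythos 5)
There is a genuine gap in your proposal: the guiding principle that ``every integrand vanishes on $\{u\geq v\}$'', and with it the claim that $(\mathcal{K}^{\ast}\otimes\mathcal{K}^{\ast})\psi$ vanishes there, is false. The operator is nonlocal: at a point $(u,v)$ with $v<u$ it samples $\psi(r_{1},r_{2})$ for $r_{1}\in(u,T]$ and $r_{2}\in(v,T]$, and these values are nonzero whenever $r_{1}<r_{2}$. Concretely, while the first two terms of Definition \ref{kStarTensorOp} do vanish for $v<u$, the third term equals
\begin{align*}
K(T,u)\int_{u}^{T}\tilde{\psi}(u,r)\,K(\mathrm{d}r,v)\neq 0,
\end{align*}
and the fourth term retains the contributions of $\psi(r_{1},r_{2})$ on $\{r_{2}>r_{1}\}$ and of $\psi(u,r_{2})$ on $\{r_{2}>u\}$. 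Half of the paper's proof is devoted to exactly this region: the third term is bounded by $C\,\lvert K(T,u)\rvert\bigl((u-v)^{-\alpha}-(T-v)^{-\alpha}\bigr)$ and the fourth by a sum involving $\int_{u}^{T}(r_{2}-u)^{-\alpha}(r_{2}-v)^{-\alpha-1}\,\mathrm{d}r_{2}$, which again requires the $\alpha<\frac14$ splitting. Without these estimates your argument does not establish square-integrability on all of $[0,T]^{2}$.

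Your treatment of the region $\{u<v\}$, on the other hand, is essentially sound and mirrors the paper's: your regions (A1), (A2), (A3) correspond exactly to the paper's decomposition of $(u,T]\times(v,T]$ into $(u,v]\times(v,T]$, $(v,T]\times(r_{1},T]$ and $(v,T]\times(v,r_{1}]$. Your handling of the delicate region (A2) --- bounding the rectangular increment crudely by a constant and exploiting the window $r_{1}\in[v,r_{2})$ to produce $(v-u)^{-2\alpha}$ --- is a valid and in fact slightly sharper alternative to the paper's estimate, which reduces that region to $\int_{v}^{T}(r_{1}-v)^{-\alpha}(r_{1}-u)^{-\alpha-1}\,\mathrm{d}r_{1}\lesssim(v-u)^{-\alpha-\frac14}$; both consume the hypothesis $\alpha<\frac14$ in the same place. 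Once you supply the missing estimates on $\{v<u\}$ (which use the same two-region splitting of the $B^{K}$ integral and the same $\alpha<\frac14$ trick), the proof closes.
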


\begin{proof}
We will investigate the integrability of
\begin{align} \label{kStarTensorOp2}
\begin{split}
\mathcal{K}^* \otimes \mathcal{K}^* \psi (u, v)
&= \psi(u, v) K(T, u) K(T, v) + K(T, v) A^K(\psi(\cdot, v)) (u) \\
&\qquad \qquad+ K(T, u) A^K(\psi(u, \cdot)) (v) + B^K(\psi) (u, v)
\end{split}
\end{align}
in the regions $\{ u < v\}$ and $\{ v < u \}$ separately (ignoring the diagonal as it has zero Lebesgue measure). \par
(i) $u < v$: \par
For the first term on the right of \eqref{kStarTensorOp2} we have
\begin{align*}
\psi(u, v) K(T, u) K(T, v) = \tilde{\psi} (u, v) K(T, u) K(T, v) \in L^2([0, T]^2; E),
\end{align*}
and for the second term, we have
\begin{align*}
&\norm{K(T, v) A^K \big( \psi(\cdot, v) \big) (u)}_E \\
&\qquad \qquad=  \norm{K(T, v) \left( \int_u^v \left[ \tilde{\psi}(r, v) -\tilde{\psi}(u, v) \right] K(\mathrm{d}r, u) - \int_v^T \tilde{\psi}(u, v) K(\mathrm{d}r, u) \right) }_E \\
&\qquad \qquad \leq C \abs{K(T, v)} \left( (v - u)^{\lambda - \alpha} + \left( \frac{1}{(v- u)^{\alpha}} - \frac{1}{(T - u)^{\alpha}} \right) \right) \in L^2([0, T]^2).
\end{align*}
The third term satisfies
\begin{align*}
\norm{ K(T, u) A^K \big( \psi (u, \cdot) \big) (v) }_E
&= \norm{ K(T, u) \int_v^T \left[ \tilde{\psi}(u, r) - \tilde{\psi}(u, v) \right] K(\mathrm{d}r, v) }_E \\
&\leq C \abs{K(T, u)} (T - v)^{\lambda - \alpha} \in L^2([0, T]^2).
\end{align*}
For the fourth term, given $r_1 \in (u, T]$, we have
\begin{align*}
(u, T] \times (v, T] = \big\{ (u, v] \times (v, T] \big\} \bigsqcup \big\{ (v, T] \times (v, r_1] \big\} \bigsqcup \big\{ (v, T] \times (r_1, T] \big\},
\end{align*}
and thus
\begin{align*}
\norm{B^K(\psi) (u, v)}_E
&= \left\| \int_u^v \left( \int_v^T \tilde{\psi} \begin{pmatrix}
u & r_1 \\
v & r_2
\end{pmatrix} K(\mathrm{d}r_2, v) \right) K(\mathrm{d}r_1, u) \right. \\
&\qquad \left. + \int_v^T \left( \int_{r_1}^T \left[ \tilde{\psi}(r_1, r_2) + \tilde{\psi}(u, v) - \tilde{\psi}(u, r_2) \right] K(\mathrm{d}r_2, v) \right) K(\mathrm{d}r_1, u) \right. \\
&\qquad \left. + \int_v^T \left( \int_v^{r_1} \left[ \tilde{\psi}(u, v) - \tilde{\psi}(u, r_2) \right] K(\mathrm{d}r_2, v) \right) K(\mathrm{d}r_1, u) \right\|_E.
\end{align*}
This expression is bounded above by
\begin{align*}
C \left( (v - u)^{\lambda - \alpha} (T - v)^{\lambda - \alpha} + \int_v^T \frac{1}{(r_1 - v)^{\alpha} (r_1 - u)^{\alpha + 1}} \wrt{r_1} + \left( \frac{1}{(v - u)^{\alpha}} - \frac{1}{(T - u)^{\alpha}} \right) \right).
\end{align*}
Since
\begin{align} \label{estL2}
\begin{split}
\int_v^T \frac{1}{ (r_1 - v)^{\alpha} (r_1 - u)^{\alpha + 1}} \wrt{r_1}
&= \int_v^T \frac{1}{(r_1 - v)^{\alpha} (r_1 - u)^{\alpha + \frac{1}{4}} (r_1 - u)^{\frac{3}{4}}} \wrt{r_1} \\
&\leq \frac{1}{(v - u)^{\alpha + \frac{1}{4}}} \int_v^T \frac{1}{(r_1 - v)^{\alpha + \frac{3}{4}}} \wrt{r_1},
\end{split}
\end{align}
and $\alpha < \frac{1}{4}$, the fourth term is also in $L^2([0, T]^2; E)$. \par
(ii) $v < u$: \par
The first two terms on the right of \eqref{kStarTensorOp2} vanish, and the third term obeys the estimate
\begin{align*}
\norm{K(T, u) A^K \big( \psi (u, \cdot) \big) (v) }_E
&= \left\| K(T, u) \int_u^T \tilde{\psi} (u, r) K(\mathrm{d}r, v) \right\|_E \quad (\psi(u,r) = 0 \; \mathrm{when} \; v < r < u) \\
&\leq C \abs{K(T, u)} \left( \frac{1}{(u - v)^{\alpha}} - \frac{1}{(T - v)^{\alpha}} \right),
\end{align*}
and hence it is in $L^2([0, T]^2; E)$. For the fourth term, note that
\begin{align*}
\psi \begin{pmatrix}
u & r_1 \\
v & r_2
\end{pmatrix} = 0 \; \mathrm{when} \; v < r_2 < u,
\end{align*}
and thus we have
\begin{align*}
\norm{B^K(\psi) (u, v)}_E
&\leq \norm{\int_u^T \left( \int_u^{r_2} \left[ \tilde{\psi}(r_1, r_2) - \tilde{\psi} (u, r_2) \right] K(\mathrm{d}r_1, u) \right) K(\mathrm{d}r_2, v) }_E \\
&\qquad+ \norm{\int_u^T \left( \int_{r_2}^T \tilde{\psi} (u, r_2) K(\mathrm{d}r_1, u) \right) K(\mathrm{d}r_2, v)}_E \\
&\leq C \left( \left( \frac{1}{(u - v)^{\alpha}} - \frac{1}{(T - v)^{\alpha}} \right) + \int_u^T \frac{1}{(r_2 - u)^{\alpha} (r_2 - v)^{\alpha + 1}} \wrt{r_2} \right).
\end{align*}
Utilizing \eqref{estL2} again, we see that the fourth term is also in $L^2([0, T]^2; E)$.
\end{proof}

The following proposition then deals with the issue of convergence along discrete approximations.
\begin{proposition} \label{HSprop} 
Let $F$ denote either $\mathbb{R}^{e}$ or $L^{2}(\Omega; \mathbb{R}^{e})$, and let $\psi: [0, T]^{2} \rightarrow F$ be a function of the form $\psi(u, v) = \mathds{1}_{[0, v)}(u) \tilde{\psi}(u,v)$, where $\tilde{\psi}$ is strongly $\lambda$-H\"{o}lder bi-continuous in the norm of $F$. Given a partition $\pi= \{ r_{i} \}$ of $[0, T]$, denote
\begin{align} \label{psiApprox}
\psi^{\pi} (s, t) := \sum_{j} \psi(s, r_{j}) \mathds{1}_{[r_{j}, r_{j+1})}(t).
\end{align}
Moreover, let $\mathcal{K}^{*} \otimes\mathcal{K}^{*}$ be the operator given in Definition \ref{kStarTensorOp}, where the Volterra kernel $K$ satisfies Condition \ref{amnCond} for some $\alpha\in\left[  0, \frac{1}{4} \right) $. Then if $\lambda> \alpha$, we have
\begin{align*}
\int_{[0, T]^{2}} \left\|  \mathcal{K}^{*} \otimes\mathcal{K}^{*} \left(\psi^{\pi} - \psi\right)  (s, t) \right\| ^{2}_{F} \, \mathrm{d} s \, \mathrm{d} t \rightarrow 0.
\end{align*}

\end{proposition}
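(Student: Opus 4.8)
The plan is to reduce the two-parameter statement to the one-parameter Proposition \ref{nualartProp}, exploiting that the approximation $\psi^{\pi}$ discretizes only the second argument. The starting observation is the exact identity $\psi^{\pi}(s,t)=\psi(s,\underline{t})$, where $\underline{t}:=r_{j}$ denotes the left endpoint of the cell $[r_{j},r_{j+1})$ containing $t$; this is immediate from the definition \eqref{psiApprox}. By \eqref{kStarProduct} and linearity, $\mathcal{K}^{\ast}\otimes\mathcal{K}^{\ast}$ factors as the composition of the one-variable operator $\mathcal{K}^{\ast}$ acting on the first slot (which I write $\mathcal{K}^{\ast}_{(1)}$) with $\mathcal{K}^{\ast}$ acting on the second. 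Since $\mathcal{K}^{\ast}_{(1)}$ leaves the (frozen) second argument untouched, it commutes with the sampling $t\mapsto\underline{t}$, so that, writing $\Phi(t):=\mathcal{K}^{\ast}\big(\psi(\cdot,t)\big)$, we have $\mathcal{K}^{\ast}_{(1)}\psi^{\pi}(\cdot,t)=\Phi(\underline{t})$ and $\mathcal{K}^{\ast}_{(1)}\psi(\cdot,t)=\Phi(t)$.

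Set $E:=L^{2}([0,T];F)$. By \eqref{truncatedKStar},
\begin{align*}
\Phi(t)(u)=\mathds{1}_{[0,t)}(u)\left(\tilde{\psi}(u,t)K(t,u)+\int_{u}^{t}\big[\tilde{\psi}(r,t)-\tilde{\psi}(u,t)\big]K(\mathrm{d}r,u)\right),
\end{align*}
and since $\tilde{\psi}(\cdot,t)$ is $\lambda$-Hölder with $\lambda>\alpha$, the discussion following \eqref{kStarDefn} gives $\Phi(t)\in E$ for every $t$. Applying Tonelli's theorem together with the factorization of the previous paragraph,
\begin{align*}
\int_{[0,T]^{2}}\norm{(\mathcal{K}^{\ast}\otimes\mathcal{K}^{\ast})(\psi^{\pi}-\psi)(u,v)}_{F}^{2}\,\mathrm{d}u\,\mathrm{d}v=\int_{0}^{T}\norm{\mathcal{K}^{\ast}(\Phi^{\pi}-\Phi)(v)}_{E}^{2}\,\mathrm{d}v,
\end{align*}
where $\Phi^{\pi}(t):=\Phi(\underline{t})$ is precisely the step-function approximation of the $E$-valued path $\Phi$. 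Hence it suffices to invoke Proposition \ref{nualartProp} for the Banach space $E$ and the map $\Phi:[0,T]\to E$; the only hypothesis left to check is that $\Phi$ is $\lambda'$-Hölder continuous into $E$ for some $\lambda'>\alpha$.

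Verifying this Hölder estimate is the heart of the proof. For $t_{1}<t_{2}$ I would bound $\int_{0}^{t_{2}}\norm{\Phi(t_{2})(u)-\Phi(t_{1})(u)}_{F}^{2}\,\mathrm{d}u$ by splitting the $u$-range into $[0,t_{1})$ and $[t_{1},t_{2})$. On $[0,t_{1})$ both terms are present, and the increment is controlled by combining the strong $\lambda$-Hölder bi-continuity of $\tilde{\psi}$ with the kernel estimates of Condition \ref{amnCond}. The binding region is $[t_{1},t_{2})$, where $\Phi(t_{1})(u)=0$ and only the boundary layer of $\psi(\cdot,t)$ near its moving discontinuity $\{s=t\}$ survives; there the kernel singularity $K(t_{2},u)\sim u^{-\alpha}(t_{2}-u)^{-\alpha}$ forces a contribution of order $\abs{t_{2}-t_{1}}^{1-2\alpha}$, i.e. Hölder exponent $\tfrac12-\alpha$. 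Hence $\Phi$ is $\lambda'$-Hölder with $\lambda'=\lambda\wedge(\tfrac12-\alpha)$, and crucially $\lambda'>\alpha$ since $\lambda>\alpha$ and $\tfrac12-\alpha>\alpha$—the latter being exactly the hypothesis $\alpha<\tfrac14$. I expect this boundary-layer estimate near $u=t$ to be the main obstacle: it is precisely the point where the jump in the first argument must be shown to cost an exponent strictly larger than $\alpha$, and where $\alpha<\tfrac14$ is indispensable; the remaining steps—the factorization and the direct appeal to Proposition \ref{nualartProp}—are comparatively routine.
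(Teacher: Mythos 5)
Your reduction is, at its core, the same as the paper's: both arguments hinge on showing that the map $\Phi(t):=\mathcal{K}^{*}\big(\psi(\cdot,t)\big)$ is H\"{o}lder continuous into $E=L^{2}([0,T];F)$ with some exponent $\lambda'>\alpha$, and then feed this into the step-approximation machinery (you via Proposition \ref{nualartProp} applied directly to the $E$-valued path $\Phi$; the paper via Proposition \ref{nualartPropNew} applied to the scalar Gram function $h(u,v)=\langle\Phi(u),\Phi(v)\rangle_{E}$, whose strong H\"{o}lder bi-continuity is deduced from exactly the same estimate by Cauchy--Schwarz). Your packaging is arguably cleaner, and the factorization $\mathcal{K}^{*}\otimes\mathcal{K}^{*}=\mathcal{K}^{*}_{(2)}\circ\mathcal{K}^{*}_{(1)}$ together with the interchange of the $u$-integral with the singular $K(\mathrm{d}r,v)$-integrals is legitimate, though it needs the explicit domination the paper supplies (via Lemma 3.2 of \cite{lim2018}) rather than a bare appeal to Tonelli, since the inner integrands are not nonnegative.

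The genuine gap is in the boundary-layer exponent. On $[t_{1},t_{2})$ the dominant term of $\Phi(t_{2})(u)$ is $\tilde{\psi}(u,t_{2})K(t_{2},u)$, and Condition \ref{amnCond} only gives $\abs{K(t_{2},u)}\leq Cu^{-\alpha}(t_{2}-u)^{-\alpha}$. Your claimed contribution $\abs{t_{2}-t_{1}}^{1-2\alpha}$ keeps only the $(t_{2}-u)^{-\alpha}$ factor; retaining the $u^{-\alpha}$ singularity, the worst case $t_{1}\downarrow 0$ gives
\begin{align*}
\int_{t_{1}}^{t_{2}}u^{-2\alpha}(t_{2}-u)^{-2\alpha}\wrt{u}\;\leq\;C\,(t_{2}-t_{1})^{1-4\alpha},
\end{align*}
i.e.\ H\"{o}lder exponent $\tfrac{1}{2}-2\alpha$ rather than $\tfrac{1}{2}-\alpha$, and $\tfrac{1}{2}-2\alpha>\alpha$ only for $\alpha<\tfrac{1}{6}$. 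This excludes precisely the regime the paper is built for (fBm with $H\in(\tfrac14,\tfrac13]$ has $\alpha=\tfrac12-H\in[\tfrac16,\tfrac14)$). The paper avoids the pointwise integration altogether: it applies H\"{o}lder's inequality on $[w_{1},w_{2})$ with exponent $q_{2}=\tfrac{1}{1-2\lambda'}$ against the $L^{4}(\wrt{s})$-integrability of $\mathcal{K}^{*}\psi(\cdot,w)$ (estimate \eqref{estimateE}), which forces the cap $\lambda'=\tfrac14\wedge\lambda$ and is exactly where $\alpha<\tfrac14$ enters; that mechanism is absent from your sketch. A second, smaller issue: your exponent $\lambda'=\lambda\wedge(\tfrac12-\alpha)$ is not admissible at the endpoint, because on $[0,t_{1})$ the majorant $(t_{1}-s)^{-(\alpha+\lambda')}$ coming from \eqref{est3} must be square-integrable, which requires the strict inequality $2(\alpha+\lambda')<1$; the paper's choice $\lambda'=\tfrac14\wedge\lambda$ satisfies this automatically. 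So the architecture of your proof is sound, but the key H\"{o}lder estimate must be run through the $L^{4}$/interpolation argument, not the direct pointwise one.
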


\begin{proof}
We define
\begin{align*}
h(u, v)
:= \int_0^T \left\langle \mathcal{K}^* \left( \psi (\cdot, u) \right) (s), \, \mathcal{K}^* \left( \psi (\cdot, v) \right) (s) \right\rangle_F \wrt{s},
\end{align*}
and correspondingly,
\begin{align*}
h^{\pi} (u, v)
&:= \int_0^T \left\langle \mathcal{K}^* \left( \psi^{\pi} (\cdot, u) \right) (s), \, \mathcal{K}^* \left( \psi^{\pi} (\cdot, v) \right) (s) \right\rangle_F \wrt{s} \\
&= \sum_{i, j} \left( \int_0^T \left\langle \mathcal{K}^* \left( \psi^{\pi} (\cdot, r_i) \right) (s), \, \mathcal{K}^* \left( \psi^{\pi} (\cdot, r_j) \right) (s) \right\rangle_F \wrt{s} \right) \mathds{1}_{[r_i, r_{i+1})} (u) \mathds{1}_{[r_j, r_{j+1})} (v) \\
&= \sum_{i, j} h(r_i, r_j) \mathds{1}_{[r_i, r_{i+1})} (u) \mathds{1}_{[r_j, r_{j+1})} (v).
\end{align*}
Let $\lambda' := \frac{1}{4} \wedge \lambda$. Since $\alpha < \frac{1}{4}$, $\lambda'$ is greater than $\alpha$, and note that any strongly $\lambda$-H\"{o}lder bi-continuous function is also strongly $\lambda'$-H\"{o}lder bi-continuous. We will begin by first showing that $h(u, v)$ is strongly $\lambda'$-H\"{o}lder bi-continuous. \par
For all $u, v, u_1, u_2, v_1, v_2 \in [0,T]$, we have
\begin{align*}
&\abs{h(u_1, v) - h(u_2, v)}
\leq \left( \int_0^T \left\| \mathcal{K}^* \left( \psi(\cdot, u_1) - \psi(\cdot, u_2) \right) (s) \right\|^2_F \wrt{s} \right)^{\frac{1}{2}} \left( \int_0^T \left\| \mathcal{K}^* \left( \psi(\cdot, v) \right) (s) \right\|^2_F \wrt{s} \right)^{\frac{1}{2}}, \\
&\abs{h(u, v_1) - h(u, v_2)}
\leq \left( \int_0^T \left\| \mathcal{K}^* \left( \psi(\cdot, v_1) - \psi(\cdot, v_2) \right) (s) \right\|^2_F \wrt{s} \right)^{\frac{1}{2}} \left( \int_0^T \left\| \mathcal{K}^* \left( \psi(\cdot, u) \right) (s) \right\|^2_F \wrt{s} \right)^{\frac{1}{2}},
\end{align*}
and $\abs{h \begin{pmatrix}
u_1 & u_2 \\
v_1 & v_2
\end{pmatrix}}$ is bounded above by
\begin{align*}
\left( \int_0^T \left\| \mathcal{K}^* \left( \psi(\cdot, u_1) - \psi(\cdot, u_2) \right) (s) \right\|^2_F \wrt{s} \right)^{\frac{1}{2}} \left( \int_0^T \left\| \mathcal{K}^* \left( \psi(\cdot, v_1) - \psi(\cdot, v_2) \right) (s) \right\|^2_F \wrt{s} \right)^{\frac{1}{2}}.
\end{align*}
Note that for $p \geq 1$, using \eqref{truncatedKStar} and fixing $w \in [0, T]$, we have
\begin{align} \label{estimateE}
\begin{split}
\left\| \mathcal{K}^* \psi (\cdot, w) (s)\right\|_F^p
&= \norm{\tilde{\psi} (s, w) K(w, s) + \int_s^w \left[ \tilde{\psi}(r, w) - \tilde{\psi} (s, w) \right] K(\mathrm{d}r, s) }_F^p \\
&\leq C \, 2^{p-1} \left( \frac{1}{s^{p\alpha} (w - s)^{p\alpha}} + (w - s)^{p (\lambda' - \alpha)} \right).
\end{split}
\end{align}
Since $\alpha < \frac{1}{4}$, $\int_0^T \left\| \mathcal{K}^* \psi (\cdot, w) (s)\right\|_F^p \wrt{s}$ is finite as long as $p \leq 4$. \par
Now, all we have to do is show that
\begin{align} \label{estimateA}
\int_0^T \left\| \mathcal{K}^* \psi(\cdot, w_2) - \psi(\cdot, w_1) (s) \right\|_F^2 \wrt{s} \leq C \, \abs{w_2 - w_1}^{2\lambda'},
\end{align}
for all $w_1, w_2 \in [0, T]$, where without loss of generality, we let $w_1 < w_2$.
Observe that
\begin{align} \label{origExp}
\begin{split}
&\int_0^T \left\| \mathcal{K}^* \left( \psi(\cdot, w_2) - \psi(\cdot, w_1) \right) (s) \right\|^2_F \wrt{s} \\
&\qquad \qquad = \int_0^{w_1} \left\| \mathcal{K}^* \left( \psi(\cdot, w_2) - \psi(\cdot, w_1) \right) (s) \right\|^2_F \wrt{s} + \int_{w_1}^{w_2} \left\| \mathcal{K}^* \left( \psi(\cdot, w_2) - \psi(\cdot, w_1) \right) (s) \right\|^2_F \wrt{s}.
\end{split}
\end{align}
For the first term above, for $s \in [0, w_1)$, we have (using \eqref{truncatedKStar})
\begin{align} \label{expansion1}
\begin{split}
\mathcal{K}^* \left( \psi(\cdot, w_2) - \psi(\cdot, w_1) \right) (s)
&= \left(\psi(s, w_2) - \psi (s, w_1) \right) K(w_2, s) \\
&\qquad+ \int_s^{w_2} \left[ \psi (r, w_2) - \psi (s, w_2) - \psi (r, w_1) + \psi (s, w_1) \right] K(\mathrm{d}r, s) \\
&= \left(\tilde{\psi}(s, w_2) - \tilde{\psi} (s, w_1) \right) K(w_2, s) + \int_s^{w_1}
\tilde{\psi} \begin{pmatrix}
s & r \\
w_1 & w_2
\end{pmatrix} K(\mathrm{d}r, s) \\
&\qquad+ \int_{w_1}^{w_2} \left[ \tilde{\psi} (r, w_2) - \tilde{\psi} (s, w_2) + \tilde{\psi} (s, w_1) \right] K(\mathrm{d}r, s).
\end{split}
\end{align}
Since $\tilde{\psi}$ is strongly $\lambda'$-H\"{o}lder bi-continuous, we have
\begin{align} \label{est1}
\norm{\left( \tilde{\psi}(s, w_2) - \tilde{\psi} (s, w_1) \right) K(w_2, s)}_F
\leq C \abs{w_2 - w_1}^{\lambda'} s^{-\alpha} (w_2 - s)^{-\alpha},
\end{align}
and
\begin{align} \label{est2}
\norm{\int_s^{w_1}
\tilde{\psi} \begin{pmatrix}
s & r \\
w_1 & w_2
\end{pmatrix} K(\mathrm{d}r, s)}_F
\leq C \abs{w_2 - w_1}^{\lambda'} (w_1 - s)^{\lambda' - \alpha}.
\end{align}
For the last integral in \eqref{expansion1}, we let $q_1$ denote $\frac{1}{1 - \lambda'}$ and use H\"{o}lder's inequality to derive
\begin{align} \label{est3}
\begin{split}
\norm{\int_{w_1}^{w_2} \left[ \tilde{\psi} (r, w_2) - \tilde{\psi} (s, w_2) + \tilde{\psi} (s, w_1) \right] K(\mathrm{d}r, s) }_F
&\leq C \abs{w_2 - w_1}^{\lambda'} \left( \int_{w_1}^{w_2} \abs{\pd{K(r, s)}{r}}^{q_1} \wrt{r} \right)^{\frac{1}{q_1}} \\
&\leq C \abs{w_2 - w_1}^{\lambda'} \left( \int_{w_1}^{w_2} \frac{1}{(r - s)^{q_1 (\alpha + 1)}} \wrt{r} \right)^{\frac{1}{q_1}} \\
&\leq C \abs{w_2 - w_1}^{\lambda'} (w_1 - s)^{-(\alpha + \lambda' )}.
\end{split}
\end{align}
Putting estimates \eqref{est1}, \eqref{est2} and \eqref{est3} together, when $s < w_1$ we have
\begin{align} \label{diffEst}
\norm{\mathcal{K}^* \left( \psi(\cdot, w_2) - \psi(\cdot, w_1) \right) (s) }_F
\leq C \abs{w_2 - w_1}^{\lambda'} f(s),
\end{align}
for some $f(s) \in L^2\left([0, T]\right)$ since $\lambda' > \alpha$ and $2(\alpha + \lambda' ) < 1$. This gives
\begin{align*}
\int_0^{w_1} \norm{\mathcal{K}^* \left( \psi(\cdot, w_2) - \psi(\cdot, w_1) \right) (s) }_F^2 \wrt{s}
\leq C \abs{w_2 - w_1}^{2 \lambda'}.
\end{align*}
Returning to the second term in \eqref{origExp}, we let $q_2$ denote $\frac{1}{1 - 2\lambda'}$ and use H\"{o}lder's inequality again to obtain
\begin{align*}
\int_{w_1}^{w_2} \left\| \mathcal{K}^* \left( \psi(\cdot, w_1) - \psi(\cdot, w_2) \right) (s) \right\|^2_F \wrt{s}
\leq |w_2 - w_1|^{2\lambda'} \left( \int_0^T \left\| \mathcal{K}^* \left( \psi(\cdot, w_1) - \psi(\cdot, w_2) \right) (s) \right\|^{2q_2}_F \wrt{s} \right)^{\frac{1}{q_2}}.
\end{align*}
Since $\lambda' < \frac{1}{4}$, we have $2q_2 \leq 4$ and this gives $\left( \int_0^T \left\| \mathcal{K}^* \left( \psi(\cdot, w_1) - \psi(\cdot, w_2) \right) (s) \right\|^{2q_2}_F \wrt{s} \right)^{\frac{1}{q_2}} < \infty$ from \eqref{estimateE}.
Now that we have shown that $h$ is strongly $\lambda'$-H\"{o}lder bi-continuous, we will show that
\begin{align*}
\int_{[0, T]^2} \left\| \mathcal{K}^* \otimes \mathcal{K}^* \left( \psi^{\pi} - \psi \right) (s, t) \right\|^2_F \wrt{s} \wrt{t}
= \int_0^T \left( \mathcal{K}^* \otimes \mathcal{K}^* \left( h^{\pi} - h \right) \right) (t, t) \wrt{t},
\end{align*}
and then invoke Proposition \ref{nualartPropNew} to complete the proof. \par
Let $g(s, t)$ denote $\mathcal{K}^* \left(\psi (\cdot, t) \right) (s)$, and note that $g(s, t) = 0$ when $s \geq t$.
We first compute
\begin{align} \label{pullOutS}
\begin{split}
\mathcal{K}^* \otimes \mathcal{K}^* h (t, t)
&= h(t, t) K(T, t)^2 + K(T, t) A^K\big( h(\cdot, t) \big) (t) + K(T, t) A^K \big( h(t, \cdot) \big) (t) + B^K(h)(t, t) \\
&= \int_0^T \left\langle g(s, t), g(s, t) \right\rangle_F K(T, t)^2 \wrt{s} \\
&\quad+ 2 K(T, t) \int_t^T \left( \int_0^T \left\langle g(s, r) - g(s, t), g(s, t) \right\rangle_F \wrt{s} \right) K(\mathrm{d}r, t) \\
&\quad+ \int_t^T \int_t^T \left( \int_0^T \left\langle g(s, r_1) - g(s, t),  g(s, r_2) - g(s, t) \right\rangle_F \wrt{s} \right) K(\mathrm{d}r_1, t) K(\mathrm{d}r_2, t).
\end{split}
\end{align}
The second term on the right vanishes when $s \geq t$, and when $s < t$, using \eqref{estimateE} and \eqref{diffEst} gives us
\begin{align*}
\abs{\left\langle g(s, r) - g(s, t), g(s, t) \right\rangle_F} \abs{\pd{K(r, t)}{r}}
\leq C \abs{r - t}^{\lambda' - \alpha - 1} \tilde{f}(s)
\end{align*}
for some $\tilde{f}(s) \in L^1 ([0, T])$, and thus we can swap the integral with respect to $s$ outside the integral with respect to $r$.
Similarly, the third term on the right of \eqref{pullOutS} is bounded by
\begin{align*}
C \left( \int_s^T \frac{1}{(r - t)^{\alpha + 1}} \wrt{r} \right)^2
\end{align*}
when $s > t$ since the integrand vanishes when $r_1 \leq s$ or $r_2 \leq s$. Furthermore, when $s < t$, its integrand is bounded by
\begin{align*}
C \abs{r_1 - t}^{\lambda' - \alpha - 1} \abs{r_2 - t}^{\lambda' - \alpha - 1} f^2(s).
\end{align*}
Hence, we can also pull out the integral with respect to $s$, and we get
\begin{align*}
\mathcal{K}^* \otimes \mathcal{K}^* h (t, t)
= \int_0^T \mathcal{K}^* \otimes \mathcal{K}^* \left( \left\langle g(s, \cdot), g(s, \cdot) \right\rangle_F \right) (t, t) \wrt{s}.
\end{align*}
Observe that
\begin{align*}
\mathcal{K}^* \otimes \mathcal{K}^* \left( \left\langle g(s, \cdot), g(s, \cdot) \right\rangle_F \right) (t,t)
&= \left\langle \mathcal{K}^* \left( g(s, \cdot) \right) (t), \mathcal{K}^* \left( g(s, \cdot) \right) (t) \right\rangle_F \\
&= \norm{\mathcal{K}^* (g(s, \cdot))(t)}^2_F,
\end{align*}
where here we use \eqref{kStarProduct}, and Fubini's theorem in the case when $F = L^2 (\Omega; \mathbb{R}^e)$. \par 
Fixing $s$, note that for all $t > s$, $g(s, \cdot)$ is $\lambda'$-H\"{o}lder continuous on $[t, T]$ (with the H\"{o}lder norm depending on $t$) from \eqref{est1}, \eqref{est2} and \eqref{est3}. Thus, $\mathcal{K}^* (g(s, \cdot))(t)$ is well defined for all $t > s$, vanishes when $t < s$, and we can apply Lemma 3.2 in \cite{lim2018} to obtain
\begin{align*}
\mathcal{K}^* \left( g(s, \cdot) \right) (t) = \mathcal{K}^* \otimes \mathcal{K}^* \psi (s, t)
\end{align*}
for all $s \neq t$. This concludes the proof.
\end{proof}
It follows in particular that $\mathcal{H}_{1}^{d}\otimes\mathcal{H}_{1}^{d}$ contains functions $\psi: \left[0, T \right]^{2} \rightarrow \mathbb{R}^{d} \otimes \mathbb{R}^{d}$ of the form $\psi(u,v) = \mathds{1}_{[0,v)}(u)\tilde{\psi}(u,v)$ whenever $\tilde{\psi}$ is strongly $\lambda$-H\"{o}lder bi-continuous.

\begin{proposition} \label{H1tensorH1equiv2}
Let $\psi: [0, T]^2 \rightarrow \mathbb{R}^d$ be of the form $\psi(u, v) = \mathds{1}_{[0, v)}(u) \tilde{\psi}(u,v)$, where $\tilde{\psi}$ is strongly $\lambda$-H\"{o}lder bi-continuous, and let $\mathcal{K}^* \otimes \mathcal{K}^*$ be defined as in Definition \ref{kStarTensorOp}, where the Volterra kernel $K$ satisfies Condition \ref{amnCond} for some $\alpha \in \left[ 0, \frac{1}{4} \right)$. \par 
Then if $\lambda > \alpha$, $\psi$ is an element of $\mathcal{H}_1^d \otimes \mathcal{H}_1^d$, with norm given by
\begin{align} \label{tensorNorm2}
\norm{\psi}_{\mathcal{H}_1^d \otimes \mathcal{H}_1^d}
= \int_{[0, T]^2} \abs{ \mathcal{K}^* \otimes \mathcal{K}^* \psi (s, t) }^2_{\mathbb{R}^d \otimes \mathbb{R}^d} \wrt{s} \wrt{t},
\end{align}
and with $\psi^{\pi}$ defined as in \eqref{psiApprox}, we have
\begin{align} \label{tensorVanishing2}
\norm{\psi^{\pi} - \psi}_{\mathcal{H}_1^d \otimes \mathcal{H}_1^d} \rightarrow 0
\end{align}
as $\norm{\pi} \rightarrow 0$.
\end{proposition}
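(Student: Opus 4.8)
The plan is to obtain this statement as a clean synthesis of the two preceding propositions, transporting everything into $L^{2}\left([0,T]^{2}\right)$ via the Hilbert-space isometry $\mathcal{K}^{\ast}\otimes\mathcal{K}^{\ast}$. Recall from the discussion following Definition \ref{kStarTensorOp} that, through \eqref{kStarProduct} and Proposition \ref{h1 and h2}, the operator $\mathcal{K}^{\ast}\otimes\mathcal{K}^{\ast}$ maps $\mathcal{H}_1^d\otimes\mathcal{H}_1^d$ isometrically onto the \emph{closed} subspace $\mathcal{H}_2^d\otimes\mathcal{H}_2^d$ of $L^{2}\left([0,T]^{2}\right)$, so that, consistent with the convention of \eqref{LH1norm}, $\norm{f}_{\mathcal{H}_1^d\otimes\mathcal{H}_1^d}=\norm{(\mathcal{K}^{\ast}\otimes\mathcal{K}^{\ast})f}^2_{L^{2}([0,T]^{2})}$ for every $f$ in the domain. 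First I would observe that each approximant $\psi^{\pi}$ already lies in $\mathcal{H}_1^d\otimes\mathcal{H}_1^d$: writing $\psi^{\pi}(s,t)=\sum_j \psi(s,r_j)\mathds{1}_{[r_j,r_{j+1})}(t)$, the slice $s\mapsto \psi(s,r_j)=\mathds{1}_{[0,r_j)}(s)\tilde{\psi}(s,r_j)$ is piecewise $\lambda$-H\"{o}lder continuous, hence belongs to $\Lambda_{\alpha}^d\subset\mathcal{H}_1^d$ by Proposition \ref{A1eqH1}, while $\mathds{1}_{[r_j,r_{j+1})}\in\mathcal{H}_1$; thus $\psi^{\pi}$ is a finite element of the algebraic tensor product and, by \eqref{kStarProduct}, its image under $\mathcal{K}^{\ast}\otimes\mathcal{K}^{\ast}$ is computed slicewise and lands in $\mathcal{H}_2^d\otimes\mathcal{H}_2^d$.

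Next I would identify $\psi$ itself as a limit. The preceding proposition (that $(\mathcal{K}^{\ast}\otimes\mathcal{K}^{\ast})\psi\in L^{2}([0,T]^{2};E)$) guarantees that the concrete function $(\mathcal{K}^{\ast}\otimes\mathcal{K}^{\ast})\psi$ is a genuine $L^2$ element, and Proposition \ref{HSprop}, applied with $F$ the finite-dimensional target of $\psi$, gives $\norm{(\mathcal{K}^{\ast}\otimes\mathcal{K}^{\ast})(\psi^{\pi}-\psi)}^2_{L^{2}([0,T]^{2})}\to 0$ as $\norm{\pi}\to 0$; by linearity this says $(\mathcal{K}^{\ast}\otimes\mathcal{K}^{\ast})\psi^{\pi}\to(\mathcal{K}^{\ast}\otimes\mathcal{K}^{\ast})\psi$ in $L^{2}$. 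In particular $\{(\mathcal{K}^{\ast}\otimes\mathcal{K}^{\ast})\psi^{\pi}\}$ is Cauchy in $L^{2}$, so by the isometry $\{\psi^{\pi}\}$ is Cauchy in $\mathcal{H}_1^d\otimes\mathcal{H}_1^d$ and, by completeness, converges to some $\Psi\in\mathcal{H}_1^d\otimes\mathcal{H}_1^d$. Continuity of the isometry forces $(\mathcal{K}^{\ast}\otimes\mathcal{K}^{\ast})\Psi=(\mathcal{K}^{\ast}\otimes\mathcal{K}^{\ast})\psi$ as $L^{2}$-functions, and since each $(\mathcal{K}^{\ast}\otimes\mathcal{K}^{\ast})\psi^{\pi}$ sits in the closed subspace $\mathcal{H}_2^d\otimes\mathcal{H}_2^d$, so does the limit. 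Identifying $\psi$ with $\Psi$, exactly as $f$ is identified with its approximating limit in Proposition \ref{A1eqH1}, then places $\psi$ in $\mathcal{H}_1^d\otimes\mathcal{H}_1^d$ and yields the norm formula \eqref{tensorNorm2}.

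Finally, with both $\psi^{\pi}$ and $\psi$ recognized as elements of $\mathcal{H}_1^d\otimes\mathcal{H}_1^d$, the vanishing \eqref{tensorVanishing2} is immediate from the isometry and Proposition \ref{HSprop}:
\begin{align*}
\norm{\psi^{\pi}-\psi}_{\mathcal{H}_1^d\otimes\mathcal{H}_1^d}
=\norm{(\mathcal{K}^{\ast}\otimes\mathcal{K}^{\ast})(\psi^{\pi}-\psi)}^2_{L^{2}([0,T]^{2})}\longrightarrow 0
\end{align*}
as $\norm{\pi}\to0$. The analytic heavy lifting, namely the $L^{2}$-integrability of $(\mathcal{K}^{\ast}\otimes\mathcal{K}^{\ast})\psi$ despite the discontinuity along the diagonal and the convergence of the transformed approximants, has already been carried out in the two preceding propositions, so the only delicate point that remains is the bookkeeping of the identification: one must check that the concrete operator of Definition \ref{kStarTensorOp} applied to $\psi$ agrees with the abstract Hilbert-space isometry applied to the limit $\Psi$, and that this common image lands in the closed subspace $\mathcal{H}_2^d\otimes\mathcal{H}_2^d$ rather than merely in $L^{2}$. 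This is the step I expect to require the most care, and it is precisely the tensor-product analogue of the identification made in Proposition \ref{A1eqH1}.
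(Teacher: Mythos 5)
Your proposal is correct and follows essentially the same route as the paper: show that each $\psi^{\pi}$ lies in $\Lambda_{\alpha}^d\otimes\mathcal{H}_1^d\subset\mathcal{H}_1^d\otimes\mathcal{H}_1^d$ via the product structure and \eqref{kStarProduct}, use Proposition \ref{HSprop} to get Cauchyness under the isometry, identify $\psi$ with the limit, and conclude \eqref{tensorNorm2} and \eqref{tensorVanishing2}. The only cosmetic difference is that you make the identification of the abstract limit with the concrete function $(\mathcal{K}^{\ast}\otimes\mathcal{K}^{\ast})\psi$ more explicit than the paper does.
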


\begin{proof}
Using the canonical identification
\begin{align} \label{canonID}
A(s) \mathds{1}_{[a, b)} (t) \simeq \sum_{j=1}^d \sum_{k=1}^d a^{(k)}_j (s) e_k \otimes \mathds{1}^{(j)}_{[a, b)} (t), \quad a, b \in [0, T],
\end{align}
it is clear that $\psi^{\pi}$ is a member of $\Lambda^d_{\alpha} \otimes \mathcal{H}_1^d$, and thus lies in $\mathcal{H}_1^d \otimes \mathcal{H}_1^d$ by Proposition \ref{A1eqH1}. Furthermore, $\norm{\psi^{\pi}}^2_{\mathcal{H}_1^d \otimes \mathcal{H}_1^d}$ is equal to
\begin{align*}
&\sum_{k, l} \int_0^T \sum_{j=1}^d \left\langle \mathcal{K}^* \left( \psi_j (\cdot, r_k) \right) (s), \mathcal{K}^* \left( \psi_j(\cdot, r_l) \right) (s) \right\rangle_{\mathbb{R}^d} \wrt{s} \int_0^T \mathcal{K}^* \left( \mathds{1}_{\left[ r_k, r_{k+1} \right)} \right) (t) \mathcal{K}^* \left( \mathds{1}_{\left[ r_l, r_{l+1} \right)} \right) (t) \wrt{t} \\
&\qquad = \sum_{k,l} \int_{[0, T]^2} \left\langle \mathcal{K}^* \otimes \mathcal{K}^* \left( \psi (\cdot, r_k) \mathds{1}_{\Delta_k } (\cdot) \right) (s, t), \mathcal{K}^* \otimes \mathcal{K}^* \left( \psi (\cdot, r_l) \mathds{1}_{\Delta_l } (\cdot) \right) (s, t) \right\rangle_{\mathbb{R}^d \otimes \mathbb{R}^d} \wrt{s} \wrt{t}, \\
&\qquad = \int_{[0, T]^2} \abs{\mathcal{K}^* \otimes \mathcal{K}^* \psi^{\pi} (s, t)}_{\mathbb{R}^d \otimes \mathbb{R}^d}^2 \wrt{s} \wrt{t},
\end{align*}
which we know is Cauchy as $\norm{\pi} \rightarrow 0$ by Proposition \ref{HSprop}. We now take any sequence of partitions $\pi(n)$ with vanishing mesh and identify $\psi$ with the limit of $\psi^{\pi(n)}$ in $\mathcal{H}_1^d \otimes \mathcal{H}_1^d$. Invoking Proposition \ref{HSprop} again then gives us \eqref{tensorNorm2} and \eqref{tensorVanishing2}.
\end{proof}

\subsection{The It\^{o}-Skorohod isometry revisited} \label{itoSkorohodIsometry}
We now give another formulation for the It\^{o}-Skorohod isometry for Volterra processes (see also \cite{ev2003}, where an isometry formula in the specific case of fractional Brownian motion is provided).
\begin{theorem} \label{isometry2}
Let $X$ be a Volterra process which satisfies Condition \ref{newCond1} for some $\rho \in \left[ 1, 2 \right)$, and assume that its kernel satisfies Condition \ref{amnCond} for $\alpha = \frac{1}{2} - \frac{1}{2\rho}$. Given $\lambda > \alpha$, let $Y$ be a random variable which satisfies, almost surely,
\begin{enumerate}[(i)]
\item $Y \in \mathcal{C}^{\lambda-H\ddot{o}l}_{pw} \left([0, T]; \mathbb{R}^d\right)$,
\item $\mathcal{D}Y: [0, T]^2 \rightarrow \mathbb{R}^d \otimes \mathbb{R}^d$ is a function of the form $\mathds{1}_{[0, t)}(s) g(s, t)$, where $g$ is strongly $\lambda$-H\"{o}lder bi-continuous.
\end{enumerate}
Then $\lim_{\norm{\pi} \rightarrow 0} Y^{\pi} = Y$ in $\mathbb{D}^{1, 2} (\mathcal{H}_1^d)$ if and only if
\begin{align*}
\lim_{\norm{\pi} \rightarrow 0} \exptn{\int_0^T \abs{\mathcal{K}^* \left( Y^{\pi} - Y \right) (t) }_{\mathbb{R}^d}^2 \wrt{t}} = 0,
\end{align*}
and
\begin{align*}
\lim_{\norm{\pi} \rightarrow 0} \exptn{\int_{[0, T]^2} \abs{\mathcal{K}^* \otimes \mathcal{K}^* \left( \mathcal{D} Y^{\pi} - \mathcal{D} Y \right) (s, t) }_{\mathbb{R}^d \otimes \mathbb{R}^d}^2 \wrt{s} \wrt{t}} = 0,
\end{align*}
in which case $\lim_{\norm{\pi} \rightarrow 0} \exptn{\delta^X\left(Y^{\pi} - Y \right)^2} = 0$ and $\exptn{\delta^X\left( Y \right)^2} $ is equal to
\begin{align*}
\exptn{\int_0^T \abs{ \mathcal{K}^* Y (t) }_{\mathbb{R}^d}^2 \wrt{t}} + \exptn{\int_{[0, T]^2} \mathrm{tr} \left( \mathcal{K}^* \otimes \mathcal{K}^* \mathcal{D} Y (s, t) \, \mathcal{K}^* \otimes \mathcal{K}^* \mathcal{D} Y (t, s) \right) \wrt{s} \wrt{t}}.
\end{align*}
\end{theorem}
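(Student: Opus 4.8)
The plan is to transport every assertion through the two Hilbert-space isometries $\mathcal{K}^{\ast}\colon\mathcal{H}_{1}^{d}\to\mathcal{H}_{2}^{d}$ and $\mathcal{K}^{\ast}\otimes\mathcal{K}^{\ast}\colon\mathcal{H}_{1}^{d}\otimes\mathcal{H}_{1}^{d}\to\mathcal{H}_{2}^{d}\otimes\mathcal{H}_{2}^{d}$, and then read the three claims off the definition of the $\mathbb{D}^{1,2}(\mathcal{H}_{1}^{d})$-norm. First I would check that under the hypotheses $Y$ and $\mathcal{D}Y$ genuinely lie in the relevant spaces pathwise. Since $\rho\in[1,2)$, the choice $\alpha=\frac{1}{2}-\frac{1}{2\rho}$ forces $\alpha\in[0,\tfrac14)$, so Condition \ref{amnCond} holds and Propositions \ref{A1eqH1} and \ref{H1tensorH1equiv2} are available. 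By (i), almost every path of $Y$ lies in $\mathcal{C}^{\lambda-H\ddot{o}l}_{pw}\subset\Lambda_{\alpha}^{d}$ (as $\lambda>\alpha$), so Proposition \ref{A1eqH1} gives $Y(\omega)\in\mathcal{H}_{1}^{d}$ with $\norm{Y}_{\mathcal{H}_{1}^{d}}^{2}=\int_{0}^{T}\abs{\mathcal{K}^{\ast}Y(t)}_{\mathbb{R}^{d}}^{2}\wrt{t}$, and each step path $Y^{\pi}$ lies in $\Lambda_{\alpha}^{d}\cap\mathcal{H}_{1}^{d}$ as well. By (ii), $\mathcal{D}Y(\omega)$ is exactly of the type covered by Proposition \ref{H1tensorH1equiv2}, hence $\mathcal{D}Y(\omega)\in\mathcal{H}_{1}^{d}\otimes\mathcal{H}_{1}^{d}$ with the norm \eqref{tensorNorm2}; moreover, since $\mathcal{D}_{s}Y^{\pi}_{t}=\sum_{i}\mathcal{D}_{s}Y_{r_{i}}\mathds{1}_{[r_{i},r_{i+1})}(t)$ coincides with the approximant \eqref{psiApprox} (frozen in the second variable), the same proposition supplies the pathwise convergence \eqref{tensorVanishing2}.

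With these identifications the equivalence is immediate. By definition of the Sobolev norm,
\begin{align*}
\norm{Y^{\pi}-Y}^{2}_{\mathbb{D}^{1,2}(\mathcal{H}_{1}^{d})}=\exptn{\norm{Y^{\pi}-Y}^{2}_{\mathcal{H}_{1}^{d}}}+\exptn{\norm{\mathcal{D}Y^{\pi}-\mathcal{D}Y}^{2}_{\mathcal{H}_{1}^{d}\otimes\mathcal{H}_{1}^{d}}},
\end{align*}
and applying $\mathcal{K}^{\ast}$ and $\mathcal{K}^{\ast}\otimes\mathcal{K}^{\ast}$ term by term rewrites the two summands as precisely the two expectations in the statement. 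As both summands are non-negative, their sum tends to zero if and only if each does, which is the claimed ``if and only if''.

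Next I would establish the $L^{2}(\Omega)$-convergence of the divergences. Because each $Y_{r_{i}}$ is Malliavin differentiable (its derivative being $\mathcal{D}Y(\cdot,r_{i})$, which is square integrable by (ii)), the simple functional $Y^{\pi}$ and hence $Y^{\pi}-Y$ lie in $\mathbb{D}^{1,2}(\mathcal{H}_{1}^{d})\subset\mathrm{Dom}(\delta^{X})$. Inequality \eqref{itoskoro} then bounds $\exptn{\delta^{X}(Y^{\pi}-Y)^{2}}$ by exactly the sum of the two expectations that vanish under the hypotheses, so $\delta^{X}(Y^{\pi})\to\delta^{X}(Y)$ in $L^{2}(\Omega)$ and $\exptn{\delta^{X}(Y)^{2}}=\lim_{\norm{\pi}\to0}\exptn{\delta^{X}(Y^{\pi})^{2}}$. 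Since $Y^{\pi}\to Y$ in the complete space $\mathbb{D}^{1,2}(\mathcal{H}_{1}^{d})$, we also have $Y\in\mathbb{D}^{1,2}(\mathcal{H}_{1}^{d})$, so the It\^{o}--Skorohod isometry (Proposition 1.3.1 in \cite{nualart2006}) applies and gives $\exptn{\delta^{X}(Y)^{2}}=\exptn{\norm{Y}^{2}_{\mathcal{H}_{1}^{d}}}+\exptn{\langle\mathcal{D}Y,\widetilde{\mathcal{D}Y}\rangle_{\mathcal{H}_{1}^{d}\otimes\mathcal{H}_{1}^{d}}}$, where $\widetilde{\,\cdot\,}$ denotes the flip of the two tensor slots. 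Translating the first term by $\mathcal{K}^{\ast}$ and the second by $\mathcal{K}^{\ast}\otimes\mathcal{K}^{\ast}$ should then produce the stated formula.

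The main obstacle I anticipate is this last translation. One must verify that the flip of tensor factors in the abstract isometry, pushed through the operator $\mathcal{K}^{\ast}\otimes\mathcal{K}^{\ast}$ (which acts identically on both slots), corresponds in the $L^{2}([0,T]^{2};\mathbb{R}^{d}\otimes\mathbb{R}^{d})$ picture to the simultaneous exchange of arguments $(s,t)\mapsto(t,s)$ and transposition of the $\mathbb{R}^{d}\otimes\mathbb{R}^{d}$ matrix, so that the Hilbert--Schmidt pairing collapses, by invariance of the trace under transpose and cyclic permutation, to $\mathrm{tr}\big(\mathcal{K}^{\ast}\otimes\mathcal{K}^{\ast}\mathcal{D}Y(s,t)\,\mathcal{K}^{\ast}\otimes\mathcal{K}^{\ast}\mathcal{D}Y(t,s)\big)$. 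Getting this bookkeeping right in the vector-valued setting (an $\mathbb{R}^{d}$-valued $Y$ against $d$-dimensional noise), keeping careful track of which slot is the ``target'' $\mathcal{H}_{1}^{d}$ and which is the differentiation direction, is the delicate point; it is most safely checked first on product integrands through \eqref{kStarProduct} and then extended by the density and continuity already furnished by Propositions \ref{A1eqH1} and \ref{H1tensorH1equiv2}.
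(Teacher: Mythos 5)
Your proposal is correct and follows essentially the same route as the paper: both arguments rest on transporting the $\mathcal{H}_1^d$ and $\mathcal{H}_1^d\otimes\mathcal{H}_1^d$ norms through the isometries $\mathcal{K}^*$ and $\mathcal{K}^*\otimes\mathcal{K}^*$ (via Propositions \ref{A1eqH1} and \ref{H1tensorH1equiv2}), invoking the bound \eqref{itoskoro}, and reducing the trace term to the stated double integral, with the flip-of-slots bookkeeping handled by Lemma 3.2 of \cite{lim2018} exactly as you anticipate. The only cosmetic difference is that the paper computes $\exptn{\delta^X(Y^\pi)^2}$ on the discrete approximants (citing Theorem 4.5 of \cite{cl2018}) and passes to the limit, whereas you apply the abstract It\^{o}--Skorohod isometry to $Y$ directly and then unwind the notation.
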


\begin{proof}
From the computation of the trace term in Theorem 4.5 of \cite{cl2018}, we know that $\exptn{\delta^X\left( Y \right)^2}$ is equal to
\begin{align*}
\lim_{\norm{\pi} \rightarrow 0} \exptn{\int_0^T \abs{ \mathcal{K}^* Y^{\pi} (t) }_{\mathbb{R}^d}^2 \wrt{t}}
+ \lim_{\norm{\pi} \rightarrow 0} \exptn{ \sum_{i, j} \sum_{k,l=1}^d \left\langle \mathcal{D}_{\cdot}^{(k)} Y^{(l)}_{t_j}, \mathds{1}_{\Delta_i} (\cdot) \right\rangle_{\mathcal{H}_1} \left\langle \mathcal{D}^{(l)}_{\cdot} Y^{(k)}_{t_i}, \mathds{1}_{\Delta_j} (\cdot) \right\rangle_{\mathcal{H}_1} }.
\end{align*}
The first term converges to $\exptn{\int_0^T \abs{\mathcal{K}^* Y (t) }_{\mathbb{R}^d}^2 \wrt{t}} $ and for the second term we have
\begin{align*}
&\exptn{ \sum_{i, j} \sum_{k,l=1}^d \left\langle \mathcal{D}_{\cdot}^{(k)} Y^{(l)}_{t_j}, \mathds{1}_{\Delta_i} (\cdot) \right\rangle_{\mathcal{H}_1} \left\langle \mathcal{D}^{(l)}_{\cdot} Y^{(k)}_{t_i}, \mathds{1}_{\Delta_j} (\cdot) \right\rangle_{\mathcal{H}_1} } \\
&\qquad \qquad= \exptn{ \sum_{i, j} \sum_{k,l=1}^d \int_0^T \mathcal{K}^* \left( \mathcal{D}_{\cdot}^{(k)} Y^{(l)}_{t_j} \right) (s) K(\Delta_i, s) \wrt{s} \int_0^T \mathcal{K}^* \left( \mathcal{D}_{\cdot}^{(l)} Y^{(k)}_{t_i} \right) (t) K(\Delta_j, t) \wrt{t} }.
\end{align*}
Using Lemma 3.2 in \cite{lim2018}, this expression is equal to
\begin{align*}
\exptn{\int_{[0, T]^2} \mathrm{tr} \left( \mathcal{K}^* \otimes \mathcal{K}^* \mathcal{D} Y^{\pi} (s, t) \, \mathcal{K}^* \otimes \mathcal{K}^* \mathcal{D} Y^{\pi} (t, s) \right) \wrt{s} \wrt{t}},
\end{align*}
which converges as $\norm{\pi} \rightarrow 0$ to
\begin{align*}
\exptn{\int_{[0, T]^2} \mathrm{tr} \left( \mathcal{K}^* \otimes \mathcal{K}^* \mathcal{D} Y (s, t) \, \mathcal{K}^* \otimes \mathcal{K}^* \mathcal{D} Y (t, s) \right) \wrt{s} \wrt{t}}.
\end{align*}
\end{proof}

In the case of Brownian motion both $\mathcal{K}^{\ast}$ and $\mathcal{K}^* \otimes\mathcal{K}^{\ast}$ are identity operators and Theorem \ref{isometry2} recovers the usual It\^{o}-Skorohod isometry:
\begin{align*}
\mathbb{E}\left[  \delta^{X}(Y)^{2}\right]  =\mathbb{E}\left[ \int_{0}^{T}\left\vert Y_{t}\right\vert ^{2}\,\mathrm{d}t\right]  +\mathbb{E}\left[\int_{[0,T]^{2}}\mathrm{tr}\left(  \mathcal{D}_{t}Y_{s}\,\mathcal{D}_{s}
Y_{t}\right)  \,\mathrm{d}s\,\mathrm{d}t\right].
\end{align*}

\section{Approximation of the Skorohod integral} \label{Sko conv}
We will now put together the results of the previous section to show that the Skorohod integral of the discrete approximations to the solution of an RDE converge. Before we proceed, we will introduce additional notation. \par 
Let $Y \in \mathcal{C}^{p-var} \left( [0, T]; \mathcal{L} (\mathbb{R}^d; \mathbb{R}^m) \right)$ denote the path-level solution to 
\begin{align*}
\mathrm{d} Y_t = V(Y_t) \strato{\mathbf{X}_t}, \quad  Y_0 = y_0,
\end{align*}
where $V \in \mathcal{C}^{\lfloor p \rfloor + 1}_b \left( \mathbb{R}^{dm}; \mathbb{R}^{dm} \otimes \mathbb{R}^d \right)$. \par
Recall that 
\begin{align*}
\mathcal{D}_t Y_s = \mathds{1}_{[0, t)} (s) J^{\mathbf{X}}_{t \leftarrow s} V(Y_s), \quad s, t \in [0, T],
\end{align*}
where here and henceforth, we will use the shorthand
\begin{align*}
J^{\mathbf{X}}_{t \leftarrow s} := J^{\mathbf{X}}_t \left(J^{\mathbf{X}}_s\right)^{-1}, \quad 0 \leq s < t \leq T.
\end{align*}
Given a Hilbert space $H$, we will denote an element of $y$ of $\mathbb{R}^m \otimes H$ as 
\begin{align} \label{bracketNotation}
y = \sum_{j=1}^m e_j \otimes [y]_j,
\end{align}
where $[y]_j \in H$ for $j = 1, \ldots, m$. (Note that there may be several ways to perform the decomposition.) \par
Now fix $0 \leq s < t \leq T$. Since $V(Y_s) \in \mathbb{R}^{md} \otimes \mathbb{R}^d \simeq \mathbb{R}^m \otimes \mathbb{R}^d \otimes \mathbb{R}^d$, we will decompose $V(Y_s)$ as
\begin{align*}
V(Y_s) = \sum_{j=1}^m e_j \otimes \left[ V(Y_s) \right]_j,
\end{align*}
where
\begin{align*}
[V(Y_s)]_j := \sum_{i, k=1}^d V^{(d(j-1) + i)}_k  (Y_s) \, e_i \otimes e_k.
\end{align*}
If we canonically identify $\mathbb{R}^{md} \otimes \mathbb{R}^d$ with the space of $md$-by-$d$ matrices, then $\left[ V(Y_s) \right]_j$ simply denotes the $d$-by-$d$ sub-matrix of $V(Y_s)$ which starts at the $(d(j - 1) + 1)^{th}$ row and ends at the $dj^{th}$ row. Contrast this with $V_j (Y_s)$, which denotes the $j^{th}$ column of $V(Y_s)$. \par
We will do the same with $Y_s \in \mathbb{R}^{md} \simeq \mathbb{R}^m \otimes \mathbb{R}^d$, and write
\begin{align*}
Y_s = \sum_{j=1}^m e_j \otimes \left[ Y_s \right]_j, \quad [Y_s]_j := \sum_{i=1}^d Y_s^{(d(j-1) + i)} e_i \in \mathbb{R}^d,
\end{align*}
and for $\displaystyle J^{\mathbf{X}}_{t \leftarrow s} V(Y_s) = \sum_{i=1}^{md} \sum_{k=1}^d a_{i, k} \, e_i \otimes e_k \in \mathbb{R}^{md} \otimes \mathbb{R}^d \simeq \mathbb{R}^m \otimes \mathbb{R}^d \otimes \mathbb{R}^d$, we have
\begin{align*}
&J^{\mathbf{X}}_{t \leftarrow s} V(Y_s) = \sum_{j=1}^m e_j \otimes \left[ J^{\mathbf{X}}_{t \leftarrow s} V(Y_s) \right]_j, \\
&\left[ J^{\mathbf{X}}_{t \leftarrow s} V(Y_s)\right]_j := \sum_{i, k = 1}^d a_{d(j-1) + i, k} \, e_i \otimes e_k \in \mathbb{R}^d \otimes \mathbb{R}^d.
\end{align*}

\begin{proposition} \label{skorohodLimit3rdLevel}
Let $\mathbf{X} \in \mathcal{C}^{0, p-var} \left([0, T]; G^{\lfloor p \rfloor} \left( \mathbb{R}^d\right) \right)$, $1 \leq p < 4$, be a Volterra rough path which satisfies Condition \ref{newCond1}, and assume that its kernel satisfies Condition \ref{amnCond} with $\alpha < \frac{1}{p}$. Let $Y \in \mathcal{C}^{p-var} \left( [0, T]; \mathcal{L} (\mathbb{R}^d; \mathbb{R}^m) \right)$ denote the path-level solution to 
\begin{align*}
\mathrm{d} Y_t = V(Y_t) \strato{\mathbf{X}_t}, \quad  Y_0 = y_0,
\end{align*}
where $V \in \mathcal{C}^{\lfloor p \rfloor + 1}_b \left( \mathbb{R}^{dm}; \mathbb{R}^{dm} \otimes \mathbb{R}^d \right)$. Then $Y \in \mathbb{D}^{1,2} (\mathbb{R}^m \otimes \mathcal{H}_1^d)$ and
\begin{align*}
\int_0^T Y_r \, \mathrm{d} X_r
&= \lim_{\norm{\pi = \{ r_i \} } \rightarrow 0} \sum_i \left[ Y_{r_i} \left( X_{r_i, r_{i+1}} \right) - \sum_{j=1}^m \left( \int_0^{r_i} \mathrm{tr} \, \left[ J^{\mathbf{X}}_{r_i \leftarrow s} V(Y_s) \right]_j \, R(\Delta_i, \wrt{s}) \right) e_j \right],
\end{align*}
where the limit is taken in $L^2\left(\Omega\right)$.
\end{proposition}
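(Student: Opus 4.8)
The plan is to realize $Y$ as an element of $\mathbb{D}^{1,2}(\mathbb{R}^m \otimes \mathcal{H}_1^d)$ through the subspace identifications developed in Section \ref{various}, to deduce $L^2(\Omega)$-convergence of the piecewise-constant approximants $Y^\pi$ to $Y$, and then to evaluate $\delta^X(Y^\pi)$ by hand using the divergence formula for elementary integrands. Throughout I set $\lambda = \tfrac{1}{p}$; since $\alpha < \tfrac{1}{p}$ by hypothesis we have $\lambda > \alpha$, which is the regularity threshold required by all the results of Section \ref{various}.

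First I would verify the hypotheses of Theorem \ref{isometry2} with the Banach space taken to be $L^2(\Omega;\mathbb{R}^e)$. The path $Y$ is almost surely $\tfrac{1}{p}$-H\"older, and $\|Y_{s,t}\|_{L^2(\Omega)} \lesssim |t-s|^{1/p}$ follows from the local estimate relating $Y_{s,t}$ to $\mathbf{X}$ together with the fact that $\|Y\|_{p\text{-var}}$ and $\|\mathbf{X}\|_{p\text{-var}}$ have moments of every order. From \eqref{D as path} the Malliavin derivative has the structure $\mathcal{D}_s Y_t = \mathds{1}_{[0,t)}(s)\, g(s,t)$ with $g(s,t) = J^{\mathbf{X}}_{t\leftarrow s} V(Y_s)$, so what remains is to show that $g$ is strongly $\lambda$-H\"older bi-continuous in the $L^2(\Omega)$-norm. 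With this in hand, Propositions \ref{A1eqH1} and \ref{H1tensorH1equiv2} identify $Y$ and $\mathcal{D}Y$ as honest elements of $\mathcal{H}_1^d$ and $\mathcal{H}_1^d \otimes \mathcal{H}_1^d$, while Propositions \ref{nualartProp} and \ref{HSprop} (applied with the Banach space $L^2(\Omega;\cdot)$) supply the two $L^2(\Omega)$-limits appearing in Theorem \ref{isometry2}. The theorem then yields $Y^\pi \to Y$ in $\mathbb{D}^{1,2}(\mathbb{R}^m\otimes\mathcal{H}_1^d)$ and, via \eqref{itoskoro}, $\delta^X(Y^\pi) \to \delta^X(Y) = \int_0^T Y_r \,\mathrm{d}X_r$ in $L^2(\Omega)$.

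It then remains to compute $\delta^X(Y^\pi)$ explicitly. Writing $Y^\pi = \sum_i \sum_{j=1}^m \sum_{k=1}^d [Y_{r_i}]_j^{(k)}\, e_j \otimes \mathds{1}^{(k)}_{\Delta_i}$ and applying the elementary divergence identity $\delta^X(Fh) = F\,\delta^X(h) - \langle \mathcal{D}F, h\rangle_{\mathcal{H}_1^d}$ componentwise in $j$ (valid here since each scalar $[Y_{r_i}]_j^{(k)} \in \mathbb{D}^{1,2}$ has moments of all orders), the leading terms assemble into the Riemann sum $\sum_i Y_{r_i}(X_{r_i,r_{i+1}})$ because $\delta^X(\mathds{1}^{(k)}_{\Delta_i}) = X^{(k)}_{r_i,r_{i+1}}$. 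For the correction terms, pairing $\mathcal{D}[Y_{r_i}]_j^{(k)}$ against $\mathds{1}^{(k)}_{\Delta_i}$ selects the $k$-th directional component, and summing over $k$ contracts the directional index against the value index, producing $\mathds{1}_{[0,r_i)}(s)\,\mathrm{tr}\,[J^{\mathbf{X}}_{r_i\leftarrow s}V(Y_s)]_j$; the $\mathcal{H}_1$-pairing of this function with $\mathds{1}_{\Delta_i}$ then equals $\int_0^{r_i} \mathrm{tr}\,[J^{\mathbf{X}}_{r_i\leftarrow s}V(Y_s)]_j\, R(\Delta_i,\mathrm{d}s)$ by the $\mathcal{W}_\rho^d$-characterization \eqref{Wd} of the inner product together with the symmetry of $R$. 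Reassembling over $j$ gives exactly the claimed approximating sums, and passing to the $L^2(\Omega)$-limit identified above completes the proof.

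The main obstacle is the bi-continuity estimate on $g(s,t) = J^{\mathbf{X}}_t (J^{\mathbf{X}}_s)^{-1} V(Y_s)$ in the $L^2(\Omega)$-norm, and in particular the rectangular-increment bound of Definition \ref{biHolderDef}, which couples an increment of $J^{\mathbf{X}}_t$ in the $t$-variable with an increment of $(J^{\mathbf{X}}_s)^{-1}V(Y_s)$ in the $s$-variable. Each factor is controlled in $p$-variation by rough-path estimates, but to obtain a deterministic $\lambda$-H\"older bound on the $L^2(\Omega)$-norm of the product one must split via Cauchy--Schwarz in $\Omega$ and absorb the resulting $p$-variation norms of $J^{\mathbf{X}}$, its inverse, and $V(Y)$ using the all-order moment bounds of Theorem \ref{JBoundThm}. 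Once this estimate is secured, the remaining steps are routine applications of the convergence results of Section \ref{various}.
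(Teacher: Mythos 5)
Your proposal follows essentially the same route as the paper: reduce to Theorem \ref{isometry2}, verify the $\frac{1}{p}$-H\"older continuity of $Y$ in $L^2(\Omega)$ and the strong $\frac{1}{p}$-H\"older bi-continuity of $J^{\mathbf{X}}_{t\leftarrow s}V(Y_s)$ (the paper delegates exactly the product/Cauchy--Schwarz argument you sketch to Lemma 3.6 of \cite{lim2018}, reducing it to $\frac{1}{p}$-H\"older continuity of $J^{\mathbf{X}}$ and $(J^{\mathbf{X}})^{-1}V(Y)$ separately), and then compute $\delta^X(Y^\pi)$ by integration by parts. One small correction: the identification of $\left\langle \mathcal{D}[Y_{r_i}]_j, \mathds{1}_{\Delta_i}\right\rangle_{\mathcal{H}_1^d}$ with the Young integral against $R(\Delta_i,\mathrm{d}s)$ should be justified directly from the directional-derivative formula \eqref{dd} rather than from the $\mathcal{W}_{\rho}^{d}$-characterization \eqref{Wd}, since the integrand's $p$-variation exponent need not satisfy $p<\rho/(\rho-1)$ when $3\le p<4$.
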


\begin{proof}
We first use integration-by-parts to obtain
\begin{align*}
\left\langle \delta^X(Y^{\pi}), \, e_j \right\rangle_{\mathbb{R}^m}
&= \sum_i \left[ \left\langle \left[ Y_{r_i} \right]_j, \, X_{r_i, r_{i+1}} \right\rangle_{\mathbb{R}^d} - \int_0^{r_i} \mathrm{tr} \, \left[ J^{\mathbf{X}}_{r_i \leftarrow s} V(Y_s) \right]_j \, R(\Delta_i, \wrt{s}) \right],
\end{align*}
for all $j = 1, \ldots, m$. Next, we invoke Theorem \ref{isometry2}, which requires us to prove that
\begin{align} \label{conv1}
\exptn{\int_0^T \abs{ \mathcal{K}^* \left( Y^{\pi} - Y \right) (t)}^2_{\mathbb{R}^{md}} \wrt{t}} \rightarrow 0,
\end{align}
and
\begin{align} \label{conv2}
\exptn{\int_{[0, T]^2} \abs{ \mathcal{K}^* \otimes \mathcal{K}^* (\mathcal{D}_s Y^{\pi}_t - \mathcal{D}_s Y_t) (s, t)}_{\mathbb{R}^{md} \otimes \mathbb{R}^d}^2 \wrt{s} \wrt{t} } \rightarrow 0.
\end{align}
We will show that $Y$ is $\frac{1}{p}$-H\"{o}lder continuous in $L^2 \left( \Omega; \mathbb{R}^{md} \right)$, and then invoke Proposition \ref{nualartProp} to obtain \eqref{conv1}. We have
\begin{align} \label{YHolder}
\begin{split}
\abs{Y_{s,t}}
&\leq C \left( \norm{\mathbf{X}}_{p-var; [s, t]} \vee \norm{\mathbf{X}}^p_{p-var; [s, t]} \right) \\
&\leq C \norm{\mathbf{X}}_{\frac{1}{p}-H\ddot{o}l; [s, t]} \left( (t - s) \vee (t - s)^{\frac{1}{p}} \right) \\
&\leq C \norm{\mathbf{X}}_{\frac{1}{p}-H\ddot{o}l; [0, T]} \left( T^{1 - \frac{1}{p}} \vee 1 \right) (t - s)^{\frac{1}{p}}
\end{split}
\end{align}
almost surely, and thus
\begin{align*}
\sqrt{\exptn{\abs{Y_{s,t}}}} \leq C_1 \abs{t - s}^{\frac{1}{p}}
\end{align*}
since $\norm{\mathbf{X}}_{\frac{1}{p}-H\ddot{o}l; [0, T]}$ has moments of all orders. \par
To show \eqref{conv2}, we will apply Proposition \ref{HSprop} with $\psi(s, t) = \mathcal{D}_s Y_t = \mathds{1}_{[0, t)} (s) J_t^{\mathbf{X}} \left( J_s^{\mathbf{X}} \right)^{-1} V(Y_s)$. To do so, we have to show that $\tilde{\psi}(s, t) := J_t^{\mathbf{X}} \left( J_s^{\mathbf{X}} \right)^{-1} V(Y_s)$ is strongly $\frac{1}{p}$-H\"{o}lder bi-continuous in $L^2 \left(\Omega; \mathbb{R}^{md} \otimes \mathbb{R}^d \right)$.
By Lemma 3.6 in \cite{lim2018}, this is equivalent to showing that $J_{\cdot}^{\mathbf{X}}$ and $\left(J_{\cdot}^{\mathbf{X}} \right)^{-1} Y_{\cdot}$ are both $\frac{1}{p}$-H\"{o}lder continuous. \par
Using \eqref{JBound}, we have
\begin{align*}
\abs{ J^{\mathbf{X}}_{s,t}}
&\leq C_1 \norm{\mathbf{X}}_{p-var; [s,t]} \exp \left( C_2 N^{\mathbf{X}}_{1; [s, t]} \right) \\
&\leq C_1 (t - s)^{\frac{1}{p}} \norm{\mathbf{X}}_{\frac{1}{p}-H\ddot{o}l;[0, T]} \exp \left( C_2 N^{\mathbf{X}}_{1; [0, T]} \right),
\end{align*}
which yields $\frac{1}{p}$-H\"{o}lder continuity for $J_{\cdot}^{\mathbf{X}}$ as the expression to the right of $(t-s)^\frac{1}{p}$ is in $L^q (\Omega)$ for all $q > 0$. The same is true for $\left( J^{\mathbf{X}}_{\cdot} \right)^{-1}$ since the inverse obeys the same bound. \par
Finally, $\left( J^{\mathbf{X}}_{\cdot} \right)^{-1} V(Y_{\cdot})$ is also $\frac{1}{p}$-H\"{o}lder continuous, since $V$ is $\mathcal{C}^1$ smooth and both $Y$ and $\left( J^{\mathbf{X}}_{\cdot} \right)^{-1}$ are $\frac{1}{p}$-H\"{o}lder continuous.
\end{proof}

\section{Augmenting the Skorohod integral with higher-level terms} \label{Sko aug}

The main purpose of this section is to show that the usual Riemann-sum approximation to the Skorohod integral can be augmented with (suitably corrected) second-level and third-level rough path terms which vanish in $L^{2}(\Omega)$ as the mesh of the partition goes to zero.

Before we do so, we will extend the theory of controlled rough paths to the case $3 \leq p < 4$, and give bounds on the higher-directional derivatives of a controlled rough path satisfying an RDE.

\subsection{Estimates for controlled rough paths of lower regularity} \label{controlledLR} 
To construct the rough integral of controlled rough paths for $3 \leq p < 4$, we need the following new definition.

\begin{definition}
Let $\mathbf{x}=\left(  1, x, \mathbf{x}^{2}, \mathbf{x}^{3} \right)
\in\mathcal{C}^{p-var}\left(  [0, T]; G^{3}(\mathbb{R}^{d})\right) $, where $3
\leq p < 4$, and let $q$ be such that $\frac{1}{p} + \frac{1}{q} > 1$. Let
$\left(  \phi, \phi^{\prime}, \phi^{\prime\prime}\right) $ satisfy
\begin{align*}
& \phi\in\mathcal{C}^{p-var} \left( [0, T]; \mathcal{U} \right) ,\\
& \phi^{\prime}\in\mathcal{C}^{p-var} \left( [0, T]; \mathcal{L} (\mathbb{R}^{d}; \mathcal{U}) \right) , \quad\mathrm{and}\\
& \phi^{\prime\prime}\in\mathcal{C}^{p-var} \left( [0, T]; \mathcal{L} (\mathbb{R}^{d} \otimes\mathbb{R}^{d}; \mathcal{U}) \right) .
\end{align*}
Then we say that $\left(  \phi,\phi^{\prime}, \phi^{\prime\prime}\right) $ (or $\phi$) is controlled by $\mathbf{x}$ if for all $s,t \in[0, T]$ we have
\begin{align} \label{controlled2}
\begin{split}
& \phi_{s,t} = \phi^{\prime}_{s} x_{s,t} + \phi^{\prime\prime}_{s}
\mathbf{x}^{2}_{s, t} + R_{s,t}^{\phi},\\
& \phi^{\prime}_{s,t} = \phi^{\prime\prime}_{s} x_{s,t} + R^{\phi^{\prime}}_{s,t},
\end{split}
\end{align}
where the remainder terms satisfy
\begin{align*}
R^{\phi} \in\mathcal{C}^{q-var} \left(  \left[  0,T\right] ; \mathcal{U} \right) , \quad R^{\phi^{\prime}} \in\mathcal{C}^{\frac{p}{2}-var} \left(\left[  0,T\right] ; \mathcal{U} \right) .
\end{align*}

\end{definition}

Thus, $\phi$ is controlled by $\mathbf{x}$ if $\left\| \phi\right\|_{p,q-cvar} < \infty$, where the controlled variation norm is defined as
\begin{align*}
\left\|  \phi\right\| _{p,q-cvar} := \left\|  \phi\right\| _{\mathcal{V}^{p};
[0, T]} + \left\|  \phi^{\prime}\right\| _{\mathcal{V}^{p}; [0, T]} + \left\|
\phi^{\prime\prime}\right\| _{\mathcal{V}^{p}; [0, T]} + \left\|  R^{\phi
}\right\| _{q-var; \left[  0,T\right] } + \left\|  R^{\phi^{\prime}}\right\|
_{\frac{p}{2}-var; \left[  0,T\right] }.
\end{align*}
Before we continue, note that $3 \leq p < 4$ implies that $\frac{p}{3} < \frac{p}{p-1} \leq\frac{p}{2}$. Since $q$-variation decreases with increasing $q$, we can always, if necessary, increase $q$ such that
\begin{align}
\label{qBounds}\frac{p}{3} \leq q < \frac{p}{p-1} \leq\frac{p}{2}
\end{align}
when we are working with $p$ in the interval $[3, 4)$.

The following theorem and the next two propositions are the lower-regularity analogues of Theorem 2.20, Proposition 2.22 and Proposition 2.21 respectively from \cite{cl2018}.

\begin{theorem} \label{controlledThm2} 
Let $\mathbf{x}=\left(  1, x,\mathbf{x}^{2}, \mathbf{x}^{3} \right)  \in\mathcal{C}^{p-var}\left(  [0, T]; G^{3} (\mathbb{R}^{d})\right) $, where $3 \leq p < 4$, and let $q$ be such that $\frac{1}{p} + \frac{1}{q} > 1 $. Let $\left(  \phi, \phi^{\prime}, \phi^{\prime\prime}\right) $ satisfy
\begin{align*}
& \phi\in\mathcal{C}^{p-var}\left( [0,T]; \mathcal{L}(\mathbb{R}^{d};\mathbb{R}^{e})\right) ,\\
& \phi^{\prime}\in\mathcal{C}^{p-var} \left(  [0,T]; \mathcal{L} (\mathbb{R}^{d}; \mathcal{L}(\mathbb{R}^{d};\mathbb{R}^{e}))\right) ,
\quad\mathrm{and} \\
& \phi^{\prime\prime}\in\mathcal{C}^{p-var} \left(  [0,T]; \mathcal{L} \left(
\mathbb{R}^{d} \otimes\mathbb{R}^{d}; \mathcal{L} (\mathbb{R}^{d}; \mathbb{R}^{e}) \right)  \right) .
\end{align*}
If $\left(  \phi, \phi^{\prime}, \phi^{\prime\prime}\right)  $ is controlled by $\mathbf{x}$ with remainder terms $R^{\phi}$ and $R^{\phi^{\prime}}$ of bounded $q$-variation and $\frac{p}{2}$-variation respectively, we can define
the rough integral
\begin{align} \label{controlledRPdefn2}
\int_{0}^{t} \phi_{r}\circ\mathrm{d}\mathbf{x}_{r}:=\underset{\left\Vert \pi\right\Vert \rightarrow 0,\pi=\left\{
0=r_{0}<\ldots<r_{n}=t\right\}  }{\lim}\sum_{i=0}^{n-1}\left(  \phi_{r_{i}} x_{r_{i},r_{i+1}}+\phi_{r_{i}}^{\prime} \mathbf{x}_{r_{i},r_{i+1}}^{2} + \phi^{\prime\prime}_{r_{i}} \mathbf{x}^{3}_{r_{i}, r_{i+1}} \right) ,
\end{align}
where we have made use of the canonical identification $\mathcal{L} (\mathbb{R}^{d}; \mathcal{L}(\mathbb{R}^{d};\mathbb{R}^{e}))\simeq \mathcal{L}(\mathbb{R}^{d}\otimes\mathbb{R}^{d}; \mathbb{R}^{e})$ and
$\mathcal{L} \left(  \mathbb{R}^{d} \otimes\mathbb{R}^{d}; \mathcal{L} (\mathbb{R}^{d};\mathbb{R}^{e}) \right)  \simeq\mathcal{L}(\mathbb{R}^{d} \otimes\mathbb{R}^{d}\otimes\mathbb{R}^{d}; \mathbb{R}^{e})$. Furthermore, if
$q \geq\frac{p}{3}$, then denoting
\begin{align*}
z_{t}:=\int_{0}^{t}\phi_{r}\circ\mathrm{d}\mathbf{x}_{r},\quad z_{t}^{\prime}
:=\phi_{t}, \quad z^{\prime\prime}_{t} := \phi^{\prime}_{t},
\end{align*}
$(z, z^{\prime}, z^{\prime\prime})$ is again controlled by $\mathbf{x}$, and
we have
\begin{align} \label{controlledBound2}
\left\|  z\right\| _{p,q} \leq C_{p, q} \left\|
\phi\right\| _{p,q-cvar} \left(  1 + \left\|  x\right\| _{p-var; [0, T]} +
\left\|  \mathbf{x}^{2}\right\| _{\frac{p}{2}-var;[0, T]} + \left\|
\mathbf{x}^{3}\right\| _{\frac{p}{3}-var; [0, T]} \right) .
\end{align}

\end{theorem}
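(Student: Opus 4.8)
The plan is to build the integral through a sewing (Young--Towghi) argument, exactly as in the second-order theory of \cite{cl2018} but carrying one additional tensor level. I would first introduce the local germ
\[
\Xi_{s,t} := \phi_s\, x_{s,t} + \phi_s'\, \mathbf{x}^{2}_{s,t} + \phi_s''\, \mathbf{x}^{3}_{s,t},
\]
so that the compensated Riemann sum in \eqref{controlledRPdefn2} is precisely $\sum_i \Xi_{r_i,r_{i+1}}$. Everything hinges on proving that $\Xi$ is almost additive with an exponent strictly larger than one, measured against a single control $\omega$ dominating $\|\mathbf{x}\|_{p-var}^{p}$, $\|R^{\phi}\|_{q-var}^{q}$ and $\|R^{\phi'}\|_{p/2-var}^{p/2}$; such an $\omega$ exists as a finite sum of controls, which is again super-additive.

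Next I would compute the defect $\delta\Xi_{s,u,t} := \Xi_{s,t}-\Xi_{s,u}-\Xi_{u,t}$ for $s<u<t$. Inserting Chen's identities for $x$, $\mathbf{x}^2$, $\mathbf{x}^3$ into $\Xi_{s,t}$ and then rewriting the increments $\phi_s-\phi_u$ and $\phi_s'-\phi_u'$ via the controlled relations \eqref{controlled2}, I expect all terms carrying $\phi_s'$ and $\phi_s''$ to cancel pairwise, leaving
\[
\delta\Xi_{s,u,t} = -\,R^{\phi}_{s,u}\, x_{u,t} \;-\; R^{\phi'}_{s,u}\, \mathbf{x}^{2}_{u,t} \;-\; \phi''_{s,u}\, \mathbf{x}^{3}_{u,t}.
\]
Securing these cancellations, while correctly tracking the tensor slots through the identifications $\mathcal{L}(\mathbb{R}^d;\mathcal{L}(\mathbb{R}^d;\mathbb{R}^e))\simeq\mathcal{L}(\mathbb{R}^d\otimes\mathbb{R}^d;\mathbb{R}^e)$ and its cubic analogue, is the bookkeeping heart of the argument; the genuinely new contribution $\phi''_{s,u}\mathbf{x}^{3}_{u,t}$ is what separates this from the setting of \cite{cl2018} and is where I anticipate the only real delicacy.

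I would then estimate the three summands against $\omega$, obtaining bounds $\lesssim \omega(s,t)^{1/q+1/p}$ for the first and $\lesssim\omega(s,t)^{4/p}$ for the other two, using $|R^{\phi}_{s,u}|\lesssim\omega^{1/q}$, $|R^{\phi'}_{s,u}|\lesssim\omega^{2/p}$, $|\phi''_{s,u}|\lesssim\omega^{1/p}$ together with the rough-path bounds $|x_{u,t}|\lesssim\omega^{1/p}$, $|\mathbf{x}^2_{u,t}|\lesssim\omega^{2/p}$, $|\mathbf{x}^3_{u,t}|\lesssim\omega^{3/p}$ and super-additivity. The exponent $1/q+1/p$ exceeds one exactly by the complementary-regularity hypothesis $\tfrac1p+\tfrac1q>1$, while $4/p>1$ is precisely the standing assumption $p<4$; this is the single indispensable use of $p<4$. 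Setting $\theta:=\min\bigl(\tfrac1q+\tfrac1p,\tfrac4p\bigr)>1$, the sewing lemma (the same argument underlying Theorem~2.20 of \cite{cl2018}) then delivers existence of the limit \eqref{controlledRPdefn2}, its independence of the partition sequence, and the local estimate $\bigl|\,\int_s^t\phi\circ d\mathbf{x}-\Xi_{s,t}\bigr|\le C_{p,q}\,\omega(s,t)^{\theta}$.

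Finally I would verify that $(z,z',z'')=(\int_0^\cdot\phi\circ d\mathbf{x},\,\phi,\,\phi')$ is itself controlled. The relation for $z'$ is immediate, since $z'_{s,t}=\phi_{s,t}=\phi_s' x_{s,t}+\bigl(\phi_s''\mathbf{x}^2_{s,t}+R^{\phi}_{s,t}\bigr)$ gives $R^{z'}_{s,t}=\phi_s''\mathbf{x}^2_{s,t}+R^{\phi}_{s,t}\in\mathcal{C}^{p/2-var}$, because $q<\tfrac{p}{p-1}\le\tfrac{p}{2}$ ensures the $q$-variation remainder is also of $\tfrac{p}{2}$-variation. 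For $z$ one has $R^{z}_{s,t}=\bigl(\int_s^t\phi\circ d\mathbf{x}-\Xi_{s,t}\bigr)+\phi_s''\mathbf{x}^3_{s,t}$: the first piece has finite $(1/\theta)$-variation, hence finite $q$-variation since $1/\theta<1\le q$, and the second has finite $\tfrac{p}{3}$-variation, hence finite $q$-variation exactly because $q\ge\tfrac{p}{3}$ --- this is where that hypothesis is consumed. Collecting the sewing constant together with the controlled norm of $\phi$ then yields \eqref{controlledBound2}. In short, the defect computation is the only subtle step, and everything downstream is governed by an exponent count that the three hypotheses $\tfrac1p+\tfrac1q>1$, $p<4$ and $q\ge\tfrac{p}{3}$ are precisely designed to satisfy.
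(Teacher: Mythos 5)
Your proposal is correct and follows essentially the same route as the paper: the same germ $\Xi_{s,t}=\phi_s x_{s,t}+\phi_s'\mathbf{x}^2_{s,t}+\phi_s''\mathbf{x}^3_{s,t}$, the same defect identity $\delta\Xi_{s,u,t}=-R^{\phi}_{s,u}x_{u,t}-R^{\phi'}_{s,u}\mathbf{x}^2_{u,t}-\phi''_{s,u}\mathbf{x}^3_{u,t}$, the same exponent $\theta=\min\left(\tfrac1p+\tfrac1q,\tfrac4p\right)$ with the point-extraction sewing argument, and the same identification of $R^{z}$ and $R^{z'}$ with $q\ge\tfrac p3$ consumed exactly where you say it is. No gaps.
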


\begin{proof}
Let $s < u < t$ and define
\begin{align*}
\Xi_{s,t} := \phi_s x_{s,t} + \phi'_s \mathbf{x}^2_{s,t} + \phi''_s \mathbf{x}^3_{s,t}.
\end{align*}
Then we have
\begin{align*}
\Xi_{s, u} + \Xi_{u, t} - \Xi_{s, t}
&= \left( \phi_s x_{s,u}  + \phi'_s \mathbf{x}^2_{s,u} + \phi''_s \mathbf{x}^3_{s,u} \right)
+ \left( \phi_u x_{u,t}  + \phi'_u \mathbf{x}^2_{u,t} + \phi''_u \mathbf{x}^3_{u,t} \right) - \phi_s \left( x_{s,u} + x_{u,t} \right) \\
&\qquad- \phi'_s \left( \mathbf{x}^2_{s,u} + \mathbf{x}^2_{u,t} + x_{s,u} \otimes x_{u,t} \right) - \phi''_s \left( \mathbf{x}^3_{s,u} + \mathbf{x}^3_{u,t} + \mathbf{x}^2_{s,u} \otimes x_{u,t} + x_{s,u} \otimes \mathbf{x}^2_{u,t} \right) \\
&= \phi_{s,u} x_{u,t} + \phi'_{s,u} \mathbf{x}^2_{u,t} + \phi''_{s,u} \mathbf{x}^3_{u,t} - \phi'_s \left( x_{s,u} \otimes x_{u,t} \right)
-\phi''_s \left( \mathbf{x}^2_{s,u} \otimes x_{u,t} + x_{s,u} \otimes \mathbf{x}^2_{u,t} \right) \\
&= \left( \phi'_s x_{s,u} + \phi''_s \mathbf{x}^2_{s,u} + R^{\phi}_{s,u} \right) x_{u,t} + \left(\phi''_s x_{s,u} + R^{\phi'}_{s,u} \right) \mathbf{x}^2_{u,t} + \phi''_{s,u} \mathbf{x}^3_{u,t} - \phi'_s \left( x_{s,u} \otimes x_{u,t} \right) \\
&\qquad- \phi''_s \left( \mathbf{x}^2_{s,u} \otimes x_{u,t} + x_{s,u} \otimes \mathbf{x}^2_{u,t} \right) \\
&= R^{\phi}_{s,u} x_{u,t} + R^{\phi'}_{s,u} \mathbf{x}^2_{u,t} + \phi''_{s,u} \mathbf{x}^3_{u,t}.
\end{align*}
Now let $\theta := \min \left( \frac{1}{p} + \frac{1}{q}, \frac{4}{p} \right)$, and let $\omega(s,t)$ denote the function
\begin{align*}
\norm{R^{\phi}}^{\frac{1}{\theta}}_{q-var; [s,t]} \norm{x}^{\frac{1}{\theta}}_{p-var; [s,t]} + \norm{R^{\phi'}}^{\frac{1}{\theta}}_{\frac{p}{2}-var; [s,t]} \norm{\mathbf{x}^2}^{\frac{1}{\theta}}_{\frac{p}{2}-var; [s,t]} + \norm{\phi''}^{\frac{1}{\theta}}_{p-var; [s,t]} \norm{\mathbf{x}^3}^{\frac{1}{\theta}}_{\frac{p}{3}-var; [s,t]}.
\end{align*}
This is a control as $\theta \leq \frac{4}{p}$ gives $\frac{1}{\theta} \left( \frac{4}{p} \right) \geq 1$ \cite{fv2010a}. Following Theorem 3.3 in \cite{cl2018}, for any partition $\pi = \{r_i\}$ of $[s, t]$ with $k$ sub-intervals, there necessarily exists some $r_j \in \pi$ such that
\begin{align*}
\abs{\Xi_{r_{j-1}, r_j} + \Xi_{r_j, r_{j+1}} - \Xi_{r_{j-1}, r_{j+1}}}
&\leq \abs{R^{\phi}_{r_{j-1}, r_j} x_{r_j, r_{j+1}}} + \abs{R^{\phi'}_{r_{j-1}, r_j} \mathbf{x}^2_{r_j, r_{j+1}}} + \abs{\phi''_{r_{j-1}, r_j} \mathbf{x}^3_{r_j, r_{j+1}}} \\
&\leq 3 \, \omega(r_{j-1}, r_{j+1})^{\theta}
\leq 3 \left( \frac{2}{k-1} \right)^{\theta} \omega(s, t)^{\theta}.
\end{align*}
Appropriately extracting points from the partition until $[s, t]$ remains gives us the bound
\begin{align} \label{controlledOldBound2}
\abs{\int_{\pi} \phi_r \strato{\mathbf{x}_r} - \left( \phi_s x_{s,t} + \phi'_s \mathbf{x}^2_{s,t}  + \phi''_s \mathbf{x}^3_{s,t} \right)}
< C \, \zeta(\theta) \, \omega(s, t)^{\theta},
\end{align}
where
\begin{align*}
\int_{\pi} \phi_r \strato{\mathbf{x}_r}
:= \sum_i \phi_{r_i} x_{r_i, r_{i+1}} + \phi'_{r_i} \mathbf{x}^2_{r_i, r_{i+1}} + \phi''_{r_i} \mathbf{x}^3_{r_i, r_{i+1}},
\end{align*}
and \eqref{controlledRPdefn2} is proved as in Theorem 3.3 of \cite{cl2018}. \par
If we define
\begin{align} \label{remainder2}
R^z_{s,t} := \int_s^t \phi_r \strato{x_r} - \left( \phi_s x_{s,t} + \phi'_s \mathbf{x}^2_{s,t} \right),
\end{align}
we get
\begin{align*}
z_{s,t} = z'_s x_{s,t} + z''_s \mathbf{x}^2_{s,t} + R^z_{s,t},
\end{align*}
and from \eqref{controlledOldBound2}, $\abs{R^z_{s,t}}^q$ is bounded above by
\begin{align*}
C_{p,q} \left[ \norm{\phi''}^q_{\mathcal{V}^p} \left( \norm{\mathbf{x}^3}^{\frac{p}{3}}_{\frac{p}{3}-var; [s,t]} \right)^{\frac{3q}{p}} +  \norm{x}^q_{p-var;[0, T]} \norm{R^{\phi}}^q_{q-var; [s,t]} + \norm{R^{\phi'}}^q_{\frac{p}{2}-var;[s,t]} \norm{\mathbf{x}^2}_{\frac{p}{2}-var;[s,t]}^q \right].
\end{align*}
Since $q \geq \frac{p}{3}$, the right side of the above expression is a control and is thus super-additive. Furthermore, $\norm{R^z}_{q-var; [0, T]}$ is bounded above by
\begin{align*}
C_{p, q} \left( \norm{\phi''}_{\mathcal{V}^p} \norm{\mathbf{x}^3}_{\frac{p}{3}-var; [0, T]} + \norm{x}_{p-var;[0, T]} \norm{R^{\phi}}_{q-var; [0, T]} + \norm{R^{\phi'}}_{\frac{p}{2}-var;[0, T]} \norm{\mathbf{x}^2}_{\frac{p}{2}-var;[0, T]} \right).
\end{align*}
Continuing, we define
\begin{align*}
R^{z'}_{s,t} := \phi''_s \mathbf{x}^2_{s, t} + R_{s,t}^{\phi},
\end{align*}
which gives
\begin{align*}
z'_{s,t} = z''_s x_{s,t} + R^{z'}_{s,t},
\end{align*}
as well as
\begin{align*}
\norm{R^{z'}}_{\frac{p}{2}-var; [0, T]}
\leq \norm{\phi''}_{\mathcal{V}^p; [0, T]} \norm{\mathbf{x}^2}_{\frac{p}{2}-var; [0, T]} + \norm{R^{\phi}}_{\frac{p}{2}-var; [0, T]}.
\end{align*}
\end{proof}

For the next proposition, given maps $A \in\mathcal{L}(\mathbb{R}^{d}; \mathcal{L}(\mathcal{U}; \mathcal{V}))$ and $B \in\mathcal{L}(\mathbb{R}^{d}; \mathcal{U})$, we will identify them as tensors (either $\mathcal{L} (\mathcal{U}; \mathcal{V})$-valued or $\mathcal{U}$-valued)
\begin{align*}
& A = \sum_{j=1}^{d} a_{j} \, \mathrm{d} e_{j}, \quad a_{j} \in\mathcal{L}%
(\mathcal{U}; \mathcal{V}),\\
& B = \sum_{j=1}^{d} b_{j} \, \mathrm{d} e_{j}, \quad b_{j} \in\mathcal{U},
\end{align*}
and adopt the following notation
\begin{align*}
AB := a_{i} (b_{j}) \, \mathrm{d} e_{i} \otimes\mathrm{d} e_{j},\\
BA := a_{j} (b_{i}) \, \mathrm{d} e_{i} \otimes\mathrm{d} e_{j},\\
\mathrm{Sym} (AB) := \frac{1}{2} (AB + BA).
\end{align*}

\begin{proposition}
\label{controlledLeibniz2} (Leibniz rule) For $3 \leq p < 4$, let
\begin{align*}
& \phi\in\mathcal{C}^{p-var} \left( [0, T]; \mathcal{L}(\mathcal{U};
\mathcal{V}) \right) ,\\
& \phi^{\prime}\in\mathcal{C}^{p-var} \left( [0, T]; \mathcal{L}%
(\mathbb{R}^{d}; \mathcal{L}(\mathcal{U}; \mathcal{V})) \right) ,
\quad\mathrm{and}\\
& \phi^{\prime\prime}\in\mathcal{C}^{p-var} \left( [0, T]; \mathcal{L}%
(\mathbb{R}^{d} \otimes\mathbb{R}^{d}; \mathcal{L}(\mathcal{U}; \mathcal{V}))
\right) .
\end{align*}
Assume that $\left( \phi, \phi^{\prime}, \phi^{\prime\prime}\right) $ is
controlled by $\mathbf{x} \in\mathcal{C}^{p-var} \left(  [0, T]; G^{3}\left(
\mathbb{R}^{d} \right)  \right) $, with remainder terms $R^{\phi}$ and
$R^{\phi^{\prime}}$ of bounded $q$-variation and $\frac{p}{2}$-variation
respectively, where $\frac{1}{p} + \frac{1}{q} > 1$ and $q \geq\frac{p}{3}$.

\begin{enumerate} [(i)]
\item Let
\begin{align*}
& \psi\in\mathcal{C}^{p-var}\left(  [0,T]; \mathcal{U} \right) ,\\
& \psi^{\prime}\in\mathcal{C}^{p-var}\left(  [0,T]; \mathcal{L}(\mathbb{R}%
^{d}; \mathcal{U} )\right) , \quad\mathrm{and}\\
& \psi^{\prime\prime}\in\mathcal{C}^{p-var}\left(  [0,T]; \mathcal{L}%
(\mathbb{R}^{d} \otimes\mathbb{R}^{d}; \mathcal{U} )\right) .
\end{align*}
If $\left( \psi,\psi^{\prime}, \psi^{\prime\prime}\right)  $ is controlled by
$\mathbf{x}$, then the path $\phi\psi\in\mathcal{C}^{p-var}([0,T];
\mathcal{V})$ given by the composition of $\phi$ and $\psi$ is also controlled
by $\mathbf{x}$, with derivative process
\begin{align*}
(\phi\psi)^{\prime\prime}\psi+\phi\psi^{\prime}%
\end{align*}
and second derivative process
\begin{align*}
(\phi\psi)^{\prime\prime}= \phi^{\prime\prime}\psi+ 2 \, \mathrm{Sym}
(\phi^{\prime}\psi^{\prime}) + \phi\psi^{\prime\prime}.
\end{align*}
In addition, we have the bound
\begin{align} \label{leibnizBound2a}
\left\|  \phi\psi\right\| _{p,q-cvar} \leq4 \left\|
\phi\right\| _{p,q-cvar} \left\|  \psi\right\| _{p,q-cvar} \left(  1 +
\left\|  x\right\| _{p-var; [0, T]} + \left\|  \mathbf{x}^{2}\right\|
_{\frac{p}{2}-var; [0,T]} \right) .
\end{align}

\item Suppose that $\psi\in\mathcal{C}^{q-var}([0,T]; \mathcal{U})$. Then
$\phi\psi\in\mathcal{C}^{p-var}([0,T]; \mathcal{V})$ is also controlled by
$\mathbf{x}$, with derivative process
\begin{align*}
(\phi\psi)^{\prime}= \phi^{\prime}\psi
\end{align*}
and second derivative process
\begin{align*}
(\phi\psi)^{\prime\prime}= \phi^{\prime\prime}\psi.
\end{align*}
Moreover, we have the bound
\begin{align}
\label{leibnizBound2b}\left\|  \phi\psi\right\| _{p,q-cvar} \leq\left\|
\phi\right\| _{p,q-cvar} \left\|  \psi\right\|  _{\mathcal{V}^{q}; [0, T]}.
\end{align}

\end{enumerate}
\end{proposition}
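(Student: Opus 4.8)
The plan is to verify directly that the triples $(\phi\psi, (\phi\psi)', (\phi\psi)'')$ with the proposed first- and second-derivative processes satisfy the two defining relations \eqref{controlled2}, and then to read off the variation bounds on the resulting remainders. For both parts the starting point is the algebraic identity
\[
(\phi\psi)_{s,t} = \phi_{s,t}\psi_s + \phi_s\psi_{s,t} + \phi_{s,t}\psi_{s,t},
\]
into which I would substitute the controlled expansions \eqref{controlled2} of each factor and then sort the resulting terms by their intrinsic scaling: the coefficients of $x_{s,t}$ and of $\mathbf{x}^2_{s,t}$ must reproduce $(\phi\psi)'_s$ and $(\phi\psi)''_s$, while everything of scaling order three or higher is absorbed into $R^{\phi\psi}$. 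An identical expansion applied to $(\phi\psi)'_{s,t} = (\phi'\psi + \phi\psi')_{s,t}$ then yields the second relation together with the remainder $R^{(\phi\psi)'}$.

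For part (i) the crucial point in the first relation is the quadratic term $\phi_{s,t}\psi_{s,t}$, whose leading contribution is $(\phi'_s x_{s,t})(\psi'_s x_{s,t})$. Writing this out in coordinates and invoking geometricity of $\mathbf{x}$, namely $x_{s,t}\otimes x_{s,t} = 2\,\mathrm{Sym}(\mathbf{x}^2_{s,t})$, converts it into $2\,\mathrm{Sym}(\phi'_s\psi'_s)\,\mathbf{x}^2_{s,t}$; together with $\phi''_s\psi_s$ and $\phi_s\psi''_s$ this gives precisely the stated $(\phi\psi)'' = \phi''\psi + 2\,\mathrm{Sym}(\phi'\psi') + \phi\psi''$. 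A useful consistency check — and a second place where the symmetrization is forced to appear — is the coefficient of $x_{s,t}$ in $(\phi\psi)'_{s,t}$: here the summands $\phi'\psi$ and $\phi\psi'$ contribute the product $\phi'_s\psi'_s$ in the two opposite orderings $AB$ and $BA$, whose sum is again $2\,\mathrm{Sym}(\phi'_s\psi'_s)$, so the two computations must agree. The remainder $R^{\phi\psi}$ then collects $R^\phi\psi$, $\phi R^\psi$, and the genuinely third-order cross terms such as $x_{s,t}\otimes\mathbf{x}^2_{s,t}$; using $\tfrac{p}{3}\le q$ from \eqref{qBounds} each of these is of finite $q$-variation, and similarly $R^{(\phi\psi)'}$ turns out to be of finite $\tfrac{p}{2}$-variation because $q\le\tfrac{p}{2}$.

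Part (ii) is lighter: a $q$-variation path $\psi$ carries no controlled structure and so plays the role of a remainder-order object. The increment $(\phi\psi)_{s,t}$ now draws its $x_{s,t}$- and $\mathbf{x}^2_{s,t}$-coefficients solely from $\phi_{s,t}\psi_s$, giving $(\phi\psi)'=\phi'\psi$ and $(\phi\psi)''=\phi''\psi$, while the terms $\phi_s\psi_{s,t}$ and $\phi_{s,t}\psi_{s,t}$ fall into the remainder. Here I would use that the cross-increment of a $p$-variation and a $q$-variation path is of finite $r$-variation with $\tfrac1r=\tfrac1p+\tfrac1q\ge 1$, hence of bounded variation and in particular of finite $q$-variation, so that $R^{\phi\psi}\in\mathcal{C}^{q-var}$; the analogous computation for $(\phi\psi)'_{s,t}=(\phi'\psi)_{s,t}$ gives $R^{(\phi\psi)'}\in\mathcal{C}^{\frac{p}{2}-var}$. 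Finally, the quantitative bounds \eqref{leibnizBound2a} and \eqref{leibnizBound2b} follow by assembling the individual $\mathcal{V}^p$-, $q$-, and $\tfrac{p}{2}$-variation estimates of the pieces above, exactly as in Theorem \ref{controlledThm2}; in part (i) the products of two controlled norms produce the factor $(1+\|x\|_{p-var}+\|\mathbf{x}^2\|_{\frac p2-var})$, whereas in part (ii) no such rough-path factor is needed because $\psi$ enters only through $\|\psi\|_{\mathcal{V}^q}$. I expect the main obstacle to be the bookkeeping in part (i): ensuring the symmetrized second-derivative process emerges consistently from both defining relations, and checking that every cross term generated by $\phi_{s,t}\psi_{s,t}$ and by $\phi'_{s,t}\psi_{s,t}$ lands in the variation class dictated by the window $\tfrac{p}{3}\le q<\tfrac{p}{p-1}\le\tfrac{p}{2}$.
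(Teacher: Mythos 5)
Your proposal is correct and follows essentially the same route as the paper: expand $(\phi\psi)_{s,t}=\phi_{s,t}\psi_s+\phi_s\psi_{s,t}+\phi_{s,t}\psi_{s,t}$, use geometricity to convert $(\phi'_s x_{s,t})(\psi'_s x_{s,t})$ into $2\,\mathrm{Sym}(\phi'_s\psi'_s)\mathbf{x}^2_{s,t}$, verify the same symmetrization appears in the expansion of $(\phi\psi)'_{s,t}$, and place the leftover terms (built from $\tilde R^\phi=\phi''\mathbf{x}^2+R^\phi$ and its $\psi$-analogue) into remainders whose $q$- and $\tfrac{p}{2}$-variation are controlled using $\tfrac{p}{3}\le q\le\tfrac{p}{2}$. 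The only cosmetic difference is in part (ii), where the paper keeps the remainder as $R^\phi_{s,t}\psi_s+\phi_t\psi_{s,t}$ and bounds it directly by $\|\phi\|_\infty\|\psi\|_{q\text{-var}}$ rather than splitting off the cross-increment $\phi_{s,t}\psi_{s,t}$ and invoking the Hölder/Young product estimate.
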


\begin{proof}
(i) It is trivial to see that $\norm{\phi'\psi + \phi\psi'}_{p-var;[0, T]}$ and $\norm{\phi'' \psi + 2 \, \mathrm{Sym} (\phi' \psi') + \phi \psi''}_{p-var; [0, T]}$ satisfy \eqref{leibnizBound2a}. First we denote
\begin{align*}
\tilde{R}^{\phi}_{s,t} := \phi''_s \mathbf{x}^2_{s,t} + R^{\phi}_{s,t}, \\
\tilde{R}^{\psi}_{s,t} := \psi''_s \mathbf{x}^2_{s,t} + R^{\psi}_{s,t},
\end{align*}
and since $\norm{\cdot}_{\frac{p}{2}-var;[s, t]} \leq \norm{\cdot}_{q-var;[s, t]}$, we have the bounds
\begin{align*}
\norm{\tilde{R}^{\phi}}_{\frac{p}{2}-var;[s, t]}
\leq \norm{\phi''}_{\infty} \norm{\mathbf{x}^2}_{\frac{p}{2}-var; [s, t]} + \norm{R^{\phi}}_{q-var; [s,t]}, \\
\norm{\tilde{R}^{\psi}}_{\frac{p}{2}-var; [s, t]}
\leq \norm{\psi''}_{\infty} \norm{\mathbf{x}^2}_{\frac{p}{2}-var; [s, t]} + \norm{R^{\psi}}_{q-var; [s, t]},
\end{align*}
for all $s, t$ in $[0, T]$. Continuing, we compute
\begin{align} \label{phipsiInc}
\begin{split}
(\phi \psi)_{s,t}
&= \phi_{s,t} \psi_{s,t} +\phi_{s,t} \psi_s + \phi_s \psi_{s,t} \\
&= \left(\phi'_s x_{s,t} + \tilde{R}^{\phi}_{s,t} \right) \left(\psi'_s x_{s,t} + \tilde{R}^{\psi}_{s,t} \right) + \left(\phi'_s x_{s,t} + \phi''_s \mathbf{x}_{s,t}^2 + R^{\phi}_{s,t} \right) \psi_s + \phi_s \left(\psi'_s x_{s,t} + \psi''_s \mathbf{x}_{s,t}^2 + R^{\psi}_{s,t} \right) \\
&= \phi'_s x_{s,t} \left( \psi'_s x_{s,t} \right) + \left( \phi'_s \psi_s + \phi_s \psi'_s \right) x_{s,t} + \left(\phi''_s \psi_s + \phi_s \psi''_s \right) \mathbf{x}^2_{s,t} \\
&\qquad + \phi'_s x_{s,t} \tilde{R}^{\psi}_{s,t} + \tilde{R}^{\phi}_{s,t} \psi'_s x_{s,t} + \tilde{R}^{\phi}_{s,t} \tilde{R}^{\psi}_{s,t} + R^{\phi}_{s,t} \psi_s + \phi_s R^{\psi}_{s,t}.
\end{split}
\end{align}
Denoting
\begin{align*}
&\phi'_s = \sum_{i=1}^d \phi'_i (s) \, \mathrm{d} e_i, \quad \phi'_i(s) \in \mathcal{L}(\mathcal{U}; \mathcal{V}) \:\: \forall i \in \{1, \ldots, d \} \, \mathrm{and} \, s \in [0, T], \\
&\psi'_s = \sum_{i=1}^d \psi'_i (s) \, \mathrm{d} e_i, \quad \psi'_i(s) \in \mathcal{U} \:\: \forall i \in \{1, \ldots, d \} \, \mathrm{and} \, s \in [0, T],
\end{align*}
we have
\begin{align*}
\phi'_s x_{s,t} \left( \psi'_s x_{s,t} \right)
&= \sum_{i,j=1}^d \phi'_i (s) ( \psi'_j (s)) \, x^{(i)}_{s,t} x^{(j)}_{s,t} \\
&= \left( \sum_{i, j =1}^d \left( \phi'_i (s) (\psi'_j(s)) + \phi'_j(s) (\psi'_i(s)) \right) \mathrm{d} e_i \otimes \mathrm{d} e_j \right) \frac{1}{2} \left( x_{s,t} \otimes x_{s,t} \right) \\
&= 2 \, \mathrm{Sym} (\phi'_s \psi'_s) \, \mathbf{x}^2_{s,t}
\end{align*}
Thus, continuing from \eqref{phipsiInc}, we have
\begin{align*}
(\phi \psi)_{s,t}
= \left( \phi'_s \psi_s + \phi_s \psi'_s \right) x_{s,t} + \left(\phi''_s \psi_s + 2 \, \mathrm{Sym} (\phi'_s \psi'_s) + \phi_s \psi''_s \right) \mathbf{x}^2_{s,t} + R^{\phi\psi}_{s,t},
\end{align*}
where
\begin{align*}
R^{\phi\psi}_{s,t}
:= \phi'_s x_{s,t} \tilde{R}^{\psi}_{s,t} + \tilde{R}^{\phi}_{s,t} \psi'_s x_{s,t} + \tilde{R}^{\phi}_{s,t} \tilde{R}^{\psi}_{s,t} + R^{\phi}_{s,t} \psi_s + \phi_s R^{\psi}_{s,t}.
\end{align*}
We can use the fact that $\frac{4q}{p} > \frac{3q}{p} \geq 1$ to show that $\norm{R^{\phi\psi}}_{q-var; [0, T]}$ is bounded above by
\begin{align*}
\norm{x}_{p} \left( \norm{\phi'}_{\infty} \norm{\tilde{R}^{\psi}}_{\frac{p}{2}} + \norm{\psi'}_{\infty} \norm{\tilde{R}^{\phi}}_{\frac{p}{2}} \right)
+ \norm{\tilde{R}^{\phi}}_{\frac{p}{2}} \norm{\tilde{R}^{\psi}}_{\frac{p}{2}} + \norm{\psi}_{\infty} \norm{R^{\phi}}_{q} + \norm{\phi}_{\infty} \norm{R^{\psi}}_{q},
\end{align*}
where here we use $\norm{\cdot}_p$ as short-hand for $\norm{\cdot}_{p-var; [0, T]}$. \par
Moving on, we need to show that
\begin{align*}
(\phi \psi)'_{s,t} = (\phi \psi)''_s x_{s,t} + R^{(\phi\psi)'}_{s,t}.
\end{align*}
We have
\begin{align*}
\left( \phi \psi \right)'_{s,t}
&= \phi'_t \psi_t + \phi_t \psi'_t - \phi'_s \psi_s - \phi_s \psi'_s \\
&= \phi'_{s,t} \psi_s + \phi_s \psi'_{s,t} + \phi'_s \psi_{s,t} + \phi_{s,t} \psi'_s + \phi'_{s,t} \psi_{s,t} + \phi_{s,t} \psi_{s,t} \\
&= \left( \phi''_s x_{s,t} + R^{\phi'}_{s,t} \right) \psi_s + \phi_s \left( \psi''_s x_{s,t} + R^{\psi'}_{s,t} \right) + \phi'_s \left( \psi'_s x_{s,t} + \tilde{R}^{\psi}_{s,t} \right) \\
&\qquad \qquad+ \left( \phi'_s x_{s,t} + \tilde{R}^{\phi}_{s,t} \right) \psi'_s + \phi'_{s,t} \psi_{s,t} + \phi_{s,t} \psi'_{s,t} \\
&= \left( \phi''_s \psi_s + \phi_s \psi''_s \right) x_{s,t} + \phi'_s (\psi'_s x_{s,t}) + \phi'_s x_{s,t} (\psi'_s) \\
&\qquad \qquad+ \underset{\displaystyle =: R_{s,t}^{(\phi\psi)'}}{\underbrace{ \phi'_s \tilde{R}^{\psi}_{s,t} + \tilde{R}^{\phi}_{s,t} \psi'_s + R^{\phi'}_{s,t} \psi_s + \phi_s R^{\psi'}_{s,t} + \phi'_{s,t} \psi_{s,t} + \phi_{s,t} \psi'_{s,t}}}.
\end{align*}
We have
\begin{align*}
\phi'_s (\psi'_s x_{s,t}) + \phi'_s x_{s,t} (\psi'_s)
&= \sum_{i,j=1}^d \phi'_j(s) (\psi'_i(s)) x^{(i)}_{s,t} \, \mathrm{d} e_j + \sum_{i,j=1}^d \phi'_i(s) (\psi'_j(s)) x^{(i)}_{s,t} \, \mathrm{d} e_j \\
&= 2 \, \mathrm{Sym} (\phi'_s \psi'_s) x_{s,t},
\end{align*}
and again $\norm{R^{(\phi\psi)'}}_{\frac{p}{2}-var; [0, T]}$ is bounded above by
\begin{align*}
\norm{\phi'}_{\infty} \norm{\tilde{R}^{\psi}}_{\frac{p}{2}} + \norm{\psi'}_{\infty} \norm{\tilde{R}^{\phi}}_{\frac{p}{2}} + \norm{\psi}_{\infty} \norm{R^{\phi'}}_{\frac{p}{2}}
+ \norm{\phi}_{\infty} \norm{R^{\psi'}}_{\frac{p}{2}} + \norm{\phi'}_{p} \norm{\psi}_{p} + \norm{\phi}_{p} \norm{\psi'}_{p}.
\end{align*}
(ii) Note that $\norm{\phi' \psi}_{p-var; [0, T]}$ and $\norm{\phi'' \psi}_{p-var; [0, T]}$ satisfy \eqref{leibnizBound2b}. Moreover, we have
\begin{align*}
\left( \phi \psi \right)_{s,t}
&= \phi_{s,t} \psi_s + \phi_t \psi_{s,t} \\
&= \left( \phi'_s x_{s,t} + \phi''_s \mathbf{x}^2_{s,t} + R^{\phi}_{s, t} \right) \psi_s + \phi_t \psi_{s,t} \\
&= \phi'_s \psi_s x_{s,t} + \phi''_s \psi_s \mathbf{x}^2_{s,t} + \underset{\displaystyle =: R^{\phi\psi}_{s,t}}{\underbrace{R^{\phi}_{s,t} \psi_s + \phi_t \psi_{s,t}}},
\end{align*}
and thus
\begin{align*}
\norm{ R^{\phi\psi}}_{q-var; [0, T]}
\leq \norm{\psi}_{\infty} \norm{R^{\phi}}_{q-var; [0, T]} + \norm{\phi}_{\infty} \norm{\psi}_{q-var; [0, T]}.
\end{align*}
Continuing, we have
\begin{align*}
(\phi \psi)'_{s,t}
&= \phi'_{s,t} \psi_s + \phi'_t \psi_{s,t} \\
&= \left(\phi''_s x_{s,t} + R^{\phi'}_{s,t} \right) \psi_s + \phi'_t \psi_{s,t} \\
&= \phi''_s \psi_s x_{s,t} + \underset{\displaystyle =: R^{(\phi\psi)'}_{s,t}}{\underbrace{R^{\phi'}_{s,t} \psi_s + \phi'_t \psi_{s,t}}},
\end{align*}
which implies that
\begin{align*}
\norm{R^{(\phi \psi)'}}_{\frac{p}{2}-var; [0, T]}
\leq \norm{\psi}_{\infty} \norm{R^{\phi'}}_{\frac{p}{2}-var; [0, T]} + \norm{\phi'}_{\infty} \norm{\psi}_{q-var; [0, T]}.
\end{align*}
\end{proof}

\begin{proposition}
\label{controlledSmoothMap2} Let $\mathbf{x} \in\mathcal{C}^{p-var} \left(
[0, T]; G^{3}\left(  \mathbb{R}^{d} \right)  \right) $ where $3 \leq p < 4$.
We assume that
\begin{align*}
& y \in\mathcal{C}^{p-var} \left( [0, T]; \mathcal{U} \right) ,\\
& y^{\prime}\in\mathcal{C}^{p-var} \left( [0, T]; \mathcal{L} \left(
\mathbb{R}^{d}, \mathcal{U} \right)  \right) ,\\
& y^{\prime\prime}\in\mathcal{C}^{p-var} \left( [0, T]; \mathcal{L} \left(
\mathbb{R}^{d} \otimes\mathbb{R}^{d}; \mathcal{U} \right) \right) ,
\end{align*}
and $\left( y, y^{\prime}, y^{\prime\prime}\right) $ is controlled by
$\mathbf{x}$ with remainder terms $R^{y}$ and $R^{y^{\prime}}$ of bounded
$q$-variation and $\frac{p}{2}$-variation respectively, where $\frac{1}{p} +
\frac{1}{q} > 1$ and $q \geq\frac{p}{3}$. Let $\phi\in\mathcal{C}_{b}^{3}
\left(  \mathcal{U}, \mathcal{V} \right)  $ and define
\begin{align*}
\left(  z_{s}, z_{s}^{\prime}, z_{s}^{\prime\prime}\right)  := \left(
\phi\left(  y_{s}\right)  ,\nabla\phi\left(  y_{s}\right)  y_{s}^{\prime
},\nabla\phi\left(  y_{s}\right)  y_{s}^{\prime\prime}+\nabla^{2}\phi\left(
y_{s}\right)  \left(  y_{s}^{\prime},y_{s}^{\prime}\right)  \right)
\end{align*}
for all $s \in[0, T]$. Then $\left(  z, z^{\prime}, z^{\prime\prime}\right)  $
is controlled by $\mathbf{x}$, and we have the following bounds
\begin{align*}
\left\|  z\right\| _{p-var; \left[  0,T \right]  }  & \leq\left\|
\phi\right\| _{\mathcal{C}_{b}^{3}} \left\|  y\right\| _{p-var; \left[
0,T\right]  },\\
\left\|  z^{\prime}\right\| _{p-var; \left[  0,T\right]  }  & \leq\left\|
\phi\right\| _{\mathcal{C}_{b}^{3}} \left\|  y\right\| _{\mathcal{V}^{p}; [0,
T]} \left\|  y^{\prime}\right\| _{\mathcal{V}^{p}; [0, T]},\\
\left\|  z^{\prime\prime}\right\| _{p-var; \left[  0,T\right]  }  &
\leq\left\|  \phi\right\| _{\mathcal{C}_{b}^{3}} \left\|  y\right\|
_{\mathcal{V}^{p}; [0, T]} \left(  \left\|  y^{\prime\prime}\right\|
_{\mathcal{V}^{p}; [0, T]} + \left\|  y^{\prime}\right\| ^{2}_{\mathcal{V}%
^{p}; [0, T]} \right) ,
\end{align*}
and
\begin{align}
\label{est Rz}\left\|  R^{z}\right\| _{q-var; \left[  0,T\right]  }, \left\|
R^{z^{\prime}}\right\| _{\frac{p}{2}-var; \left[  0,T\right]  } \leq\left\|
\phi\right\| _{\mathcal{C}_{b}^{3}} \left(  1 + \left\|  y\right\| _{p,q-cvar}
\right) ^{3} \left(  1 + \left\|  x\right\| _{p-var; \left[  0,T\right]  } +
\left\|  \mathbf{x}^{2}\right\| _{\frac{p}{2}-var; \left[  0,T\right]  }
\right) ^{2}.
\end{align}

\end{proposition}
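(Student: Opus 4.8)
The plan is to follow the classical argument that smooth maps preserve the controlled-rough-path structure (as in Proposition 2.21 of \cite{cl2018}, and the Davie--Gubinelli-type Taylor expansion of \cite{fh2014}), but carried one level higher so that a \emph{cubic} Taylor remainder must be controlled; this is precisely where the hypotheses $\phi \in \mathcal{C}_b^3$ and $q \geq \frac{p}{3}$ enter. First, the three $p$-variation bounds on $z, z', z''$ are routine. Since $\phi \in \mathcal{C}_b^3$, the maps $y \mapsto \phi(y)$, $\nabla\phi(y)$, $\nabla^2\phi(y)$ are bounded and Lipschitz, so $\norm{z}_{p-var} \leq \norm{\phi}_{\mathcal{C}_b^3}\norm{y}_{p-var}$, and the estimates for $z' = \nabla\phi(y)y'$ and $z'' = \nabla\phi(y)y'' + \nabla^2\phi(y)(y',y')$ follow by combining this with the elementary product estimate $\norm{fg}_{p-var} \leq \norm{f}_{\mathcal{V}^p}\norm{g}_{\mathcal{V}^p}$ exactly as in Proposition \ref{controlledLeibniz2}; I omit these details.

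The substance lies in establishing the controlled structure and the remainder bound \eqref{est Rz}. For $R^z$ I would apply Taylor's theorem to second order,
\[
\phi(y_t) - \phi(y_s) = \nabla\phi(y_s)\,y_{s,t} + \tfrac{1}{2}\nabla^2\phi(y_s)(y_{s,t}, y_{s,t}) + \Theta_{s,t}, \qquad \abs{\Theta_{s,t}} \leq \tfrac{1}{6}\norm{\phi}_{\mathcal{C}_b^3}\abs{y_{s,t}}^3,
\]
and substitute the controlled expansion $y_{s,t} = y'_s x_{s,t} + y''_s \mathbf{x}^2_{s,t} + R^y_{s,t}$ from \eqref{controlled2}. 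The linear term yields the coefficient $\nabla\phi(y_s)y'_s = z'_s$ of $x_{s,t}$ and the contribution $\nabla\phi(y_s)y''_s$ to the coefficient of $\mathbf{x}^2_{s,t}$. For the quadratic term I would invoke weak geometricity, $\mathrm{Sym}(\mathbf{x}^2_{s,t}) = \tfrac{1}{2}x_{s,t}\otimes x_{s,t}$, together with the symmetry of $\nabla^2\phi$ — the identity already exploited in Proposition \ref{controlledLeibniz2}(i) — to rewrite the leading piece $\tfrac{1}{2}\nabla^2\phi(y_s)(y'_s x_{s,t}, y'_s x_{s,t})$ as $\nabla^2\phi(y_s)(y'_s, y'_s)\,\mathbf{x}^2_{s,t}$. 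Adding the two $\mathbf{x}^2$-contributions reproduces exactly $z''_s$, so that $z_{s,t} = z'_s x_{s,t} + z''_s \mathbf{x}^2_{s,t} + R^z_{s,t}$, where $R^z_{s,t}$ collects $\Theta_{s,t}$, the term $\nabla\phi(y_s)R^y_{s,t}$, and the genuinely higher-order cross terms (those pairing $x_{s,t}$ with $y''_s\mathbf{x}^2_{s,t}$ or $R^y_{s,t}$, and so on). Estimating each piece against a super-additive control then gives $\norm{R^z}_{q-var} < \infty$: the cubic remainder satisfies $\abs{\Theta_{s,t}} \lesssim \norm{\phi}_{\mathcal{C}_b^3}\,\omega(s,t)^{3/p}$, and since $q \geq \frac{p}{3}$ gives $\frac{1}{q} \leq \frac{3}{p}$ this has finite $q$-variation — this single estimate is the whole reason the argument needs $\mathcal{C}_b^3$ rather than $\mathcal{C}_b^2$. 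The term $\nabla\phi(y_s)R^y_{s,t}$ inherits the $q$-variation of $R^y$, and every cross term has combined order $\frac{3}{p}$ or $\frac{1}{p}+\frac{1}{q}$, each dominating $\frac{1}{q}$ (using $\frac{1}{p}+\frac{1}{q}>1>\frac{1}{q}$). The bound \eqref{est Rz} then follows by collecting the corresponding norms, just as \eqref{controlledOldBound2} was summed in Theorem \ref{controlledThm2}.

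The analysis of $R^{z'}$ is entirely parallel but one order lower. Writing $z' = \nabla\phi(y)y'$ and using the product structure $z'_{s,t} = (\nabla\phi(y))_{s,t}\,y'_s + \nabla\phi(y_t)\,y'_{s,t}$, I would Taylor-expand $(\nabla\phi(y))_{s,t} = \nabla^2\phi(y_s)y_{s,t} + O(\abs{y_{s,t}}^2)$ and insert $y_{s,t} = y'_s x_{s,t} + \cdots$ together with $y'_{s,t} = y''_s x_{s,t} + R^{y'}_{s,t}$. The $x_{s,t}$-coefficient again assembles into $z''_s$, and the leftover $R^{z'}_{s,t}$ consists of terms of order at least $\frac{2}{p}$ together with $\nabla\phi(y_t)R^{y'}_{s,t}$ and products involving $R^y$. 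Here the relevant input is $q < \frac{p}{2}$ from \eqref{qBounds}, which forces $\frac{1}{q} > \frac{2}{p}$; hence both $R^y$ (of $q$-variation) and the quadratic remainders have finite $\frac{p}{2}$-variation, giving $R^{z'} \in \mathcal{C}^{\frac{p}{2}-var}$ and its half of \eqref{est Rz}.

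The main obstacle I anticipate is bookkeeping: correctly enumerating every cross term produced by substituting the two-term controlled expansions into the second-order Taylor polynomials, and certifying that each term carries a variation exponent at least $\frac{1}{q}$ (for $R^z$) or $\frac{2}{p}$ (for $R^{z'}$). The one genuinely new input relative to the $2 \leq p < 3$ theory of \cite{cl2018} is the cubic Taylor remainder and the matching hypothesis $q \geq \frac{p}{3}$; everything else is a higher-level transcription of the arguments already developed in this section.
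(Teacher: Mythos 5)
Your proposal is correct and follows essentially the same route as the paper's proof: a second-order Taylor expansion whose cubic remainder is tamed by $q \geq \frac{p}{3}$, substitution of the controlled expansion of $y_{s,t}$ with the weak-geometricity identity $\frac{1}{2}\nabla^{2}\phi(y_{s})(y_{s}'x_{s,t}, y_{s}'x_{s,t}) = \nabla^{2}\phi(y_{s})(y_{s}',y_{s}')\,\mathbf{x}^{2}_{s,t}$, and a Leibniz-rule treatment of $z' = \nabla\phi(y)\,y'$ for the $R^{z'}$ bound (the paper packages this last step as an application of Lemmas 3.5 and 3.6 of \cite{cl2018}, which is exactly the hand computation you describe). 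The bookkeeping of cross terms you flag as the main obstacle is handled in the paper by grouping $\tilde{R}^{y}_{s,t} := y_{s}''\mathbf{x}^{2}_{s,t} + R^{y}_{s,t}$ into a single $\frac{p}{2}$-variation remainder, which matches your exponent counts.
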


\begin{proof}
From Taylor's theorem we have
\begin{align} \label{z2}
z_{s,t}=\nabla \phi \left( y_{s}\right) y_{s,t}+\frac{1}{2}\nabla ^{2}\phi
\left( y_{s}\right) \left( y_{s,t}, y_{s,t} \right) + R_{s,t}^{Taylor}
\end{align}
for all $s < t$ in $\left[ 0,T\right]$, where $\abs{R_{s,t}^{Taylor}} \leq \norm{\phi}_{\mathcal{C}_b^3} \abs{y_{s,t}}^3$. From this it follows that
\begin{align} \label{Tremainder2}
\norm{R^{Taylor}}_{q-var; \left[ 0,T\right] }
\leq \norm{\phi}_{\mathcal{C}_b^3} \norm{y}_{p-var; \left[ 0, T \right] }^3.
\end{align}
As in the previous proposition, we define
\begin{align*}
\tilde{R}^{y}_{s, t} := y''_s \mathbf{x}^2_{s,t} + R^y_{s,t},
\end{align*}
and note that $y_{s,t} = y'_s x_{s,t} + \tilde{R}^y_{s,t}$ and
\begin{align} \label{rTildeEst}
\norm{\tilde{R}^y}_{\frac{p}{2}-var; [s, t]} \leq \norm{y''}_{\infty} \norm{\mathbf{x}^2}_{\frac{p}{2}-var; [s, t]} + \norm{R^y}_{\frac{p}{2}-var; [s, t]}
\end{align}
for all $s, t$ in $[0, T]$. We next use the fact that $(y, y' , y'')$ is controlled by $\mathbf{x}$ in equation \eqref{z2}, which yields
\begin{align*}
z_{s,t}
&= \nabla \phi \left( y_{s}\right) \left( y'_s x_{s,t} + y''_s \mathbf{x}_{s,t}^2 + R_{s,t}^y \right) +\frac{1}{2} \nabla^2 \phi \left( y_{s}\right) \left( y'_s x_{s,t} + \tilde{R}^y_{s,t}, y'_s x_{s,t} + \tilde{R}^y_{s,t} \right) + R_{s,t}^{Taylor} \\
&= \underset{\displaystyle = z'_s}{\underbrace{\nabla \phi \left( y_{s}\right) y'_s}} x_{s,t} + \underset{\displaystyle = z''_s}{\underbrace{\left( \nabla \phi \left( y_{s}\right) y''_s + \nabla^2 \phi \left( y_{s}\right) \left( y_{s}^{\prime }, y_{s}^{\prime
}\right) \right) }}\mathbf{x}_{s,t}^{2}+\underset{\displaystyle =: R_{s,t}^z}{\underbrace{\nabla \phi \left( y_{s}\right) R_{s,t}^{y}+E_{s,t}+R_{s,t}^{Taylor}}},
\end{align*}
where
\begin{align*}
E_{s,t}
&:= \nabla^2 \phi \left( y_s\right) \left( y'_{s} x_{s,t}, \tilde{R}^y_{s,t} \right) + \frac{1}{2} \nabla^2 \phi \left( y_{s}\right) \left( \tilde{R}_{s,t}^y, \tilde{R}_{s,t}^y \right),
\end{align*}
and we have used the fact that $\frac{1}{2} \nabla ^{2}\phi \left( y_{s}\right) \left( y'_{s} x_{s,t}, y'_{s} x_{s,t} \right) = \nabla^2 \phi \left( y_{s}\right) \left( y'_s, y'_s \right) \mathbf{x}_{s,t}^2$ (recall that $\mathbf{x}$ is assumed to be weakly-geometric). The stated estimates on the $p^{th}$-variation of $(z, z', z'')$ are then easily derived. We get
\begin{align*}
\norm{E}_{q-var; [0, T]}
\leq \norm{\phi}_{\mathcal{C}^3_b} \left( \norm{y'}_{\infty} \norm{x}_{p-var; [0, T]} \norm{\tilde{R}^y}_{\frac{p}{2}-var; [0, T]} + \norm{\tilde{R}^y}^2_{\frac{p}{2}-var; [0, T]} \right).
\end{align*}
After using \eqref{rTildeEst} and adding the $q^{th}$-variation bounds of $\nabla \phi(y) R^y$ and $R^{Taylor}$, we get bound \eqref{est Rz} for $R^z$. \par
Proceeding, we can apply Lemma 3.5 from \cite{cl2018} to $\nabla \phi (y)$ to obtain
\begin{align*}
R^{\nabla \phi(y)} \leq \norm{\phi}_{\mathcal{C}^3_b} \left( \norm{y}^2_{p-var; [0, T]} + \norm{\tilde{R}^y}_{\frac{p}{2}-var; [0, T]} \right).
\end{align*}
Furthermore, if we apply Lemma 3.6 from \cite{cl2018} with $\phi$ replaced with $\nabla \phi(y)$ and $\psi$ replaced by $y'$, we obtain
\begin{align*}
z'_{s,t} = \left( \nabla \phi(y) \, y' \right)_{s,t}
&=: (\phi \psi)_{s,t} \\
&= (\phi'_s \psi_s + \phi_s \psi'_s) \, x_{s,t} + R^{\phi\psi}_{s,t} \\
&= \underset{\displaystyle = z''_s}{\underbrace{\left( \nabla^2 \phi (y_s) \left(y'_s, y'_s \right) + \nabla \phi(y_s) \, y''_s \right)}} \, x_{s,t} + \underset{\displaystyle =: R^{z'}_{s,t}}{\underbrace{R^{\phi\psi}_{s,t}}}
\end{align*}
and
\begin{align*}
\norm{R^{z'}}_{\frac{p}{2}-var; [0, T]}
&\leq \left( \norm{\nabla \phi (y)}_{\mathcal{V}^p; [0, T]}  + \norm{R^{\nabla \phi(y)}}_{\frac{p}{2}-var; [0, T]} \right) \left( \norm{y'}_{\mathcal{V}^p; [0, T]} + \norm{R^{y'}}_{\frac{p}{2}-var; [0, T]} \right) \\
&\leq \norm{\phi}_{\mathcal{C}^3_b} \left( \left( 1 + \norm{y}_{p-var; [0, T]} \right)^2 + \norm{\tilde{R}^y}_{\frac{p}{2}-var; [0, T]} \right) \left( \norm{y'}_{\mathcal{V}^p; [0, T]} + \norm{R^{y'}}_{\frac{p}{2}-var; [0, T]} \right).
\end{align*}
\end{proof}

The following theorem extends Theorem 3.1 in \cite{cl2018}.
\begin{theorem} \label{controlledRDE} 
Consider the system of RDEs
\begin{align*}
& \mathrm{d} y_{t} = V(y_{t}) \circ\mathrm{d} \mathbf{x}_{t}, \quad y_{0} = a
\in\mathbb{R}^{e}, \\
& \mathrm{d} J^{\mathbf{x}}_{t} = \nabla V (y_{t}) \left( \circ\mathrm{d} \mathbf{x}_{t} \right)  J^{\mathbf{x}}_{t}, \quad J^{\mathbf{x}}_{0} = \mathcal{I}_{e},
\end{align*}
where $V=(V_{1},\ldots,V_{d})$ is a collection of $\mathbb{R}^{e}$-valued vector fields. If $\mathbf{x} = \left(  1, x,\mathbf{x}^{2}, \mathbf{x}^{3} \right)  \in\mathcal{C}^{p-var} \left( [0, T]; G^{3} \left( \mathbb{R}^{d}\right) \right) $, $3 \leq p < 4$, and $V$ is in $\mathcal{C}^{4}_{b}$, then both $(y, V(y), V^2(y) )$ and \\
$\left(  J^{\mathbf{x}}, \left( J^{\mathbf{x}} \right) ^{\prime}, \left(  J^{\mathbf{x}} \right)^{\prime\prime}\right) $ are controlled by $\mathbf{x}$. In addition,
\begin{align} \label{boundedVF2}
\left\|  y\right\| _{p,q-cvar} \leq C_{p} \left( 1 + \left\Vert V\right\Vert _{C_{b}^{3}} \right) ^{10} \left(  1 + \left\Vert \mathbf{x}\right\Vert_{p-var;[0,T]}\right)^{8},
\end{align}
and
\begin{align} \label{linearVF2}
\left\|  J^{\mathbf{x}}\right\| _{p,q-cvar} \leq C_{1} \left( 1 + \exp\left(  C_{2} N^{\mathbf{x}}_{1; [0, T]} \right)  \right) ^{10} \left(  1 + \left\Vert \mathbf{x}\right\Vert _{p-var;[0,T]}\right)^{8},
\end{align}
where $C_{1}$, $C_{2}$ depend on $p$ and $\left\|  V\right\| _{\mathcal{C}^{4}_{b}}$.
\end{theorem}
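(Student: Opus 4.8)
The plan is to recast both equations as fixed points in the space of controlled rough paths and then to run the standard localize-and-patch a priori estimate, using Proposition \ref{controlledSmoothMap2} (composition with a $\mathcal{C}_b^3$ map), Theorem \ref{controlledThm2} (integration of a controlled path) and Proposition \ref{controlledLeibniz2} (the Leibniz rule) as the three basic stability results, and exploiting the greedy sequence of (\ref{NDefn}) to globalise. Local well-posedness comes from the contraction of the Picard map $\Gamma(w,w',w''):=\big(a+\int_0^{\cdot}V(w_r)\circ\mathrm{d}\mathbf{x}_r,\,V(w),\,(V(w))'\big)$, whose Lipschitz estimates in controlled-variation norm are exactly the three cited propositions; I will take the RDE solution $y$ as the resulting fixed point (its existence and finite $p$-variation are already available in the preliminaries).

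First I would pin down the controlled structure. Setting $y':=V(y)$ and $y'':=V^2(y):=\nabla V(y)\,V(y)$, Proposition \ref{controlledSmoothMap2} applied to $V$ shows that $\big(V(y),(V(y))',(V(y))''\big)$ is controlled by $\mathbf{x}$, with $(V(y))'=\nabla V(y)\,y'$ and remainders of the required $q$- and $\tfrac{p}{2}$-regularity. Theorem \ref{controlledThm2} then gives that $z_\cdot:=\int_0^\cdot V(y_r)\circ\mathrm{d}\mathbf{x}_r$ is again controlled, with $z'=V(y)$ and $z''=(V(y))'=\nabla V(y)\,V(y)=V^2(y)$. Since $y$ solves the RDE we have $y_{s,t}=z_{s,t}$, so $y$ inherits precisely the expansion of $z$ with Gubinelli derivatives $(V(y),V^2(y))$; the self-consistency $y''=(V(y))'$ is exactly the compatibility that makes $(y,V(y),V^2(y))$ controlled, proving the first assertion for $y$.

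For the quantitative bound I would chain the two estimates: on a subinterval $[s,t]$, combining Proposition \ref{controlledSmoothMap2} and Theorem \ref{controlledThm2} yields
\[
\norm{y}_{p,q-cvar;[s,t]}\leq C_{p,q}\,\norm{V}_{\mathcal{C}_b^3}\,\left(1+\norm{y}_{p,q-cvar;[s,t]}\right)^3\left(1+\norm{\mathbf{x}}_{p-var;[s,t]}\right)^{k}
\]
for a fixed integer $k$. This is a cubic inequality in the unknown $\norm{y}_{p,q-cvar;[s,t]}$, so it can only be closed when the driving signal is small. I would therefore fix $\beta$ and work on the greedy intervals $[\tau_i(\beta),\tau_{i+1}(\beta)]$, on which $\norm{\mathbf{x}}_{p-var}\leq\beta^{1/p}$; choosing $\beta$ small enough to absorb the cubic term produces a uniform local bound $\norm{y}_{p,q-cvar;[\tau_i,\tau_{i+1}]}\leq M(\norm{V}_{\mathcal{C}_b^3})$. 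Patching across the $N^{\mathbf{x}}_{\beta;[0,T]}+1$ intervals by super-additivity of the associated controls, together with the elementary bound $N^{\mathbf{x}}_{\beta;[0,T]}\leq\beta^{-1}\norm{\mathbf{x}}^p_{p-var;[0,T]}$ (each complete greedy interval contributes at least $\beta$ to the total $p$-variation power), turns the count of intervals into a polynomial factor and delivers (\ref{boundedVF2}); the exponents $10$ and $8$ are obtained by bookkeeping the powers of $\norm{V}_{\mathcal{C}_b^3}$ and of $\big(1+\norm{\mathbf{x}}_{p-var}\big)$ carried by the two propositions.

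For the Jacobian the outline is identical, with $\phi\mapsto\nabla V(y)\,\phi$ as the driving vector field and Proposition \ref{controlledLeibniz2}(i) controlling the product $\nabla V(y)\,J^{\mathbf{x}}$. The crucial difference is that this vector field is \emph{linear} in $J^{\mathbf{x}}$, hence unbounded, so the local estimate is multiplicative rather than absorbable: on each greedy interval one picks up a bounded factor, and patching multiplies these factors across the $N^{\mathbf{x}}_{1;[0,T]}$ intervals, producing the $\exp\!\big(C_2 N^{\mathbf{x}}_{1;[0,T]}\big)$ growth of (\ref{linearVF2}), entirely consistent with the scalar estimate (\ref{JBound}) used in Theorem \ref{JBoundThm}. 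I expect the main obstacle to be exactly this quantitative step: closing the nonlinear local estimate uniformly in the size of $\mathbf{x}$, and then tracking the dependence on $N^{\mathbf{x}}_{\beta}$ carefully enough that the multiplicative patching for $\norm{J^{\mathbf{x}}}_{p,q-cvar}$ still retains finite moments of all orders through the Gaussian tails of $N^{\mathbf{x}}_{1;[0,T]}$.
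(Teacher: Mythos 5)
Your strategy (Picard fixed point in the controlled-path space, a cubic self-bounding inequality closed on greedy intervals, multiplicative patching for the linear Jacobian equation) is workable, but it is genuinely different from, and considerably heavier than, what the paper does. For the bounded equation the paper performs no iteration and no localization at all: it takes the RDE solution as given and invokes the Euler/Davie estimate (Corollary 10.15 of \cite{fv2010b}),
\begin{align*}
\abs{y_{s,t} - V(y_s)x_{s,t} - V^2(y_s)\mathbf{x}^2_{s,t} - V^3(y_s)\mathbf{x}^3_{s,t}} \leq C_p\left(\norm{V}_{\mathcal{C}^3_b}\norm{\mathbf{x}}_{p-var;[s,t]}\right)^{\gamma}, \quad \gamma\in(p,4),
\end{align*}
which delivers the controlled structure and the $q$-variation bound on $R^y$ in one stroke by super-additivity, with a bound that is automatically polynomial in $\norm{\mathbf{x}}_{p-var;[0,T]}$; Proposition 2.21 of \cite{cl2018} then supplies $R^{V(y)}$. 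For the Jacobian the paper also avoids a bespoke linear analysis: it first bounds $\norm{J^{\mathbf{x}}}_{\infty}\leq 1+\exp\left(C(N^{\mathbf{x}}_{1;[0,T]}+1)\right)=:1+M$ via \cite{fr2013}, truncates the linear vector field to a $\mathcal{C}^4_b$ field of norm at most $\norm{V}_{\mathcal{C}^4_b}(M+3)$ agreeing with it on the range of $J^{\mathbf{x}}$, and applies \eqref{boundedVF2} to the augmented bounded system $(y,J^{\mathbf{x}})$ --- the factor $\exp(C_2N^{\mathbf{x}}_{1;[0,T]})$ in \eqref{linearVF2} enters through $M$, not through patching. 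If you pursue your route you must fill three steps that the paper's argument never needs: (i) the contraction of your Picard map requires local Lipschitz estimates (differences of outputs) for the composition, product and integration maps, whereas Propositions \ref{controlledSmoothMap2}, \ref{controlledLeibniz2} and Theorem \ref{controlledThm2} only bound output norms; (ii) an inequality of the form $u\leq C\varepsilon(1+u)^3$ does not by itself force $u$ to be small --- you need the usual continuity-in-$t$ argument to stay on the small root; and (iii) patching the two-parameter remainders across greedy intervals is not plain super-additivity, since by Chen's relation $R^y_{s,t}-R^y_{s,u}-R^y_{u,t}=R^{y'}_{s,u}x_{u,t}+y''_{s,u}\mathbf{x}^2_{u,t}$, and these cross terms must be estimated. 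All three are standard, so I would count your sketch as a correct alternative rather than a gap, but the paper's proof is substantially shorter and gives the polynomial dependence in \eqref{boundedVF2} essentially for free.
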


\begin{proof}
Using Corollary 10.15 in \cite{fv2010b}, for $\gamma > p$ and $s,t \in [0, T]$, we have
\begin{align*}
\abs{y_{s,t} - V(y_s) x_{s,t} - V^2(y_s) \mathbf{x}^2_{s,t} - V^3(y_s) \mathbf{x}^3_{s, t}}
\leq C_p \left( \norm{V}_{\mathcal{C}^3_b} \norm{\mathbf{x}}_{p-var;[s, t]} \right)^{\gamma},
\end{align*}
where $V^2(y_s)$ and $V^3(y_s)$ denote the following tensors
\begin{align*}
&V^2(y_s) := \nabla V(y_s) (V(y_s)) \in \mathbb{R}^e \otimes \mathbb{R}^d \otimes \mathbb{R}^d \quad \mathrm{and} \\
&V^3(y_s) := \nabla^2 V(y_s) (V(y_s), V(y_s)) + \nabla V(y_s) \left[ \nabla V(y_s) (V(y_s)) \right] \in \mathbb{R}^e \otimes \mathbb{R}^d \otimes \mathbb{R}^d \otimes \mathbb{R}^d
\end{align*}
respectively. \par 
This implies that
\begin{align} \label{REst2}
\begin{split}
\abs{R^y_{s,t}}^q
&\leq C_q \left( \abs{ V^3(y_s) \mathbf{x}^3_{s,t}}^q + \left( \norm{V}_{\mathcal{C}^3_b} \norm{\mathbf{x}}_{p-var; [s,t]} \right)^{\gamma q} \right) \\
&\leq C_q \left( \norm{V}_{\mathcal{C}^3_b}^{3q} \norm{\mathbf{x}}_{p-var;[s,t]}^{3q} + \norm{V}_{\mathcal{C}^3_b}^{\gamma q} \norm{\mathbf{x}}_{p-var;[s,t]}^{\gamma q} \right),
\end{split}
\end{align}
and thus
\begin{align*}
\norm{R^y}_{q-var; [0, T]} \leq C_q \left( \norm{V}^3_{\mathcal{C}^3_b} \norm{\mathbf{x}}_{p-var; [0, T]}^3 \vee \norm{V}_{\mathcal{C}^3_b}^{\gamma} \norm{\mathbf{x}}_{p-var; [0, T]}^{\gamma} \right),
\end{align*}
from the super-additivity of the right side of \eqref{REst2}.
Observe that
\begin{align*}
&\norm{V(y)}_{p-var; [0, T]} \leq \norm{V}_{\mathcal{C}^3_b} \norm{y}_{p-var; [0, T]}, \\
&\norm{V^2(y)}_{p-var; [0, T]} \leq \norm{V}_{\mathcal{C}^3_b}^2 \left(1 + \norm{y}_{p-var; [0, T]} \right)^2,
\end{align*}
and
\begin{align*}
\norm{y}_{p-var;[0, T]}
\leq C_p \left( \norm{V}_{\mathcal{C}^3_b} \norm{\mathbf{x}}_{p-var; [0, T]} \vee \norm{V}_{\mathcal{C}^3_b}^p \norm{\mathbf{x}}^p_{p-var; [0, T]} \right).
\end{align*}
Applying Proposition 2.21 from \cite{cl2018}, we also have
\begin{align*}
V(y)_{s, t} = V^2(y_s) x_{s,t} + R^{V(y)}_{s, t}, \quad \forall \, s < t \in [0, T],
\end{align*}
where
\begin{align*}
\norm{R^{V(y)}}_{\frac{p}{2}-var;[0,T]}
\leq \norm{V}_{\mathcal{C}_{b}^3} \left( \norm{y}_{p-var;[0,T]}^2 + \norm{R^y}_{q-var;[0,T]}\right),
\end{align*}
and thus \eqref{controlled2} is satisfied. \par 
Note that $q \leq \frac{p}{p-1}$ implies that $q \leq \frac{p}{2}$ for all $p \geq 3$.
Collecting all the estimates above and choosing $\gamma$ to be in $(p, 4)$ gives us the bound in \eqref{boundedVF2}. \par
The proof for the Jacobian is the same as that in Theorem 3.1 of \cite{cl2018}. From Proposition 5 in \cite{fr2013}, we can obtain the bound
\begin{align} \label{Jsup}
\norm{J^{\mathbf{x}}}_{\infty} \leq 1 + \exp \left( C_{p, \norm{V}_{\mathcal{C}^3_b}} \left( N^{\mathbf{x}}_{1; [0, T]} + 1 \right) \right) =: 1 + M,
\end{align}
and construct $\mathcal{C}^4_b$ vector fields $U_i(y_t)$ which equal the linear vector field $z \mapsto \nabla V_i (y_t) z$ on the set $\mathcal{W} = \left\{ z \in \mathbb{R}^{e^2} \; \big\vert \; \abs{z} < M + 2 \right\}$, and which satisfy
\begin{align*}
\norm{U_i}_{\mathcal{C}^4_b}
\leq \norm{V}_{\mathcal{C}^4_b} (M + 3), \quad i = 1, \ldots, d.
\end{align*}
Then the system of RDEs can be rewritten as a bounded RDE
\begin{align*}
\mathrm{d} \tilde{y}_t = \tilde{V} (\tilde{y}_t) \strato{\mathbf{x}_t}, \quad \tilde{y}_0 = (a, \mathcal{I}_e),
\end{align*}
where $\tilde{y} = \left( y, J^{\mathbf{x}} \right) \in \mathbb{R}^{e + e^2}$ and $\norm{\tilde{V}}_{\mathcal{C}^4_b} \leq \norm{V}_{\mathcal{C}^4_b} (M + 3)$. \par
Finally, an application of \eqref{boundedVF2} to the above equation yields \eqref{linearVF2}.
\end{proof}

\subsection{Upper bounds on the high-order Malliavin derivatives} \label{HOMD} 
We now use the results from the proceeding section to obtain upper bounds on the directional derivative. We first recall the following results from \cite{cl2018} for the formula of $\mathcal{D}_{h_{1},\ldots, h_{n}}^{n}y_{t}$.

\begin{theorem} \label{mdFormula} 
Let $p \geq1$ and $q \geq 1$ be such that $\frac{1}{p} + \frac{1}{q} > 1$, and let $n \in \mathbb{N}$ such that $n \geq 2$. Assume $\mathbf{x} \in\mathcal{C}^{0, p-var} \left( [0, T]; G^{\lfloor p \rfloor} \left( \mathbb{R}^{d}\right) \right) $ and suppose $y$ is the path-level solution to the RDE
\begin{align}
\mathrm{d}y_{t}=V\left(  y_{t}\right)  \circ\mathrm{d}\mathbf{x}_{t}, \quad y_{0}\in\mathbb{R}^{e}\text{ given},
\end{align}
where $V \in\mathcal{C}^{\lfloor p \rfloor+ n}_{b} \left( \mathbb{R}^{e}; \mathbb{R}^{e} \otimes\mathbb{R}^{d} \right) $. Suppose that $g_{1}, \ldots, g_{n} \in \mathcal{C}^{q-var}([0, T]; \mathbb{R}^d)$. Then $\mathcal{D}_{g_{1}, \ldots, g_{n}}^{n} y_{t}$ satisfies the RDE
\begin{align} \label{df}
\begin{split}
&\mathrm{d} \mathcal{D}_{g_{1}, \ldots, g_{n}}^{n} y_{t} 
= \sum_{i=1}^{n} \nabla^{i} V\left(  y_{t} \right)  A_{i}^{n}\left(  t\right) \circ\mathrm{d} \mathbf{x}_{t} + \sum_{i=0}^{n-1}\sum_{j=1}^{n} \nabla^{i} V\left(  y_{t} \right)  B_{i,j}^{n} \left( t \right) \, \mathrm{d} g_{j} (t), \\
&\mathcal{D}^n_{g_1, \ldots, g_n} y_0 = 0,
\end{split}
\end{align}
where $A_{i}^{n}$ and $B_{i,j}^{n}$ are respectively defined as
\begin{align}
\label{A}A_{i}^{n}\left(  t\right)  :=\sum_{\pi=\left\{  \pi_{1},\ldots ,\pi_{i}\right\}  \in\mathcal{P}\left(  \left\{  g_{1},\ldots, g_{n}\right\} \right)  }\mathcal{D}_{\pi_{1}}^{\left\vert \pi_{1}\right\vert }y_{t} \tilde{\otimes}\cdots\tilde{\otimes}\mathcal{D}_{\pi_{i}}^{\left\vert \pi_{i}\right\vert } y_{t}, \, t \in \left[ 0, T\right] ,
\end{align}
and
\begin{align} \label{Bb}
B_{i,j}^{n}\left(  t\right)  :=\sum_{\pi=\left\{  \pi_{1},\ldots, \pi_{i}\right\}  \in\mathcal{P}\left(  \left\{ g_{1}, \ldots, g_{j-1}, g_{j+1}, \ldots, g_{n} \right\} \right)  }\mathcal{D}_{\pi_{1}}^{\left\vert \pi_{1}\right\vert }y_{t}\tilde{\otimes}\cdots\tilde{\otimes}\mathcal{D}_{\pi_{i}}^{\left\vert \pi_{i}\right\vert } y_{t}.
\end{align}

\end{theorem}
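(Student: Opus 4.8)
The plan is to argue by induction on $n$, taking as the base case the first-order formula \eqref{dd}, which in differential form already exhibits the claimed structure: writing $\mathcal{D}_{g}y$ for the solution of
\begin{align*}
\mathrm{d}\mathcal{D}_{g}y_{t} = \nabla V(y_{t})\,\mathcal{D}_{g}y_{t}\circ\mathrm{d}\mathbf{x}_{t} + V(y_{t})\,\mathrm{d}g(t), \quad \mathcal{D}_{g}y_{0}=0,
\end{align*}
one recognises $A_{1}^{1} = \mathcal{D}_{g_{1}}y$ and $B_{0,1}^{1}=\mathrm{Id}$, so that \eqref{df} holds for $n=1$. For the inductive step I would take the RDE \eqref{df} at order $n-1$ for $z_{t} := \mathcal{D}^{n-1}_{g_{1},\ldots,g_{n-1}}y_{t}$ and apply the directional-derivative operator $\mathcal{D}_{g_{n}}$ to both sides, recovering $\mathcal{D}^{n}_{g_{1},\ldots,g_{n}}y_{t}$ on the left by the definition \eqref{nderiv}.

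The analytic heart of the step is the behaviour of $\mathcal{D}_{g_{n}}$ under the two kinds of integrals appearing in \eqref{df}. For a rough integral against $\mathbf{x}$, differentiating in the translation direction $g_{n}$ splits into an interior term and a driver term,
\begin{align*}
\mathcal{D}_{g_{n}}\!\left[\int \Psi_{r}\circ\mathrm{d}\mathbf{x}_{r}\right] = \int \mathcal{D}_{g_{n}}\Psi_{r}\circ\mathrm{d}\mathbf{x}_{r} + \int \Psi_{r}\,\mathrm{d}g_{n}(r),
\end{align*}
the second integral being a Young integral that arises precisely because the driver $T_{\varepsilon_{n}g_{n}}\mathbf{x}$ carries the perturbation; for a Young integral against $g_{j}$ with $j<n$ only the interior term survives. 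Combined with the Leibniz rule of Proposition \ref{controlledLeibniz2} and the chain rule $\mathcal{D}_{g_{n}}[\nabla^{i}V(y_{t})] = \nabla^{i+1}V(y_{t})\,\mathcal{D}_{g_{n}}y_{t}$, applying $\mathcal{D}_{g_{n}}$ to a summand $\nabla^{i}V(y_{t})A_{i}^{n-1}(t)\circ\mathrm{d}\mathbf{x}_{t}$ produces three contributions: one promoting the derivative order of $V$ with a new factor $\mathcal{D}_{g_{n}}y_{t}$, one differentiating $A_{i}^{n-1}$ factor-by-factor, and the Young correction $\nabla^{i}V(y_{t})A_{i}^{n-1}(t)\,\mathrm{d}g_{n}(t)$.

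It then remains to check that re-summing these contributions reproduces $A_{i}^{n}$ and $B_{i,j}^{n}$ exactly, which is the standard recursion for set partitions: a partition of $\{g_{1},\ldots,g_{n}\}$ into $i$ blocks is obtained from a partition of $\{g_{1},\ldots,g_{n-1}\}$ either by adjoining $g_{n}$ as a new singleton block — the term where $\nabla^{i-1}V$ is promoted to $\nabla^{i}V$ and $\mathcal{D}_{g_{n}}y_{t}$ is symmetrized into the product, matching $A_{i-1}^{n-1}\mapsto A_{i}^{n}$ — or by inserting $g_{n}$ into one of the existing $i$ blocks, which is exactly $\mathcal{D}_{g_{n}}A_{i}^{n-1}$; the symmetrization $\tilde{\otimes}$ makes the insertion well-defined irrespective of slot. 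The Young correction supplies precisely the $j=n$ column of the second sum, since $B_{i,n}^{n} = A_{i}^{n-1}$ (both range over partitions of $\{g_{1},\ldots,g_{n-1}\}$), while differentiating the order-$(n-1)$ Young terms in \eqref{df} reproduces the $j<n$ columns via the same recursion. One should note that the $i=0$ entries $B_{0,j}^{n}$ vanish for $n\geq 2$, so the lower summation limit is immaterial beyond the base case.

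The main obstacle I anticipate is not the combinatorics but the rigorous justification of the differentiation step for $3\leq p<4$: one must show that $\mathcal{D}_{g_{n}}$ genuinely commutes with the rough and Young integrals and produces the stated Young correction, which rests on the continuity of the controlled-rough-path integration map (Theorem \ref{controlledThm2}) together with the stability of the solution under rough-path translation and the complementary regularity $\frac1p+\frac1q>1$. In particular each intermediate object $A_{i}^{n}(t)$ and $B_{i,j}^{n}(t)$ must be verified to be a controlled path of the regularity demanded by Theorem \ref{controlledThm2}; this is what forces the hypothesis $V\in\mathcal{C}^{\lfloor p\rfloor+n}_{b}$, since every differentiation consumes one derivative of $V$ while $\lfloor p\rfloor$ are already needed to make sense of the order-zero RDE. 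The symmetrization bookkeeping and the precise tensor-slot identifications are then routine.
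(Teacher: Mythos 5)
Your induction on $n$ --- differentiating the order-$(n-1)$ RDE in the direction $g_{n}$, using the Duhamel-type splitting of the rough integral into an interior term plus the Young correction $\int \Psi_{r}\,\mathrm{d}g_{n}(r)$, and matching the resulting terms to $A_{i}^{n}$ and $B_{i,j}^{n}$ via the block-insertion recursion for set partitions (with $B_{i,n}^{n}=A_{i}^{n-1}$ supplying the $j=n$ column and the $i=0$ terms vanishing for $n\geq 2$) --- is sound and is the standard derivation of this formula. Note, however, that the present paper states Theorem \ref{mdFormula} without proof, recalling it from \cite{cl2018}, so there is no in-paper argument to compare against; your sketch reproduces the expected one, with the remaining work (as you yourself flag) being the rigorous justification that $\mathcal{D}_{g_{n}}$ passes through the rough and Young integrals in the regime $3\leq p<4$, which is exactly where Theorem \ref{controlledThm2} and the complementary regularity $\frac{1}{p}+\frac{1}{q}>1$ are needed.
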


\begin{corollary} \label{cf} 
Under the conditions of the preceding theorem, $\mathcal{D}_{g_{1}, \ldots, g_{n}}^{n} y_{t}$ equals
\begin{align} \label{exp2}
\sum_{i=2}^{n}\int_{0}^{t} J_{t}^{\mathbf{x}} \left(J_{s}^{\mathbf{x}} \right) ^{-1} \nabla^{i}V\left(  y_{s}\right) A_{i}^{n} \left(  s\right)  \circ\mathrm{d} \mathbf{x}_{s} + \sum_{i=1}^{n-1} \sum_{j=1}^{n} \int_{0}^{t} J_{t}^{\mathbf{x}} \left( J_{s}^{\mathbf{x}} \right)^{-1} \nabla^{i} V\left(  y_{s} \right)  B_{i,j}^{n}\left(s \right) \, \mathrm{d} g_{j}(s)
\end{align}
for all $n\geq2$.
\end{corollary}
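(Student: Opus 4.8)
The plan is to read the system \eqref{df} as an inhomogeneous \emph{linear} rough differential equation in the unknown $\mathcal{D}^n_{g_1,\ldots,g_n} y_t$ and to solve it by a variation-of-constants (Duhamel) argument, with the Jacobian $J^{\mathbf{x}}$ of \eqref{jacobianRDE} playing the role of the fundamental solution. The first step is to isolate the genuinely homogeneous term. In the first sum of \eqref{df} the summand with $i=1$ is $\nabla V(y_t)\,A_1^n(t)\circ\mathrm{d}\mathbf{x}_t$; since the only element of $\mathcal{P}(\{g_1,\ldots,g_n\})$ consisting of a single block is $\{\{g_1,\ldots,g_n\}\}$, formula \eqref{A} gives $A_1^n(t)=\mathcal{D}^n_{g_1,\ldots,g_n} y_t$, so this summand is exactly the homogeneous term $\nabla V(y_t)\,\mathcal{D}^n_{g_1,\ldots,g_n} y_t\circ\mathrm{d}\mathbf{x}_t$, matching the structure of \eqref{jacobianRDE}. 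At the same time, for $n\geq 2$ the index set $\{g_1,\ldots,g_{j-1},g_{j+1},\ldots,g_n\}$ in \eqref{Bb} has $n-1\geq 1$ elements and hence admits no partition into $0$ blocks, so $B_{0,j}^n\equiv 0$ and the $i=0$ contributions to the second sum of \eqref{df} vanish. This is precisely why the sums in \eqref{exp2} begin at $i=2$ and $i=1$ respectively.

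With this identification, \eqref{df} reads
\begin{align*}
\mathrm{d}\mathcal{D}^n_{g_1,\ldots,g_n} y_t = \nabla V(y_t)\,\mathcal{D}^n_{g_1,\ldots,g_n} y_t\circ\mathrm{d}\mathbf{x}_t + \mathrm{d}F_t,\qquad \mathcal{D}^n_{g_1,\ldots,g_n} y_0 = 0,
\end{align*}
where the forcing differential collects the remaining terms,
\begin{align*}
\mathrm{d}F_t := \sum_{i=2}^n \nabla^i V(y_t)\,A_i^n(t)\circ\mathrm{d}\mathbf{x}_t + \sum_{i=1}^{n-1}\sum_{j=1}^n \nabla^i V(y_t)\,B_{i,j}^n(t)\,\mathrm{d}g_j(t).
\end{align*}
Since $J^{\mathbf{x}}$ solves the homogeneous equation with $J^{\mathbf{x}}_0=\mathcal{I}_e$ and is invertible, the variation-of-constants formula for linear rough/Young differential equations (exactly as in the first-order case \eqref{dd}) yields $\mathcal{D}^n_{g_1,\ldots,g_n} y_t = \int_0^t J_t^{\mathbf{x}}\left(J_s^{\mathbf{x}}\right)^{-1}\,\mathrm{d}F_s$. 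Because $\mathrm{d}F_s$ is linear in the differentials $\circ\mathrm{d}\mathbf{x}_s$ and $\mathrm{d}g_j(s)$, substituting its definition and splitting the integral by linearity produces precisely \eqref{exp2}; no nesting of integrals occurs.

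It remains to verify that every integral in \eqref{exp2} is well defined, which I would carry out by induction on $n$, the base case $n=1$ being the known formula \eqref{dd}. The two integral types have distinct requirements: the rough integrals against $\mathbf{x}$ demand that each integrand $J_s^{\mathbf{x}}\,\nabla^i V(y_s)\,A_i^n(s)$ (and the analogue with $B_{i,j}^n$) be a controlled rough path, so that Theorem \ref{controlledThm2} applies, while the Young integrals against $g_j$ need only complementary regularity, which holds since $\frac1p+\frac1q>1$ and the integrands have finite $p$-variation. For $i\geq 2$ in \eqref{A} and $i\geq 1$ in \eqref{Bb} every block satisfies $|\pi_k|\leq n-1$, so all factors $\mathcal{D}^{|\pi_k|} y$ appearing in $A_i^n$ and $B_{i,j}^n$ are of strictly lower order and, by the inductive hypothesis, are controlled by $\mathbf{x}$. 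The symmetrized tensor products defining $A_i^n$ and $B_{i,j}^n$ are then controlled via the Leibniz rule (Proposition \ref{controlledLeibniz2}), composition with the smooth vector fields $\nabla^i V$ preserves this by Proposition \ref{controlledSmoothMap2}, and left-multiplication by $J^{\mathbf{x}}$—which is controlled by Theorem \ref{controlledRDE}—again yields a controlled rough path, closing the induction.

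The main obstacle is this final bookkeeping step in the regime $3\leq p<4$: one must confirm that composing and multiplying these controlled rough paths keeps the remainder terms $R^\phi$ and $R^{\phi'}$ within the $q$-variation and $\tfrac{p}{2}$-variation classes that Theorem \ref{controlledThm2} requires, rather than degrading them. This is exactly where the lower-regularity controlled-path calculus of Section \ref{controlledLR} is indispensable, and the estimates in Propositions \ref{controlledLeibniz2} and \ref{controlledSmoothMap2}, together with the bound \eqref{linearVF2} on $J^{\mathbf{x}}$, must be chained with care so that the regularity of the forcing $\mathrm{d}F$, and hence of each integrand in \eqref{exp2}, never drops below the threshold for the rough integral to exist.
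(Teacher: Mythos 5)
Your proposal is correct and follows the intended route: the paper recalls this corollary from \cite{cl2018} without reproducing the proof, and the argument there is precisely the one you give, namely identifying the $i=1$ summand $\nabla V(y_t)A_1^n(t)\circ\mathrm{d}\mathbf{x}_t$ (with $A_1^n=\mathcal{D}^n_{g_1,\ldots,g_n}y$) as the homogeneous part of a linear RDE, noting $B^n_{0,j}\equiv 0$ for $n\geq 2$, and applying Duhamel's variation-of-constants formula with $J^{\mathbf{x}}$ as fundamental solution. Your accompanying remarks on why each integral in \eqref{exp2} is well defined (induction on $n$, with the controlled-path calculus of Section \ref{controlledLR} handling the $3\leq p<4$ regime) correctly identify the only delicate point.
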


We now arrive at the main result of this section, which extends Proposition 3.5 in \cite{cl2018}.

\begin{proposition} \label{dir der est} 
Let $p \in[2, 4)$ and $q$ be such that $\frac{1}{p} + \frac{1}{q} > 1$. Let $\mathbf{x} \in\mathcal{C}^{0, p-var} \left( [0, T]; G^{\lfloor p \rfloor} \left( \mathbb{R}^{d}\right) \right) $, and $y$ be the solution to the RDE
\begin{align*}
\mathrm{d}y_{t}=V\left(  y_{t}\right)  \circ\mathrm{d}\mathbf{x}_{t}, \quad y_{0}\in\mathbb{R}^{e}\text{ given},
\end{align*}
where $V \in\mathcal{C}^{\lfloor p \rfloor+ n}_{b} (\mathbb{R}^{e}; \mathbb{R}^{e} \otimes\mathbb{R}^{d})$. Then there exists a polynomial $P_{d(n)}: \mathbb{R}_{+} \times\mathbb{R}_{+} \rightarrow\mathbb{R}_{+}$ of finite degree $d(n)$ for which
\begin{align} \label{ubound}
\left\|  \mathcal{D}_{g_{1}, \ldots, g_{n}}^{n} y_{\cdot} \right\|_{\mathcal{V}^{p}; \left[  0,T\right]  } \leq P_{d(n)} \left( \left\|  \mathbf{x}\right\| _{p-var; [0, T]}, \exp\left(  C \, N^{\mathbf{x}}_{1; [0, T]} \right)  \right)  \prod\limits_{i=1}^{n} \left\| g_i \right\| _{q-var; [0, T]},
\end{align}
for any $g_{1}, \ldots, g_{n} \in \mathcal{C}^{q-var}([0, T]; \mathbb{R}^d)$. Here $N_{1}^{\mathbf{x}}$ is defined as in \eqref{NDefn}, and the constant $C$ as well as the coefficients of $P_{d(n)}$ depend only on $\left\|  V\right\|_{\mathcal{C}^{\lfloor p \rfloor+ n}_{b}}$, $p$ and $q$ ($= \frac{p}{2}$ when $2 \leq p < 3 $).
\end{proposition}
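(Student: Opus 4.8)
The plan is to prove, by induction on $n$, the stronger statement that $\mathcal{D}^n_{g_1,\ldots,g_n} y$ is itself a rough path controlled by $\mathbf{x}$ (with appropriately identified Gubinelli derivatives) and that its full controlled-variation norm satisfies
\begin{align*}
\left\| \mathcal{D}^n_{g_1,\ldots,g_n} y \right\|_{p,q-cvar} \leq P_{d(n)}\left( \left\| \mathbf{x} \right\|_{p-var;[0,T]}, \exp\left( C N^{\mathbf{x}}_{1;[0,T]} \right) \right) \prod_{i=1}^n \left\| g_i \right\|_{q-var;[0,T]}.
\end{align*}
Since $\left\| \cdot \right\|_{\mathcal{V}^p;[0,T]} \leq \left\| \cdot \right\|_{p,q-cvar}$ by definition of the controlled norm, this immediately yields \eqref{ubound}. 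Working with the controlled norm rather than the bare $\mathcal{V}^p$ norm is essential: the closed-form expression \eqref{exp2} represents $\mathcal{D}^n y$ through rough integrals of the lower-order derivatives, and Theorem \ref{controlledThm2} can be applied only if those lower-order objects are genuine controlled rough paths with identified derivative processes.

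For the base case $n=1$ I would rewrite \eqref{dd} as $\mathcal{D}_g y_t = J^{\mathbf{x}}_t \int_0^t (J^{\mathbf{x}}_s)^{-1} V(y_s) \, \mathrm{d}g(s)$. By Theorem \ref{controlledRDE} both $J^{\mathbf{x}}$ and $(J^{\mathbf{x}})^{-1} V(y)$ are controlled by $\mathbf{x}$, with norms bounded through \eqref{linearVF2} and \eqref{boundedVF2}; the inner integral is a Young integral against the $q$-variation path $g$, hence of finite $q$-variation with norm $\lesssim \left\| g \right\|_{q-var}$ by Young's inequality. Part (ii) of the Leibniz rule (Proposition \ref{controlledLeibniz2}) then shows that the product with $J^{\mathbf{x}}_t$ is controlled and supplies the estimate \eqref{leibnizBound2b}.

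For the inductive step I would treat the formula \eqref{exp2} term by term. By the inductive hypothesis each factor $\mathcal{D}^{|\pi_k|}_{\pi_k} y$ occurring in $A^n_i(s)$ and $B^n_{i,j}(s)$ is controlled with norm $\lesssim \prod_{g \in \pi_k} \left\| g \right\|_{q-var}$; applying part (i) of the Leibniz rule to the iterated tensor products shows $A^n_i$ and $B^n_{i,j}$ are controlled, and Proposition \ref{controlledSmoothMap2} applied to $\nabla^i V$, followed by another use of the Leibniz rule, shows that $\nabla^i V(y)\, A^n_i$ and $\nabla^i V(y)\, B^n_{i,j}$ are controlled. Each rough-integral term I would rewrite as $J^{\mathbf{x}}_t \int_0^t (J^{\mathbf{x}}_s)^{-1} \nabla^i V(y_s) A^n_i(s) \circ \mathrm{d}\mathbf{x}_s$, bound the integral via Theorem \ref{controlledThm2}, and then reinstate the $J^{\mathbf{x}}_t$ factor by the Leibniz rule; each $B$-term I would handle as a Young integral against $g_j$ exactly as in the base case. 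The crucial bookkeeping for the $g$-norms is combinatorial: every partition $\pi$ appearing in $A^n_i$ exhausts $\{g_1,\ldots,g_n\}$, so the $q$-variation norms multiply to $\prod_{i=1}^n \left\| g_i \right\|_{q-var}$, while for $B^n_{i,j}$ the partition exhausts $\{g_1,\ldots,g_n\}\setminus\{g_j\}$ and the single missing factor $\left\| g_j \right\|_{q-var}$ is precisely restored by the Young integration against $g_j$. Thus every term carries exactly the right product of $g$-norms, and summing over the finitely many partitions preserves this.

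The remaining verification, which I expect to be the main obstacle, is that the coefficient assembled from all these operations is a polynomial of finite degree in $\left\| \mathbf{x} \right\|_{p-var}$ and $\exp(C N^{\mathbf{x}}_{1;[0,T]})$. Here I would use that $\left\| \mathbf{x}^2 \right\|_{p/2-var}$ and $\left\| \mathbf{x}^3 \right\|_{p/3-var}$ are dominated by powers of $\left\| \mathbf{x} \right\|_{p-var}$, that the Jacobian and its inverse each contribute the factor \eqref{linearVF2} (the inverse flow solves a linear RDE of the same type and obeys the same estimate), and that each of the finitely many Leibniz, composition, and integration steps inflates the accumulated degree only by a bounded amount. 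Since for fixed $n$ the recursion terminates after finitely many steps, the total degree $d(n)$ is finite and the constant $C$ together with the coefficients of $P_{d(n)}$ depend only on $\left\| V \right\|_{\mathcal{C}^{\lfloor p \rfloor + n}_b}$, $p$ and $q$, as asserted. The range $2 \leq p < 3$ is already Proposition 3.5 of \cite{cl2018} and fits the identical induction with the degree-two controlled-path theory in place of Theorem \ref{controlledThm2}.
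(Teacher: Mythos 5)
Your proposal is correct and follows essentially the same route as the paper, which omits the details and states that the argument is that of Proposition 3.5 in \cite{cl2018} with the lower-regularity controlled-path machinery (Theorem \ref{controlledThm2}, Propositions \ref{controlledLeibniz2} and \ref{controlledSmoothMap2}, Theorem \ref{controlledRDE}) substituted in for $3 \leq p < 4$. Your induction on $n$ through the closed-form expression \eqref{exp2}, carrying the full $p,q$-controlled-variation norm, handling the $A^n_i$ terms by the rough-integral estimate and the $B^n_{i,j}$ terms by Young integration against $g_j$, is precisely that argument made explicit, including the correct combinatorial accounting of the $\left\| g_i \right\|_{q\text{-var}}$ factors.
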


\begin{proof}
We shall omit the proof as it proceeds in virtually the same manner as that of Proposition 3.5 in \cite{cl2018}, for which the reader is invited to consult. The only difference is that for the case $p \geq 3$, one will have to use Theorems 5.2 and 5.5, as well as Propositions 5.3 and 5.4 in lieu of Theorem 2.20 and 3.5, and Propositions 2.22 and 2.21 respectively from \cite{cl2018}.
\end{proof}

\subsection{Augmenting the higher-order iterated integrals}

For this section, we will use $\pi(n) := \left\{  t^{n}_{i} \right\} $ to denote the $n^{th}$ dyadic partition of $[0, T]$, i.e. $t^{n}_{i} = \frac{iT}{2^{n}}$ for $i = 0, \ldots, 2^{n}$, and $\Delta^{n}_{i}$ to denote the interval $\left[ t^{n}_{i}, t^{n}_{i+1}\right] $.

In addition, $\rho^{\prime}$ will denote the H\"{o}lder conjugate of $\rho$, i.e. $\frac{1}{\rho} + \frac{1}{\rho^{\prime}} = 1$.

The following proposition giving bounds for the compensated second-order terms was proven in \cite{cl2018}.

\begin{proposition} \label{2ndlevel} 
Let $X$ be a continuous, centered Gaussian process in $\mathbb{R}^{d}$ with i.i.d. components, and for $p \in[2, 4)$, let
$\mathbf{X} \in\mathcal{C}^{0, p-var} \left( [0, T]; G^{\lfloor p \rfloor} \left( \mathbb{R}^{d}\right) \right) $ denote the geometric rough path constructed from the limit of the piecewise-linear approximations of $X$.

Let $\rho$ and $q$ be such that $\rho\in\left[ 1, 2 \right) $ and $\frac{1}{p} + \frac{1}{q} > 1$. We assume that the covariance function of $X$ satisfies
\begin{enumerate} [(a)]
\item $\left\|  R\right\| _{\rho-var; [0, T]^{2}} < \infty$,
\item $\left\|  R(t, \cdot) - R (s, \cdot)\right\| _{q-var; [0, T]} \leq C
\left|  t - s\right| ^{\frac{1}{\rho}}$, for all $s, t \in[0, T]$.
\end{enumerate} 
Now let $\psi: \Omega\times[0, T] \rightarrow\mathbb{R}^{d} \otimes \mathbb{R}^{d}$ be a stochastic process satisfying $\displaystyle \psi_{t} = \sum_{a, b = 1}^{d} \psi_{t}^{(a, b)} \mathrm{d} e_{a} \otimes\mathrm{d} e_{b}
\in\mathbb{D}^{4,2} (\mathbb{R}^{d} \otimes\mathbb{R}^{d})$ for all $t \in [0, T]$. Furthermore, assume there exists $C < \infty$ such that for all $s, t \in [0, T]$ and $a, b = 1, \ldots, d$, we have
\begin{align} \label{propCond0}
\left|  \mathbb{E} \left[  \psi^{(a, b)}_{s} \psi^{(a, b)}_{t}\right] \right| \leq C,
\end{align}
and for $k = 2, 4$, we have
\begin{align} \label{propCond}
\left|  \mathbb{E} \left[  \mathcal{D}^{k}_{h_{1}, \ldots, h_{k}} \left(  \psi^{(a, b)}_{s} \psi^{(a, b)}_{t} \right)  \right] \right| 
\leq C \prod_{i=1}^{k} \left\|  \Phi(h_{i})\right\|_{q-var; [0, T]},
\end{align}
for all $h_{1}, \ldots, h_{k} \in\mathcal{H}_{1}^{d}$. Then
\begin{align}
\lim_{n \rightarrow \infty} \left\| \sum_{i=0}^{2^{n} - 1} \psi_{t^{n}_{i}}
\left( \mathbf{X}^{2}_{t^{n}_{i}, t^{n}_{i+1}} - \frac{1}{2} \sigma^{2}
\left( t^{n}_{i}, t^{n}_{i+1} \right)  \mathcal{I}_{d} \right) \right\|_{L^{2}(\Omega)} = 0.
\end{align}
\end{proposition}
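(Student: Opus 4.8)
The plan is to exploit the fact that the compensated second-level increment $\mathbf{X}^{2}_{s,t} - \frac{1}{2}\sigma^{2}(s,t)\mathcal{I}_{d}$ is a centred element of the second Wiener chaos, and then to reduce the $L^{2}(\Omega)$-norm of the Riemann sum to a double sum of scalar terms which are handled by the product and duality formulas together with hypotheses \eqref{propCond0}--\eqref{propCond}. First I would record the chaos structure of the summand. Since $\mathbf{X}$ is the geometric lift of $X$, the symmetric part of $\mathbf{X}^{2}_{s,t}$ equals $\frac{1}{2}X_{s,t}\otimes X_{s,t}$, whose diagonal entries have mean $\frac{1}{2}\sigma^{2}(s,t)$ and whose off-diagonal entries are centred by independence of the components, while the antisymmetric (Lévy-area) part is also centred. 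Hence each scalar component $\mathbf{X}^{2,(a,b)}_{s,t}-\frac{1}{2}\delta_{ab}\sigma^{2}(s,t)$ is a mean-zero second-chaos variable, and I would write it as $I_{2}(f^{(a,b)}_{s,t})$ for a symmetric kernel $f^{(a,b)}_{s,t}\in\mathcal{H}_{1}\otimes\mathcal{H}_{1}$ built from $\mathds{1}_{[s,t)}^{(a)}$ and $\mathds{1}_{[s,t)}^{(b)}$.

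Next I would expand the square of the norm. Writing $\Delta_{i}=[t^{n}_{i},t^{n}_{i+1}]$ and expanding the bilinear pairing dictated by the action of $\psi_{t^{n}_{i}}\in\mathbb{R}^{d}\otimes\mathbb{R}^{d}$ on the increment, the $L^{2}$-norm squared becomes $\sum_{i,j}\sum_{a,b,c,d}\exptn{\psi^{(a,b)}_{t^{n}_{i}}\psi^{(c,d)}_{t^{n}_{j}}\,I_{2}(f^{(a,b)}_{\Delta_{i}})\,I_{2}(f^{(c,d)}_{\Delta_{j}})}$. By independence of the $X^{(k)}$, only index combinations with $\{a,b\}=\{c,d\}$ survive, which is precisely why the hypotheses are phrased with a common pair $(a,b)$. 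To each surviving summand I apply the product formula \eqref{productFormula} in the form $I_{2}(f)I_{2}(g)=I_{4}(f\,\tilde{\otimes}\,g)+4\,I_{2}(f\,\tilde{\otimes}_{1}\,g)+2\langle f,g\rangle_{\mathcal{H}_{1}^{\otimes 2}}$, and then the duality formula \eqref{dualityFormula} with $F=\psi^{(a,b)}_{t_{i}}$ and $G=\psi^{(c,d)}_{t_{j}}$, producing the three pieces
\[
\exptn{\langle D^{4}(FG),\,f\,\tilde{\otimes}\,g\rangle_{\mathcal{H}_{1}^{\otimes 4}}}+4\,\exptn{\langle D^{2}(FG),\,f\,\tilde{\otimes}_{1}\,g\rangle_{\mathcal{H}_{1}^{\otimes 2}}}+2\,\langle f,g\rangle_{\mathcal{H}_{1}^{\otimes 2}}\,\exptn{FG}.
\]
After expanding the contraction kernels as combinations of simple tensors of $\mathds{1}_{\Delta_{i}}$ and $\mathds{1}_{\Delta_{j}}$, the three expectation factors are exactly the objects controlled by \eqref{propCond0} (for the last piece) and \eqref{propCond} with $k=2,4$ (for the first two).

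The remaining work is to estimate the kernel factors and show summability. The un-contracted $f\,\tilde{\otimes}\,g$ is a symmetrization of $\mathds{1}_{\Delta_{i}}^{\otimes 2}\otimes\mathds{1}_{\Delta_{j}}^{\otimes 2}$; the order-one contraction $f\,\tilde{\otimes}_{1}\,g$ carries one scalar factor $R(\Delta_{i},\Delta_{j}):=\exptn{X_{\Delta_{i}}X_{\Delta_{j}}}$ times $\mathds{1}_{\Delta_{i}}\,\tilde{\otimes}\,\mathds{1}_{\Delta_{j}}$; and $\langle f,g\rangle$ reduces to a product of rectangular covariance increments bounded by $|R(\Delta_{i},\Delta_{j})|^{2}$-type quantities via the Friz--Victoir moment estimates for $\mathbf{X}^{2}$. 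Using hypothesis (b) in the form $\norm{\Phi(\mathds{1}_{\Delta_{i}})}_{q\text{-var}}=\norm{R(t^{n}_{i+1},\cdot)-R(t^{n}_{i},\cdot)}_{q\text{-var}}\leq C|\Delta_{i}|^{1/\rho}$, the three contributions are dominated respectively by
\[
\sum_{i,j}|R(\Delta_{i},\Delta_{j})|^{2},\qquad \sum_{i,j}|R(\Delta_{i},\Delta_{j})|\,|\Delta_{i}|^{1/\rho}|\Delta_{j}|^{1/\rho},\qquad \Big(\sum_{i}|\Delta_{i}|^{2/\rho}\Big)^{2}.
\]
Each vanishes as $n\to\infty$: for the first, split $|R|^{2}=|R|^{\rho}|R|^{2-\rho}$ and use $\sum_{i,j}|R(\Delta_{i},\Delta_{j})|^{\rho}\leq\norm{R}^{\rho}_{\rho\text{-var};[0,T]^{2}}$ while $\max_{i,j}|R(\Delta_{i},\Delta_{j})|^{2-\rho}\to 0$ by uniform continuity (here $2-\rho>0$); for the second, Hölder with exponents $\rho,\rho'$ reduces matters to $\sum_{i}|\Delta_{i}|^{\rho'/\rho}\leq(\max_{i}|\Delta_{i}|)^{\rho'/\rho-1}T\to0$, since $\rho<2$ forces $\rho'/\rho>1$; the third vanishes for the same reason as $2/\rho>1$.

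The main obstacle is not the chaos expansion but the bookkeeping that channels every kernel contraction into a form to which \eqref{propCond0}--\eqref{propCond} apply: one must correctly isolate the scalar factor $R(\Delta_{i},\Delta_{j})$ from $f\,\tilde{\otimes}_{1}\,g$ and from $\langle f,g\rangle$, track the $\mathbb{R}^{d}\otimes\mathbb{R}^{d}$ index structure carried by $\psi$, and check that the index pairings forced by independence are exactly those for which the hypotheses are stated. A secondary delicate point is the antisymmetric Lévy-area part of $\mathbf{X}^{2}$: although it is again a second-chaos element, one must verify that its kernel inner products are governed by the same $\rho$-variation of $R$, which is where the Friz--Victoir estimates enter. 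Once these are in place the summation argument is routine, and it is precisely the compensation by $\frac{1}{2}\sigma^{2}(\Delta_{i})\mathcal{I}_{d}$ that removes the non-vanishing diagonal mean, so that the leading term $\sum_{i,j}|R(\Delta_{i},\Delta_{j})|^{2}$ tends to zero rather than to a nonzero constant.
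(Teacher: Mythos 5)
Your overall strategy --- chaos decomposition of the compensated increment, the product formula \eqref{productFormula} for $I_2(f)I_2(g)$, duality \eqref{dualityFormula} to convert the $I_4$ and $I_2$ pieces into the Malliavin-derivative expectations controlled by \eqref{propCond0}--\eqref{propCond}, and then summability via condition (b) and the finite $\rho$-variation of $R$ --- is exactly the method used in the paper (the proposition itself is quoted from \cite{cl2018}, but the same machinery is deployed in full for the third-level analogue, Proposition \ref{3rdlevel}). Your summability estimates for the three resulting pieces are correct, modulo a harmless mislabelling of which bound goes with which contraction order. However, there are two genuine gaps. First, your claim that independence of the components $X^{(k)}$ kills all index combinations with $\{a,b\}\neq\{c,d\}$ is false: $\psi^{(a,b)}_{t^n_i}$ and $\psi^{(c,d)}_{t^n_j}$ are arbitrary functionals of the whole process, so the mixed expectations do not vanish, and the hypotheses give you no control over $\mathbb{E}[\psi^{(a,b)}_s\psi^{(c,d)}_t]$ for mismatched pairs. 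The correct reduction --- used in the paper --- is simply the triangle inequality $\|\sum_i \psi_{t^n_i}(\cdot)\|_{L^2}\leq\sum_{a,b}\|\sum_i\psi^{(a,b)}_{t^n_i}(\cdot)^{(a,b)}\|_{L^2}$, after which each component is treated separately with a common pair $(a,b)$. This is easily repaired.

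The second gap is more serious: the antisymmetric (L\'evy-area) part of $\mathbf{X}^2_{\Delta_i}$, which you defer to ``Friz--Victoir estimates,'' is where the bulk of the real work lies, and those estimates do not by themselves close the argument. The hypotheses \eqref{propCond} bound $\mathbb{E}[\mathcal{D}^k_{h_1,\ldots,h_k}(\psi\psi)]$ only for specified directions $h_1,\ldots,h_k\in\mathcal{H}_1^d$; to exploit them after the product/duality step you must exhibit the area increment as an explicit (limit of) combination(s) of first-chaos variables $I_1(\mathds{1}_{\Delta}^{(y)})$ over concrete intervals, so that every contraction appearing in $f\tilde{\otimes}_r g$ is an inner product of indicator functions controlled by condition (b) and by $\|R\|_{\rho-var}$. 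Second moments of $\mathbf{X}^2$ alone give you no handle on the mixed moments with $\psi_{t^n_i}\psi_{t^n_j}$. In the paper this is achieved by the dyadic telescoping $\left(\mathbf{X}^2\right)^{NS}(\pi(n+k+1))-\left(\mathbf{X}^2\right)^{NS}(\pi(n+k))$, which expresses each area increment as a sum of commutators $\frac{1}{2}[X_{\Delta_{u^L}},X_{\Delta_{u^R}}]$ over dyadic subintervals, followed by a case-by-case accounting of which directions carry the small factors $2^{-(n+k)/\rho}$; this is the step your proposal is missing.
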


We will now proceed to give a similar estimate for the third-order terms. We first begin with the following lemma.

\begin{lemma} \label{productExp} 
For any $h_{1}, \ldots, h_{6} \in\mathcal{H}_{1}^{d}$, we have
\begin{align*}
\mathbb{E} \left[  \psi_{t^{n}_{i}} \psi_{t^{n}_{j}} \prod_{k=1}^{6}
I_{1}(h_{k})\right]  = \mathbb{E} \left[  \mathcal{D}_{h_{1}, h_{2}, h_{3},
h_{4}, h_{5}, h_{6}}^{6} \psi_{t^{n}_{i}} \psi_{t^{n}_{j}}\right]  +
\sum_{\sigma\in\mathcal{S}_{6}} C_{\sigma, 1} A_{\sigma, 1} + C_{\sigma, 2}
A_{\sigma, 2} + C_{\sigma, 3} A_{\sigma, 3},
\end{align*}
where
\begin{align*}
& A_{\sigma, 1} := \mathbb{E} \left[  \mathcal{D}_{h_{\sigma(1)},
h_{\sigma(2)}, h_{\sigma(3)}, h_{\sigma(4)}}^{4} \psi_{t^{n}_{i}} \psi
_{t^{n}_{j}}\right]  \left\langle h_{\sigma(5)}, h_{\sigma(6)} \right\rangle
_{\mathcal{H}_{1}^{d}},\\
& A_{\sigma, 2} := \mathbb{E} \left[  \mathcal{D}_{h_{\sigma(1)},
h_{\sigma(2)}}^{2} \psi_{t^{n}_{i}} \psi_{t^{n}_{j}}\right]  \left\langle
h_{\sigma(3)}, h_{\sigma(4)} \right\rangle _{\mathcal{H}_{1}^{d}} \left\langle
h_{\sigma(5)}, h_{\sigma(6)} \right\rangle _{\mathcal{H}_{1}^{d}},\\
& A_{\sigma, 3} := \mathbb{E} \left[  \psi_{t^{n}_{i}} \psi_{t^{n}_{j}
}\right]  \left\langle h_{\sigma(1)}, h_{\sigma(2)} \right\rangle
_{\mathcal{H}_{1}^{d}} \left\langle h_{\sigma(3)}, h_{\sigma(4)} \right\rangle
_{\mathcal{H}_{1}^{d}} \left\langle h_{\sigma(5)}, h_{\sigma(6)} \right\rangle
_{\mathcal{H}_{1}^{d}},
\end{align*}
$\mathcal{S}_{6}$ denotes the symmetric group of permutations on $\{1, \ldots, 6\}$, and the $C_{\sigma, k}$'s are constants that depend on the permutation $\sigma$.
\end{lemma}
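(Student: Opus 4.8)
The plan is to reduce the computation of $\mathbb{E}\big[\psi_{t^n_i}\psi_{t^n_j}\prod_{k=1}^6 I_1(h_k)\big]$ to a single application of the duality formula \eqref{dualityFormula}, after first expanding the product $\prod_{k=1}^6 I_1(h_k)$ into a sum of multiple Wiener integrals. Writing $F := \psi_{t^n_i}\psi_{t^n_j}$, which under the standing Malliavin-regularity hypotheses on $\psi$ lies in $\mathbb{D}^{6,2}(\mathbb{R}^d\otimes\mathbb{R}^d)$, the only ingredients I need are the product formula \eqref{productFormula}, the duality \eqref{dualityFormula}, and the elementary identification $\mathbb{E}\big[\langle D^n F,\, h_{k_1}\otimes\cdots\otimes h_{k_n}\rangle\big] = \mathbb{E}\big[\mathcal{D}^n_{h_{k_1},\ldots,h_{k_n}}F\big]$, which is valid because $D^n F$ is symmetric.

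First I would expand $\prod_{k=1}^6 I_1(h_k)$. Since each $I_1(h_k)$ lies in the first Wiener chaos, iterating \eqref{productFormula} in its base case $I_1(h)I_1(h') = I_2(h\tilde\otimes h') + \langle h, h'\rangle_{\mathcal{H}_1^d}$ yields the Wick-type expansion
\[
\prod_{k=1}^6 I_1(h_k) = \sum_{M}\Big(\prod_{\{a,b\}\in M}\langle h_a, h_b\rangle_{\mathcal{H}_1^d}\Big)\, I_{6-2|M|}\Big(\tilde\bigotimes_{k\notin M} h_k\Big),
\]
where the sum runs over all partial matchings $M$ of $\{1,\ldots,6\}$ (collections of disjoint pairs) and $|M|$ is the number of pairs. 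The surviving chaos orders are exactly $6,4,2,0$, corresponding to $|M| = 0,1,2,3$.

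Next I would take the expectation against $F$ and apply \eqref{dualityFormula} term by term:
\[
\mathbb{E}\Big[F\, I_{6-2|M|}\big(\tilde\otimes_{k\notin M}h_k\big)\Big] = \mathbb{E}\Big[\big\langle D^{6-2|M|}F,\ \tilde\otimes_{k\notin M}h_k\big\rangle\Big] = \mathbb{E}\big[\mathcal{D}^{6-2|M|}_{\{h_k:\,k\notin M\}}F\big].
\]
Collecting the four groups of terms, the matchings with $|M|=0$ give the single leading term $\mathbb{E}[\mathcal{D}^6_{h_1,\ldots,h_6}F]$, while those with $|M|=1,2,3$ produce respectively the $A_{\sigma,1}$, $A_{\sigma,2}$, $A_{\sigma,3}$ contributions, in each of which the differentiating directions are the unmatched $h$'s and the scalar factors are the inner products of the matched pairs.

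The computation is essentially bookkeeping, and the only step requiring care is the final reorganization into a sum over $\mathcal{S}_6$: a fixed matching $M$ is represented by many permutations $\sigma$, so the constants $C_{\sigma,k}$ must be chosen to compensate for this overcounting, for the symmetry $\langle h_a,h_b\rangle = \langle h_b,h_a\rangle$, for the interchangeability of distinct pairs, and for the symmetrization constants in $\tilde\otimes$. I would handle this by first writing the identity in the intrinsic matching-indexed form above and only then passing to the permutation-indexed statement of the lemma. The one analytic prerequisite is that $F=\psi_{t^n_i}\psi_{t^n_j}\in\mathbb{D}^{6,2}$, so that $\mathcal{D}^2,\mathcal{D}^4,\mathcal{D}^6$ and the duality pairings are well defined and integrable; this follows from the Leibniz rule for the Malliavin derivative and the assumed smoothness of $\psi$, and is a routine check rather than a genuine obstacle.
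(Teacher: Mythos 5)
Your proposal is correct and follows essentially the same route as the paper: iterate the product formula \eqref{productFormula} to expand $\prod_{k=1}^6 I_1(h_k)$ into chaoses of orders $6,4,2,0$ weighted by inner products of paired $h$'s, then apply the duality formula \eqref{dualityFormula} term by term. Your matching-indexed intermediate form is just a more explicit bookkeeping of what the paper absorbs directly into the permutation sum with constants $C_{\sigma,k}$.
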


\begin{proof}
From the product formula \eqref{productFormula} we have
\begin{align} \label{product6}
\begin{split}
\prod_{k=1}^6 I_1(h_k)
&= I_6 (h_1 \otimes h_2 \otimes h_3 \otimes h_4 \otimes h_5 \otimes h_6) \\
&\qquad + \sum_{\sigma \in \mathcal{S}_6} C_{\sigma, 1} \, I_4 \left( h_{\sigma(1)} \otimes h_{\sigma(2)} \otimes h_{\sigma(3)} \otimes h_{\sigma(4)} \right) \left\langle h_{\sigma(5)}, h_{\sigma(6)} \right\rangle_{\mathcal{H}_1^d} \\
&\qquad \qquad+ C_{\sigma, 2} \, I_2 \left( h_{\sigma(1)} \otimes h_{\sigma(2)} \right) \left\langle h_{\sigma(3)}, h_{\sigma(4)} \right\rangle_{\mathcal{H}_1^d} \left\langle h_{\sigma(5)}, h_{\sigma(6)} \right\rangle_{\mathcal{H}_1^d} \\
&\qquad\qquad + C_{\sigma, 3} \, \left\langle h_{\sigma(1)}, h_{\sigma(2)} \right\rangle_{\mathcal{H}_1^d} \left\langle h_{\sigma(3)}, h_{\sigma(4)} \right\rangle_{\mathcal{H}_1^d} \left\langle h_{\sigma(5)}, h_{\sigma(6)} \right\rangle_{\mathcal{H}_1^d}.
\end{split}
\end{align}
Applying integration-by-parts \eqref{dualityFormula} finishes the proof.
\end{proof}

\begin{proposition} \label{3rdlevel} 
Let $\mathbf{X} \in\mathcal{C}^{0, p-var} \left( [0, T]; G^{3} \left( \mathbb{R}^{d}\right) \right) $, $3 \leq p < 4$, be a geometric Gaussian rough path which satisfies Condition \ref{newCond1}, and assume that its covariance function satisfies, for all $s, t \in[0, T]$,
\begin{align*}
\left\|  R(t, \cdot) - R (s, \cdot) \right\| _{q-var; [0, T]} \leq C \left| t - s\right| ^{\frac{1}{\rho}},
\end{align*}
for some finite constant $C$ and $\rho\in[1, 2)$.

Let $\psi: \Omega\times[0, T] \rightarrow\mathbb{R}^{d} \otimes\mathbb{R}^{d} \otimes\mathbb{R}^{d}$ be a stochastic process satisfying $\displaystyle \psi_{t} = \sum_{a,b,c = 1}^{d} \psi_{t}^{(a,b,c)} \mathrm{d} e_{a} \otimes \mathrm{d} e_{b} \otimes \mathrm{d} e_{c} \in \mathbb{D}^{6,2} (\mathbb{R}^{d} \otimes \mathbb{R}^{d} \otimes \mathbb{R}^{d})$ for all $t \in [0, T]$. Furthermore, assume there exists $C < \infty$ such that we have
\begin{align}
\left|  \mathbb{E} \left[  \psi^{(a, b)}_{s} \psi^{(a, b)}_{t}\right] \right| \leq C,
\end{align}
and for $k = 2, 4, 6$, we have
\begin{align} \label{propCond2}
\left|  \mathbb{E} \left[  \mathcal{D}^{k}_{h_{1}, \ldots, h_{k}} \left( \psi^{(a, b, c)}_{s} \psi^{(a, b, c)}_{t} \right)  \right] \right|  
\leq C \prod_{i=1}^{k} \left\| \Phi(h_{i})\right\|_{q-var; [0, T]},
\end{align}
for all $h_{1}, \ldots, h_{k} \in\mathcal{H}_{1}^{d}$, $s, t \in[0, T]$ and $a, b, c = 1, \ldots, d$. Then
\begin{align}
\lim_{n \rightarrow\infty} \left\|  \sum_{i=0}^{2^{n} - 1} \psi_{t^{n}_{i}}
\left(  \mathbf{X}^{3}_{t^{n}_{i}, t^{n}_{i+1}} \right) \right\|_{L^{2}(\Omega)} = 0.
\end{align}

\end{proposition}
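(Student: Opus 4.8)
The plan is to compute the squared $L^2(\Omega)$-norm directly, expand it as a double sum over the dyadic intervals, and reduce each summand---via the Wiener-chaos structure of the third-level increments---to the six-fold products handled by Lemma \ref{productExp}. Writing $\Delta_i$ for $\Delta_i^n$, the starting point is
\begin{align*}
\exptn{\norm{\sum_{i=0}^{2^n-1} \psi_{t_i^n}\left(\mathbf{X}^3_{t_i^n, t_{i+1}^n}\right)}^2}
= \sum_{i, j} \exptn{\left\langle \psi_{t_i^n}\left(\mathbf{X}^3_{\Delta_i}\right), \, \psi_{t_j^n}\left(\mathbf{X}^3_{\Delta_j}\right) \right\rangle}.
\end{align*}
Componentwise, each increment $\mathbf{X}^3_{\Delta_i}$ is a degree-three polynomial in the first-chaos variables $I_1(\mathds{1}^{(\cdot)}_{\Delta_i})$, and hence lies in the sum of the first and third homogeneous Wiener chaoses; the product $\mathbf{X}^3_{\Delta_i}\mathbf{X}^3_{\Delta_j}$ is therefore of degree six. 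Expanding this product against $\psi_{t_i^n}\psi_{t_j^n}$ through the product formula \eqref{productFormula} and the duality formula \eqref{dualityFormula} produces exactly the structure of Lemma \ref{productExp}, the six slots being filled by the first-chaos representatives of the increments over $\Delta_i$ and $\Delta_j$.

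First I would apply Lemma \ref{productExp} to rewrite each summand as a finite sum of terms of the types $A_{\sigma,1}, A_{\sigma,2}, A_{\sigma,3}$, each an expectation of an even-order Malliavin derivative $\mathcal{D}^{2k}(\psi_{t_i^n}\psi_{t_j^n})$, $k \in \{0,1,2,3\}$, multiplied by a product of $\mathcal{H}_1^d$-inner products of the increment representatives. The hypothesis \eqref{propCond2} bounds every Malliavin-derivative factor by $C\prod\norm{\Phi(h)}_{q-var;[0,T]}$; since $\Phi(\mathds{1}^{(\cdot)}_{\Delta_i}) = R(t^n_{i+1},\cdot) - R(t^n_i,\cdot)$, the one-dimensional covariance bound $\norm{R(t,\cdot)-R(s,\cdot)}_{q-var} \leq C|t-s|^{1/\rho}$ converts each such factor into a power $|\Delta_i|^{1/\rho}$, while the inner products are rectangular increments of $R$ controlled through its 2D $\rho$-variation.

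The final step is to sum over $i, j$ and show the total vanishes. The diagonal contribution $i=j$ is dominated by the terms $A_{\sigma,3}$ carrying no Malliavin derivative: there each of the three inner products is at most $\sigma^2(\Delta_i) \lesssim |\Delta_i|^{1/\rho}$, so the diagonal sum is of order $2^n(2^{-n})^{3/\rho}$, which tends to zero throughout our range $\rho < 2$. The genuine difficulty, and the main obstacle, lies in the off-diagonal sum $i \neq j$. Here the naive pointwise estimate $\norm{\mathbf{X}^3_{\Delta_i}}_{L^2} \lesssim |\Delta_i|^{3/p}$ would only yield a bound of order $\left(\sum_i |\Delta_i|^{3/p}\right)^2$, which converges solely for $p < 3$; to reach the whole regime $3 \leq p < 4$ one must instead exploit the decay of the mixed inter-interval inner products supplied by the chaos expansion. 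I expect to organize these off-diagonal contributions as discrete two-dimensional Young-type sums against $R$ and to invoke super-additivity of the control $\omega(s,t) = \norm{R}^\rho_{\rho-var;[s,t]^2}$, distributing the six available factors of regularity by H\"{o}lder's inequality so that the bound is uniform in $n$ and of order $o(1)$. The crucial point---and the origin of the paper's assertion that no rebalancing term is required---is that six factors of covariance regularity over-compensate, so that the total decays for the full range $1 \leq \rho < 2$ rather than only $\rho < \frac{3}{2}$.
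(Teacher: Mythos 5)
Your overall strategy---expand the square, reduce each summand to a six-fold product of first-chaos variables, apply Lemma \ref{productExp}, and then balance exponents using \eqref{propCond2}, the bound $\norm{\Phi(\mathds{1}_{\Delta_i^n})}_{q-var;[0,T]} \lesssim 2^{-n/\rho}$ and the 2D $\rho$-variation of $R$---is the right one, and it is essentially how the paper treats the \emph{symmetric} part of $\mathbf{X}^3$. The gap is in your central reduction: you assert that $\mathbf{X}^3_{t_i^n, t_{i+1}^n}$ is a degree-three polynomial in the first-chaos variables $I_1(\mathds{1}^{(\cdot)}_{\Delta_i^n})$. That is true only of the symmetric part $(\mathbf{X}^3_{s,t})^S = \tfrac{1}{6}X_{s,t}^{\otimes 3}$. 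The non-symmetric (Lie-bracket) components of the third-level iterated integral are not functions of the endpoint increment $X_{s,t}$ at all---they depend on the whole trajectory of $X$ inside $[t_i^n, t_{i+1}^n]$---so the product $\psi_{t_i^n}\psi_{t_j^n}\,(\mathbf{X}^3_{\Delta_i^n})^{NS}(\mathbf{X}^3_{\Delta_j^n})^{NS}$ does not take the form $\psi_{t_i^n}\psi_{t_j^n}\prod_{k=1}^6 I_1(h_k)$ with the $h_k$ indicators of $\Delta_i^n$ and $\Delta_j^n$, and Lemma \ref{productExp} cannot be applied to it as you describe.

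The paper closes this gap with a separate argument for $(\mathbf{X}^3)^{NS}$ that your proposal is missing: it writes $(\mathbf{X}^3_{t_i^n, t_{i+1}^n})^{NS}$ as a telescoping $L^2$-limit over successively finer dyadic refinements $\pi(n+k)$, $k = 1, 2, \ldots$, and shows by an explicit computation in $G^3(\mathbb{R}^d)$ that each refinement increment is a finite sum of terms $M_u + N_u$ built from commutators $[X_{\Delta_{u^L}}, X_{\Delta_{u^R}}]$ of increments over sub-dyadic intervals. Each such term \emph{is} a product of six first-chaos variables, now indexed by intervals at scales $2^{-(n+k)}$ and $2^{-(n+l)}$, so Lemma \ref{productExp} applies; the price is a four-fold sum $\sum_{i,j}\sum_{k,l}\sum_{u,v}$ that must be shown to vanish uniformly in the refinement depth $m$, which is where most of the work (and the careful case analysis of which $h_{\sigma(r)}$ sit on which sub-interval) lies. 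Without this step your estimate covers only the symmetric part. You correctly observe that the naive bound $\norm{\mathbf{X}^3_{\Delta_i}}_{L^2}\lesssim |\Delta_i|^{3/p}$ fails for $p\geq 3$, but the chaos expansion you invoke to rescue it is simply not available at the level of the coarse partition; it only becomes available after the dyadic decomposition of the non-symmetric part.
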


\begin{proof}
First note that
\begin{align} \label{mainQuant2}
\begin{split}
\left\| \sum_{i=0}^{2^n - 1} \psi_{t^n_i} \left( \mathbf{X}^3_{t^n_i, t^n_{i+1}} \right) \right\|_{L^2(\Omega)}
\leq \left\| \sum_{i=0}^{2^n - 1} \psi_{t^n_i} \left( \left(\mathbf{X}^3_{t^n_i, t^n_{i+1}} \right)^S \right) \right\|_{L^2(\Omega)}
+ \left\| \sum_{i=0}^{2^n - 1} \psi_{t^n_i} \left( \left( \mathbf{X}^3_{t^n_i, t^n_{i+1}} \right)^{NS} \right) \right\|_{L^2(\Omega)},
\end{split}
\end{align}
where $\left( \mathbf{X}^3 \right)^S$ denotes the symmetric part of $\mathbf{X}^3$ and
\begin{align*}
\left( \mathbf{X}^3_{s,t} \right)^{NS} = \mathbf{X}^3_{s,t} - \left( \mathbf{X}^3_{s,t} \right)^S
\end{align*}
denotes the non-symmetric part. The two parts will be tackled separately, and since
\begin{align*}
\norm{\sum_{i=0}^{2^n - 1} \psi_{t^n_i} \left( \left( \mathbf{X}^3_{t^n_i, t^n_{i+1}} \right)^S \right)}_{L^2(\Omega)}
\leq \sum_{a, b, c = 1}^d \norm{ \sum_{i=0}^{2^n - 1} \psi_{t^n_i}^{(a, b, c)} \left( \left( \mathbf{X}^3_{t^n_i, t^n_{i+1}} \right)^S \right)^{(a, b, c)} }_{L^2(\Omega)},
\end{align*}
and similarly for the non-symmetric part, we will study the convergence of each fixed $(a, b, c)^{th}$ tensor component individually and henceforth suppress the notation for the component in the superscript of $\psi$. \par
(a) To begin, we will prove that
\begin{align}
\lim_{n \rightarrow \infty} \norm{\sum_{i=0}^{2^n - 1} \psi_{t^n_i} \left(
\mathbf{X}^3_{t^n_i, t^n_{i+1}} \right)^S}_{L^2(\Omega)} = 0.
\end{align}
Since
\begin{align*}
\left( \mathbf{X}^3_{s,t} \right)^S = \frac{1}{6} X_{s, t} \otimes X_{s,t} \otimes X_{s,t},
\end{align*}
this is equivalent to showing that
\begin{align*}
\exptn{\left( \sum_{i=0}^{2^n - 1} \psi_{t^n_i} \left( \left( \mathbf{X}^3_{t_i^n, t_{i+1}^n} \right)^{S} \right)^{(a, b, c)} \right)^2}
= \frac{1}{36} \sum_{i, j=0}^{2^n - 1} \exptn{ \psi_{t^n_i} \psi_{t^n_j} X^{(a)}_{\Delta^n_i} X^{(b)}_{\Delta^n_i} X^{(c)}_{\Delta^n_i} X^{(a)}_{\Delta^n_j} X^{(b)}_{\Delta^n_j} X^{(c)}_{\Delta^n_j}}
\end{align*}
converges to zero as $n \rightarrow \infty$.
First define
\begin{align*}
&h_1 := \mathds{1}_{\Delta^n_i}^{(a)}, \quad h_2 := \mathds{1}_{\Delta^n_i}^{(b)}, \quad h_3 := \mathds{1}_{\Delta^n_i}^{(c)}, \\
&h_4 := \mathds{1}_{\Delta^n_j}^{(a)}, \quad h_5 := \mathds{1}_{\Delta^n_j}^{(b)}, \quad h_6 := \mathds{1}_{\Delta^n_j}^{(c)}.
\end{align*}
Note that for $k = 1, \ldots, 6$, we have
\begin{align*}
\norm{\Phi(h_k)}_{q-var; [0, T]}
&= \norm{R\left( t^n_{i+1}, \cdot\right) - R\left( t^n_i, \cdot\right)}_{q-var; [0, T]} \quad \mathrm{or} \quad \norm{R\left( t^n_{j+1}, \cdot\right) - R\left( t^n_j, \cdot\right)}_{q-var; [0, T]} \\
&\leq C \, 2^{-\frac{n}{\rho}}
\end{align*}
and
\begin{align*}
\norm{h_k}_{\mathcal{H}_1^d}
&= \sqrt{\sigma^2 \left( t_i^n, t_{i+1}^n \right)} \quad \mathrm{or} \quad \sqrt{\sigma^2 \left( t_j^n, t_{j+1}^n \right)} \\
&\leq \sqrt{\norm{R\left( t^n_{i+1}, \cdot\right) - R\left( t^n_i, \cdot\right)}_{q-var; [0, T]}} \quad \mathrm{or} \quad \sqrt{\norm{R\left( t^n_{j+1}, \cdot\right) - R\left( t^n_j, \cdot\right)}_{q-var; [0, T]}} \\
&\leq C \, 2^{-\frac{n}{2\rho}}.
\end{align*}
Recall from Lemma \ref{productExp} that
\begin{align*}
&\exptn{\psi_{t^n_i} \psi_{t^n_j} \prod_{k=1}^6 I_1(h_k)}
=: \exptn{\mathcal{D}_{h_1, h_2, h_3, h_4, h_5, h_6}^6 \psi_{t^n_i} \psi_{t^n_j}} + \sum_{\sigma \in \mathcal{S}_6} C_{\sigma, 1} A_{\sigma, 1} + C_{\sigma, 2} A_{\sigma, 2} + C_{\sigma, 3} A_{\sigma, 3}.
\end{align*}
For the first term on the right side of the above expression, we have
\begin{align*}
\sum_{i, j = 0}^{2^n - 1} \exptn{\mathcal{D}_{h_1, h_2, h_3, h_4, h_5, h_6}^6 \psi_{t^n_i} \psi_{t^n_j}}
&\leq C \sum_{i, j = 0}^{2^n - 1} \prod_{k=1}^6 \norm{\Phi(h_k)}_{q-var; [0, T]} \\
&\leq C\, 2^{-2n \left( \frac{3}{\rho} - 1 \right)},
\end{align*}
which vanishes as $n \rightarrow \infty$ since $\rho < 2$. \par
For the $A_{\sigma, 1}$ terms we have
\begin{align*}
&\sum_{i, j= 0}^{2^n-1} \exptn{\mathcal{D}^4_{h_{\sigma(1)}, h_{\sigma(2)}, h_{\sigma(3)}, h_{\sigma(4)}} \psi_{t^n_i} \psi_{t^n_j}} \left\langle h_{\sigma(5)}, h_{\sigma(6)} \right\rangle_{\mathcal{H}_1^d} \\
&\fqquad \leq \sum_{i, j = 0}^{2^n-1} \prod_{k=1}^4 \norm{\Phi(h_{\sigma(k)})}_{q-var; [0, T]} \norm{h_{\sigma(5)}}_{\mathcal{H}_1^d} \norm{h_{\sigma(6)}}_{\mathcal{H}_1^d} \\
&\fqquad \leq C \, 2^{-2n \left( \frac{5}{2\rho} -1 \right)} \rightarrow 0,
\end{align*}
and similarly for the $A_{\sigma, 2}$ terms we have
\begin{align*}
&\sum_{i, j= 0}^{2^n-1} \exptn{\mathcal{D}^2_{h_{\sigma(1)}, h_{\sigma(2)}} \psi_{t^n_i} \psi_{t^n_j}} \left\langle h_{\sigma(3)}, h_{\sigma(4)} \right\rangle_{\mathcal{H}_1^d} \left\langle h_{\sigma(5)}, h_{\sigma(6)} \right\rangle_{\mathcal{H}_1^d} \\
&\qquad \qquad \leq \sum_{i, j = 0}^{2^n-1} \norm{\Phi(h_{\sigma(1)})}_{q-var; [0, T]} \norm{\Phi(h_{\sigma(2)})}_{q-var; [0, T]} \prod_{k=3}^6 \norm{h_{\sigma(k)}}_{\mathcal{H}_1^d}  \\
&\qquad \qquad \leq C \, 2^{-2n \left( \frac{2}{\rho} -1 \right)} \rightarrow 0.
\end{align*}
Finally for the $A_{\sigma, 3}$ terms we have two cases:
either
\begin{align*}
\left\langle h_{\sigma(1)}, h_{\sigma(2)} \right\rangle_{\mathcal{H}_1^d} \left\langle h_{\sigma(3)}, h_{\sigma(4)} \right\rangle_{\mathcal{H}_1^d} \left\langle h_{\sigma(5)}, h_{\sigma(6)} \right\rangle_{\mathcal{H}_1^d}
= R \begin{pmatrix}
t^n_i & t^n_{i+1} \\
t^n_j & t^n_{j+1}
\end{pmatrix}^3,
\end{align*}
or
\begin{align*}
\left\langle h_{\sigma(1)}, h_{\sigma(2)} \right\rangle_{\mathcal{H}_1^d} \left\langle h_{\sigma(3)}, h_{\sigma(4)} \right\rangle_{\mathcal{H}_1^d} \left\langle h_{\sigma(5)}, h_{\sigma(6)} \right\rangle_{\mathcal{H}_1^d}
= R \begin{pmatrix}
t^n_i & t^n_{i+1} \\
t^n_j & t^n_{j+1}
\end{pmatrix}
\sigma^2 \left( t^n_i, t^n_{i+1} \right) \sigma^2 \left( t^n_j, t^n_{j+1} \right).
\end{align*}
In either case, since
\begin{align*}
\abs{R \begin{pmatrix}
t^n_i & t^n_{i+1} \\
t^n_j & t^n_{j+1}
\end{pmatrix}},
\sigma^2 \left( t^n_i, t^n_{i+1} \right), \sigma^2 \left( t^n_j, t^n_{j+1} \right)
\leq \frac{C}{2^{\frac{n}{\rho}}},
\end{align*}
we have
\begin{align*}
\sum_{i, j = 0}^{2^n-1}
\exptn{\psi_{t^n_i} \psi_{t^n_j}} \left\langle h_{\sigma(1)}, h_{\sigma(2)} \right\rangle_{\mathcal{H}_1^d} &\left\langle h_{\sigma(3)}, h_{\sigma(4)} \right\rangle_{\mathcal{H}_1^d} \left\langle h_{\sigma(5)}, h_{\sigma(6)} \right\rangle_{\mathcal{H}_1^d} \\
&\leq C \left( \sum_{i,j = 0}^{2^n-1}
R \begin{pmatrix}
t^n_i & t^n_{i+1} \\
t^n_j & t^n_{j+1}
\end{pmatrix}^{\rho} \right)^{\frac{1}{\rho}}
\left( \sum_{i, j=0}^{2^n-1} 2^{\frac{-2n \rho'}{\rho}} \right)^{\frac{1}{\rho'}} \\
&\leq C \, \norm{R}_{\rho-var;[0, T]^2} 2^{-2n\left( \frac{1}{\rho} - \frac{1}{\rho'}\right)} \\
&\leq C \, \norm{R}_{\rho-var;[0, T]^2} 2^{-2n \left( \frac{2}{\rho} - 1\right)} \rightarrow 0.
\end{align*}
(b) We will now move on to show that
\begin{align}
\lim_{n \rightarrow \infty} \norm{\sum_{i=0}^{2^n - 1} \psi_{t^n_i} \left(
\mathbf{X}^3_{t^n_i, t^n_{i+1}} \right)^{NS}}_{L^2(\Omega)} = 0.
\end{align}
Let $X^{\pi(k)}$ denote the piece-wise linear approximation of $X$ over $\pi(k)$, and let \\
$\mathbf{X}^{\pi(k)} = \left(1, \, \mathbf{X}^1 (\pi(k)), \, \mathbf{X}^2 (\pi(k)), \, \mathbf{X}^3 (\pi(k)) \right) = S_3 \left( X^{\pi(k)} \right)$ denote its canonical lift to a geometric rough path. \par
Next, define
\begin{align*}
\left( \mathbf{X}^3_{s,t} \right)^{NS} (\Delta_l) := \left( \mathbf{X}^3_{s,t} \right)^{NS} \left( \pi (l+1) \right)  - \left( \mathbf{X}^3_{s,t} \right)^{NS} \left( \pi (l) \right).
\end{align*}
Since $\left( \mathbf{X}^3_{t_i^n, t_{i+1}^n} \right)^{NS} \left( \pi(n) \right) = 0$, we have
\begin{align*}
\left( \mathbf{X}^3_{t_i^n, t_{i+1}^n} \right)^{NS} = \lim_{m\rightarrow\infty}\sum_{k=1}^m
\left( \mathbf{X}^3_{t_i^n, t_{i+1}^n} \right)^{NS} \left( \Delta_{n+k} \right)  \text{ for every }n\in \mathbb{N} \; \mathrm{and} \; i = 0, 1, \ldots, 2^n - 1,
\end{align*}
where the limit is taken in $L^2 (\Omega)$. \par
We want to show that
\begin{align*}
\norm{\sum_{i=0}^{2^n - 1} \psi_{t^n_i} \left( \left( \mathbf{X}^3_{t_i^n, t_{i+1}^n} \right)^{NS} (\pi (n+m)) \right)^{(a, b, c)}}_{L^2(\Omega)} \rightarrow 0
\end{align*}
uniformly for all $m$ as $n \rightarrow \infty$. To begin, let
\begin{align} \label{sDefn}
s_u^{k, i} := t_i^n + \frac{u}{2^{n+k}} = t_{u + i 2^k}^{n+k},
\end{align}
and we will denote the intervals
\begin{align} \label{LRDefn}
\begin{split}
&\Delta_{u^L}^i := \left[ s^{k+1, i}_{2u}, s^{k+1, i}_{2u+1} \right], \quad \Delta_{u^R}^i := \left[ s^{k+1, i}_{2u+1}, s^{k+1, i}_{2u+2} \right], \\
&\Delta_u^i := \Delta_{u^L}^i \cup \Delta_{u^R}^i = \left[ s^{k, i}_u, s^{k, i}_{u+1} \right] \subset \left[ t^n_i, t^n_{i+1} \right], \quad \forall u = 0, \ldots 2^k - 1.
\end{split}
\end{align}
Note that we suppress the dependence on $k$ and $n$ in the notation for the variables on the left.
The following computation on $G^3(\mathbb{R}^d)$ can be verified easily; for $f, g \in \mathbb{R}^d$ we have
\begin{align*}
&\exp(f) \otimes \exp(g)
= \left( 1, \, f + g, \, \frac{(f + g)^{\otimes 2}}{2} + \frac{1}{2} [f, g], \, \frac{(f + g)^{\otimes 3}}{6} + N(f, g) \right),
\end{align*}
where
\begin{align*}
N(f, g) := \frac{1}{4} \big( (f+g) \otimes [f, g] + [f, g] \otimes (f + g) \big) + \frac{1}{12} \big( [f, [f, g]] + [g, [g, f]] \big).
\end{align*}
Using the above expression with $f = X_{\Delta_{u^L}^i}$ and $g = X_{\Delta_{u^R}^i}$, for $k = 1, \ldots, m$ we obtain the following identity on $T^3 \left( \mathbb{R}^d \right)$:
\begin{align*}
&\bigotimes_{u=0}^{2^k-1} \exp \left( X_{\Delta_{u^L}^i} \right) \otimes \exp \left( X_{\Delta^i_{u^R}} \right) - \bigotimes_{u=0}^{2^k-1} \exp \left( X_{\Delta_u^i} \right) \\
&\quad \qquad = \bigotimes_{u=0}^{2^k-1} \left( 1, X_{\Delta_u^i}, \frac{1}{2} X_{\Delta_u^i}^{\otimes 2}, \frac{1}{6} X_{\Delta_u^i}^{\otimes 3} \right)
+ \left(0, 0, \frac{1}{2} \left[ X_{\Delta^i_{u^L}}, X_{\Delta_{u^R}^i} \right], 0 \right)
+ \left(0, 0, 0, N \left( X_{\Delta^i_{u^L}}, X_{\Delta^i_{u^R}} \right) \right) \\
&\fqquad- \bigotimes_{u = 0}^{2^k-1} \left( 1, X_{\Delta_u^i}, \frac{1}{2} X_{\Delta_u^i}^{\otimes 2}, \frac{1}{6} X_{\Delta_u^i}^{\otimes 3} \right) \\
&\quad \qquad = \sum_{u=0}^{2^k-1} \left( 0, \, 0, \, \frac{1}{2} \left[ X_{\Delta^i_{u^L}}, X_{\Delta^i_{u^R}} \right], \, M\left( X_{\Delta^i_{u^L}}, X_{\Delta^i_{u^R}} \right) + N\left( X_{\Delta^i_{u^L}}, X_{\Delta^i_{u^R}} \right) \right)
\end{align*}
where
\begin{align*}
M\left( X_{\Delta^i_{u^L}}, X_{\Delta^i_{u^R}} \right)
&:= \sum_{r=0}^{u-1} X_{\Delta^i_r} \otimes \frac{1}{2} \left[ X_{\Delta^i_{u^L}}, X_{\Delta^i_{u^R}} \right] + \frac{1}{2} \left[ X_{\Delta^i_{u^L}}, X_{\Delta^i_{u^R}} \right] \otimes \sum_{r=u+1}^{2^k-1} X_{\Delta^i_r} \\
&= X_{t^n_i, s^{k, i}_u} \otimes \frac{1}{2} \left[ X_{\Delta^i_{u^L}}, X_{\Delta^i_{u^R}} \right] + \frac{1}{2} \left[ X_{\Delta^i_{u^L}}, X_{\Delta^i_{u^R}} \right] \otimes X_{s^{k,i}_{u+1}, t^n_{i+1}}.
\end{align*}
This means that
\begin{align*}
\mathbf{X}^3_{t^n_i, t^n_{i+1}} (\pi(n+k+1)) - \mathbf{X}^3_{t^n_i, t^n_{i+1}} (\pi(n+k))
&= \sum_{u=0}^{2^k-1} M_u + N_u,
\end{align*}
where we use $M_u$ and $N_u$ as short-hand for $M\left( X_{\Delta^i_{u^L}}, X_{\Delta^i_{u^R}} \right)$ and $N\left( X_{\Delta^i_{u^L}}, X_{\Delta^i_{u^R}} \right)$ respectively.
This in turn gives us
\begin{align*}
\left( \mathbf{X}^3_{t_i^n, t_{i+1}^n} \right)^{NS} \left( \Delta_{n+k} \right) = \sum_{u=0}^{2^k-1} M_u + N_u,
\end{align*}
since
\begin{align*}
\mathbf{X}^3_{t^n_i, t^n_{i+1}} (\pi(n+k+1))& - \mathbf{X}^3_{t^n_i, t^n_{i+1}} (\pi(n+k)) \\
&= \left( \mathbf{X}^3_{t^n_i, t^n_{i+1}} \right)^S (\pi(n+k+1)) - \left( \mathbf{X}^3_{t^n_i, t^n_{i+1}} \right)^S (\pi(n+k)) + \left( \mathbf{X}^3_{t_i^n, t_{i+1}^n} \right)^{NS} \left( \Delta_{n+k} \right) \\
&= \exp \left( X_{t^n_i, t^n_{i+1}} \right) - \exp \left( X_{t^n_i, t^n_{i+1}} \right) + \left( \mathbf{X}^3_{t_i^n, t_{i+1}^n} \right)^{NS} \left( \Delta_{n+k} \right) \\
&= \left( \mathbf{X}^3_{t_i^n, t_{i+1}^n} \right)^{NS} \left( \Delta_{n+k} \right).
\end{align*}
Thus, we obtain
\begin{align} \label{3rdQuant}
\begin{split}
&\exptn{\left( \sum_{i=0}^{2^n - 1} \psi_{t^n_i} \left( \left( \mathbf{X}^3_{t_i^n, t_{i+1}^n} \right)^{NS} (\pi (n+m)) \right)^{(a, b, c)} \right)^2} \\
&\quad \qquad \qquad \qquad= \exptn{\left( \sum_{i=0}^{2^n - 1} \psi_{t^n_i} \sum_{k=1}^m \left( \left( \mathbf{X}^3_{t_i^n, t_{i+1}^n} \right)^{NS} (\Delta_{n+k}) \right)^{(a, b, c)} \right)^2} \\
&\quad \qquad \qquad \qquad= \sum_{i, j=0}^{2^n - 1} \exptn{ \psi_{t^n_i} \psi_{t^n_j} \sum_{k=1}^m \left( \left( \mathbf{X}^3_{t_i^n, t_{i+1}^n} \right)^{NS} \left( \Delta_{n+k} \right) \right)^{(a, b, c)}  \sum_{l=1}^m \left( \left( \mathbf{X}^3_{t_j^n, t_{j+1}^n} \right)^{NS} \left( \Delta_{n+l} \right) \right)^{(a, b, c)}} \\
&\quad \qquad \qquad \qquad= \sum_{i,j=0}^{2^n-1} \sum_{k,l=1}^m \sum_{u=0}^{2^k-1} \sum_{v=0}^{2^l-1} \exptn{\psi_{t^n_i} \psi_{t^n_j} \left( M_u + N_u\right)^{(a, b, c)} \left( M_v + N_v \right)^{(a, b, c)}}.
\end{split}
\end{align}
In what follows, it does not matter to the analysis which particular subinterval of $\Delta_u^i$, $\Delta_v^j$, $\Delta^n_i$ or $\Delta^n_j$ is present in the terms. Hence we will use the notation
\begin{align*}
&\Delta_{u^*} = \Delta_{u^L}^i, \Delta_{u^R}^i \: \mathrm{or} \: \Delta_u^i, \quad \Delta_{v^*} = \Delta_{v^L}^j, \Delta_{v^R}^j \: \mathrm{or} \: \Delta_v^j, \\
&\Delta_{i^*} = \left[t^n_i, s^{k,i}_u \right] \: \mathrm{or} \: \left[ s^{k,i}_{u+1}, t^n_{i+1} \right], \quad \Delta_{j^*} = \left[t^n_j, s^{l,j}_v \right] \: \mathrm{or} \: \left[ s^{l,j}_{v+1}, t^n_{j+1} \right],
\end{align*}
and
\begin{align*}
R \begin{pmatrix}
\Delta_{u^*} \\
\Delta_{v^*}
\end{pmatrix}
:= \left\langle \mathds{1}_{\Delta_u^*}, \mathds{1}_{\Delta_v^*} \right\rangle_{\mathcal{H}_1}
= R \begin{pmatrix}
a_1 & a_2 \\
b_1 & b_2
\end{pmatrix},
\end{align*}
where $[a_1, a_2] = \Delta_{u^L}^i, \Delta_{u^R}^i \: \mathrm{or} \: \Delta_u^i$, and $[b_1, b_2] = \Delta_{v^L}^j, \Delta_{v^R}^j \: \mathrm{or} \: \Delta_v^j$. \par
$R \begin{pmatrix}
\Delta_{u^*} \\
\Delta_{i^*}
\end{pmatrix}$,
$R \begin{pmatrix}
\Delta_{u^*} \\
\Delta_{j^*}
\end{pmatrix}$,
$R \begin{pmatrix}
\Delta_{u^*} \\
\Delta_{u^*}
\end{pmatrix}$,
$R \begin{pmatrix}
\Delta_{v^*} \\
\Delta_{i^*}
\end{pmatrix}$,
$R \begin{pmatrix}
\Delta_{v^*} \\
\Delta_{j^*}
\end{pmatrix}$ and
$R \begin{pmatrix}
\Delta_{v^*} \\
\Delta_{v^*}
\end{pmatrix}$ can be defined in the same manner, and we have the bounds
\begin{align} \label{qrBounds}
\begin{split}
\abs{R \begin{pmatrix}
\Delta_{u^*} \\
\Delta_{v^*}
\end{pmatrix}},
\abs{R \begin{pmatrix}
\Delta_{u^*} \\
\Delta_{u^*}
\end{pmatrix}},
\abs{R \begin{pmatrix}
\Delta_{u^*} \\
\Delta_{i^*}
\end{pmatrix}},
\abs{R \begin{pmatrix}
\Delta_{u^*} \\
\Delta_{j^*}
\end{pmatrix}}
&\leq \norm{R \left( \Delta_{u^*}, \cdot \right)}_{q-var; [0, T]} \\
&= \norm{\Phi \left( \mathds{1}_{\Delta_{u^*}} \right)}_{q-var; [0, T]}
\leq \frac{C}{2^{\frac{n+k}{\rho}}}, \\
\abs{R \begin{pmatrix}
\Delta_{u^*} \\
\Delta_{v^*}
\end{pmatrix}},
\abs{R \begin{pmatrix}
\Delta_{v^*} \\
\Delta_{v^*}
\end{pmatrix}},
\abs{R \begin{pmatrix}
\Delta_{v^*} \\
\Delta_{i^*}
\end{pmatrix}},
\abs{R \begin{pmatrix}
\Delta_{v^*} \\
\Delta_{j^*}
\end{pmatrix}}
&\leq \norm{R \left( \Delta_{v^*}, \cdot \right)}_{q-var; [0, T]} \\
&= \norm{\Phi \left( \mathds{1}_{\Delta_{v^*}} \right)}_{q-var; [0, T]}
\leq \frac{C}{2^{\frac{n+l}{\rho}}}.
\end{split}
\end{align}
Using the notation 
\begin{align*}
R_{\Delta_u^i \times \Delta_v^j} : = \abs{R \begin{pmatrix}
s^{k, i}_{2u} & s^{k, i}_{2u+1} \\
s^{l, j}_{2v} & s^{l, j}_{2v+1}
\end{pmatrix}} + 
\abs{R \begin{pmatrix}
s^{k, i}_{2u+1} & s^{k, i}_{2u+2} \\
s^{l, j}_{2v} & s^{l, j}_{2v+1}
\end{pmatrix}} + 
\abs{R \begin{pmatrix}
s^{k, i}_{2u} & s^{k, i}_{2u+1} \\
s^{l, j}_{2v+1} & s^{l, j}_{2v+2}
\end{pmatrix}} + 
\abs{R \begin{pmatrix}
s^{k, i}_{2u+1} & s^{k, i}_{2u+2} \\
s^{l, j}_{2v+1} & s^{l, j}_{2v+2}
\end{pmatrix}},
\end{align*}
note that
\begin{align*}
\sum_{i, j=0}^{2^n-1} \sum_{u=0}^{2^k-1} \sum_{v=0}^{2^l-1} \abs{R \begin{pmatrix}
\Delta_{u^*} \\
\Delta_{v^*}
\end{pmatrix}}^{\rho}
\leq \sum_{i, j=0}^{2^n-1} \sum_{u=0}^{2^k-1} \sum_{v=0}^{2^l-1} R^{\rho}_{\Delta^i_u \times \Delta_v^j}
\leq 4^{\rho} \norm{R}_{\rho-var; [0, T]^2}^{\rho}
\end{align*}
for all $k,l \in \mathbb{N}$. \par
For $k = 1, \ldots 6$, let $y_k$ denote $a, b$ or $c$. Returning to \eqref{3rdQuant}, we see that the last line can be expanded to include terms of the type $M_u^{(a, b, c)} M_v^{(a, b, c)}$:
\begin{align} \label{typeMM}
\exptn{\psi_{t^n_i} \psi_{t^n_j} X^{(y_1)}_{\Delta_{u^*}} X^{(y_2)}_{\Delta_{u^*}} X^{(y_3)}_{\Delta_{i^*}} X^{(y_4)}_{\Delta_{v^*}} X^{(y_5)}_{\Delta_{v^*}} X^{(y_6)}_{\Delta_{j^*}} },
\end{align}
terms coming from $N_u^{(a, b, c)} N_v^{(a, b, c)}$:
\begin{align} \label{typeNN}
\exptn{\psi_{t^n_i} \psi_{t^n_j} X^{(y_1)}_{\Delta_{u^*}} X^{(y_2)}_{\Delta_{u^*}} X^{(y_3)}_{\Delta_{u^*}} X^{(y_4)}_{\Delta_{v^*}} X^{(y_5)}_{\Delta_{v^*}} X^{(y_6)}_{\Delta_{v^*}} },
\end{align}
and terms arising from $M_u^{(a, b, c)} N_v^{(a, b, c)}$:
\begin{align} \label{typeMN}
\exptn{\psi_{t^n_i} \psi_{t^n_j} X^{(y_1)}_{\Delta_{u^*}} X^{(y_2)}_{\Delta_{u^*}} X^{(y_3)}_{\Delta_{i^*}} X^{(y_4)}_{\Delta_{v^*}} X^{(y_5)}_{\Delta_{v^*}} X^{(y_6)}_{\Delta_{v^*}} }.
\end{align}
To account for the remaining $N_u^{(a, b, c)} M_v^{(a, b, c)}$ terms, we simply swap $u$ and $v$ in the third case.
Note also that with our short-hand notation, as an example, $X_{\Delta_{u^*}}^{(y_1)}$ may not be equal to $X_{\Delta_{u^*}}^{(y_2)}$ even if $y_1 = y_2$ since $\Delta_{u^*}$ may be one of several intervals. \par
Since $M_u$ is anti-symmetric with respect to $X_{\Delta_{u^L}}$ and $X_{\Delta_{u^R}}$, we can assume that $y_1 \neq y_2$ in \eqref{typeMM} and \eqref{typeMN}, and $y_4 \neq y_5$ in \eqref{typeMM}.
In each of the three cases, we will use $I_1(h_k)$ to denote $X^{(y_k)}$ for $k = 1, \ldots, 6$; for example in \eqref{typeMM}, $h_1 := \mathds{1}_{\Delta_u^*}^{(y_1)}$ and $I_1 (h_1) = X^{(y_1)}_{\Delta_{u^*}}$. Now applying Lemma \ref{productExp}, we have
\begin{align*}
&\exptn{\psi_{t^n_i} \psi_{t^n_j} \prod_{k=1}^6 I_1(h_k)}
=: \exptn{\mathcal{D}_{h_1, h_2, h_3, h_4, h_5, h_6}^6 \psi_{t^n_i} \psi_{t^n_j}} + \sum_{\sigma \in \mathcal{S}_6} C_{\sigma, 1} A_{\sigma, 1} + C_{\sigma, 2} A_{\sigma, 2} + C_{\sigma, 3} A_{\sigma, 3},
\end{align*}
where we recall that
\begin{align*}
&A_{\sigma, 1} := \exptn{\mathcal{D}_{h_{\sigma(1)}, h_{\sigma(2)}, h_{\sigma(3)}, h_{\sigma(4)}}^4 \psi_{t^n_i} \psi_{t^n_j}} \left\langle h_{\sigma(5)}, h_{\sigma(6)} \right\rangle_{\mathcal{H}_1^d}, \\
&A_{\sigma, 2} := \exptn{\mathcal{D}_{h_{\sigma(1)}, h_{\sigma(2)}}^2 \psi_{t^n_i} \psi_{t^n_j}} \left\langle h_{\sigma(3)}, h_{\sigma(4)} \right\rangle_{\mathcal{H}_1^d} \left\langle h_{\sigma(5)}, h_{\sigma(6)} \right\rangle_{\mathcal{H}_1^d}, \\
&A_{\sigma, 3} := \exptn{\psi_{t^n_i} \psi_{t^n_j}} \left\langle h_{\sigma(1)}, h_{\sigma(2)} \right\rangle_{\mathcal{H}_1^d} \left\langle h_{\sigma(3)}, h_{\sigma(4)} \right\rangle_{\mathcal{H}_1^d} \left\langle h_{\sigma(5)}, h_{\sigma(6)} \right\rangle_{\mathcal{H}_1^d}.
\end{align*}
We will show that for each of these terms, the sum over all the sub-intervals converges to zero as $n \rightarrow \infty$. \par
For the first term, from \eqref{propCond} we have
\begin{align*}
\exptn{\mathcal{D}_{h_1, h_2, h_3, h_4, h_5, h_6}^6 \psi_{t^n_i} \psi_{t^n_j}}
\leq C \prod_{i=1}^6 \norm{\Phi(h_i)}_{q-var; [0, T]}.
\end{align*}
Looking at each of the three types of terms \eqref{typeMM}, \eqref{typeNN} and \eqref{typeMN}, we see that at least two of the $h_i$'s are $\mathds{1}_{\Delta_{u^*}}$, and another two of the $h_i$'s are $\mathds{1}_{\Delta_{v^*}}$. Thus we obtain
\begin{align*}
\sum_{i, j = 0}^{2^n - 1} \sum_{k,l = 1}^m \sum_{u = 0}^{2^k - 1} \sum_{v = 0}^{2^l-1} \exptn{\mathcal{D}_{h_1, h_2, h_3, h_4, h_5, h_6}^6 \psi_{t^n_i} \psi_{t^n_j}}
& \leq C \sum_{i, j = 0}^{2^n - 1} \sum_{k,l = 1}^m \sum_{u = 0}^{2^k - 1} \sum_{v = 0}^{2^l-1} \frac{1}{2^{(n+k)\frac{2}{\rho}} } \frac{1}{2^{(n+l)\frac{2}{\rho}} } \\
&\leq C \sum_{i, j = 0}^{2^n - 1} \frac{1}{2^{2n \left( \frac{2}{\rho}\right)}} \sum_{k,l=1}^{\infty} \frac{1}{2^{k \left( \frac{2}{\rho} - 1 \right)}} \frac{1}{2^{l \left( \frac{2}{\rho} - 1 \right)}} \\
&  \leq \frac{C}{2^{2n \left( \frac{2}{\rho} - 1\right)}} \rightarrow 0
\end{align*}
since $\rho < 2$. \par
For the $A_{\sigma, 1}$ terms, we have two cases:
\begin{enumerate}[(i)]
\item
$\left\langle h_{\sigma(5)}, h_{\sigma(6)} \right\rangle_{\mathcal{H}_1^d} = R \begin{pmatrix}
\Delta_{u^*} \\
\Delta_{v^*}
\end{pmatrix}$: \par
In all three types of terms \eqref{typeMM}, \eqref{typeNN} and \eqref{typeMN}, at least one of $\left\{ h_{\sigma(1)}, h_{\sigma(2)}, h_{\sigma(3)}, h_{\sigma(4)} \right\}$ equals $\mathds{1}_{\Delta_{u^*}}$, and another one in the set equals $\mathds{1}_{\Delta_{v^*}}$. Applying the bounds in \eqref{qrBounds}, we get
\begin{align*}
\sum_{i, j = 0}^{2^n - 1} \sum_{k,l = 1}^m \sum_{u = 0}^{2^k - 1} \sum_{v=0}^{2^l-1} &\exptn{\mathcal{D}_{h_{\sigma(1)}, h_{\sigma(2)}, h_{\sigma(3)}, h_{\sigma(4)}}^4 \psi_{t^n_i} \psi_{t^n_j}} \left\langle h_{\sigma(5)}, h_{\sigma(6)} \right\rangle_{\mathcal{H}_1^d}, \\
&\leq C \sum_{i, j = 0}^{2^n - 1} \sum_{k,l = 1}^m \sum_{u = 0}^{2^k - 1} \sum_{v=0}^{2^l-1} \prod_{r=1}^4 \norm{\Phi(h_{\sigma(r)})}_{q-var; [0, T]} \abs{\left\langle h_{\sigma(5)}, h_{\sigma(6)} \right\rangle_{\mathcal{H}_1^d}} \\
&\leq C \sum_{i, j = 0}^{2^n - 1} \sum_{k,l = 1}^m \sum_{u = 0}^{2^k - 1} \sum_{v=0}^{2^l-1}
2^{\frac{-(n+k)}{\rho}} 2^{\frac{-(n+l)}{\rho}}
\abs{ R \begin{pmatrix}
\Delta_{u^*} \\
\Delta_{v^*}
\end{pmatrix}} \\
&\leq C \sum_{k,l = 1}^m \left( \sum_{i, j = 0}^{2^n - 1} \sum_{u = 0}^{2^k - 1} \sum_{v = 0}^{2^l-1} R \begin{pmatrix}
\Delta_{u^*} \\
\Delta_{v^*}
\end{pmatrix}^{\rho} \right)^{\frac{1}{\rho}}
\left( \sum_{i, j = 0}^{2^n - 1} \sum_{u = 0}^{2^k - 1} \sum_{v = 0}^{2^l-1} 2^{-(n+k)\left(\frac{\rho'}{\rho}\right)} 2^{-(n+l)\left(\frac{\rho'}{\rho}\right)} \right)^{\frac{1}{\rho'}} \\
&\leq C \, 2^{-2n \left( \frac{1}{\rho} - \frac{1}{\rho'} \right)} \norm{R}_{\rho-var;[0, T]^2} \sum_{k,l=1}^m 2^{-k \left( \frac{1}{\rho} - \frac{1}{\rho'} \right)} 2^{-l \left( \frac{1}{\rho} - \frac{1}{\rho'} \right)} \\
&\leq C \, 2^{-2n \left( \frac{2}{\rho} - 1 \right)} \norm{R}_{\rho-var;[0, T]^2} \sum_{k,l=1}^{\infty} 2^{-k \left( \frac{2}{\rho} - 1 \right)} 2^{-l \left( \frac{2}{\rho} - 1 \right)} \rightarrow 0
\end{align*}
\item
$\left\langle h_{\sigma(5)}, h_{\sigma(6)} \right\rangle_{\mathcal{H}_1^d} \neq R \begin{pmatrix}
\Delta_{u^*} \\
\Delta_{v^*}
\end{pmatrix}$: \par
We will go through each of the three types of terms \eqref{typeMM}, \eqref{typeNN} and \eqref{typeMN} to count the number of quantities with increments $\Delta_{u^*}$ or $\Delta_{v^*}$, which yield the factors $2^{\frac{-(n+k)}{\rho}}$ and $2^{\frac{-(n+l)}{\rho}}$ respectively.
\begin{enumerate} [(a)]
\item
$M_u^{(a, b, c)} M_v^{(a, b, c)}$ terms: \par
We have five possibilities:
\begin{align} \label{56choices}
\left\langle h_{\sigma(5)}, h_{\sigma(6)} \right\rangle_{\mathcal{H}_1^d}
= R \begin{pmatrix}
\Delta_{u^*} \\
\Delta_{i^*}
\end{pmatrix},
R \begin{pmatrix}
\Delta_{u^*} \\
\Delta_{j^*}
\end{pmatrix},
R \begin{pmatrix}
\Delta_{v^*} \\
\Delta_{i^*}
\end{pmatrix},
R \begin{pmatrix}
\Delta_{v^*} \\
\Delta_{j^*}
\end{pmatrix} \: \mathrm{or} \:
R \begin{pmatrix}
\Delta_{i^*} \\
\Delta_{j^*}
\end{pmatrix};
\end{align}
we need not consider the cases $R \begin{pmatrix}
\Delta_{u^*} \\
\Delta_{u^*}
\end{pmatrix}$ or $R \begin{pmatrix}
\Delta_{v^*} \\
\Delta_{v^*}
\end{pmatrix}$ since $y_1 \neq y_2$ and $y_4 \neq y_5$ in \eqref{typeMM}. \par
If $\left\langle h_{\sigma(5)}, h_{\sigma(6)} \right\rangle_{\mathcal{H}_1^d}$ is equal to either of the first two quantities on the right of \eqref{56choices}, then one of $\left\{ h_{\sigma(1)}, h_{\sigma(2)}, h_{\sigma(3)}, h_{\sigma(4)} \right\}$ must be equal to $\mathds{1}_{\Delta_{u^*}}$ and another two in the set must be equal to $\mathds{1}_{\Delta_{v^*}}$. If $\left\langle h_{\sigma(5)}, h_{\sigma(6)} \right\rangle_{\mathcal{H}_1^d}$ is equal to the third or the fourth quantity in \eqref{56choices}, we have the same count with $u$ and $v$ switched. \par
If $\left\langle h_{\sigma(5)}, h_{\sigma(6)} \right\rangle_{\mathcal{H}_1^d} = R \begin{pmatrix}
\Delta_{i^*} \\
\Delta_{j^*}
\end{pmatrix}$, then without loss of generality,
\begin{align*}
h_{\sigma(1)} \: \mathrm{and} \: h_{\sigma(2)} = \mathds{1}_{\Delta_{u^*}}, \quad
h_{\sigma(3)} \: \mathrm{and} \: h_{\sigma(4)} = \mathds{1}_{\Delta_{v^*}}.
\end{align*}
\item
$N_u^{(a, b, c)} N_v^{(a, b, c)}$ terms: \par
If $\left\langle h_{\sigma(5)}, h_{\sigma(6)} \right\rangle_{\mathcal{H}_1^d}
= R \begin{pmatrix}
\Delta_{u^*} \\
\Delta_{u^*}
\end{pmatrix}$, then one of $\left\{ h_{\sigma(1)}, h_{\sigma(2)}, h_{\sigma(3)}, h_{\sigma(4)} \right\}$ must equal $\mathds{1}_{\Delta_{u^*}}$ and another two in the set must equal $\mathds{1}_{\Delta_{v^*}}$. By switching $u$ and $v$, we can resolve the only other case $\left\langle h_{\sigma(5)}, h_{\sigma(6)} \right\rangle_{\mathcal{H}_1^d}
= R \begin{pmatrix}
\Delta_{v^*} \\
\Delta_{v^*}
\end{pmatrix}$ similarly.
\item
$M_u^{(a, b, c)} N_v^{(a, b, c)}$ terms: \par
There are only three possibilities
\begin{align*}
\left\langle h_{\sigma(5)}, h_{\sigma(6)} \right\rangle_{\mathcal{H}_1^d}
= R \begin{pmatrix}
\Delta_{u^*} \\
\Delta_{i^*}
\end{pmatrix},
R \begin{pmatrix}
\Delta_{v^*} \\
\Delta_{v^*}
\end{pmatrix} \: \mathrm{or} \:
R \begin{pmatrix}
\Delta_{v^*} \\
\Delta_{i^*}
\end{pmatrix},
\end{align*}
and we need not consider the case $R \begin{pmatrix}
\Delta_{u^*} \\
\Delta_{u^*}
\end{pmatrix}$ since $y_1 \neq y_2$ in \eqref{typeMN}. If $\left\langle h_{\sigma(5)}, h_{\sigma(6)} \right\rangle_{\mathcal{H}_1^d}$ is equal to $R \begin{pmatrix}
\Delta_{u^*} \\
\Delta_{i^*}
\end{pmatrix}$, then one of $\left\{ h_{\sigma(1)}, h_{\sigma(2)}, h_{\sigma(3)}, h_{\sigma(4)} \right\}$ must be equal to $\mathds{1}_{\Delta_{u^*}}$ and another two in the set must be equal to $\mathds{1}_{\Delta_{v^*}}$. If $\left\langle h_{\sigma(5)}, h_{\sigma(6)} \right\rangle_{\mathcal{H}_1^d}$ is equal to the second or third quantity, the same count applies with $u$ and $v$ switched.
\end{enumerate}
Thus in each case, applying the bounds in \eqref{qrBounds} yields
\begin{align} \label{A1Bound}
\prod_{r=1}^4 \norm{\Phi(h_{\sigma(r)})}_{q-var; [0, T]} \abs{\left\langle h_{\sigma(5)}, h_{\sigma(6)} \right\rangle_{\mathcal{H}_1^d}}
\leq C \, 2^{\frac{-2(n+k)}{\rho}} 2^{\frac{-2(n+l)}{\rho}},
\end{align}
which gives us
\begin{align*}
&\sum_{i, j= 0}^{2^n-1} \sum_{k,l=1}^m \sum_{u=0}^{2^k-1} \sum_{v=0}^{2l-1} \exptn{\mathcal{D}^4_{h_{\sigma(1)}, h_{\sigma(2)}, h_{\sigma(3)}, h_{\sigma(4)}} \psi_{t^n_i} \psi_{t^n_j}} \left\langle h_{\sigma(5)}, h_{\sigma(6)} \right\rangle_{\mathcal{H}_1^d} \\
&\fqquad \leq C \sum_{i, j= 0}^{2^n-1} \sum_{k,l=1}^m \sum_{u=0}^{2^k-1} \sum_{v=0}^{2l-1} 2^{-(n+k) \frac{2}{\rho}} 2^{-(n+l)  \frac{2}{\rho}} \\
&\fqquad \leq C \, 2^{-2n \left( \frac{2}{\rho} -1 \right)} \sum_{k,l=1}^{\infty} 2^{-k \left(\frac{2}{\rho} - 1 \right)} 2^{-l \left(\frac{2}{\rho} - 1 \right)} \rightarrow 0.
\end{align*}
\end{enumerate}
For the $A_{\sigma, 2}$ terms, when we consider $\left\langle h_{\sigma(3)}, h_{\sigma(4)} \right\rangle_{\mathcal{H}_1^d}$ and $\left\langle h_{\sigma(5)}, h_{\sigma(6)} \right\rangle_{\mathcal{H}_1^d}$, we have three cases: either both, one, or none of them are equal to $R \begin{pmatrix}
\Delta_{u^*} \\
\Delta_{v^*}
\end{pmatrix}$.
\begin{enumerate}[(i)]
\item
$\left\langle h_{\sigma(3)}, h_{\sigma(4)} \right\rangle_{\mathcal{H}_1^d} \: \mathrm{and} \: \left\langle h_{\sigma(5)}, h_{\sigma(6)} \right\rangle_{\mathcal{H}_1^d} = R \begin{pmatrix}
\Delta_{u^*} \\
\Delta_{v^*}
\end{pmatrix}$: \par
(Note that this does not imply that they are equal to one another since $\Delta_{u^*}$ and $\Delta_{v^*}$ can be one of several intervals.) \par
Observe that
\begin{align*}
\abs{\left\langle h_{\sigma(3)}, h_{\sigma(4)} \right\rangle_{\mathcal{H}_1^d}}
\leq \abs{R \begin{pmatrix}
\Delta_{u^*} \\
\Delta_{v^*}
\end{pmatrix}}^{\frac{\rho}{2}} \abs{R \begin{pmatrix}
\Delta_{u^*} \\
\Delta_{v^*}
\end{pmatrix}}^{1 - \frac{\rho}{2}}
\leq C \, R^{\frac{\rho}{2}}_{\Delta_u^i \times \Delta_v^j} \, 2^{\frac{-(n+k)}{\rho} \left( 1 - \frac{\rho}{2} \right) },
\end{align*}
and
\begin{align*}
\abs{\left\langle h_{\sigma(5)}, h_{\sigma(6)} \right\rangle_{\mathcal{H}_1^d}}
\leq \abs{R \begin{pmatrix}
\Delta_{u^*} \\
\Delta_{v^*}
\end{pmatrix}}^{\frac{\rho}{2}} \abs{R \begin{pmatrix}
\Delta_{u^*} \\
\Delta_{v^*}
\end{pmatrix}}^{1 - \frac{\rho}{2}}
\leq C \, R^{\frac{\rho}{2}}_{\Delta_u^i \times \Delta_v^j} \, 2^{\frac{-(n+l)}{\rho} \left( 1 - \frac{\rho}{2} \right) }.
\end{align*}
Thus we obtain
\begin{align*}
\sum_{i, j = 0}^{2^n - 1} \sum_{k,l=1}^m \sum_{u = 0}^{2^k - 1} \sum_{v=0}^{2^l-1} &\exptn{\mathcal{D}_{h_{\sigma(1)}, h_{\sigma(2)}}^2 \psi_{t^n_i} \psi_{t^n_j}} \left\langle h_{\sigma(3)}, h_{\sigma(4)} \right\rangle_{\mathcal{H}_1^d} \left\langle h_{\sigma(5)}, h_{\sigma(6)} \right\rangle_{\mathcal{H}_1^d} \\
&\leq C \sum_{k,l=1}^m 2^{\frac{-(n+k)}{\rho} \left( 1 - \frac{\rho}{2} \right) } 2^{\frac{-(n+l)}{\rho} \left( 1 - \frac{\rho}{2} \right) } \sum_{i, j = 0}^{2^n - 1} \sum_{u = 0}^{2^k - 1} \sum_{v = 0}^{2^l-1} R^{\rho}_{\Delta_u^i \times \Delta_v^j} \\
&\leq C \, 2^{-2n \left( \frac{1}{\rho} - \frac{1}{2} \right)} \sum_{k,l=1}^{\infty} 2^{-k \left( \frac{1}{\rho} - \frac{1}{2} \right)} 2^{-l \left( \frac{1}{\rho} - \frac{1}{2}\right)} \norm{R}^{\rho}_{\rho-var;[0,T]^2} \rightarrow 0,
\end{align*}
since $\frac{1}{\rho} - \frac{1}{2} > 0$.
\item
WLOG, assume $\left\langle h_{\sigma(3)}, h_{\sigma(4)} \right\rangle_{\mathcal{H}_1^d} = R \begin{pmatrix}
\Delta_{u^*} \\
\Delta_{v^*}
\end{pmatrix}, \quad
\left\langle h_{\sigma(5)}, h_{\sigma(6)} \right\rangle_{\mathcal{H}_1^d} \neq R \begin{pmatrix}
\Delta_{u^*} \\
\Delta_{v^*}
\end{pmatrix}$: \par
As before, we will use the bounds in \eqref{qrBounds} to show that
\begin{align} \label{A2Bound}
\norm{\Phi(h_{\sigma(1)})}_{q-var; [0, T]} \norm{\Phi(h_{\sigma(2)})}_{q-var; [0, T]} \abs{\left\langle h_{\sigma(5)}, h_{\sigma(6)} \right\rangle_{\mathcal{H}_1^d}}
\leq C \, 2^{\frac{-(n+k)}{\rho}} 2^{\frac{-(n+l)}{\rho}}.
\end{align}
\begin{enumerate} [(a)]
\item
$M_u^{(a, b, c)} M_v^{(a, b, c)}$ terms: \par
Again we have five possibilities,
\begin{align} \label{56choices2}
\left\langle h_{\sigma(5)}, h_{\sigma(6)} \right\rangle_{\mathcal{H}_1^d}
= R \begin{pmatrix}
\Delta_{u^*} \\
\Delta_{i^*}
\end{pmatrix},
R \begin{pmatrix}
\Delta_{u^*} \\
\Delta_{j^*}
\end{pmatrix},
R \begin{pmatrix}
\Delta_{v^*} \\
\Delta_{i^*}
\end{pmatrix},
R \begin{pmatrix}
\Delta_{v^*} \\
\Delta_{j^*}
\end{pmatrix} \: \mathrm{or} \:
R \begin{pmatrix}
\Delta_{i^*} \\
\Delta_{j^*}
\end{pmatrix},
\end{align}
and we need not consider the cases $R \begin{pmatrix}
\Delta_{u^*} \\
\Delta_{u^*}
\end{pmatrix}$ or $R \begin{pmatrix}
\Delta_{v^*} \\
\Delta_{v^*}
\end{pmatrix}$ since $y_1 \neq y_2$ and $y_4 \neq y_5$ in \eqref{typeMM}. \par
If $\left\langle h_{\sigma(5)}, h_{\sigma(6)} \right\rangle_{\mathcal{H}_1^d}$ is equal to either of the first two quantities on the right of \eqref{56choices2}, then either $h_{\sigma(1)} $ or $h_{\sigma(2)} $ is equal to $\mathds{1}_{\Delta_{v^*}}$. If $\left\langle h_{\sigma(5)}, h_{\sigma(6)} \right\rangle_{\mathcal{H}_1^d}$ is equal to the third or fourth quantity, then either $h_{\sigma(1)} $ or $h_{\sigma(2)} $ is equal to $\mathds{1}_{\Delta_{u^*}}$. \par
If $\left\langle h_{\sigma(5)}, h_{\sigma(6)} \right\rangle_{\mathcal{H}_1^d} = R \begin{pmatrix}
\Delta_{i^*} \\
\Delta_{j^*}
\end{pmatrix}$, then we must have $h_{\sigma(1)} = \mathds{1}_{\Delta_{u^*}}$ and $h_{\sigma(2)} = \mathds{1}_{\Delta_{v^*}}$, or vice versa. \par
\item
$N_u^{(a, b, c)} N_v^{(a, b, c)}$ terms: \par
If $\left\langle h_{\sigma(5)}, h_{\sigma(6)} \right\rangle_{\mathcal{H}_1^d}
= R \begin{pmatrix}
\Delta_{u^*} \\
\Delta_{u^*}
\end{pmatrix}$ (resp. $R \begin{pmatrix}
\Delta_{v^*} \\
\Delta_{v^*}
\end{pmatrix}$), then both $h_{\sigma(1)} $ and $h_{\sigma(2)} $ must be equal to $\mathds{1}_{\Delta_{v^*}}$ (resp. $\mathds{1}_{\Delta_{u^*}}$).
\item
$M_u^{(a, b, c)} N_v^{(a, b, c)}$ terms: \par
There are only three possibilities,
\begin{align*}
\left\langle h_{\sigma(5)}, h_{\sigma(6)} \right\rangle_{\mathcal{H}_1^d}
= R \begin{pmatrix}
\Delta_{u^*} \\
\Delta_{i^*}
\end{pmatrix},
R \begin{pmatrix}
\Delta_{v^*} \\
\Delta_{v^*}
\end{pmatrix} \: \mathrm{or} \:
R \begin{pmatrix}
\Delta_{v^*} \\
\Delta_{i^*}
\end{pmatrix},
\end{align*}
and we need not consider the case $R \begin{pmatrix}
\Delta_{u^*} \\
\Delta_{u^*}
\end{pmatrix}$ since $y_1 \neq y_2$ in \eqref{typeMN}. If $\left\langle h_{\sigma(5)}, h_{\sigma(6)} \right\rangle_{\mathcal{H}_1^d}$ is equal to $R \begin{pmatrix}
\Delta_{u^*} \\
\Delta_{i^*}
\end{pmatrix}$, then both $h_{\sigma(1)}$ and $h_{\sigma(2)}$ are equal to $\mathds{1}_{\Delta_{v^*}}$. If $\left\langle h_{\sigma(5)}, h_{\sigma(6)} \right\rangle_{\mathcal{H}_1^d}$ is equal to $R \begin{pmatrix}
\Delta_{v^*} \\
\Delta_{v^*}
\end{pmatrix}$ or $R \begin{pmatrix}
\Delta_{v^*} \\
\Delta_{i^*}
\end{pmatrix}$, then either $h_{\sigma(1)}$ or $h_{\sigma(2)}$ is equal to $\mathds{1}_{\Delta_{u^*}}$.
\end{enumerate}
Thus we obtain
\begin{align*}
\sum_{i, j= 0}^{2^n-1} \sum_{k,l=1}^m \sum_{u=0}^{2^k-1} \sum_{v=0}^{2^l-1} &\exptn{\mathcal{D}^2_{h_{\sigma(1)}, h_{\sigma(2)}} \psi_{t^n_i} \psi_{t^n_j}} \left\langle h_{\sigma(3)}, h_{\sigma(4)} \right\rangle_{\mathcal{H}_1^d} \left\langle h_{\sigma(5)}, h_{\sigma(6)} \right\rangle_{\mathcal{H}_1^d} \\
&\leq \sum_{k,l = 1}^m \left( \sum_{i, j = 0}^{2^n - 1} \sum_{u = 0}^{2^k - 1} \sum_{v = 0}^{2^l-1} \abs{ R \begin{pmatrix}
\Delta_{u^*} \\
\Delta_{v^*}
\end{pmatrix}}^{\rho} \right)^{\frac{1}{\rho}}
\left( \sum_{i, j = 0}^{2^n - 1} \sum_{u = 0}^{2^k - 1} \sum_{v = 0}^{2^l-1} 2^{-(n+k)\left(\frac{\rho'}{\rho}\right)} 2^{-(n+l)\left(\frac{\rho'}{\rho}\right)} \right)^{\frac{1}{\rho'}} \\
&\leq \norm{R}_{\rho-var; [0,T]^2} \sum_{k,l=1}^m 2^{-2n \left( \frac{1}{\rho} - \frac{1}{\rho'} \right)} 2^{-k \left( \frac{1}{\rho} - \frac{1}{\rho'} \right)} 2^{-l \left( \frac{1}{\rho} - \frac{1}{\rho'} \right)} \\
&\leq C \norm{R}_{\rho-var; [0,T]^2} 2^{-2n \left( \frac{2}{\rho} - 1 \right)} \rightarrow 0.
\end{align*}
\item
$\left\langle h_{\sigma(3)}, h_{\sigma(4)} \right\rangle_{\mathcal{H}_1^d} \: \mathrm{and} \: \left\langle h_{\sigma(5)}, h_{\sigma(6)} \right\rangle_{\mathcal{H}_1^d}
\neq R \begin{pmatrix}
\Delta_{u^*} \\
\Delta_{v^*}
\end{pmatrix}$: \par
We will show that
\begin{align} \label{A22Bound}
\norm{\Phi \left( h_{\sigma(1)} \right)}_{q-var; [0, T]} \norm{\Phi \left( h_{\sigma(2)} \right)}_{q-var; [0, T]} \abs{ \left\langle h_{\sigma(3)}, h_{\sigma(4)} \right\rangle_{\mathcal{H}_1^d}} \abs{ \left\langle h_{\sigma(5)}, h_{\sigma(6)} \right\rangle_{\mathcal{H}_1^d}}
\end{align}
is bounded above by $C \, 2^{\frac{-2(n+k)}{\rho}} 2^{\frac{-2(n+l)}{\rho}}$, which gives us
\begin{align*}
\sum_{i, j= 0}^{2^n-1} \sum_{k,l=1}^m \sum_{u=0}^{2^k-1} \sum_{v=0}^{2^l-1} &\exptn{\mathcal{D}^2_{h_{\sigma(1)}, h_{\sigma(2)}} \psi_{t^n_i} \psi_{t^n_j}} \left\langle h_{\sigma(3)}, h_{\sigma(4)} \right\rangle_{\mathcal{H}_1^d} \left\langle h_{\sigma(5)}, h_{\sigma(6)} \right\rangle_{\mathcal{H}_1^d} \\
&\leq C \sum_{i, j= 0}^{2^n-1} \sum_{k,l=1}^m \sum_{u=0}^{2^k-1} \sum_{v=0}^{2^l-1} 2^{-(n+k) \frac{2}{\rho}} 2^{-(n+l)  \frac{2}{\rho}} \\
&\leq C \, 2^{-2n \left( \frac{2}{\rho} -1 \right)} \sum_{k,l=1}^m 2^{-k \left(\frac{2}{\rho} - 1 \right)} 2^{-l \left(\frac{2}{\rho} - 1 \right)} \rightarrow 0.
\end{align*}
\begin{enumerate}[(a)]
\item
$M_u^{(a, b, c)} M_v^{(a, b, c)}$ terms: \par
Note that in this scenario, neither $h_{\sigma(1)}$ nor $h_{\sigma(2)}$ can be equal to $\mathds{1}_{\Delta_{i^*}}$ or $\mathds{1}_{\Delta_{j^*}}$, so we essentially have two cases. \par
If $h_{\sigma(1)} \: \mathrm{and} \: h_{\sigma(2)} = \mathds{1}_{\Delta_{u^*}}$, we must have
\begin{align*}
\left\langle h_{\sigma(3)}, h_{\sigma(4)} \right\rangle_{\mathcal{H}_1^d}
= R \begin{pmatrix}
\Delta_{v^*} \\
\Delta_{i^*}
\end{pmatrix} \quad \mathrm{and} \quad \left\langle h_{\sigma(5)}, h_{\sigma(6)} \right\rangle_{\mathcal{H}_1^d}
= R \begin{pmatrix}
\Delta_{v^*} \\
\Delta_{j^*}
\end{pmatrix},
\end{align*}
or vice versa. (The case $h_{\sigma(1)} \: \mathrm{and} \: h_{\sigma(2)} = \mathds{1}_{\Delta_{v^*}}$ can be resolved similarly by swapping $u$ and $v$.) \par
If instead we have $h_{\sigma(1)} = \mathds{1}_{\Delta_{u^*}}$ and $h_{\sigma(2)} = \mathds{1}_{\Delta_{v^*}}$, or vice versa, then without loss of generality, it must be the case that
\begin{align*}
\left\langle h_{\sigma(3)}, h_{\sigma(4)} \right\rangle_{\mathcal{H}_1^d}
= R \begin{pmatrix}
\Delta_{u^*} \\
\Delta_{i^*}
\end{pmatrix} \: \mathrm{or} \:
R \begin{pmatrix}
\Delta_{u^*} \\
\Delta_{j^*}
\end{pmatrix},
\quad \left\langle h_{\sigma(5)}, h_{\sigma(6)} \right\rangle_{\mathcal{H}_1^d}
= R \begin{pmatrix}
\Delta_{v^*} \\
\Delta_{j^*}
\end{pmatrix} \: \mathrm{or} \:
R \begin{pmatrix}
\Delta_{v^*} \\
\Delta_{i^*}
\end{pmatrix}.
\end{align*}
\item
$N_u^{(a, b, c)} N_v^{(a, b, c)}$ terms: \par
Without loss of generality, we have
\begin{align*}
h_{\sigma(1)} = \mathds{1}_{\Delta_{u^*}}, \:\:
h_{\sigma(2)} = \mathds{1}_{\Delta_{v^*}}, \:\:
\left\langle h_{\sigma(3)}, h_{\sigma(4)} \right\rangle_{\mathcal{H}_1^d}
= R \begin{pmatrix}
\Delta_{u^*} \\
\Delta_{u^*}
\end{pmatrix}, \:\:
\left\langle h_{\sigma(5)}, h_{\sigma(6)} \right\rangle_{\mathcal{H}_1^d}
= R \begin{pmatrix}
\Delta_{v^*} \\
\Delta_{v^*}
\end{pmatrix}.
\end{align*}
\item
$M_u^{(a, b, c)} N_v^{(a, b, c)}$ terms: \par
Without loss of generality, either
\begin{align*}
h_{\sigma(1)}, \, h_{\sigma(2)} = \mathds{1}_{\Delta_{u^*}}, \quad
\left\langle h_{\sigma(3)}, h_{\sigma(4)} \right\rangle_{\mathcal{H}_1^d}
= R \begin{pmatrix}
\Delta_{v^*} \\
\Delta_{i^*}
\end{pmatrix} \:\: \mathrm{and} \:\: \left\langle h_{\sigma(5)}, h_{\sigma(6)} \right\rangle_{\mathcal{H}_1^d}
= R \begin{pmatrix}
\Delta_{v^*} \\
\Delta_{v^*}
\end{pmatrix},
\end{align*}
or
\begin{align*}
&h_{\sigma(1)} = \mathds{1}_{\Delta_{u^*}}, \: h_{\sigma(2)} = \mathds{1}_{\Delta_{v^*}}, \\
&\left\langle h_{\sigma(3)}, h_{\sigma(4)} \right\rangle_{\mathcal{H}_1^d}
= R \begin{pmatrix}
\Delta_{u^*} \\
\Delta_{i^*}
\end{pmatrix} \:\: \mathrm{and} \:\: \left\langle h_{\sigma(5)}, h_{\sigma(6)} \right\rangle_{\mathcal{H}_1^d}
= R \begin{pmatrix}
\Delta_{v^*} \\
\Delta_{v^*}
\end{pmatrix}.
\end{align*}
\end{enumerate}
\end{enumerate}
For the $A_{\sigma, 3}$ terms, when we consider the three inner products $\left\langle h_{\sigma(1)}, h_{\sigma(2)} \right\rangle_{\mathcal{H}_1^d}$, $\left\langle h_{\sigma(3)}, h_{\sigma(4)} \right\rangle_{\mathcal{H}_1^d}$ and $\left\langle h_{\sigma(5)}, h_{\sigma(6)} \right\rangle_{\mathcal{H}_1^d}$, we have two cases: either one of them is equal to $R \begin{pmatrix}
\Delta_{u^*} \\
\Delta_{v^*}
\end{pmatrix}$, or two or more of them are. Observe that it is not possible for none of them to equal $R \begin{pmatrix}
\Delta_{u^*} \\
\Delta_{v^*}
\end{pmatrix}$.
\begin{enumerate}[(i)]
\item
If two or more of the inner products are equal to $R \begin{pmatrix}
\Delta_{u^*} \\
\Delta_{v^*}
\end{pmatrix}$, then we can use the same computation in the first case for the $A_{\sigma, 2}$ terms to show that
\begin{align*}
\abs{ \left\langle h_{\sigma(1)}, h_{\sigma(2)} \right\rangle_{\mathcal{H}_1^d}} \abs{ \left\langle h_{\sigma(3)}, h_{\sigma(4)} \right\rangle_{\mathcal{H}_1^d} } \abs{ \left\langle h_{\sigma(5)}, h_{\sigma(6)} \right\rangle_{\mathcal{H}_1^d}}
\leq C \, R^{\rho}_{\Delta_u^i \times \Delta_v^j} \, 2^{\frac{-(n+k)}{\rho} \left( 1 - \frac{\rho}{2} \right) } 2^{\frac{-(n+l)}{\rho} \left( 1 - \frac{\rho}{2} \right) },
\end{align*}
and this gives us
\begin{align*}
\sum_{i, j = 0}^{2^n - 1} \sum_{k,l=1}^m \sum_{u = 0}^{2^k - 1} \sum_{v=0}^{2^l-1} &\exptn{\psi_{t^n_i} \psi_{t^n_j}} \left\langle h_{\sigma(1)}, h_{\sigma(2)} \right\rangle_{\mathcal{H}_1^d} \left\langle h_{\sigma(3)}, h_{\sigma(4)} \right\rangle_{\mathcal{H}_1^d} \left\langle h_{\sigma(5)}, h_{\sigma(6)} \right\rangle_{\mathcal{H}_1^d} \\
&\leq C \, 2^{-2n \left( \frac{1}{\rho} - \frac{1}{2} \right)} \sum_{k,l=1}^{\infty} 2^{-k \left( \frac{1}{\rho} - \frac{1}{2} \right)} 2^{-l \left( \frac{1}{\rho} - \frac{1}{2}\right)} \norm{R}^{\rho}_{\rho-var;[0,T]^2} \rightarrow 0.
\end{align*}
\item
Assume that $\left\langle h_{\sigma(1)}, h_{\sigma(2)} \right\rangle_{\mathcal{H}_1^d} = R \begin{pmatrix}
\Delta_{u^*} \\
\Delta_{v^*}
\end{pmatrix}, \: \mathrm{and} \:
\left\langle h_{\sigma(3)}, h_{\sigma(4)} \right\rangle_{\mathcal{H}_1^d}, \, \left\langle h_{\sigma(5)}, h_{\sigma(6)} \right\rangle_{\mathcal{H}_1^d}
\neq R \begin{pmatrix}
\Delta_{u^*} \\
\Delta_{v^*}
\end{pmatrix}$. \par
Then without loss of generality, we have:
\begin{enumerate} [(a)]
\item
$M_u^{(a, b, c)} M_v^{(a, b, c)}$ terms:
\begin{align*}
\left\langle h_{\sigma(3)}, h_{\sigma(4)} \right\rangle_{\mathcal{H}_1^d}
= R \begin{pmatrix}
\Delta_{u^*} \\
\Delta_{i^*}
\end{pmatrix} \: \mathrm{or} \:
R \begin{pmatrix}
\Delta_{u^*} \\
\Delta_{j^*}
\end{pmatrix}, \quad
\left\langle h_{\sigma(5)}, h_{\sigma(6)} \right\rangle_{\mathcal{H}^1_d}
= R \begin{pmatrix}
\Delta_{v^*} \\
\Delta_{i^*}
\end{pmatrix} \: \mathrm{or} \:
R \begin{pmatrix}
\Delta_{v^*} \\
\Delta_{j^*}
\end{pmatrix}.
\end{align*}
\item
$N_u^{(a, b, c)} N_v^{(a, b, c)}$ terms:
\begin{align*}
\left\langle h_{\sigma(3)}, h_{\sigma(4)} \right\rangle_{\mathcal{H}_1^d}
= R \begin{pmatrix}
\Delta_{u^*} \\
\Delta_{u^*}
\end{pmatrix}, \quad
\left\langle h_{\sigma(5)}, h_{\sigma(6)} \right\rangle_{\mathcal{H}_1^d}
= R \begin{pmatrix}
\Delta_{v^*} \\
\Delta_{v^*}
\end{pmatrix}.
\end{align*}
\item
$M_u^{(a, b, c)} N_v^{(a, b, c)}$ terms:
\begin{align*}
\left\langle h_{\sigma(3)}, h_{\sigma(4)} \right\rangle_{\mathcal{H}_1^d}
= R \begin{pmatrix}
\Delta_{u^*} \\
\Delta_{i^*}
\end{pmatrix}, \quad
\left\langle h_{\sigma(5)}, h_{\sigma(6)} \right\rangle_{\mathcal{H}_1^d}
= R \begin{pmatrix}
\Delta_{v^*} \\
\Delta_{v^*}
\end{pmatrix}.
\end{align*}
\end{enumerate}
In each case, applying the bounds in \eqref{qrBounds} gives us
\begin{align*}
\abs{ \left\langle h_{\sigma(3)}, h_{\sigma(4)} \right\rangle_{\mathcal{H}_1^d}} \abs{\left\langle h_{\sigma(5)}, h_{\sigma(6)} \right\rangle_{\mathcal{H}_1^d}}
\leq C \, 2^{\frac{-(n+k)}{\rho}} 2^{\frac{-(n+l)}{\rho}},
\end{align*}
which in turn yields
\begin{align*}
\sum_{i, j= 0}^{2^n-1} \sum_{k,l=1}^m \sum_{u=0}^{2^k-1} \sum_{v=0}^{2^l-1} &\exptn{\psi_{t^n_i} \psi_{t^n_j}} \left\langle h_{\sigma(1)}, h_{\sigma(2)}\right\rangle_{\mathcal{H}_1^d} \left\langle h_{\sigma(3)}, h_{\sigma(4)} \right\rangle_{\mathcal{H}_1^d} \left\langle h_{\sigma(5)}, h_{\sigma(6)} \right\rangle_{\mathcal{H}_1^d} \\
&\leq \sum_{k,l = 1}^m \left( \sum_{i, j = 0}^{2^n - 1} \sum_{u = 0}^{2^k - 1} \sum_{v = 0}^{2^l-1} \abs{ R \begin{pmatrix}
\Delta_{u^*} \\
\Delta_{v^*}
\end{pmatrix}}^{\rho} \right)^{\frac{1}{\rho}}
\left( \sum_{i, j = 0}^{2^n - 1} \sum_{u = 0}^{2^k - 1} \sum_{v = 0}^{2^l-1} 2^{-(n+k)\left(\frac{\rho'}{\rho}\right)} 2^{-(n+l)\left(\frac{\rho'}{\rho}\right)} \right)^{\frac{1}{\rho'}} \\
&\leq \norm{R}_{\rho-var; [0,T]^2} \sum_{k,l=1}^m 2^{-2n \left( \frac{1}{\rho} - \frac{1}{\rho'} \right)} 2^{-k \left( \frac{1}{\rho} - \frac{1}{\rho'} \right)} 2^{-l \left( \frac{1}{\rho} - \frac{1}{\rho'} \right)} \\
&\leq C \norm{R}_{\rho-var; [0,T]^2} 2^{-2n \left( \frac{2}{\rho} - 1 \right)} \rightarrow 0.
\end{align*}
\end{enumerate}
\end{proof}

\begin{corollary} \label{skorodhodLimitEnhanced} 
For $2 \leq p < 4$, let $Y \in \mathcal{C} ^{p-var} \left( [0, T]; \mathcal{L} (\mathbb{R}^d; \mathbb{R}^m) \right)$ denote the path-level solution to 
\begin{align*}
\mathrm{d} Y_t = V(Y_t) \strato{\mathbf{X}_t}, \quad  Y_0 = y_0,
\end{align*}
where $\mathbf{X} \in \mathcal{C}^{0, p-var} \left( [0, T]; G^{\lfloor p \rfloor} \left(\mathbb{R}^d \right)\right)$ satisfies Condition \ref{newCond1} and its covariance function satisfies
\begin{align*}
\left\| R(t, \cdot) - R (s, \cdot) \right\|_{q-var; [0, T]} 
\leq C \left| t - s\right|^{\frac{1}{\rho}}, \quad \forall s, t \in [0, T].
\end{align*}
Then if $V \in \mathcal{C}^{\lfloor p \rfloor + 4}_b \left( \mathbb{R}^{md}; \mathbb{R}^{md} \otimes \mathbb{R}^d \right)$, we have
\begin{align}  \label{enhancedSkoEq}
\lim_{\left\| \pi(n)\right\| \rightarrow 0} \norm{\sum_i V(Y_{t^n_i}) \left( \mathbf{X}^2_{t^n_i, t^n_{i+1}} - \frac{1}{2} \sigma^2 \left( t^n_i, t^n_{i+1} \right) \mathcal{I}_d \right)}_{L^2(\Omega)} = 0.
\end{align}
Furthermore, if $3 \leq p < 4$ and $V \in \mathcal{C}^9_b \left( \mathbb{R}^{md}; \mathbb{R}^{md} \otimes \mathbb{R}^d \right)$, we have
\begin{align}  \label{enhancedSkoEq2}
\lim_{\left\| \pi(n)\right\| \rightarrow 0} \norm{\sum_i \nabla V(Y_{t^n_i}) \left( V(Y_{t^n_i}) \right) \left( \mathbf{X}^3_{t^n_i, t^n_{i+1}} \right)}_{L^2(\Omega)} = 0.
\end{align}
\end{corollary}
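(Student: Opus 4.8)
The plan is to deduce both limits from the abstract vanishing results already established for the compensated second- and third-order iterated integrals, namely Proposition \ref{2ndlevel} and Proposition \ref{3rdlevel}, by taking $\psi$ to be the block components of the processes $V(Y)$ and $V^2(Y) = \nabla V(Y)(V(Y))$ respectively. First I would reduce to the tensor components: writing $V(Y_{t^n_i}) = \sum_{j=1}^m e_j \otimes [V(Y_{t^n_i})]_j$ as in \eqref{bracketNotation}, with $[V(Y_{t^n_i})]_j \in \mathbb{R}^d \otimes \mathbb{R}^d$, the triangle inequality in $L^2(\Omega)$ bounds the left side of \eqref{enhancedSkoEq} by a sum over $j = 1, \ldots, m$ of the analogous sums for the single process $\psi := [V(Y)]_j$, and likewise the left side of \eqref{enhancedSkoEq2} decomposes into $m$ sums driven by $\psi := [V^2(Y)]_j \in \mathbb{R}^d \otimes \mathbb{R}^d \otimes \mathbb{R}^d$. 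It then suffices to verify that each such $\psi$ meets the hypotheses of Proposition \ref{2ndlevel} (for $k = 2, 4$) or Proposition \ref{3rdlevel} (for $k = 2, 4, 6$).

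The two non-derivative hypotheses are immediate: the uniform bound \eqref{propCond0} holds because $V \in \mathcal{C}_b$ forces $\psi$ to be uniformly bounded, whence $|\mathbb{E}[\psi^{(a,b)}_s \psi^{(a,b)}_t]| \leq \|V\|^2_\infty$ (with an analogous bound involving $\|\nabla V\|_\infty$ and $\|V\|_\infty$ in the third-order case), while membership $\psi_t \in \mathbb{D}^{4,2}$ (resp. $\mathbb{D}^{6,2}$) follows from the smoothness of $V$ together with the fact that all directional derivatives $\mathcal{D}^n Y_t$ exist and have moments of all orders. The substance is the derivative estimate \eqref{propCond} (resp. \eqref{propCond2}). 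To establish it I would expand $\mathcal{D}^k_{h_1, \ldots, h_k}(\psi^{(a,b)}_s \psi^{(a,b)}_t)$ by the Leibniz rule into a finite sum of products $\mathcal{D}^l_{\cdot} \psi_s \cdot \mathcal{D}^{k-l}_{\cdot} \psi_t$, then apply the Fa\`a di Bruno chain rule to each factor to express $\mathcal{D}^l_{g_1, \ldots, g_l}(V(Y_s))$ — with $g_i = \Phi(h_i)$ — as a sum of terms $\nabla^r V(Y_s)(\mathcal{D}^{n_1}_{\cdot} Y_s, \ldots, \mathcal{D}^{n_r}_{\cdot} Y_s)$ where $n_1 + \cdots + n_r = l$.

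The heart of the argument, and the step I expect to be the main obstacle, is bounding each factor $\mathcal{D}^n_{g_1, \ldots, g_n} Y_s$ uniformly in $s$ and converting it to the required product of Cameron–Martin $q$-variation norms. For this I would invoke Proposition \ref{dir der est}, which gives $|\mathcal{D}^n_{g_1, \ldots, g_n} Y_s| \leq \|\mathcal{D}^n_{g_1, \ldots, g_n} Y_\cdot\|_{\mathcal{V}^p} \leq P_{d(n)}(\|\mathbf{X}\|_{p-var}, \exp(C N^{\mathbf{X}}_1)) \prod_i \|g_i\|_{q-var}$ as in \eqref{ubound}; combined with the boundedness of the derivatives of $V$, this yields for each Leibniz term a bound of the form $Q(\|\mathbf{X}\|_{p-var}, \exp(C N^{\mathbf{X}}_1)) \prod_{i=1}^k \|\Phi(h_i)\|_{q-var}$ with $Q$ a polynomial. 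Taking expectations and using Cauchy–Schwarz, the random prefactor is integrable because $\|\mathbf{X}\|_{p-var}$ has moments of all orders and $N^{\mathbf{X}}_1$ has Gaussian tails (cf. Theorem 6.3 in \cite{cll2013} and the proof of Theorem \ref{JBoundThm}), producing the constant $C$ in \eqref{propCond} and \eqref{propCond2}. The bookkeeping — distributing the $k$ derivatives across the two factors and across the chain-rule partitions while checking that exactly $k$ factors of $\|\Phi(h_i)\|_{q-var}$ survive — is where care is required; it also dictates the differentiability demands on $V$, since reaching $k = 4$ needs $\mathcal{D}^4 Y$ and hence $V \in \mathcal{C}^{\lfloor p \rfloor + 4}_b$, while reaching $k = 6$ for $\psi = V^2(Y)$ needs $\mathcal{D}^6 Y$ and thus $V \in \mathcal{C}^9_b$ when $3 \leq p < 4$. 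With \eqref{propCond} and \eqref{propCond2} in hand, Proposition \ref{2ndlevel} and Proposition \ref{3rdlevel} close the argument.
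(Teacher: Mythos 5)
Your proposal is correct and follows essentially the same route as the paper: the paper likewise reduces to the block components $\psi = [V(Y)]_j$ and $\psi = [\nabla V(Y)(V(Y))]_j$, verifies \eqref{propCond0} from $V\in\mathcal{C}^1_b$, and obtains \eqref{propCond} and \eqref{propCond2} by combining the directional-derivative bound of Proposition \ref{dir der est} with the integrability of $\left\Vert\mathbf{X}\right\Vert_{p-var;[0,T]}$ and $\exp\left(C\,N^{\mathbf{X}}_{1;[0,T]}\right)$ and the product and chain rules of Malliavin differentiation. Your write-up simply makes explicit the Leibniz/Fa\`a di Bruno bookkeeping that the paper leaves to the reader.
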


\begin{proof}
We have to show that bounds \eqref{propCond0} and \eqref{propCond} in Proposition \ref{2ndlevel} are satisfied with 
\begin{align*}
\psi_t = \left[ V(Y_t) \right]_j \in \mathbb{R}^d \otimes \mathbb{R}^d, \quad j = 1, \ldots, m,
\end{align*}
to show \eqref{enhancedSkoEq}. Similarly, proving that bound \eqref{propCond2} in Proposition \ref{3rdlevel} is satisfied with
\begin{align*}
\psi_t = \left[ \nabla V(Y_t) \left( V(Y_t) \right) \right]_j \in \mathbb{R}^d \otimes \mathbb{R}^d \otimes \mathbb{R}^d, \quad j = 1, \ldots, m,
\end{align*}
will yield \eqref{enhancedSkoEq2}.

\eqref{propCond0} is trivially true since $V \in \mathcal{C}^1_b$. To show that the bounds hold for the higher Malliavin derivatives, recall Proposition \ref{dir der est}, which states that almost surely we have
\begin{align}  
\norm{\mathcal{D}_{h_1, \ldots, h_n}^n Y_{\cdot}}_{\infty} 
\leq P_{d(n)} \left( \norm{\mathbf{X}}_{p-var; [0, T]}, \exp \left( C \, N^{\mathbf{X}}_{1; [0, T]} \right)\right) \prod\limits_{i=1}^{n} \left\| \Phi(h_{i})\right\|_{q-var; [0, T]}.
\end{align}
As both $\norm{\mathbf{X}}_{p-var; [0, T]}$ and $\exp \left( C \, N^{\mathbf{X}}_{1; [0, T]} \right)$ belong to $\bigcap_{r>0}L^{r}\left( \Omega \right)$, we have 
\begin{align} \label{bound}
\norm{\mathcal{D}_{h_1, \ldots, h_n}^n Y_t}_{L^{r}\left( \Omega \right) }
\leq C_{n, q} \prod \limits_{i=1}^{n} \norm{\Phi(h_i)}_{q-var; \left[ 0,T\right] }
\end{align}
for any $r > 0$.
Now we simply use the product and chain rule of Malliavin differentiation in conjunction with the fact that $V$ has bounded derivatives up to the appropriate order.
\end{proof}

\section{Correction formula} \label{corr formula}

We are now ready to prove the main result of the paper. As before, $\pi(n) := \left\{  t^{n}_{i} \right\} , t^{n}_{i} := \frac{iT}{2^{n}}$, denotes the sequence of dyadic partitions on $[0, T]$.

\subsection{Main theorem}

\begin{theorem} \label{mainThm3rdLevel} 
For $3 \leq p < 4$, let $Y \in \mathcal{C}^{p-var} \left( [0, T]; \mathcal{L} (\mathbb{R}^d; \mathbb{R}^m) \right)$ denote the path-level solution to 
\begin{align*}
\mathrm{d} Y_t = V(Y_t) \strato{\mathbf{X}_t}, \quad  Y_0 = y_0,
\end{align*}
where $V \in \mathcal{C}^9_b \left( \mathbb{R}^{md}; \mathbb{R}^{md} \otimes \mathbb{R}^d \right)$, and $\mathbf{X} \in \mathcal{C}^{0, p-var} \left( [0, T]; G^{\lfloor p \rfloor} \left(\mathbb{R}^d \right)\right)$ is a Volterra process which satisfies Condition \ref{newCond1}, and whose kernel satisfies Condition \ref{amnCond} with $\alpha < \frac{1}{p}$. Furthermore, we assume the covariance function satisfies 
\begin{align} 
\left\| R(t, \cdot) - R (s, \cdot) \right\|_{q-var; [0, T]} \leq C \left| t - s\right|^{\frac{1}{\rho}},
\end{align}
for all $s, t \in [0, T]$, and $\norm{R(\cdot)}_{q-var; [0, T]} < \infty$. 
 Then almost surely, we have 
\begin{align} \label{cflr}
\int_0^T Y_t \circ \mathrm{d} \mathbf{X}_t 
&= \int_0^T Y_t \, \mathrm{d} X_t + \sum_{j=1}^m \left( \frac{1}{2} \int_0^T \mathrm{tr} \left[ V(Y_s) \right]_j \, \mathrm{d} R(s) + U^{(j)}_T \right) e_j,
\end{align}
where for $j = 1, \ldots, m$, $U^{(j)}_T$ is the limit in $L^2(\Omega)$ of 
\begin{align} \label{2ndTermApprox}
\sum_{i} \int_0^{t^n_i} \mathrm{tr} \left[J^{\mathbf{X}}_{t^n_i \leftarrow s} V(Y_s) - V \left( Y_{t^n_i} \right) \right]_j R(\Delta^n_i, \mathrm{d}s) 
\end{align}
along the dyadic partitions $\{ t^n_i\}$ of $[0, T]$.
\end{theorem}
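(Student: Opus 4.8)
The plan is to realize both integrals as $L^2(\Omega)$-limits of Riemann sums over the dyadic partitions $\pi(n)=\{t^n_i\}$ and to compare them term by term. By Theorem \ref{controlledRDE}, the solution $Y$ is a level-$3$ controlled rough path with Gubinelli derivatives $Y'=V(Y)$ and $Y''=V^2(Y)=\nabla V(Y)(V(Y))$, so Theorem \ref{controlledThm2} presents $\int_0^T Y_t\circ\mathrm{d}\mathbf{X}_t$ as the limit of $\sum_i\bigl(Y_{t^n_i}X_{t^n_i,t^n_{i+1}}+V(Y_{t^n_i})\mathbf{X}^2_{t^n_i,t^n_{i+1}}+V^2(Y_{t^n_i})\mathbf{X}^3_{t^n_i,t^n_{i+1}}\bigr)$. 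Since $Y$, $\mathbf{X}$ and $J^{\mathbf{X}}$ have moments of every order and the error in that theorem is controlled by $\omega(s,t)^\theta$ with $\theta>1$, I would upgrade this convergence to $L^2(\Omega)$ along $\pi(n)$ by a uniform-integrability argument. On the Skorohod side, Proposition \ref{skorohodLimit3rdLevel} already expresses $\int_0^T Y_t\,\mathrm{d}X_t$ as the $L^2(\Omega)$-limit of $\sum_i\bigl[Y_{t^n_i}X_{t^n_i,t^n_{i+1}}-\sum_{j=1}^m\bigl(\int_0^{t^n_i}\mathrm{tr}\,[J^{\mathbf{X}}_{t^n_i\leftarrow s}V(Y_s)]_j\,R(\Delta^n_i,\mathrm{d}s)\bigr)e_j\bigr]$.

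Subtracting the two sums, the first-level terms $Y_{t^n_i}X_{t^n_i,t^n_{i+1}}$ cancel, so the rough-minus-Skorohod difference reduces to the second- and third-level rough terms together with the Skorohod correction. I then invoke Corollary \ref{skorodhodLimitEnhanced}: equation \eqref{enhancedSkoEq2} gives $\sum_i V^2(Y_{t^n_i})\mathbf{X}^3_{t^n_i,t^n_{i+1}}\to 0$ in $L^2$, so the entire third level disappears \emph{without} any rebalancing term (this is the surprising ingredient), while equation \eqref{enhancedSkoEq} lets me replace $\sum_i V(Y_{t^n_i})\mathbf{X}^2_{t^n_i,t^n_{i+1}}$ by its compensated diagonal $\tfrac12\sum_i V(Y_{t^n_i})\sigma^2(t^n_i,t^n_{i+1})\mathcal{I}_d$ up to an $L^2$-null error. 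Taking the $j$-th component, this contributes $\tfrac12\sum_i\mathrm{tr}\,[V(Y_{t^n_i})]_j\,\sigma^2(t^n_i,t^n_{i+1})$.

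The decisive step is the rebalancing. I would split the Skorohod integrand as $J^{\mathbf{X}}_{t^n_i\leftarrow s}V(Y_s)=V(Y_{t^n_i})+\bigl(J^{\mathbf{X}}_{t^n_i\leftarrow s}V(Y_s)-V(Y_{t^n_i})\bigr)$. Using $R(t,0)=0$, the constant part yields $\sum_i\mathrm{tr}\,[V(Y_{t^n_i})]_j\bigl(R(t^n_{i+1},t^n_i)-R(t^n_i,t^n_i)\bigr)$, which I combine with the diagonal of the compensated second level through the elementary identity
\begin{align*}
\tfrac12\sigma^2(t^n_i,t^n_{i+1})+R(t^n_{i+1},t^n_i)-R(t^n_i,t^n_i)=\tfrac12\bigl(R(t^n_{i+1})-R(t^n_i)\bigr),
\end{align*}
a one-line consequence of $\sigma^2(s,t)=R(t,t)-2R(s,t)+R(s,s)$ and the symmetry of $R$. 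The pooled diagonal terms then form the Riemann–Stieltjes sum $\tfrac12\sum_i\mathrm{tr}\,[V(Y_{t^n_i})]_j\bigl(R(t^n_{i+1})-R(t^n_i)\bigr)$; since $V(Y)$ has finite $p$-variation and $R(\cdot)$ finite $q$-variation with $\tfrac1p+\tfrac1q>1$, Young's theory identifies its limit as $\tfrac12\int_0^T\mathrm{tr}\,[V(Y_s)]_j\,\mathrm{d}R(s)$, both pathwise and in $L^2$.

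What remains is precisely $\sum_i\int_0^{t^n_i}\mathrm{tr}\,[J^{\mathbf{X}}_{t^n_i\leftarrow s}V(Y_s)-V(Y_{t^n_i})]_j\,R(\Delta^n_i,\mathrm{d}s)$, the expression \eqref{2ndTermApprox} defining $U^{(j)}_T$. The main obstacle is that, in contrast to the regime $2\le p<3$, its integrand $J^{\mathbf{X}}_{\cdot\leftarrow s}V(Y_s)-V(Y_\cdot)$ is merely continuous and \emph{not} Hölder bi-continuous, hence lacks complementary regularity with $R$, so this term cannot be passed to Theorem \ref{2Dintegral} as a genuine 2D Young–Stieltjes integral. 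I therefore do not establish its convergence directly; rather, its $L^2(\Omega)$-limit $U^{(j)}_T$ exists \emph{by default}, being forced by the already-proven $L^2$ convergence of every other term in the rearranged identity (the two integrals, the vanishing higher-level compensations, and the Young integral). The only residual care is book-keeping: taking all limits in $L^2(\Omega)$ simultaneously so that the pathwise rough- and Young-integral limits may be equated with the $L^2$ Skorohod limit, and reassembling the $e_j$-components into the stated vector identity \eqref{cflr}.
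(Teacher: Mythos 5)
Your proposal is correct and follows essentially the same route as the paper's own proof: controlled-rough-path expansion of $\int Y\circ\mathrm{d}\mathbf{X}$ up to level three with $L^2$ convergence via moment bounds, Proposition \ref{skorohodLimit3rdLevel} for the Skorohod side, Corollary \ref{skorodhodLimitEnhanced} to kill the augmented second- and third-level terms, the same rebalancing identity pooling $\tfrac12\sigma^2$ with the constant part of the Skorohod correction into $\tfrac12\int\mathrm{tr}[V(Y_s)]_j\,\mathrm{d}R(s)$ via Young's inequality, and the existence of $U^{(j)}_T$ forced as the residual $L^2$ limit. No substantive differences to report.
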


\begin{proof}
Using bounds \eqref{controlledBound2}, \eqref{boundedVF2} together with the integrability of $\mathbf{X}$, we can apply dominated convergence theorem to \eqref{controlledRPdefn2} in Theorem \ref{controlledThm2} to show that $\int_0^T Y_t \strato{\textbf{X}_t} $ is the $L^2(\Omega)$ limit of
\begin{align*}
\lim_{n \rightarrow \infty} \sum_i
\, Y_{t^n_i} \left( X_{t^n_i, t^n_{i+1}} \right) + V(Y_{t^n_i}) \left( \mathbf{X}^2_{t^n_i, t^n_{i+1}} \right) + \nabla V \left( Y_{t^n_i} \right) \left( V \left( Y_{t^n_i} \right) \right) \left( \mathbf{X}^3_{t^n_i, t^n_{i+1}} \right).
\end{align*}
Now applying Proposition \ref{skorohodLimit3rdLevel} in conjunction with Corollary \ref{skorodhodLimitEnhanced} gives us
\begin{align*}
\int_0^T Y_t \wrt{X_t} 
&= \lim_{n \rightarrow \infty} \sum_i \left[ Y_{t^n_i} \left( X_{t^n_i, t^n_{i+1}} \right) - \sum_{j=1}^m \left( \int_0^{t^n_i} \mathrm{tr} \left[J^{\mathbf{X}}_{t^n_i \leftarrow s} V(Y_s)\right]_j R\left( \Delta^n_i, \mathrm{d} s \right) \right) e_j + A_i \right], 
\end{align*}
where the limit is also in $L^2(\Omega)$ and
\begin{align*}
A_i := V(Y_{t^n_i}) \left( \left( \mathbf{X}^2_{t^n_i, t^n_{i+1}} \right) - \frac{1}{2} \sigma^2 \left( t^n_i, t^n_{i+1} \right) \mathcal{I}_d \right) + \nabla V \left( Y_{t^n_i} \right) \left( V \left( Y_{t^n_i} \right) \right) \left( \mathbf{X}^3_{t^n_i, t^n_{i+1}} \right).
\end{align*}
Following the procedure in Theorem 6.1 of \cite{cl2018}, subtracting the two integrals and re-balancing the terms gives us
\begin{align} \label{diff}
\begin{split}
\int_0^T Y_t \strato{\textbf{X}_t} \, - &\int_0^T Y_t \wrt{X_t} \\
&= \sum_{j=1}^m \left( \lim_{n \rightarrow \infty} \sum_i \int_0^{t^n_i} \mathrm{tr} \left[ J^{\mathbf{X}}_{t^n_i \leftarrow s} V(Y_s) \right]_j R(\Delta^n_i, \wrt{s} ) + \frac{1}{2} \sigma^2 \left( t^n_i, t^n_{i+1} \right) \mathrm{tr} \left[ V(Y_{t^n_i}) \right]_j \right) e_j, \\
&= \sum_{j=1}^m \left( \lim_{n \rightarrow \infty} \sum_i \int_0^{t^n_i} \mathrm{tr} \left[ J^{\mathbf{X}}_{t^n_i \leftarrow s} V(Y_s) - V\left(Y_{t^n_i} \right) \right]_j R(\Delta^n_i, \wrt{s} ) \right. \\
&\left. \fqquad+ \frac{1}{2} \lim_{n \rightarrow \infty} \sum_i \mathrm{tr} \left[ V(Y_{t^n_i}) \right]_j \big( R\left( t^n_{i+1}, t^n_{i+1} \right) - R \left(t^n_i, t^n_i \right) \big) \right) e_j.
\end{split}
\end{align}
The second term in the last line of the expression above is dominated by
\begin{align*}
C \norm{V(Y_{\cdot})}_{p-var; [0, T]} \norm{R(\cdot)}_{q-var; [0, T]}
\end{align*}
by Young's inequality, and thus converges in $L^2 (\Omega)$ to
\begin{align*}
\frac{1}{2} \int_0^T \mathrm{tr} \left[ V(Y_s) \right]_j \, \mathrm{d} R(s).
\end{align*}
This in turn guarantees the convergence of the first term in $L^2(\Omega)$ to the random variable $U_T^{(j)}$. Now extracting an almost sure subsequence allows us to equate both sides of \eqref{cflr} almost surely, and the proof is thus complete.
\end{proof}

In the more regular case $2 \leq p < 3$, we can be more precise in identifying the second term \eqref{2ndTermApprox}.

\begin{proposition} \label{volterra2ndTerm}
For $2 \leq p < 3$, let $Y \in \mathcal{C}^{p-var} \left( [0, T]; \mathcal{L} (\mathbb{R}^d; \mathbb{R}^m) \right)$ denote the path-level solution to $\mathbf{X}$ 
\begin{align*}
\mathrm{d} Y_t = V(Y_t) \strato{\mathbf{X}_t}, \quad  Y_0 = y_0,
\end{align*}
where $\mathbf{X} \in \mathcal{C}^{0, p-var} \left( [0, T]; G^{\lfloor p \rfloor} \left(\mathbb{R}^d \right)\right)$ is a Volterra process satisfying Condition \ref{newCond1} with some $\rho \in \left[ 1, \frac{3}{2} \right)$, and whose kernel satisfies Condition \ref{amnCond} with $\alpha < \frac{1}{2p}$. \par 
Furthermore, assume that $V \in \mathcal{C}^6_b \left( \mathbb{R}^{md}; \mathbb{R}^{md} \otimes \mathbb{R}^d \right)$, and the covariance function satisfies 
\begin{align} 
\left\| R(t, \cdot) - R (s, \cdot) \right\|_{\rho-var; [0, T]} 
\leq C \left| t - s\right|^{\frac{1}{\rho}},
\end{align}
for all $s, t \in [0, T]$. Then almost surely we have 
\begin{align*}
\int_0^T Y_t \circ \mathrm{d} \mathbf{X}_t 
&= \int_0^T Y_t \, \mathrm{d} X_t + Z_T,
\end{align*}
where the correction term is given by
\begin{align} \label{Zdef}
\begin{split}
Z^{(j)}_T 
&= \frac{1}{2} \int_0^T \mathrm{tr} \left[ V(Y_s) \right]_j \, \mathrm{d} R(s) + \int_{[0, T]^2} h_j(s, t) \, \mathrm{d} R(s,t) \\
&= \frac{1}{2} \int_0^T \mathrm{tr} \left[ V(Y_s) \right]_j \, \mathrm{d} R(s) + \int_0^T \mathcal{K}^* \otimes \mathcal{K}^* h_j (r, r) \wrt{r}, \quad j = 1, \ldots, m.
\end{split}
\end{align}
with
\begin{align*}
h_j (s, t) := \mathds{1}_{[0, t)} (s) \, \mathrm{tr} \left[J^{\mathbf{X}}_{t \leftarrow s} V(Y_s) - V(Y_t) \right]_j.
\end{align*}
\end{proposition}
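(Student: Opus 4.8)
The plan is to follow the same route as Theorem \ref{mainThm3rdLevel}, specialised to the regime $2 \leq p < 3$ where $\mathbf{X} = (1, X, \mathbf{X}^2)$ carries no third-level term, and then to upgrade the resulting $L^2(\Omega)$-limit into the two closed-form integral representations in \eqref{Zdef}. First I would reproduce the reduction from the proof of Theorem \ref{mainThm3rdLevel}: expand the rough integral through the controlled-rough-path integral of Theorem \ref{controlledThm2} (only up to second order now), approximate the Skorohod integral by Proposition \ref{skorohodLimit3rdLevel}, and discard the compensated second-level terms via \eqref{enhancedSkoEq} of Corollary \ref{skorodhodLimitEnhanced}. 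Subtracting the two integrals and re-balancing exactly as in Theorem 6.1 of \cite{cl2018} then shows $\int_0^T Y_t \strato{\mathbf{X}_t} - \int_0^T Y_t \wrt{X_t} = \sum_{j=1}^m Z_T^{(j)} e_j$, where $Z_T^{(j)}$ is the $L^2(\Omega)$-limit of $\sum_i \int_0^{t^n_i} \mathrm{tr}[J^{\mathbf{X}}_{t^n_i \leftarrow s} V(Y_s) - V(Y_{t^n_i})]_j \, R(\Delta^n_i, \mathrm{d}s) + \tfrac12 \sigma^2(t^n_i, t^n_{i+1}) \mathrm{tr}[V(Y_{t^n_i})]_j$, and where the purely diagonal contribution $\tfrac12 \sum_i \mathrm{tr}[V(Y_{t^n_i})]_j (R(t^n_{i+1}) - R(t^n_i))$ already converges, by Young's inequality and $\norm{R(\cdot)}_{q-var; [0,T]} < \infty$, to $\tfrac12 \int_0^T \mathrm{tr}[V(Y_s)]_j \wrt{R(s)}$. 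It then remains to identify the limit of $\sum_i \int_0^{t^n_i} \mathrm{tr}[J^{\mathbf{X}}_{t^n_i \leftarrow s} V(Y_s) - V(Y_{t^n_i})]_j \, R(\Delta^n_i, \mathrm{d}s)$ with the off-diagonal integral appearing in \eqref{Zdef}.

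For the first equality in \eqref{Zdef} I would invoke the complementary Young regularity available when $\rho < \tfrac32$. Writing $h_j(s,t) = \mathds{1}_{[0,t)}(s)\, \tilde h_j(s,t)$ with $\tilde h_j(s,t) := \mathrm{tr}[J^{\mathbf{X}}_{t \leftarrow s} V(Y_s) - V(Y_t)]_j$, the key structural fact is that $\tilde h_j$ vanishes on the diagonal because $J^{\mathbf{X}}_{t \leftarrow t} = \mathcal{I}_e$, which tames the jump of the indicator. Using the almost sure $\tfrac1p$-H\"older continuity of $Y$, $J^{\mathbf{X}}$ and $(J^{\mathbf{X}})^{-1}$ (from \eqref{JBound} and Theorem \ref{JBoundThm}) together with $V \in \mathcal{C}^6_b$, one checks that $h_j$ has finite 2D $p'$-variation for some $p'$ with $\tfrac{1}{p'} + \tfrac1\rho > 1$ (possible since $p < 3$ and $\rho < \tfrac32$ force $\tfrac1p + \tfrac1\rho > \tfrac13 + \tfrac23 = 1$, leaving room to spare). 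Theorem \ref{2Dintegral} then furnishes both the existence of $\int_{[0,T]^2} h_j \wrt{R}$ and a Young estimate with which the partially-discretised Riemann sums above can be shown to converge to it, exactly as in the proof of the correction formula \eqref{corr form} in \cite{cl2018}.

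The second equality in \eqref{Zdef} is where the Volterra structure enters, through Condition \ref{amnCond} with $\alpha < \tfrac{1}{2p}$. Since $\tilde h_j$ is strongly $\tfrac1p$-H\"older bi-continuous and $\tfrac1p > \alpha$, the function $h_j$ falls precisely into the class treated in Proposition \ref{H1tensorH1equiv2}, so that $h_j \in \mathcal{H}_1 \otimes \mathcal{H}_1$ and its action against $\mathrm{d}R$ is computed through $\mathcal{K}^* \otimes \mathcal{K}^*$. Concretely I would pass from the $\mathcal{W}_\rho$-pairing $\int_{[0,T]^2}(\cdot)\wrt{R}$ to the $\Lambda_\alpha$-representation $\int_0^T (\mathcal{K}^* \otimes \mathcal{K}^*)(\cdot)(r,r)\wrt{r}$ using the identity displayed just after \eqref{Wd}, extended off product functions by Lemma 3.2 of \cite{lim2018} and the diagonal-term analysis of Proposition \ref{HSprop}. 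This yields $\int_{[0,T]^2} h_j \wrt{R} = \int_0^T \mathcal{K}^* \otimes \mathcal{K}^* h_j(r,r)\wrt{r}$, and a final passage to an almost sure subsequence promotes the $L^2(\Omega)$ identity to the claimed almost sure one.

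I expect the main obstacle to be the treatment of the diagonal in both of the last two steps: the indicator $\mathds{1}_{[0,t)}(s)$ makes $h_j$ genuinely discontinuous there, so neither the 2D variation bound nor the kernel conversion is automatic. The saving grace — and the point requiring the most care — is that $\tilde h_j$ vanishes on $\{s=t\}$ with a quantitative $\tfrac1p$-H\"older rate, which is what simultaneously yields the complementary regularity needed for Theorem \ref{2Dintegral} and the bi-H\"older hypothesis needed to place $h_j$ in the scope of Proposition \ref{H1tensorH1equiv2}. Reconciling these two descriptions of the same object, and verifying that the mixed (continuous in $s$, discrete in $t$) approximating sums genuinely converge to the 2D Young--Stieltjes integral rather than merely to some abstract $L^2$-limit, is the crux of the argument.
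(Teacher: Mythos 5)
Your outline reaches the right statement and, for the identity itself, is essentially the paper's route: the paper does not re-run the $2\le p<3$ convergence analysis at all but simply cites Theorem 6.1 of \cite{cl2018} to get the almost sure identity together with the first line of \eqref{Zdef}, the point being that in this regime the off-diagonal term is already a genuine 2D Young--Stieltjes integral by complementary regularity ($\tfrac1p+\tfrac1\rho>1$), exactly as you observe. Re-deriving this via Theorem \ref{controlledThm2}, Proposition \ref{skorohodLimit3rdLevel} and \eqref{enhancedSkoEq} is harmless but redundant.

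Where your proposal diverges, and where it has a genuine soft spot, is the second equality in \eqref{Zdef}. The paper does not go through Proposition \ref{H1tensorH1equiv2} or the strong (rectangular-increment) bi-H\"older property of $\tilde h_j$ at all; it invokes Proposition 4.3 of \cite{lim2018}, which converts $\int_{[0,T]^2}\phi\,\mathrm{d}R$ into $\int_0^T\mathcal{K}^*\otimes\mathcal{K}^*\phi(r,r)\,\mathrm{d}r$ for any $\phi$ that is $\lambda$-H\"older bi-continuous \emph{in the weak sense} (the two one-parameter conditions of Definition \ref{biHolderDef} only, without \eqref{biHolder2}) provided $\lambda>2\alpha$. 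The only new computation in the paper's proof is then the verification that $h_j$ itself, indicator included, is weakly $\tfrac1p$-H\"older bi-continuous across the diagonal, via the identity $h_j(u,v)=\tilde h_j(u,v\vee u)$ and the fact that $\tilde h_j$ vanishes on $\{s=t\}$ --- the phenomenon you correctly single out as the crux. Two consequences for your sketch. First, your worry about \eqref{biHolder2} failing at the diagonal is real (the paper says so explicitly in the discussion following the proposition) but moot: the conversion lemma does not need the rectangular-increment bound, so routing the argument through Proposition \ref{H1tensorH1equiv2} is both unnecessary and insufficient, since that proposition controls the $L^2([0,T]^2)$ norm of $\mathcal{K}^*\otimes\mathcal{K}^*(h_j^\pi-h_j)$ rather than the diagonal trace $\int_0^T\mathcal{K}^*\otimes\mathcal{K}^*(h_j^\pi-h_j)(r,r)\,\mathrm{d}r$, which is the quantity that must vanish for the Riemann sums $\sum_{i,j}h_j(u_i,v_j)R\left(\begin{smallmatrix}u_i&u_{i+1}\\v_j&v_{j+1}\end{smallmatrix}\right)$ to converge to the right-hand side. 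Second, and relatedly, the relevant H\"older threshold for that diagonal-trace statement is $2\alpha$, not $\alpha$: this is precisely why the proposition strengthens the kernel hypothesis to $\alpha<\tfrac{1}{2p}$ rather than the $\alpha<\tfrac1p$ used elsewhere in the paper, a point your proposal never uses and which your appeal to ``$\tfrac1p>\alpha$'' glosses over. With the citation of Proposition 4.3 of \cite{lim2018} (or an equivalent diagonal-trace estimate proved under $\lambda>2\alpha$) substituted for Proposition \ref{H1tensorH1equiv2}, your argument closes.
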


\begin{proof}
Under the conditions of the theorem, we can invoke Theorem 6.1 from \cite{cl2018} to obtain the first line of \eqref{Zdef}. To obtain the second line, we will use Proposition 4.3 from \cite{lim2018}, which states that if $\phi:[0,T]^{2} \rightarrow \mathbb{R}$ is a $\lambda$-H\"{o}lder bi-continuous function (one that satisfies Definition \ref{biHolderDef} without necessarily satisfying \eqref{biHolder2}) with $\lambda > 2 \alpha$, then
\begin{align} \label{volEqDR}
\int_{[0, T]^{2}}\phi(s, t)\,\mathrm{d} R(s, t) = \int_{0}^{T}\mathcal{K}^* \otimes\mathcal{K}^* \phi(r,r)\,\mathrm{d}r.
\end{align}
Thus, the proof is complete once we show that $h_j (s, t)$ is $\frac{1}{p}$-H\"{o}lder bi-continuous for all $j = 1, \ldots, m$ since $\frac{1}{p} > 2\alpha$. Using the fact that 
\begin{align*}
\tilde{h}_j (s, t) := \mathrm{tr} \left[J^{\mathbf{X}}_{t \leftarrow s} V(Y_s) - V(Y_t) \right]_j
\end{align*}
is $\frac{1}{p}$-H\"{o}lder bi-continuous, we have, assuming $v_2 > v_1$ without loss of generality,
\begin{align*}
\abs{h_j (u, v_2) - h_j (u, v_1)} 
&= \abs{\tilde{h}_j (u, v_2 \vee u) - \tilde{h}_j (u, v_1 \vee u)}, \quad u, v_1, v_2 \in [0, T], \\
&\leq C_1 \abs{v_2 \vee u - v_1 \vee u}^{\frac{1}{p}} \\
&\leq C_2 \abs{v_2 - v_1}^{\frac{1}{p}},
\end{align*}
and similarly, 
\begin{align*}
\abs{h_j (u_2, v) - h_j (u_1, v)} \leq C \abs{u_2 - u_1}^{\frac{1}{p}}, \quad v, u_1, u_2 \in [0, T].
\end{align*}

\end{proof}

In the case $3 \leq p < 4$, due to the lack of complementary regularity, we cannot apply Theorem \ref{2Dintegral}  although $\mathds{1}_{[0, t)} (s) \mathrm{tr} \left[J^{\mathbf{X}}_{t \leftarrow s} V(Y_s) - V \left( Y_t \right) \right]_j$ is continuous almost surely on $[0, T]^2$. Furthermore, although the integrand is strongly $\frac{1}{p}$-H\"{o}lder bi-continuous away from the diagonal, one can check that in general, \eqref{biHolder2} fails at the diagonal, which means that we cannot employ \eqref{volEqDR} from Proposition 4.3 in \cite{lim2018} (it can also be verified that there would be insufficient H\"{o}lder regularity in the weaker sense). Hence, we can only show convergence in $L^2(\Omega)$ rather than almost surely. The question of whether the second part of the correction term can be identified as a proper 2D Young-Stieltjes integral requires further investigation. \par 

An interesting special case of Theorem is when the vector fields defining the RDE commute. In this situation the $U_{T}$ terms in the correction formula \eqref{cflr} disappear.

\begin{corollary} \label{thmSimple} 
Under the conditions of Theorem \ref{mainThm3rdLevel}, if in addition the vector fields commute, i.e. $[V_{i},V_{j}]=0$ for all $i,j=1,\ldots,d$, then
\begin{align*}
\int_{0}^{T}Y_{t}\circ\mathrm{d}\mathbf{X}_{t}=\int_{0}^{T}Y_{t} \, \mathrm{d}X_{t}+\frac{1}{2}\sum_{j=1}^{m}\left(  \int_{0}^{T}\mathrm{tr} \left[ V(Y_{s})\right]_{j}\,\mathrm{d}R(s)\right) e_{j},
\end{align*}
\end{corollary}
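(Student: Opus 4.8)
The plan is to show that under commutativity every summand in the approximating sequence \eqref{2ndTermApprox} vanishes identically, so that its $L^2(\Omega)$ limit $U^{(j)}_T$ is zero; the simplified formula then falls straight out of Theorem \ref{mainThm3rdLevel}. The whole corollary thus reduces to the pathwise geometric identity
\[
J^{\mathbf{X}}_{t \leftarrow s} V(Y_s) = V(Y_t), \qquad 0 \le s \le t \le T,
\]
which I would prove column by column, i.e. $J^{\mathbf{X}}_{t \leftarrow s} V_k(Y_s) = V_k(Y_t)$ for each $k = 1, \ldots, d$. This is the rough-path incarnation of the classical fact that the flow of a vector field preserves exactly those fields that commute with it.

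To establish the identity I would fix $s$ and $k$ and compare two paths on $[s, T]$. Setting $Z_t := J^{\mathbf{X}}_{t \leftarrow s} V_k(Y_s)$ and noting that $\left(J^{\mathbf{X}}_s\right)^{-1} V_k(Y_s)$ is constant in $t$, the Jacobian RDE \eqref{jacobianRDE} gives that $Z$ solves the linear RDE
\[
\mathrm{d} Z_t = \nabla V(Y_t)\left(\strato{\mathbf{X}_t}\right) Z_t, \qquad Z_s = V_k(Y_s).
\]
On the other hand, set $W_t := V_k(Y_t)$. Applying the chain rule for RDEs (the composition result of Proposition \ref{controlledSmoothMap2}, or Corollary 10.15 in \cite{fv2010b}) yields $\mathrm{d} W_t = \nabla V_k(Y_t)\, V(Y_t)\left(\strato{\mathbf{X}_t}\right) = \sum_j \nabla V_k(Y_t) V_j(Y_t)\, \circ\mathrm{d}X^{(j)}_t$, whereas the equation read off from $Z$ would demand $\sum_j \nabla V_j(Y_t) V_k(Y_t)\,\circ\mathrm{d}X^{(j)}_t$. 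These agree precisely because $[V_j, V_k] = \nabla V_k V_j - \nabla V_j V_k = 0$ is exactly the relation $\nabla V_k(Y_t) V_j(Y_t) = \nabla V_j(Y_t) V_k(Y_t)$. Hence $W$ solves the same linear RDE as $Z$ with the same initial value $W_s = V_k(Y_s)$ at time $s$, and uniqueness of solutions to linear RDEs (available since $V \in \mathcal{C}^9_b$) forces $Z_t = W_t$.

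With the identity in hand, the integrand of \eqref{2ndTermApprox} becomes $\mathrm{tr}\left[J^{\mathbf{X}}_{t^n_i \leftarrow s} V(Y_s) - V(Y_{t^n_i})\right]_j = \mathrm{tr}\left[V(Y_{t^n_i}) - V(Y_{t^n_i})\right]_j = 0$ for every $i$ and every $s \le t^n_i$, so each approximating sum is identically zero and $U^{(j)}_T = 0$ almost surely for all $j$. Substituting into \eqref{cflr} then gives the stated formula.

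I expect the main obstacle to be the careful justification, in the genuinely rough regime $3 \le p < 4$, that $W_t = V_k(Y_t)$ truly satisfies the linear RDE above, i.e. that differentiating $V_k(Y_t)$ via the geometric chain rule is legitimate and produces the bracket relation with no hidden correction terms. Since $\mathbf{X}$ is a \emph{geometric} rough path the ordinary calculus rules apply, and once the composition $V_k(Y_\cdot)$ is realized as a controlled rough path the computation is purely algebraic, with commutativity entering only through $\nabla V_k V_j = \nabla V_j V_k$. The remaining appeal to uniqueness for the common linear RDE satisfied by $Z$ and $W$ is standard under the stated regularity of $V$.
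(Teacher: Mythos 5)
Your proof is correct, and it reduces the corollary to the same key identity the paper uses, namely $J^{\mathbf{X}}_{t \leftarrow s} V(Y_s) = V(Y_t)$ for $0 \le s \le t \le T$, after which $U^{(j)}_T = 0$ and the formula follows from Theorem \ref{mainThm3rdLevel} exactly as you say. The difference lies in how that identity is justified. The paper quotes the closed-form commutator formula
\begin{align*}
\left( J^{\mathbf{X}}_t\right)^{-1} W(Y_t) = W(y_0) + \sum_{i=1}^d \int_0^t \left( J^{\mathbf{X}}_s \right)^{-1} [V_i, W] \strato{\mathbf{X}^{(i)}_s}
\end{align*}
(cf.\ Chapter 20 of \cite{fv2010b}), so that commutativity makes $\left(J^{\mathbf{X}}_t\right)^{-1} V_k(Y_t)$ constant and the invariance is immediate. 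You instead exhibit $Z_t = J^{\mathbf{X}}_{t\leftarrow s}V_k(Y_s)$ and $W_t = V_k(Y_t)$ as solutions of the same linear RDE with the same data at time $s$, with $[V_j,V_k]=0$ entering as $\nabla V_k V_j = \nabla V_j V_k$, and conclude by uniqueness. Your route is more self-contained (it does not presuppose the Duhamel-type formula) but carries the extra burden of making rigorous both the chain rule for $V_k(Y_\cdot)$ in the regime $3 \le p < 4$ and uniqueness for a linear RDE whose coefficient $\nabla V(Y_t)$ is itself only a rough path in $t$; both points are handled by treating $(Y,Z)$ as a joint RDE and localizing the linear vector fields, precisely as in the proof of Theorem \ref{controlledRDE}, so the gap you flag is real but closable with machinery already in the paper. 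The paper's citation-based route is shorter; yours makes the mechanism by which commutativity kills the correction term more transparent.
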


\begin{proof}
For any vector field $W \in \mathcal{C}^1 \left(\mathbb{R}^{md}; \mathbb{R}^{md} \right)$, we have
\begin{align*}
\left( J^{\mathbf{X}}_t\right)^{-1} W(Y_t) = W(y_0) + \sum_{i=1}^d \int_0^t \left( J^{\mathbf{X}}_s \right)^{-1} [V_i, W] \strato{\mathbf{X}^{(i)}_s},
\end{align*}
which can be computed using the RDEs satisfied by $Y$ and $\left( J^{\mathbf{X}} \right)^{-1}$, cf. Chapter 20 (Section 4.2) in \cite{fv2010b}. Hence, if the $V_i$'s commute, then each $V_i$ is invariant under the flow of $Y$, and we have
\begin{align*}
J^{\mathbf{X}}_{t \leftarrow s} V(Y_s) = V(Y_t), \quad 0 \leq s < t \leq T.
\end{align*}
\end{proof}

\subsection{Applications of the correction formula}

We present applications of the main theorem to two important special cases. The first is to fractional Brownian motion in the regime $H > \frac{1}{4}$. Given the interest in this in recent years, especially in volatility models in mathematical finance, this result may also find practical uses among the wider areas of applied probability. The second application is to use the commuting case discussed in Corollary \ref{thmSimple} to obtain It\^{o} formulas for Gaussian processes. This links our correction formula to the prolific stream of recent work mentioned in the introduction.

\begin{theorem} [Correction formula fBM, $H > \frac{1}{4}$]
For $1 \leq p < 4$, let $Y \in \mathcal{C}^{p-var} \left( [0, T]; \mathcal{L} (\mathbb{R}^d; \mathbb{R}^m) \right)$ denote the path-level solution to 
\begin{align*}
\mathrm{d} Y_t = V(Y_t) \strato{\mathbf{X}_t}, \quad  Y_0 = y_0,
\end{align*}
where we assume that $V \in \mathcal{C}^k_b \left( \mathbb{R}^{md}; \mathbb{R}^{md} \otimes \mathbb{R}^d \right)$, with
\begin{align} \label{kDefn}
k = \begin{cases}
\, 2,  \quad 1 \leq p < 2, \\
\, 6, \quad 2 \leq p < 3, \\
\, 9, \quad 3 \leq p < 4,
\end{cases}
\end{align}
and $\mathbf{X} \in \mathcal{C}^{0, p-var} \left( [0, T]; G^{\lfloor p \rfloor} \left(\mathbb{R}^d \right)\right)$ is the geometric rough path constructed from the limit of the piecewise-linear approximations of standard fractional Brownian motion with Hurst parameter $H > \frac{1}{4}$. Then almost surely, we have 
\begin{align*}
\int_0^T Y_t \circ \mathrm{d} \mathbf{X}_t 
&= \int_0^T Y_t \, \mathrm{d} X_t + Z_T,
\end{align*}
where the correction term $Z_T = \left( Z^{(1)}_T, \ldots, Z^{(m)}_T \right)$ is given by
\begin{align} \label{cTerm}
\begin{split}
Z^{(j)}_T 
&= H \int_0^T \mathrm{tr} \left[ V(Y_s) \right]_j \, s^{2H - 1} \wrt{s} + \int_{[0, T]^2} h_j(s,t) \, \mathrm{d} R(s, t), \quad j = 1, \ldots, m, \\
&= H \int_0^T \mathrm{tr} \left[ V(Y_s) \right]_j \, s^{2H - 1} \wrt{s} + \int_0^T \mathcal{K}^*\otimes \mathcal{K}^* h_j (r, r) \wrt{r}, \quad \left( \mathrm{when} \, \frac{1}{3} < H \leq \frac{1}{2} \right),
\end{split}
\end{align}
with
\begin{align*}
h_j (s,t) := \mathds{1}_{[0, t)} (s) \mathrm{tr} \left[J^{\mathbf{X}}_{t \leftarrow s} V(Y_s) - V(Y_t) \right]_j, \quad j = 1, \ldots, m.
\end{align*}
\end{theorem}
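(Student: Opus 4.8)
The plan is to obtain this theorem as a specialization, to $X = B^{H}$, of the correction formulas already proved in the general Volterra setting. The argument splits along the three regimes recorded in \eqref{kDefn}, which correspond to $H > \frac{1}{2}$ (whence $1 \leq p < 2$), $\frac{1}{3} < H \leq \frac{1}{2}$ (whence $2 \leq p < 3$), and $\frac{1}{4} < H \leq \frac{1}{3}$ (whence $3 \leq p < 4$). In each regime I would fix a suitable exponent $p \in (1/H, 4)$ within the relevant range and verify that $B^{H}$ meets the hypotheses of the corresponding general statement, all of which is furnished by Proposition \ref{fBMCond}: it records that $B^{H}$ satisfies Condition \ref{newCond1} with $\rho = \frac{1}{2H}$ and the attendant exponent $q$ (with the understanding that $\rho = 1$ is used when $H > \frac{1}{2}$, where $R$ has finite 2D $1$-variation), that its kernel satisfies Condition \ref{amnCond} with $\alpha = \frac{1}{2} - H$ for $\frac{1}{4} < H \leq \frac{1}{2}$, and that $R(\cdot) = (\cdot)^{2H}$ is of bounded variation, with $\norm{R(t,\cdot) - R(s,\cdot)}_{q-var;[0,T]} \leq C\abs{t-s}^{1/\rho}$ holding for $\frac{1}{4} < H \leq \frac{1}{2}$.

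The crux, and the source of the threshold $H > \frac{1}{4}$, is to confirm that the kernel constraints demanded by the general theorems — namely $\alpha < \frac{1}{p}$ in Theorem \ref{mainThm3rdLevel} and $\alpha < \frac{1}{2p}$ in Proposition \ref{volterra2ndTerm} — are compatible with the existence constraint $p > 1/H$ (Theorem \ref{gaussianRP}) and with complementary regularity $\frac{1}{p} + \frac{1}{q} > 1$. Substituting $\alpha = \frac{1}{2} - H$, in the regime $2 \leq p < 3$ complementary regularity holds trivially because $\frac{1}{q} = \frac{1}{2H} \geq 1$, so the binding condition is $\alpha < \frac{1}{2p}$, i.e. $p < \frac{1}{1-2H}$, which is compatible with $p \in (1/H, 3)$ precisely when $H > \frac{1}{3}$ (then $\frac{1}{1-2H} > 3 > 1/H$). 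In the regime $3 \leq p < 4$, where $q = \frac{2}{1+2H}$, one observes that $\frac{1}{q} = 1 - \alpha$, so complementary regularity $\frac{1}{p} + \frac{1}{q} > 1$ and the kernel constraint $\alpha < \frac{1}{p}$ reduce to the single condition $p < \frac{2}{1-2H}$, which is compatible with $p \in (1/H, 4)$ precisely when $H > \frac{1}{4}$ (then $\frac{2}{1-2H} > 4 > 1/H$). I would present these two elementary interval computations in full, observing that the endpoints pinch exactly at $H = \frac{1}{4}$ and $H = \frac{1}{3}$, which is what dictates the $H$-ranges and the smoothness exponents $k$ in \eqref{kDefn}.

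Once an admissible $p$ is fixed, the formula follows by direct invocation: Theorem \ref{mainThm3rdLevel} (with $V \in \mathcal{C}^9_b$) in the regime $3 \leq p < 4$, Proposition \ref{volterra2ndTerm} (with $V \in \mathcal{C}^6_b$) in the regime $2 \leq p < 3$, and Theorem 6.1 of \cite{cl2018} (with $V \in \mathcal{C}^2_b$) in the Young regime $H > \frac{1}{2}$, where only the finite 2D $1$-variation of $R$ and complementary regularity are needed and the kernel machinery is not invoked. In every case the first part of the correction is $\frac{1}{2} \int_0^T \mathrm{tr}[V(Y_s)]_j \, \mathrm{d}R(s)$; since $R(s) = s^{2H}$ is absolutely continuous with $\mathrm{d}R(s) = 2H\, s^{2H-1}\,\mathrm{d}s$, this becomes $H \int_0^T \mathrm{tr}[V(Y_s)]_j\, s^{2H-1}\,\mathrm{d}s$, the first summand of \eqref{cTerm}.

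The second summand $\int_{[0,T]^2} h_j(s,t)\,\mathrm{d}R(s,t)$ is a genuine 2D Young-Stieltjes integral when $H > \frac{1}{3}$ and, for $\frac{1}{3} < H \leq \frac{1}{2}$, is further identified with $\int_0^T \mathcal{K}^* \otimes \mathcal{K}^* h_j(r,r)\,\mathrm{d}r$ by Proposition \ref{volterra2ndTerm} — the hypothesis $\alpha < \frac{1}{2p}$ being exactly what secures the $\frac{1}{p}$-H\"{o}lder bi-continuity of $h_j$ used there — whereas for $\frac{1}{4} < H \leq \frac{1}{3}$ it is interpreted only as the $L^2(\Omega)$-limit $U^{(j)}_T$ of Theorem \ref{mainThm3rdLevel}, as discussed after that proposition. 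The entire difficulty is thus concentrated in the parameter bookkeeping of the second paragraph; beyond fixing the admissible $p$, the proof requires nothing more than the cited results and the computation of $\mathrm{d}R$, after which extracting an almost surely convergent subsequence identifies the two sides of the correction formula pathwise.
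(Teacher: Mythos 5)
Your proposal is correct and follows essentially the same route as the paper: the published proof simply observes that Proposition \ref{fBMCond} supplies all hypotheses needed to invoke Theorem 6.1 of \cite{cl2018} (together with Proposition \ref{volterra2ndTerm} for the $\mathcal{K}^{*}\otimes\mathcal{K}^{*}$ identification when $\frac{1}{3} < H \leq \frac{1}{2}$) and Theorem \ref{mainThm3rdLevel} when $\frac{1}{4} < H \leq \frac{1}{3}$. Your explicit verification that $\alpha = \frac{1}{2}-H$ is compatible with $p > \frac{1}{H}$, $\alpha < \frac{1}{p}$ (resp. $\alpha < \frac{1}{2p}$) and complementary regularity is accurate and merely makes explicit the bookkeeping the paper leaves to Proposition \ref{fBMCond}.
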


\begin{remark}
For simplicity, we use the same notation for the second term of $Z_{T}^{(j)}$ for all $H > \frac{1}{4}$, with the understanding that it denotes the $L^{2}(\Omega)$ limit of \eqref{2ndTermApprox} when $\frac{1}{4} < H \leq \frac{1}{3}$.
\end{remark}

\begin{proof}
The proof rests entirely on Proposition \ref{fBMCond}, which tells us that fractional Brownian motion fulfills all the requirements needed to apply Theorem 6.1 of \cite{cl2018} when $H > \frac{1}{3}$, and Theorem \ref{mainThm3rdLevel} when $\frac{1}{4} < H \leq \frac{1}{3}$.
\end{proof}We now show that we can recover It\^{o}'s formulas.

\begin{theorem} [It\^{o} formulas for Gaussian processes]
For $1 \leq p < 4$, let $\mathbf{X} \in \mathcal{C}^{0, p-var} \left( [0, T]; G^{\lfloor p \rfloor} \left( \mathbb{R}^d \right)\right)$ satisfy Condition \ref{newCond1}.  Depending on $p$, we further impose the following conditions:
\begin{enumerate} [(i)]
\item $1 \leq p < 2$: $\sigma^2(s, t) \leq C \abs{t -s}^{\theta}$ for some $\theta > 1$ and $\norm{R(\cdot)}_{q-var; [0, T]} < \infty$.
\item $2 \leq p < 3$: The covariance function satisfies 
\begin{align} \label{cond2}
\left\| R(t, \cdot) - R (s, \cdot) \right\|_{q-var; [0, T]} 
\leq C \left| t - s\right|^{\frac{1}{\rho}},
\end{align}
for all $s, t \in [0, T]$.
\item $3 \leq p < 4$: $\mathbf{X}$ is a Volterra process whose kernel satisfies Condition \ref{amnCond} with $\alpha < \frac{1}{p}$. Furthermore, its covariance function satisfies \eqref{cond2} and $\norm{R(\cdot)}_{q-var; [0, T]} < \infty$.
\end{enumerate}
Then almost surely, for $f \in \mathcal{C}^{k+2}_b \left( \mathbb{R}^d; \mathbb{R} \right)$, $k$ defined as in \eqref{kDefn}, we have 
\begin{align*}
f(X_T) - f(0) = \int_0^T \left\langle \nabla f(X_t), \circ \mathrm{d} \mathbf{X}_t \right\rangle 
&= \int_0^T \left\langle \nabla f(X_t), \, \mathrm{d} X_t \right\rangle + \frac{1}{2} \int_0^T \Delta f (X_t) \, \mathrm{d} R(t).
\end{align*}
\end{theorem}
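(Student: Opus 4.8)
The plan is to deduce the statement from the commuting case of the correction formula (Corollary \ref{thmSimple} and, in the regime $2 \leq p < 3$, Proposition \ref{volterra2ndTerm}) by exhibiting $\nabla f(X)$ as a component of a solution to an RDE whose vector fields commute. Since $\nabla f(X_t)$ is not an autonomous function of itself, I would augment the state space and work with $Z = (U, W)$ taking values in $\mathbb{R}^{d} \times \mathbb{R}^{d} \cong \mathcal{L}(\mathbb{R}^d; \mathbb{R}^2)$ (so $m = 2$), governed by
\begin{align*}
\mathrm{d} Z_t = \hat{V}(Z_t) \circ \mathrm{d} \mathbf{X}_t, \quad Z_0 = (0, \nabla f(0)), \qquad \hat{V}_i(u, w) := \big( e_i, \, \nabla^2 f(u)\, e_i \big), \; i = 1, \ldots, d.
\end{align*}
The first block is driven by the constant (identity) vector field, so $U_t = X_t$, and the rough-path chain rule then forces $W_t = \nabla f(X_t)$; thus $Z$ simultaneously carries $X$ and the integrand $\nabla f(X)$ as its two $d$-dimensional blocks.

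Next I would verify the two structural facts needed to invoke the correction formula. First, regularity: each $\hat{V}_i$ is the concatenation of a constant field and $\nabla^2 f(u)\, e_i$, so $\hat{V} \in \mathcal{C}^k_b$ precisely because $f \in \mathcal{C}^{k+2}_b$, with $k$ as in \eqref{kDefn}; together with the covariance hypotheses imposed in cases (i)--(iii) this places us in the setting of the appropriate correction theorem for each range of $p$. Second, commutativity: since $\hat{V}_i$ depends only on $u$, the directional derivative $D_{\hat{V}_i}\hat{V}_j$ has vanishing first block and second block $\nabla^3 f(u)(e_i, e_j)$, which is symmetric in $i, j$ by Schwarz's theorem; hence $[\hat{V}_i, \hat{V}_j] = D_{\hat{V}_i}\hat{V}_j - D_{\hat{V}_j}\hat{V}_i = 0$ for all $i, j$.

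With commutativity in hand, the non-commutative part of the correction term vanishes, and I would apply Corollary \ref{thmSimple} for $3 \leq p < 4$ (resp.\ Proposition \ref{volterra2ndTerm} with $h_j \equiv 0$ for $2 \leq p < 3$, and the first-level Young computation underlying \cite{cl2018} for $1 \leq p < 2$) and read off the $j = 2$ block. There the rough integral is $\int_0^T \left\langle \nabla f(X_t), \circ \mathrm{d}\mathbf{X}_t \right\rangle$, the Skorohod integral is $\int_0^T \left\langle \nabla f(X_t), \mathrm{d} X_t \right\rangle$, and the correction is $\tfrac{1}{2}\int_0^T \mathrm{tr}\,[\hat{V}(Z_s)]_2 \, \mathrm{d} R(s)$. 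Since the $j=2$ sub-matrix of $\hat{V}(Z_s)$ has $i$-th column $\nabla^2 f(X_s)\, e_i$, one has $[\hat{V}(Z_s)]_2 = \nabla^2 f(X_s)$ and $\mathrm{tr}\,[\hat{V}(Z_s)]_2 = \Delta f(X_s)$, producing exactly $\tfrac{1}{2}\int_0^T \Delta f(X_t)\, \mathrm{d} R(t)$.

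Finally, I would identify the left-hand side: because $\mathbf{X}$ is a geometric rough path and $U = X$ solves the identity RDE, the fundamental theorem of calculus / change-of-variables formula for rough integrals (see \cite{fv2010b}) yields $\int_0^T \left\langle \nabla f(X_t), \circ \mathrm{d}\mathbf{X}_t \right\rangle = f(X_T) - f(X_0) = f(X_T) - f(0)$, using $X_0 = 0$. The main obstacle is the bookkeeping around the augmentation: one must check that appending the $\nabla^2 f$-block preserves both the boundedness and smoothness demanded of the vector fields and, crucially, the commutativity — the latter resting on the symmetry of the third derivatives of $f$ — and that the hypotheses of the correct correction theorem hold across all three regularity regimes, so that the $j = 2$ projection may be taken legitimately in each case.
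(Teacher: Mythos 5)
Your proposal is correct and follows essentially the same route as the paper: augment the state to carry both $X$ and $\nabla f(X)$ as blocks of a single RDE solution with vector fields $\big(e_i, \nabla^2 f(\cdot)e_i\big)$, observe that these commute (the paper states ``one can check'' what you verify explicitly via symmetry of $\nabla^3 f$, and also offers an alternative direct computation of $J^{\mathbf{X}}$ showing $J^{\mathbf{X}}_{t\leftarrow s}V(Y_s)=V(Y_t)$), and then apply Corollary \ref{thmSimple} to read off the relevant block, with $\mathrm{tr}\,\nabla^2 f = \Delta f$ giving the stated correction. The only cosmetic difference is the ordering of the two blocks.
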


\begin{proof}
Let $Y_t = \left( Y^{(1)}_t, \ldots, Y^{(2d)}_t \right)$ denote the augmented process
\begin{align*}
\left( \pd{f}{e_1} (Y_t), \ldots, \pd{f}{e_d} (Y_t), X^{(1)}_t, \ldots X^{(d)}_t \right),
\end{align*}
in which case $Y$ satisfies the RDE
\begin{align*}
\mathrm{d} Y_t = V(Y_t) \strato{\mathbf{X}_t}, \quad Y_0 = \left( y_0, 0 \right),
\end{align*}
where  $V(Y) \in \mathbb{R}^{2d} \otimes \mathbb{R}^d$ is represented by the $2d$-by-$d$ matrix
\begin{align*}
V(Y_t) = \begin{bmatrix}
\begin{array} {c}
\nabla^2 f (Y_t) \\
\hdashline
\mathcal{I}_d
\end{array}
\end{bmatrix}.
\end{align*}
Now one can check that $[V_i, V_j] = 0$ for all $i, j = 1, \ldots, d$, and apply Corollary \ref{thmSimple}. \par
Alternatively, since $\nabla V(Y_t) \strato{\mathbf{X}_t}$ is the upper-triangular $2d$-by-$2d$ matrix
\begin{align*}
\nabla V(Y_t) \strato{\mathbf{X}_t}
= \begin{bmatrix}
\begin{array}{c:c}
0  & \nabla^3 f(X_t) \strato{\mathbf{X}_t} \\
\hdashline
\phantom{123456} 0 \phantom{123456} & 0
\end{array}
\end{bmatrix},
\end{align*}
where
\begin{align*}
\left( \nabla^3 f (X_t) \strato{\mathbf{X}_t} \right)_{ij} = \sum_{k=1}^d \frac{\partial^3 f}{\partial e_k \partial e_i \partial e_j} (X_t) \strato{\mathbf{X}^{(k)}_t}, \quad i, j = 1, \ldots, d,
\end{align*}
one can directly compute the solution to the RDE satisfied by the Jacobian as
\begin{align*}
J^{\mathbf{X}}_t = \begin{bmatrix}
\begin{array}{c:c}
\mathcal{I}_d & \nabla^2 f(X_t) - \nabla^2 f(0) \\
\hdashline
\phantom{1234567} 0 \phantom{1234567} & \mathcal{I}_d
\end{array}
\end{bmatrix}, \quad t \in [0, T].
\end{align*}
Now since the inverse is given by
\begin{align*}
\left( J^{\mathbf{X}}_t \right)^{-1} = \begin{bmatrix}
\begin{array}{c:c}
\mathcal{I}_d & \nabla^2 f(0) - \nabla^2 f (X_t) \\
\hdashline
\phantom{1234567} 0 \phantom{1234567} & \mathcal{I}_d
\end{array}
\end{bmatrix}, \quad t \in [0, T],
\end{align*}
we also obtain, for $0 \leq s < t \leq T$,
\begin{align*}
J^{\mathbf{X}}_{t \leftarrow s} V(Y_s) = V(Y_t).
\end{align*}
\end{proof}

\end{document}